\documentclass[11pt]{article}
\title{The satisfiability threshold of random linear equations over finite commutative rings}

\author{Pu Gao\thanks{Research supported by NSERC RGPIN-04173-2019.} \\ University of Waterloo \\ pu.gao@uwaterloo.ca \and Theodore Morrison\thanks{Research supported by an Ontario Graduate Scholarship} \\ University of Waterloo\\ tmorriso@uwaterloo.ca}

\date{}
\usepackage{amsfonts, amsmath, amssymb, amsthm, bigints, bbm,tikz,scrextend,mathrsfs}
\usepackage[shortlabels]{enumitem}
\usepackage[normalem]{ulem}
\usetikzlibrary{graphs, quotes}
\usepackage[margin=1in]{geometry}
\usepackage[dvipsnames]{xcolor}
\newtheorem{theorem}{Theorem}[section]

\newtheorem{lemma}[theorem]{Lemma}

\newtheorem{definition}[theorem]{Definition}
\newtheorem{observation}[theorem]{Observation}
\newtheorem{conjecture}[theorem]{Conjecture}
\newtheorem{example}[theorem]{Example}
\newtheorem{problem}[theorem]{Problem}

\newtheorem{claim}[theorem]{Claim}

\newcommand{\ZZ}{\mathbb{Z}}
\newcommand{\FF}{\mathbb{F}}

\newcommand{\NN}{\mathbb{N}}
\newcommand{\eps}{\varepsilon}

\newcommand{\lt}{\left}
\newcommand{\rt}{\right}
\newcommand{\rv}{\boldsymbol}

\newcommand{\Var}{\operatorname{Var}}
\newcommand{\mc}{\mathcal}

\renewcommand{\subset}{\subseteq}
\renewcommand{\supset}{\supseteq}

\newcommand{\Spec}{\operatorname{Spec}}

\newcommand{\supp}{\operatorname{supp}}

\newcommand{\Po}{\operatorname{Po}}
\newcommand{\Bin}{\operatorname{Bin}}

\newcommand{\Ann}{\operatorname{Ann}}

\newcommand{\Mult}{\operatorname{Mult}}

\newcommand{\contig}{\boldsymbol{\triangleright}}
\newcommand{\mcontig}{\boldsymbol{\triangleleft}\boldsymbol{\triangleright}}

\theoremstyle{definition}
\newcommand{\ind}[1]{\boldsymbol{1}_{\{#1\}}}
\def\deg{{\text{deg}}}
\def\da{{\boldsymbol{\delta}}}
\def\ka{{\boldsymbol{\kappa}}}

\newcommand\remove[1]{{}}

\DeclareSymbolFont{symbolsC}{U}{txsyc}{m}{n}
\DeclareMathSymbol{\strictif}{\mathrel}{symbolsC}{74}
\DeclareMathSymbol{\strictfi}{\mathrel}{symbolsC}{75}
\DeclareMathSymbol{\strictiff}{\mathrel}{symbolsC}{76}
\usepackage{fullpage}

\def\A{{\bf A}}
\def\B{{\bf B}}
\def\b{{\bf b}}
\def\C{{\bf C}}

\def\d{{\bf d}}
\def\bb{{\beta}}

\def\pr{{\mathbb P}}
\def\ex{{\mathbb E}}

\begin{document}

\maketitle

\begin{abstract}
    We extend the study of random linear equations over finite fields to equations over finite commutative rings. We characterize precisely when the satisfiability threshold occurs at a sublinear scale; namely, when the random system become unsatisfiable with high probability with a number of constraints $m$ that is sublinear in $n$, the number of variables. In this regime, we determine the exact value of the satisfiability threshold. 
    
    In the complementary regime where the satisfiability threshold is linear in $n$, we determine its precise value when $R$ is a principal ring. Interestingly, this value is independent of the choice of $R$, mirroring the same phenomenon when $R$ is a finite field. We further prove that this independence of $R$ breaks down if $R$ is nonprincipal. In particular, we investigate a classical family of nonprincipal rings and determine the satisfiability thresholds for all rings in this family. Remarkably, in this setting, the satisfiability threshold depends not only on the underlying ring, but also on other parameters defining the random linear equation model.  
\end{abstract}

\newpage

\section{Introduction}

Random constraint satisfaction problems (CSPs) are a central topic at the intersection of theoretical computer science, probability, combinatorics, statistical physics, and optimization. A CSP consists of a set of $n$ variables and a collection of $m$ constraints, and the goal is to determine whether there exists an assignment of the variables that satisfies all constraints simultaneously. In the random setting, the variables and constraints are generated according to a probability distribution. Among the most widely studied random CSP models are those in which each constraint involves exactly $k$ variables chosen uniformly at random from all of the $n$ variables. For many such models, it has been established  that as $n$ grows, the probability of the random CSP being satisfiable undergoes a sharp transition from  one to zero, when the ratio $m/n$ is around some critical value $d_k$ called the satisfiability threshold. The value of $d_k$ depends on both the type of CSP and the underlying random model.

Determining the satisfiability threshold is one of the central problems in the study of random CSPs. This research  began with random $k$-SAT in the early 1990s~\cite{Cheeseman1991,Selman1992,CRAWFORD1996}, motivated by extensive experimental and theoretical investigations into the phase transition phenomenon.  A major milestone was the work of Friedgut and Bourgain~\cite{friedgut1999sharp}, who established the existence of a sharp satisfiability threshold for random $k$-SAT. However the exact value of the threshold remained unknown for each fixed $k\ge 3$. The first breakthrough result in determining the satisfiability of a nontrivial random CSP was achieved by Dubois and Mander~\cite{duboismandler2002XORSAT}  in the study of random 3-XORSAT in 2002. 
Later, this work was extended to $k$-XORSAT for any fixed $k\ge 3$ by Dietzfelbinger et al.~\cite{dietzfelbinger2010XORSATthreshold} and independently by Pittel and Sorkin~\cite{pittelsorkin2016XORSATthreshold}. Since the constraints in a $k$-XORSAT are simply  linear equations over $\FF_2$, it is natural to ask how the satisfiability threshold behaves for random linear equations over other algebraic structures. The special case of $\FF_3$ and $\FF_4$ was solved by Goerdt and Falke~\cite{goerdt2012beyondxorsat}, and the satisfiability threshold of random linear equations over a general finite field was determined by Ayre et al.\ in~\cite{ayre2020satisfiability}. A striking phenomenon emerging from these results is that, despite the different algebraic structures of the underlying finite fields, the satisfiability threshold is universal: it takes the same value  $d_k$ for every finite field (see its definition in~\eqref{def:dk} in Theorem~\ref{thm:field-sat} below). In parallel,  continual progress has been made towards determining the satisfiability threshold of the random $k$-SAT~\cite{Ding2016,GOERDT1996469,Kirousis1998,Achlioptas2002,CojaOghlan2016}. Much of the work was significantly influenced by ideas and techniques from statistical physics. In particular, the precise value of the $k$-SAT satisfiability threshold was predicted using the so-called cavity method from statistical physics.  This problem was eventually resolved by of Ding, Sly and Sun~\cite{dingslysun2022kSAT} for sufficiently large $k$.  Beyond random $k$-SAT and linear equations,  satisfiability thresholds have also been investigated for a wide range of other random CSPs, including $k$-NAE-SAT, $1$-in-$k$-SAT, graph and hypergraph colouring, etc.~\cite{Ding2016,Achlioptas2001}. For some of these problems, the threshold has been rigorously determined, while for some of the others, precise predictions have been obtained through the cavity method and related techniques from statistical physics. 

Recall the striking phenomenon that the satisfiability threshold of random linear equations is universal regardless of the finite field over which the equations are defined.  
Connamacher and Molloy  boldly conjectured that this is due to a special property these CSP all share: the uniquely extendability, meaning that  for any constraint, fixing the values of any $k-1$ variables uniquely determines the value of the remaining variable needed to satisfy the constraint.
In their work~\cite{connamacher2012satisfiability} they conjectured that any random UE-CSP (uniquely extendable CSP) instance has the same satisfiability threshold $d_k$ as in~\eqref{def:dk},  and they confirmed this conjecture in the special case $(k,r)=(3,4)$ where $r$ is the number of different values a  variable can take. This is the smallest pair of parameters not covered by the theory of linear equations over finite fields. More recently, Gao and Morrison~\cite{gao2026satisfiabilitythresholdsolutionspace} confirmed this conjecture for the class of so-called ``reducible'' UE-CSPs. 

In this paper we extend this study to random linear equations over a finite commutative ring $(R,+,\cdot)$. There are several motivations for considering this broader setting. Linear equations over rings have a rich history across many areas of mathematics, including number theory, algebra, additive combinatorics, and optimisation. In particular, a wide range of combinatorial problems can be formulated as the problem of finding integer or modular solutions to systems of linear equations, where the algebraic structure of the underlying ring captures essential combinatorial constraints.
From the perspective of random CSPs, linear equations over a finite field are examples of UE-constraints. However, when the underlying field is replaced by a general finite ring, this property fails: the equations may remain extendable but no longer uniquely so. They may also cease to be extendable. Thus, random linear equations over finite rings provide a natural framework that interpolates between different classes of random CSPs and encompasses a broad range of constraints with different nature. This leads to several fundamental questions. How does the satisfiability threshold depend on the choice of the ring $R$? Does the universality phenomenon observed for finite fields extend to broader classes of finite rings? If not, to what extent is the threshold determined by the algebraic properties of $R$, and which structural features of the ring govern this dependence? Addressing these questions is the main objective of this paper.

Beyond the perspective of random CSPs, random linear equations over rings are also of independent interest in random matrix theory, a field with far-reaching applications in areas such as coding theory, combinatorics, cryptography, and information theory. A central theme in this area is the study of the rank of random matrices~\cite{Tikhomirov2020,Bordenave2009,Costello2010,glasgow2023}.   In fact, for certain classes of random matrices $B$, the rank can be determined through the satisfiability of random linear systems $Bx=b$ where $b$ is chosen according to an appropriate distribution; see for instance~\cite{ayre2020satisfiability}. Random matrices over rings were studied in~\cite{glasgow2023,Cheong2021} and universality results on the asymptotic distribution of the co-kernel were established. However, compared with the extensive theory over finite fields, many fundamental questions about random matrices over rings remain open. We discuss several related open problems in Section~\ref{sec:open}.

We adopt a model of random linear equations similar to~\cite{ayre2020satisfiability}. Let $k\ge 3$ be an integer, and $\hat P$ be a distribution over $R^k$. We say that $\hat P$ is permutation-invariant if $P(\sigma_1,\ldots, \sigma_k)=P(\sigma_{\pi(1)},\ldots, \sigma_{\pi(k)})$ for all $(\sigma_1,\ldots, \sigma_k)\in R^k$ and all permutations $\pi$ of $\{1,\ldots, k\}$. For each $m,n\in \NN$, let $C_n=\{a_1,\ldots, a_m\}$ and $V_n=\{v_1,\ldots, v_n\}$ be disjoint index sets, and define a random $m\times n$ matrix $\B\in R^{C_m\times V_n}$ over $R$ as follows: independently for each $a\in C_m$, choose $j_1<j_2<\cdots <j_k$ uniformly at random from $[n]$; sample $(\sigma_1\ldots, \sigma_k)$ according to $\hat P$;  
and let $\B_{j,j_i}=\sigma_i$ for all $1\le i\le k$ and $\B_{j,h}=0$ for all $h\notin \{j_1,j_2,\ldots,j_k\}$. Let $\hat Q$ be a distribution over $R^m$ and let $\b$ be sampled by $\hat Q$. Consider the random linear equations over $R$ given by 

\begin{equation}\label{eq:system}
\B x=\b.
\end{equation}
We say the system is satisfiable (SAT) if there exists $x\in R^n$ satisfying all the equations in (\ref{eq:system}). Otherwise, we say (\ref{eq:system}) is unsatisfiable (UNSAT). The satisfiability threshold in the case where $R$ is a field is given as follows.

\begin{theorem}[\cite{ayre2020satisfiability}]\label{thm:field-sat} Let $k\ge 3$ be a fixed integer, and let 
   \begin{align}
            \rho_{k,d} &= \sup\lt\{x\in [0,1]: x=1-\exp(-dx^{k-1})\rt\}\nonumber\\
            d_k&=\inf\lt\{d>0:\rho_{d,k}-d\rho_{d,k}^{k-1}+(1-1/k)d\rho_{d,k}^k<0\rt\}.\label{def:dk}
        \end{align}
Suppose that $R$ is a finite field, that the entries of $\b$ are drawn uniformly and independently from $R$, and that $\hat P$ is permutation-invariant. Then, for every $\eps>0$, a.a.s.\ \eqref{eq:system} is satisfiable if $m<(d_k/k-\eps)n$ and is unsatisfiable if $m>(d_k/k+\eps)n$.

\end{theorem}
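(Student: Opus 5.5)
The plan follows the strategy of \cite{ayre2020satisfiability}, by way of the $2$-core. Let $G$ be the random $k$-uniform factor graph whose variable nodes are $V_n$, whose check nodes are $C_m$, and whose edges join each check to its $k$ chosen variables. First, we peel: repeatedly delete a variable of degree at most one together with its (unique) incident check. Over a field, an equation containing a variable of degree one can always be satisfied afterwards by solving for that variable, because its coefficient is nonzero. (The model implicitly requires nonzero coefficients; if $\hat P$ puts mass on zero entries, we restrict to the support pattern and replace $k$ by an effective degree, but we assume the standard case.) Hence $\B x=\b$ is satisfiable if and only if the system restricted to the $2$-core is. A standard differential-equations or branching-process analysis of the $2$-core of random $k$-uniform hypergraphs, as in Molloy's work and in Pittel--Sorkin, shows the following when $d=km/n$ is fixed. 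The core has asymptotically $n\bigl(\rho-d\rho^{k-1}/k\cdot k\bigr)$ variables; more precisely, the number of core variables minus the number of core checks is $n\bigl(\rho-d\rho^{k-1}+(1-1/k)d\rho^k\bigr)+o(n)$, with $\rho=\rho_{k,d}$. This is exactly the expression in \eqref{def:dk}.

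For the unsatisfiable side, take $d>d_k+\eps'$. The core then has more equations than variables by a linear amount, so its matrix $\B_{\rm core}$ has at least $\Omega(n)$ linearly dependent rows. Since $\b$ is uniform and independent of $\B$, each independent left-kernel vector $y$ imposes the constraint $y^\top\b=0$, which holds with probability $1/|R|$ independently across a basis of the kernel. The probability of satisfiability is therefore at most $|R|^{-\Omega(n)}=o(1)$.

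For the satisfiable side, take $d<d_k-\eps'$. The core system is satisfiable if and only if $\b_{\rm core}$ lies in the column space of $\B_{\rm core}$. Because $\b$ is uniform, this holds with probability $|R|^{-(m'-\operatorname{rank})}$, where $m'$ is the number of core checks. So it suffices to show that $\B_{\rm core}$ has full row rank a.a.s. We plan to do this with a second-moment or kernel-counting argument. Let $Z$ be the number of nonzero $y\in R^{m'}$ with $y^\top\B_{\rm core}=0$; we want $\mathbb E Z=o(1)$. Following \cite{ayre2020satisfiability}, we condition on the core degree sequence, which has a truncated Poisson (minimum degree $2$) variable degree distribution and check degree $k$. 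We then compute $\mathbb E Z$ via the configuration model, summing over the type profile of $y$: the fractions of checks carrying each value of $R$, and the induced value distributions at the variables, each of which must sum to zero. Permutation invariance of $\hat P$ lets each check's contribution be written symmetrically. The first-moment exponent is a function $\Phi$ of the profile. The task is to show $\Phi<0$ away from the trivial profile $y=0$, and to show separately that small-support kernel vectors are unlikely.

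The main obstacle is this first-moment optimization. We must show that the maximum of $\Phi$ over all nonzero profiles is attained at the uniform profile, where its value is precisely $-(\text{core variables}-\text{core checks})/n<0$ when $d<d_k$, and that no intermediate profile gives a larger value. We would attempt this by reducing, via the $\hat P$-invariance and convexity (an entropy/Jensen argument on the variable marginals), to a one-parameter family indexed by the fraction $\alpha$ of nonzero entries in $y$. We would then check that the resulting function of $\alpha$ is maximized at $\alpha=0$ or $\alpha=1-1/|R|$. Small $\alpha$ corresponds to sets of checks covering every variable an even-like number of times; there we would use a direct expansion argument, namely that the core has no small "cycle" structures of checks of linear-in-size deficiency, to show such kernel vectors a.a.s.\ do not exist. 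Finally, concentration and sharpness in $m$ follow because every step holds for each fixed $d$ bounded away from $d_k$.
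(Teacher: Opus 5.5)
\textbf{The gap is in the satisfiable direction, which carries essentially all the difficulty.} You must show that $\B_{\mathrm{core}}$ has full row rank for \emph{every} $d<d_k$. You defer exactly this to a first-moment optimization that you do not carry out, and propose to collapse it to the single parameter $\alpha$ via ``$\hat P$-invariance and convexity.'' That reduction is not available, for three reasons.
\begin{itemize}
\item Permutation invariance of $\hat P$ is a symmetry among the $k$ positions of a row, not among the nonzero values taken by $y$. The event $\sum_{a\ni v}y_a\B_{a,v}=0$ at a core variable depends on the whole value profile $(\alpha_s)_{s\in R\setminus\{0\}}$. For instance, for $\hat P$ the point mass at $(1,\ldots,1)$, the only symmetry left is $y\mapsto cy$.
\item For non-product $\hat P$ the coefficients within a row are correlated. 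So, given the graph, the probability does not even factor over variables.
\item No concavity argument can settle it. The exponent has competing peaks at $y=0$ and at the uniform profile, and the whole task is to exclude a higher intermediate peak.
\end{itemize}
This $(|R|-1)$-dimensional optimization is the first moment of row dependencies (equivalently, the second moment of the number of solutions). That is the computation of \cite{duboismandler2002XORSAT,dietzfelbinger2010XORSATthreshold,pittelsorkin2016XORSATthreshold} for $q=2$ and of \cite{goerdt2012beyondxorsat} for $q=3,4$. Even the one-variable inequality you would ``then check'' is the analytic core of the $q=2$ papers. The authors of \cite{ayre2020satisfiability} single out the extension of this calculation to larger $q$ as the obstacle, and circumvent it from a decoding viewpoint. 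The paper takes the theorem from \cite{ayre2020satisfiability}. Its reuse of that argument in the proof of Theorem~\ref{thm:units} shows the subcritical step is obtained by exhibiting $V'\subseteq V$ with $|V'|=|C|$ and $\B[C,V']$ invertible, with no moment optimization. As written, your subcritical half restates the obstacle rather than overcoming it; only your small-support expansion step is standard and fine.

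\textbf{What does work.} Your $2$-core reduction, your formula for the core excess, and your unsatisfiable direction are correct. Write $n',m'$ for the numbers of core variables and checks. With $\b$ uniform and independent of $\B$, the bound $\pr[\text{SAT}\mid\B]=|R|^{\operatorname{rank}(\B_{\mathrm{core}})-m'}\le|R|^{n'-m'}$ is cleaner than the route the paper sketches for Theorem~\ref{thm:units}. That route counts the $|R|^{o(n)}$ solutions of a core at density $d_k-\eps_n$ and adds $\Omega(n)$ equations, each killing a fixed solution with probability at least $\delta$; it is designed to work for non-uniform $Q$ as well. Two small repairs are needed:
\begin{itemize}
\item Since $d_k$ is an infimum, apply your bound at some $d'\in(d_k,d_k+\eps')$ where the excess is negative, and use that unsatisfiability is monotone in $m$.
\item Your intermediate count of core variables should read $n\bigl(\rho-d\rho^{k-1}(1-\rho)\bigr)$, although the difference you then state is correct.
\end{itemize}
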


We only consider finite commutative rings $R$, and we investigate the satisfiability (SAT) threshold for $\B x=\b$ over $R$. Moreover, we only consider the model where the linear equations are independent. In other words, the components in $\b$ are drawn independently.
Suppose that $Q$ is a distribution over $R$. Let $Q^{\otimes m}$ denote the $m$-fold product measure on $R^m$; i.e.\ $Q^{\otimes m}(b_1,\ldots,b_m)=\prod_{i=1}^m Q(b_i)$ for all $(b_1,\ldots,b_m)\in R^m$. Hence, in this paper, we always consider $\hat Q=Q^{\otimes k}$ for some distribution $Q$ over $R$. The probability measure $\hat P$ does not need to be a product measure. However, the choice $\hat P=P^{\otimes k}$ for some distribution $P$ over $R$ is is among the most natural and important cases, and thus will be our primary focus. 

The proof of Theorem~\ref{thm:field-sat} critically uses the ``unique extendability'' property of linear equations over a field. That is, for any assignment of $x_1,\ldots, x_{k-1}\in\FF_q$ in a equation $\sum_{i=1}^ka_ix_i=b$ over $\FF_q$, there exists a unique value $x_k\in \FF_q$ that satisfies the equation. This property fails when $R$ is not a field, which presents an obstacle to adapting the proof of Theorem~\ref{thm:field-sat}. We begin to address this problem by omitting the most trivial case, which is when the equation $\sum_{i=1}^ka_ix_i=b$ over $R$ has no solutions. 
\begin{example}\label{trivialUNSAT}
    Suppose $(a_1,\ldots, a_k)\in \supp(\hat P)$ and $b\in \supp(Q)$ are such that $b$ is not in the ideal generated by $\{a_1,\ldots, a_k\}$, then there is no solution to $\sum_{i=1}^k a_i x_i = b$ over $R$.
\end{example}

\begin{definition}
    Let $\hat P$ be a distribution on $R^k$ and let $Q$ be a distribution on $R$. We say that $(\hat P,Q)$ is inconsistent if, for all $(a_1,\ldots, a_k)\in \supp(\hat P)$ and all $b\in \supp(Q)$, $b$ is not in the ideal generated by $\{a_1,\ldots, a_k\}$.  
\end{definition}
If $(\hat P,Q)$ are inconsistent, then every equation $\sum_{i=1}^ka_ix_i=b$ with $(a_1,\ldots, a_k)\in \supp(\hat P),b\in \supp(Q)$ is unsatisfiable, so~\eqref{eq:system} is trivially unsatisfiable.

 Next we demonstrate an example where the system is trivially always satisfiable. 
\begin{definition}
 Let $\hat P$ be a distribution on $R^k$ and let $Q$ be a distribution on $R$. We say that $(\hat P,Q)$ admits a $k$-ary constant solution $\sigma\in R$ if, for all $(a_1,\ldots, a_k)\in \supp(\hat P)$ and $b\in \supp(Q)$,   
    \begin{equation*}
        \sum_{i=1}^k a_i \sigma=b.
    \end{equation*}
\end{definition}
Clearly, if $(P,Q)$ admits a $k$-ry constant solution $\sigma$, then the constant solution $x=\sigma^n$ (i.e. $x_i=\sigma$ for all $i\in [n]$) satisfies~\eqref{eq:system}. To study SAT threshold, we certainly would exclude cases like this to avoid triviality. This is similar to a homogeneous system with $b=0$, but there are examples of $(P,Q)$ with $\supp(Q)\neq \{0\}$ that admit a $k$-ary constant solution. For example $R=\ZZ_4$ with $\supp(P)=\ZZ_4$, $\supp(Q)=\{3\}$, and $k=3$ admits a constant solution $1$.

Throughout this paper we assume the following assumptions on $(\hat P, Q)$ to avoid trivialities demonstrated above.
\begin{description}
    \item[(A1)] $(\hat P,Q)$ is not inconsistent.
    \item[(A2)] $(\hat P,Q)$ does not support any constant solution.
\end{description}

\subsection{Main results}

First, we consider the case where $\hat P=P^{\otimes k}$ for some distribution $P$ over $R$. Our first result provides a complete characterization of the sublinear SAT threshold, determines the associated coarse threshold, and establishes that the phase transition is not sharp in this regime. To state the result we need a technical definition. We use the notation $(I:J)$ to denote the ideal quotient $(I:J)=\{t\in R:Jt\subset I\}$ for two ideals $I,J\subset R$.

\begin{definition}\label{def:I}
    Let $P$ and $Q$ be distributions over $R$. Define a sequence of ideals $I_0(P),I_1(P),\ldots$ inductively by setting $I_0(P)=\bigcap_{r\in\supp(P)}(r)$, and setting 
    \begin{equation*}
        I_\ell(P)=\bigcap_{r\in\supp(P)}r \left(\bigcap_{s\in \supp(P)}(I_{\ell-1}(P):s)\right)
    \end{equation*}
    for positive integers $\ell$. Define
    \begin{equation*}
        \ell(P,Q)=\inf\{\ell\geq 0: I_\ell(P)\not\supset \supp(Q)\}    
    \end{equation*}
    with the convention that $\inf (\emptyset) = \infty$.
\end{definition}

\subsection{Principal $R$}

\begin{theorem}\label{thm:principal-sublinear}
    Let $P$ and $Q$ be distributions on a principal ring $R$ and let $\hat P=P^{\otimes k}$. Suppose $(\hat P,Q)$ satisfies {\bf (A1)} and {\bf (A2)}. 
    Set
    \begin{equation}
        \alpha_{P,Q} = \frac{\sum_{i=1}^{\ell(P,Q)}k(k-1)^{i-1}}{1+\sum_{i=1}^{\ell(P,Q)}k(k-1)^{i-1}}. \label{def:alphaPQ}
    \end{equation}
    with $\alpha_{P,Q}=1$ if $\ell(P,Q)=\infty$. Then,  
    \begin{align*}
        \lim_{n\to\infty}\pr[\text{System~\eqref{eq:system} is satisfiable}]=\begin{cases}
            1&m \ll n^{\alpha_{P,Q}},\\
            0&m \gg n^{\alpha_{P,Q}}.
        \end{cases}
    \end{align*}
   Moreover, if $\alpha_{P,Q}<1$ and $m=Cn^{\alpha_{P,Q}}$, then for all fixed $C>0$, 
    \begin{align}
0<        \liminf_{n\to\infty}P[ \text{System~\eqref{eq:system} is satisfiable}]\le        \limsup_{n\to\infty}P[\text{System~\eqref{eq:system} is satisfiable}]<1. \label{eq:window}
    \end{align}
\end{theorem}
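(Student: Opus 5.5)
The plan is to identify the obstruction behind $\ell(P,Q)$ as a small tree-like structure in the factor graph, and then run a first/second moment count of how many copies of it appear in the random hypergraph.

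\textbf{Step 1: interpret $I_\ell(P)$.} I would show that $I_\ell(P)$ is exactly the set of right-hand sides $b$ that are \emph{forced} to be consistent at the root of a depth-$\ell$ tree of equations. Call this structure $T_\ell$. It has a root equation on $k$ variables. Every variable of an equation at depth $<\ell$ lies in $k-1$ further child equations, and each child equation has $k-1$ fresh variables. So $T_\ell$ has $\sum_{i=1}^{\ell+1}k(k-1)^{i-1}$ variables and $1+\sum_{i=1}^{\ell}k(k-1)^{i-1}$ equations.

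More precisely, define $J_\ell$ as the set of values that a variable at the root of a depth-$\ell$ branch can be forced to take, whatever the coefficients and right-hand sides. The claim is that $b\notin I_\ell(P)$ for some choice in the supports iff some assignment of coefficients in $\supp(P)$ and right-hand sides in $\supp(Q)$ to $T_\ell$ is unsatisfiable. The direction "$T_\ell$ can be UNSAT when $b\notin I_\ell$" uses principality. In a principal ring, $r(\bigcap_s(I_{\ell-1}:s))$ is a principal ideal, so intersections and quotients are attained by single generators. A single bad choice of coefficients in a child then realizes the restriction. This mirrors the recursion $I_\ell=\bigcap_r r(\bigcap_s (I_{\ell-1}:s))$ by induction on $\ell$.

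\textbf{Step 2: structural lemma (the key step).} I would prove that a system whose factor graph is a forest of bounded-size trees containing no copy of an ``unsatisfiable'' $T_{\ell(P,Q)}$ pattern is satisfiable. The idea is to peel leaf equations: in a principal ring, a leaf equation whose value set contains $I_0$ imposes only ideal-membership constraints on its parent variable. Assumptions (A1) and (A2) guarantee that the obstruction pattern actually occurs with positive probability.

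\textbf{Step 3: counting.} Let $\ell=\ell(P,Q)$. Set $v=1+\sum_{i=1}^{\ell}k(k-1)^{i-1}$ for the number of equations in the pattern; then the number of variables is $(k-1)v+1$. The expected number of copies in the random system scales as $n^{(k-1)v+1}(m/n^{k})^{v}\asymp m^{v}n^{1-v}$. This tends to infinity iff $m\gg n^{(v-1)/v}=n^{\alpha_{P,Q}}$, which matches \eqref{def:alphaPQ}.

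\textbf{Step 4: the two regimes.}

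\emph{Satisfiable side, $m\ll n^{\alpha}$.} Since $\alpha<1$ when $\ell<\infty$, the factor graph a.a.s.\ is a forest of trees with bounded-depth components (no cycles, since $m=o(n)$). Trees with more than $v$ equations but no bad copy are handled by Step 2. Larger trees not containing $T_\ell$ as a subpattern are also satisfiable, because any unsatisfiable tree must contain a full-depth branching structure. This is the step I expect to be the main obstacle: showing that any unsatisfiable tree contains $T_\ell$, including lopsided trees where branching is incomplete. I would first try to prove it by induction on the height of the tree, using the recursion for $I_\ell$. The case $\ell=\infty$ also needs care: here I would show that no tree is ever unsatisfiable, so the threshold is at linear scale, giving $\alpha=1$.

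\emph{Unsatisfiable side, $m\gg n^{\alpha}$.} A second moment computation (copies overlap rarely) shows that a.a.s.\ a copy of $T_\ell$ exists. With probability bounded away from $0$, its coefficients and right-hand sides are the bad ones. Many disjoint copies appear, so the system is a.a.s.\ unsatisfiable.

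\emph{Critical window, $m=Cn^{\alpha}$.} The number of copies of $T_\ell$, and of bad copies, converges to a Poisson distribution with finite positive mean, by the method of moments. Then $\pr[\text{no bad copy}]\to e^{-\lambda}\in(0,1)$. Combined with Step 2, this gives \eqref{eq:window}.
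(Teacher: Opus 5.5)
Your architecture matches the paper's: the minimal obstruction is the thin tree $T^+_{\ell(P,Q)}$, followed by first and second moment counts. Your equation count $1+\sum_{i=1}^{\ell}k(k-1)^{i-1}$ is that of $T^+_\ell$, although your verbal description, with $k-1$ child equations per variable, would give a much larger tree.

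\textbf{Main gap.} The step that carries the whole satisfiable side is that every unsatisfiable weighted tree with weights in the supports contains $T^+_{\ell(P,Q)}$ (Lemma~\ref{lem:sublinearSATcondition}). You leave this open, and your sketch does not work as stated.
\begin{itemize}
\item Plain height is the wrong induction parameter. A caterpillar in which every equation has at least one leaf variable is arbitrarily tall but contains no $T^+_1$.
\item More seriously, bottom-up peeling gives ideal-membership conditions only while $b_c$ lies in the ideal $J_c$ that the subtree below $c$ can generate. In a tree avoiding $T^+_{\ell(P,Q)}$, the subtree below a non-root equation can still have depth $\ell(P,Q)$, because the missing arm can be the one through the parent variable. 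Then $b_c\notin J_c$ is possible, the condition passed to the parent variable is a proper coset, and whether it is compatible depends on what lies above $c$.
\end{itemize}
The paper's device is root-free and homogeneous:
\begin{itemize}
\item Take the maximal family $\{I_a\}$ with $\bigoplus_a I_a\subseteq\{Bx\}$; then $b_a\in I_a$ for all $a$ suffices for satisfiability.
\item Show $I_a\supseteq B_{a,v}\bigcap_{a'\neq a}(I_{a'}:B_{a',v})$ (Lemma~\ref{lem:maximalI_a}).
\item Induct on the 2-subtree height $\bar h_F(a)$ to get $I_a\supseteq I_{\bar h_F(a)}(P)$. The inductive step uses an exchange argument: some neighbour $v$ of $a$ has $\bar h_F(a')<\bar h_F(a)$ for every other equation $a'$ at $v$.
\item Finally, $\bar h_F(a)\ge\ell$ forces $T^+_\ell\subseteq F$ (Lemma~\ref{lem:h(a)bound}).
\end{itemize}

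\textbf{Two further gaps.}
\begin{itemize}
\item For $\ell(P,Q)=\infty$ you never show unsatisfiability when $m\gg n$. This is exactly where {\bf (A2)} is needed (Claim~\ref{claim:noconstantUB}). Any $x$ is constant on at least $n/|R|$ variables, so each equation fails with probability at least $\gamma|R|^{-k}$, and $|R|^n(1-\gamma|R|^{-k})^m\to0$. By contrast, {\bf (A1)} is what gives $\liminf>0$ in the window, since every tree has a satisfiable weighting (Claim~\ref{claim:acyclicsat}). Neither assumption is what makes the obstruction appear, as you state.
\item In Step 1, ``intersections are attained by single generators'' fails in non-local principal rings: in $\FF_2\times\FF_2$ one has $(1,0)R\cap(0,1)R=0$. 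With one child equation per variable, you cannot force intermediate products into the full intersection $I_j(P)$; this already fails at the bottom level. The paper instead does three things:
\begin{itemize}
\item it uses $r\bigcap_s J_s=\bigcap_s rJ_s$ for principal $R$ (Lemma~\ref{lem:principalscommute}) to get $I_\ell(P)=I^+_\ell(P)=\bigcap_x I^+_\ell(x)$;
\item it fixes a single coefficient sequence $x$ with $b\notin I^+_\ell(x)$;
\item it forces the products level by level into $I^+_j(x)$, which contains $\supp(Q)$ for $j<\ell$ (Lemma~\ref{lem:T^+obstruction}).
\end{itemize}
Alternatively, you could reduce to local factors, where the ideals form a chain and your single-generator idea does work.
\end{itemize}
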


\noindent {\bf Remark}. If $(P,Q)$ is such that $(\hat P,Q)$ is inconsistent, then $\alpha_{P,Q}=0$, and~\eqref{eq:system} is trivially unsatisfiable for every $m\ge 1$. Thus, the inequality~\eqref{eq:window} does not hold. Hence it is necessary to assume {\bf (A1)} in this theorem.

This theorem shows that the SAT phase transition is not sharp when $\alpha_{P,Q}<1$, in contrast to Theorem~\ref{thm:field-sat} in the field case. What remains to be understood is the case where $\alpha_{P,Q}=1$. Is there a sharp phase transition for SAT in this regime? We confirm it in the case that $R$ is principal. Moreover, we show that the SAT threshold coincides exactly the threshold $d_k$ in Theorem~\ref{thm:field-sat} in the field case. Hence, the SAT threshold is independent of the structure and the order of $R$ provided that $R$ is principal.

\begin{theorem}\label{thm:principal}
    Suppose that $R$ is principal,  that $P,Q$ are distributions over $R$ such that $\alpha_{P,Q}=1$. Suppose $\hat P=P^{\otimes k}$ and $(\hat P,Q)$ satisfies {\bf (A2)}. Let $d_k$ be as defined in Theorem~\ref{thm:field-sat}.
    Then, for every fixed $\eps>0$,
    \begin{equation*}
        \lim_{n\to\infty}P[\text{System~\eqref{eq:system} is satisfiable}]=\begin{cases}
            1& \text{if $m<(d_k-\eps)n/k$},\\
            0&\text{if $m>(d_k+\eps)n/k$}.
        \end{cases}
    \end{equation*}
\end{theorem}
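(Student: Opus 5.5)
We reduce to the local case and handle each local factor by a filtration argument that returns us to the field setting of Theorem~\ref{thm:field-sat}. A finite commutative principal ring decomposes as $R\cong R_1\times\cdots\times R_t$, where each $R_j$ is a finite local principal ring (a chain ring). Each $R_j$ has maximal ideal $(\pi_j)$ of nilpotency index $e_j$, and its residue field is $\FF_{q_j}$. The system $\B x=\b$ splits into $t$ independent-coordinate systems $\B^{(j)}x^{(j)}=\b^{(j)}$ over the $R_j$. These are coupled only through the shared hypergraph. Satisfiability of the whole system is the conjunction of the component events.

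\emph{Upper bound (UNSAT for $m>(d_k+\eps)n/k$).} Pick a component $j$ on which the projected pair $(P_j,Q_j)$ still violates the constant-solution condition. If {\bf (A2)} fails on every component, we could combine the componentwise constants into a global constant by CRT, contradicting {\bf (A2)}. Inside $R_j$, let $s$ be the largest index such that the equations are not all trivially solvable modulo $\pi_j^{s}$. Reduce the system modulo the relevant layer $\pi_j^{s-1}R_j/\pi_j^{s}R_j\cong\FF_{q_j}$, after dividing out common powers of $\pi_j$ from the coefficients. This gives a random linear system over $\FF_{q_j}$ on the same $k$-uniform hypergraph. Its right-hand side is nondegenerate: the hypothesis $\alpha_{P,Q}=1$, i.e.\ $\ell(P,Q)=\infty$, forces $\supp(Q)$ into the ideals $I_\ell(P)$ for all $\ell$, so $b$ is always divisible as needed; non-constancy gives a positive-probability nonzero affine part. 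We then run the first-moment/2-core argument of \cite{ayre2020satisfiability}. Above $d_k$, the 2-core has more equations than variables by a linear amount, and with probability $1-o(1)$ the right-hand side misses the image. This step only needs $\b$ to be non-degenerate in the quotient, not uniform, and we adapt that part of the argument accordingly.

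\emph{Lower bound (SAT for $m<(d_k-\eps)n/k$).} Peel the hypergraph to its 2-core $\C$. Every equation outside the core contains a variable of degree one at the moment it is peeled. Over a ring that variable's coefficient $a$ may be a nonunit, so the equation is solvable only if the remaining part lands in $(a)$. Here we use $\ell(P,Q)=\infty$. The ideals $I_\ell(P)$ encode exactly what the peeled tree structure of depth $\ell$ can absorb: the iterated quotients $(I_{\ell-1}:s)$ track the constraints propagating up a tree of depth $\ell$. Since $\supp(Q)\subset I_\ell$ for all $\ell$, every finite peeling tree can be solved back-to-front. We use the fact that below $d_k$ the non-core part is a.a.s.\ a forest of bounded-size-in-probability trees, attached to the core in a controlled way.

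\emph{Solving on the core (main obstacle).} We solve the core by a Hensel-type lifting over each chain ring, layer by layer through $R_j\supset(\pi_j)\supset\cdots\supset(\pi_j^{e_j})=0$. At each layer the problem is a random linear system over $\FF_{q_j}$ on the core hypergraph. Its coefficient matrix is the reduction of the core matrix with entries in the units/appropriate layer, and its right-hand side is determined by previous layers. Below $d_k$, \cite{ayre2020satisfiability} gives that the core matrix over a field a.a.s.\ has full row rank. We need this simultaneously for the $O(1)$ layers and for arbitrary (adversarially determined) right-hand sides. Full row rank gives exactly that, so any right-hand side is solvable, and we lift. The hard part is that the coefficient distribution on a layer is not the law required in Theorem~\ref{thm:field-sat}, since zero-divisor entries vanish in the reduction. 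We plan to redo the rank argument of \cite{ayre2020satisfiability} for the core with entries whose reductions are zero with some probability. Equivalently, we condition on the sub-hypergraph of unit entries, and the principal structure lets us factor out $\pi_j$-powers uniformly within an equation. This is the step where principality is essential. Combining components by CRT completes the proof.
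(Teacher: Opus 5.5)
Your proposal has a genuine gap, and it sits exactly where you place the ``main obstacle'': you never use the full strength of $\alpha_{P,Q}=1$.

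\textbf{The missing idea.} The key fact is Lemma~\ref{lem:oneideal}. On every local factor $R_i$ on which $(P_i^{\otimes k},Q_i)$ satisfies {\bf (A2)}, all elements of $\supp(P_i)$ generate the \emph{same} ideal $M_i^\ell$. To see this, suppose $\pi_i(r_1)=\pi_i(r_2)s$ with $s\in M_i$. Then $I_\ell(P)\subset r_1(I_{\ell-1}(P):r_2)$ gives $\pi_i(I_\ell(P))\subset s\,\pi_i(I_{\ell-1}(P))$. Once the decreasing chain $I_\ell(P)$ stabilises, nilpotency of $s$ forces $\pi_i(I_{\ell^*}(P))=0$. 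But {\bf (A2)} on $R_i$ supplies $b\in\supp(Q)$ with $\pi_i(b)\neq 0$, so $\ell(P,Q)<\infty$, a contradiction.

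Hence, with $g$ a generator of $M_i$, every coefficient is a unit multiple of $g^\ell$ and $\supp(Q_i)\subset M_i^\ell$. The module isomorphism $M_i^\ell\cong R_i/\Ann(M_i^\ell)$, $sg^\ell\mapsto s+\Ann(M_i^\ell)$, then turns the system into an equivalent one with \emph{unit} coefficients over a local ring. There, a square minor that is invertible over the residue field has unit determinant, so Theorem~\ref{thm:units} applies. No Hensel lifting, peeling analysis or modified rank argument is needed.

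\textbf{Why your core step fails without it.} As written, your core step is only a plan, and the plan cannot succeed. If coefficients of different valuations really occurred, a positive fraction of rows would reduce modulo $M_j$ to rows with fewer than $k$ (even zero) nonzero entries. The reduced matrix could then not have full row rank at any linear density, so the threshold would not be $d_k$. Likewise, ``factoring out $\pi_j$-powers uniformly within an equation'' is not available a priori; it is precisely the content of the lemma.

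\textbf{Two further unsupported steps.} In the lower bound, $\ell(P,Q)=\infty$ does not let you solve the peeled equations back to front. A peeled equation also contains variables already fixed by the core solution. The ideals $I_\ell(P)$ only control acyclic systems whose variables are all free (Lemma~\ref{lem:sublinearSATcondition}). With a nonunit $B_{a,v}$, nothing guarantees $b_a-\sum_{u\neq v}B_{a,u}x_u\in(B_{a,v})$; again, the one-ideal lemma is what makes this automatic.

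In the upper bound, passing to a single field layer with a ``nondegenerate'' right-hand side is not justified. Take $R=\ZZ_4$, $\supp(P)=\{1,3\}$, $\supp(Q)=\{0,2\}$. Here {\bf (A2)} holds over $R$, but the system modulo $2$ is homogeneous. The system on the top layer $2\ZZ_4$ has right-hand side determined by $b-Bx_0\in 2\ZZ_4$, where $x_0$ is the lower-layer part of a putative solution. So it is not an independent random vector, and you would need to control all such $x_0$ simultaneously.

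The paper instead works over $R$ directly. It bounds the number of solutions of the near-critical 2-core system over $R$ by $|R|^{o(n)}$, using that invertible minors modulo $M$ lift when coefficients are units. It then applies {\bf (A2)} over $R$ itself to show that each additional core equation is violated by any fixed assignment with probability at least some $\delta>0$.
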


\noindent {\bf Remark}. Note that {\bf (A1)} is satisfied by the assumption that $\alpha_{P,Q}=1$.

\subsection{Nonprincipal $R$}
\label{sec:main_result_nonprincipal}

Given a matrix $B\in R^{C_m\times V_n}$, let its associated factor graph $F=F(B)$ be a bipartite graph on vertex set $V_n\cup C_m$ with edge set $E$ defined as follows: the vertices in $V_n$ correspond to the columns of $B$, and the vertices in $C_m$ correspond to the rows of $B$. There is an edge $(v,c)\in V_n\times C_m$ if $B_{c,v}\neq 0$. 

Given a linear system $Bx=b$ over $R$ we can associate a weighted factor graph $(F,w)$ to it by letting $F=F(B)$ and $w$ be the function on $C_m\cup E$ defined by $w_{c}=b_c$ for every vertex $c\in C$ and $w_{cv}=B_{c,v}$ for every edge $cv\in E$.

Conversely, given a weighted factor graph $(F,w)$, let $(B,b)=(B,b)_{(F,w)}$ be the linear system over $R$ whose weighted factor graph is $(F,w)$. We say $(F,w)$ is unsatisfiable if $B x= b$ has no solutions over $R$.
Given distributions $P$ and $Q$ over a ring $R$, let $\mc T(P,Q)$ be the set of unsatisfiable connected acyclic factor graphs whose edge weights are in $\supp(P)$ and whose vertex weights are in $\supp(Q)$.
\begin{theorem}\label{thm:nonprincipal-sublinear}
    Let $P$ and $Q$ be distributions on a ring $R$, and let $\hat P=P^{\otimes k}$. Suppose $\hat P$ and $\hat Q$ satisfy {\bf (A1)} and {\bf (A2)}. Define
    \begin{equation*}
        \alpha_{P,Q}=1-\frac{1}{\inf\lt\{|C(T)|:T\in\mc T(P,Q)\rt\}}.
    \end{equation*}
    with $\alpha_{P,Q}=1$ if $\mc T(P,Q)=\emptyset$. Then
    \begin{align*}
        \lim_{n\to\infty}\pr[\text{System~\eqref{eq:system} is satisfiable}]=\begin{cases}
            1&m \ll n^{\alpha_{P,Q}},\\
            0&m \gg n^{\alpha_{P,Q}}.
        \end{cases}
    \end{align*}
    Moreover, if $\alpha_{P,Q}<1$ and $m=Cn^{\alpha_{P,Q}}$, then for all fixed $C>0$, 
    \begin{align}
0<        \liminf_{n\to\infty}P[ \text{System~\eqref{eq:system} is satisfiable}]\le        \limsup_{n\to\infty}P[\text{System~\eqref{eq:system} is satisfiable}]<1. \label{eq:window-nonprincipal}
    \end{align}
\end{theorem}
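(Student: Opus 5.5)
Let $s=\inf\{|C(T)|:T\in\mc T(P,Q)\}$, so that $\alpha_{P,Q}=1-1/s$ when $\mc T(P,Q)\neq\emptyset$. The plan is to show that, below density $n^{1-1/s}$, the random factor graph $F(\B)$ contains no small obstruction at all, and that near the threshold the number of copies of a minimal obstructing tree behaves like a Poisson variable with bounded mean.

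\textbf{Satisfiable side, $m\ll n^{\alpha_{P,Q}}$.} First we show that a.a.s.\ every connected component of $F(\B)$ is a tree with fewer than $s$ constraint nodes. The expected number of connected subgraphs with $j$ constraints and $j(k-1)+1$ variables (a tree) is $\Theta(n^{j(k-1)+1}(m/n^k)^j)=\Theta(n\,(m/n)^j)$. For $j=s$ this is $o(1)$ exactly when $m\ll n^{1-1/s}$. Every component with at least $s$ constraints contains a connected subgraph with exactly $s$ constraints, either a tree or a subgraph containing a cycle, so this bound covers all large tree components. Cycles are even rarer: a connected subgraph with $j$ constraints and excess $\ge 1$ has expected count $O(n^{j(k-1)}(m/n^k)^j)=O((m/n)^j)=o(1)$, summed over $j$ up to $\log n$. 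Longer cycles are handled by the usual sparse-graph argument, since $m=o(n)$.

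Next, every component is an acyclic connected factor graph with edge weights in $\supp(P)$ and constraint weights in $\supp(Q)$ and fewer than $s$ constraints, so by the definition of $s$ it lies outside $\mc T(P,Q)$ and is satisfiable. Components share no variables, so their solutions combine into a solution of~\eqref{eq:system}. If $\mc T(P,Q)=\emptyset$, then all tree components are satisfiable. For $m\ll n$ the only non-tree components are a.a.s.\ $O(1)$ unicyclic components of bounded size, which appear only when $m=\Theta(n)$; for $m\ll n$ their expected number is $o(1)$. This gives the claim with $\alpha=1$.

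\textbf{Unsatisfiable side, $m\gg n^{\alpha}$.} Fix $T\in\mc T(P,Q)$ with $s$ constraints. Let $X$ count copies of the shape of $T$ in $F(\B)$ whose weights match $T$. This event has probability $p_T>0$ per copy, because $\hat P=P^{\otimes k}$ and $\hat Q=Q^{\otimes m}$ are product measures and the edge weights of $T$ lie in the supports. A subtlety is that a copy of $T$ inside a larger component is still an obstruction: any solution of the full system restricts to a solution of the sub-system, since the constraints of $T$ involve only variables of $T$. It therefore suffices that $X>0$. We have $\ex X=\Theta(n(m/n)^s)\to\infty$. A second moment computation, summing over pairs of copies according to their overlap (a union of two copies sharing $t\ge1$ variables has fewer vertices, and the standard strictly-balanced-type accounting applies because trees in this sparse regime have density maximized at the full tree), gives $\Var X=o((\ex X)^2)$. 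When $m$ is so large that $m/n$ is not $o(1)$, monotonicity in $m$ finishes the argument: adding equations cannot restore satisfiability, under the natural coupling.

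\textbf{Window, $m=Cn^{\alpha}$.} The upper bound $<1$ on the satisfiability probability follows from a Poisson approximation (method of moments or Brun's sieve) for $X$, which has bounded mean $\mu=\Theta(C^s)$, so $\pr[X>0]\to1-e^{-\mu}>0$. For the lower bound $>0$, let $Y$ count all tree components with exactly $s$ constraints that are weighted as an element of $\mc T(P,Q)$, together with any trees with more constraints. By the first-moment bound above, trees with more than $s$ constraints and cyclic components are a.a.s.\ absent. $Y$ is again asymptotically Poisson with bounded mean, so $\pr[Y=0]$ is bounded away from $0$, and on that event the argument of the satisfiable side applies.

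\textbf{Main obstacle.} The hardest step is the second-moment, Poisson-approximation part on the unsatisfiable side and in the window. Since $T$ is a tree, the overlap analysis needs care: one must check that every union of two distinct copies has expected count $o((\ex X)^2)$, or bounded in the window case. I would first verify this by noting that a union with $j'$ constraints and $v'$ variables, forming a connected subgraph, satisfies $v'\le j'(k-1)+1$. Its count is then at most $n(m/n)^{j'}\cdot n^{-(\text{excess})}$ with $j'>s$, which is $o(\ex X)$.
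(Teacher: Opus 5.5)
You handle the satisfiable side, the case $\alpha_{P,Q}<1$ with $m\gg n^{\alpha_{P,Q}}$ (tree obstructions plus monotonicity), and the window for $s\ge 2$ soundly, along essentially the same first/second-moment route as the paper.

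\textbf{Main gap: the case $\alpha_{P,Q}=1$ with $m\gg n$.} Here $\mc T(P,Q)=\emptyset$, and the theorem asserts that the system is a.a.s.\ unsatisfiable. Your unsatisfiable side begins with ``fix $T\in\mc T(P,Q)$''. Your monotonicity step only transfers unsatisfiability from some smaller $m'$ at which a tree obstruction already appears. When $\mc T(P,Q)=\emptyset$, no weighted tree is ever an obstruction, so nothing in the proposal covers this case. A symptom is that you never use {\bf (A2)}, and here it is indispensable: if $(\hat P,Q)$ admitted a constant solution $\sigma$, then $x=\sigma^n$ would satisfy every instance for every $m$.

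The missing ingredient is a global first-moment count over assignments, as in the paper's Claim~\ref{claim:noconstantUB}.
\begin{itemize}
\item By {\bf (A2)} and finiteness of $R$, $\gamma=\min_{r\in R}\pr[(a_1+\cdots+a_k)r\neq b]>0$ for a random row $(a_1,\ldots,a_k;b)$.
\item For a fixed $x\in R^{V_n}$, choose $r_x$ attained by at least $n/|R|$ coordinates.
\item A row has all $k$ of its variables in $\{v:x_v=r_x\}$ with probability at least $(1-o(1))|R|^{-k}$, independently of its coefficients. It is then violated with probability at least $\gamma$.
\item Hence the expected number of solutions is at most $|R|^n(1-\gamma|R|^{-k}+o(1))^m=o(1)$ when $m\gg n$.
\end{itemize}

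\textbf{Smaller point: the window when $s=1$.} This is the case $\alpha_{P,Q}=0$, so $m=C$ is a constant. A.a.s.\ the system then consists of $C$ variable-disjoint equations, and your counts $X$ and $Y$ are $\Bin(C,\cdot)$ variables rather than asymptotically Poisson. Moreover, $\pr[Y=0]=(1-p)^C$ is positive only because a single random equation is satisfiable with probability $1-p>0$. That is exactly {\bf (A1)}: without it every instance with $m\ge 1$ is unsatisfiable, and the lower bound in \eqref{eq:window-nonprincipal} fails. You should treat $s=1$ separately using {\bf (A1)}, for example as the paper does via Claim~\ref{claim:acyclicsat}. For $s\ge 2$, {\bf (A1)} holds automatically and your Poisson argument is fine.
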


Our next theorem characterises precisely when $\mc T(P,Q)$ is empty, and relates this charaterisation to $\ell(P,Q)$, introduced in Theorem~\ref{thm:principal-sublinear}.

\begin{theorem}\label{thm:nonprincipal-linear-characterisation}
    Let $P$ and $Q$ be distributions on a ring $R$. Then $\alpha_{P,Q}=1$ if and only if $\ell(P,Q)=\infty$.
\end{theorem}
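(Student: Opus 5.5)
We must show that $\mc T(P,Q)=\emptyset$ if and only if $I_\ell(P)\supset\supp(Q)$ for every $\ell\ge 0$. The plan is to interpret $I_\ell(P)$ as the set of right-hand sides $b$ that are ``forced-solvable'' at a tree of depth $\ell$. Then an unsatisfiable tree exists exactly when some $b\in\supp(Q)$ lies outside some $I_\ell(P)$.

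The key step is the following characterisation. Consider a rooted acyclic factor graph whose root is a constraint vertex. Every constraint has $k$ variable neighbours with edge weights in $\supp(P)$. Each non-root variable hangs below its parent constraint and may have further child constraints. Define the depth in the obvious way. The claim is that $I_\ell(P)$ equals the set of $t\in R$ such that, for every such tree of depth at most $\ell$ whose root has right-hand side $t$ and whose other constraints have right-hand sides in $I_{\cdot}$ suitably, the system is solvable.

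It is cleaner to phrase this via the set of values a variable $v$ may take in a subtree hanging below it. Let $S$ be the set of $x\in R$ such that every child constraint $c$ of $v$ with weight $s$ on edge $cv$ can still be satisfied, i.e.\ $b_c-sx$ lies in the ideal of values achievable by the remaining subtree. Unwinding this gives the recursion. The condition $x\in\bigcap_s(I_{\ell-1}:s)$ says that $sx\in I_{\ell-1}$ for all weights $s$. The outer $\bigcap_r r(\cdot)$ says that the constraint with weight $r$ on the edge to $v$ can attain a target using such an $x$. Note $I_0=\bigcap_r(r)$: a single constraint $\sum r_ix_i=b$ with all $x_i$ free solves for every $b\in(r)$. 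The worst case over weights is the intersection, because $P^{\otimes k}$ allows every weight pattern.

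To make this precise, I would prove by induction on $\ell$ two statements.

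\textbf{Soundness.} If $b\in I_\ell$, then any tree of depth at most $\ell$ with root right-hand side $b$ and other right-hand sides in $\supp(Q)\subset I_{\ell'}$ for all $\ell'$ is satisfiable. Solve the tree top-down. Pick one root edge with weight $r$ and write $b=r y$ with $y\in\bigcap_s(I_{\ell-1}:s)$. Set that variable to $y$ and the other root variables to $0$. Each child constraint then has right-hand side $b'-sy\in I_{\ell-1}$, and we recurse. For variables set to $0$, the child right-hand side is unchanged and lies in $I_{\ell-1}$ by hypothesis. A monotonicity step $I_\ell\subset I_{\ell-1}$ must be checked as well.

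\textbf{Completeness.} If $b\notin I_\ell$, we construct an unsatisfiable tree of depth $\ell$ with root right-hand side $b$, leaves carrying right-hand sides from $\supp(Q)\setminus I_{\ell-1}$. We choose root weights $r$ to witness $b\notin r(\cdots)$ and child weights $s$ witnessing failure of the colon condition.

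The main obstacle is completeness, because the intersections are not single conditions. The statement $b\notin\bigcap_r r J$ gives one bad $r$, which we use for every root edge. Every variable value $x$ with $rx$ contributing then fails $x\in J=\bigcap_s(I_{\ell-1}:s)$ for some $s$ depending on $x$. The fix is to attach, below each root variable, one child constraint for each $s\in\supp(P)$, with right-hand sides chosen to defeat all $x$ simultaneously. This is possible because attaching many children is allowed. The right-hand sides must also come from $\supp(Q)$ rather than arbitrary elements, so the induction must track which elements are realisable from $Q$. This is where the definition $\ell(P,Q)=\inf\{\ell:I_\ell\not\supset\supp(Q)\}$ enters: use $q\in\supp(Q)\setminus I_\ell$ at the root and $q'\in\supp(Q)\setminus I_{\ell-1}$, which exists by minimality failing, lower down. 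Assumption (A1), needed for $\ell(P,Q)\ge1$ cases, is handled separately when $\ell=0$.
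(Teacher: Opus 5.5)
Your soundness direction is correct, and it differs from the paper's route. The paper argues bottom-up in Lemma~\ref{lem:sublinearSATcondition}, using maximal ideal families $\{I_a\}$ and $2$-subtree heights; it needs that stronger form for Theorem~\ref{thm:nonprincipal-bounds}. You instead solve a tree of depth $D$ top-down. At constraint depth $j$ you write the effective right-hand side, which lies in $I_{D-j}(P)$, as $ry$ with $y\in\bigcap_s(I_{D-j-1}(P):s)$ on one free edge, and put $0$ on the other free edges. Since $\ell(P,Q)=\infty$, each child's effective right-hand side $b_c-sy$ stays in $I_{D-j-1}(P)$, so this works.

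The gap is in completeness.
\begin{itemize}
\item \textbf{The sets you rely on are empty.} You put right-hand sides $q'\in\supp(Q)\setminus I_{\ell-1}(P)$ on lower constraints and say such $q'$ exists ``by minimality failing''. It is the reverse: for $\ell=\ell(P,Q)$, minimality means $\supp(Q)\subset I_j(P)$ for all $j<\ell$, so this set is empty.
\item \textbf{The induction cannot be applied at a child.} Your hypothesis is ``if $b\notin I_\ell$, some tree with root value $b$ is unsatisfiable''. At a child, the effective right-hand side $q'-sx$ depends on the unknown parent value $x$. A bad $q'$ can simply be offset by $sx$.
\end{itemize}

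What is needed is a single tree that confines variable values uniformly; this is the paper's Lemma~\ref{lem:Tobstruction}.
\begin{itemize}
\item The only bad right-hand side is $b$ at the root. Every other constraint gets an arbitrary element of $\supp(Q)$, which is good.
\item Below each variable at level $j<\ell$, attach $p^2$ constraints, one per pair $(r,s)\in\supp(P)^2$. Weight the edge to the parent by $s$ and the $k-1$ downward edges by $r$.
\item Set $J_j=\bigcap_s(I_{\ell-j-1}(P):s)$, with $I_{-1}(P)=R$. Reverse induction shows $y_v\in J_j$ for every $v$ at level $j$, in any assignment satisfying the non-root constraints. The $(r,s)$-child $a$ gives $sy_v\in w(a)+rJ_{j+1}\subset rJ_{j+1}$, because $w(a)\in I_{\ell-j-1}(P)\subset rJ_{j+1}$. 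Intersecting over $r$ and then over $s$ gives $y_v\in J_j$.
\item Your ``one child per $s$'' only yields $sy_v\in rJ_{j+1}$ for a single $r$. It misses the intersection over $r$ built into $I_{\ell-j-1}(P)$.
\item Finally, weight all root edges by $r_0$ with $b\notin r_0J_0$. Then the root equation $r_0\sum_v y_v=b$ is impossible, since $J_0$ is an ideal.
\end{itemize}
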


However, for nonprincipal rings, the relation between $\alpha_{P,Q}$ and $\ell(P,Q)$ is not the same as in~\eqref{def:alphaPQ}. In fact, the right hand side of~\eqref{def:alphaPQ} is an upper bound for $\alpha_{P,Q}$ for nonprincipal $R$, as demonstrated in the following theorem.

\begin{theorem}\label{thm:nonprincipal-bounds}
    Let $P$ and $Q$ be distributions on a ring $R$, and let $p=|\supp(P)|$. If $\ell(P,Q)<\infty$, then
    \begin{equation*}
        \frac{\sum_{i=1}^{\ell(P,Q)}k(k-1)^{i-1}}{1+\sum_{i=1}^{\ell(P,Q)}k(k-1)^{i-1}}\leq \alpha_{P,Q}\leq \frac{\sum_{i=1}^{\ell(P,Q)}k(k-1)^{i-1}p^{2i}}{1+\sum_{i=1}^{\ell(P,Q)}k(k-1)^{i-1}p^{2i}}
    \end{equation*}
\end{theorem}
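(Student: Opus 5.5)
The proof splits into two parts. The definition of $\alpha_{P,Q}$ in Theorem~\ref{thm:nonprincipal-sublinear} gives $\alpha_{P,Q}=1-1/c^*$, where $c^*=\min\{|C(T)|:T\in\mc T(P,Q)\}$. Put $S_\ell=\sum_{i=1}^{\ell}k(k-1)^{i-1}$ and $S'_\ell=\sum_{i=1}^{\ell}k(k-1)^{i-1}p^{2i}$, with $\ell=\ell(P,Q)<\infty$. The two inequalities are then equivalent to
\[
1+S_\ell\le c^*\le 1+S'_\ell .
\]
So I need (i) every unsatisfiable tree with weights in $\supp(P),\supp(Q)$ has at least $1+S_\ell$ constraints, and (ii) an explicit unsatisfiable tree with at most $1+S'_\ell$ constraints.

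The tool for both is an operational reading of $I_j$. By definition, $b\in I_j$ iff for every $r\in\supp(P)$ we can write $b=rt$ with $st\in I_{j-1}$ for all $s\in\supp(P)$. Equivalently, in a constraint with right-hand side $b$, any single variable $v$ with coefficient $r$ can be assigned a value $t$ solving the constraint with the other variables set to $0$. Moreover, $v$ then contributes $st\in I_{j-1}$ to every other constraint containing $v$ with coefficient $s$.

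For (i), I would prove by induction on $j$ the following claim. Let $T$ be a tree rooted at a constraint $c$, let $u$ be the parent variable of $c$ (possibly absent), and suppose every constraint at distance $h$ from $c$ in the constraint-tree has right-hand side in $I_{j-h}$. If every constraint of $T$ at depth $<j$ has all $k$ (resp. $k-1$ below the root) child branches present, then $T$ is satisfiable with $x_u$ free. Otherwise a missing branch yields a "private" variable near the top, which absorbs the root constraint.

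Concretely, the argument runs as follows. Take a minimal unsatisfiable tree $T$ and root it at any constraint $c_0$. If some constraint within distance $<\ell$ of $c_0$ has a variable whose subtree does not reach depth $\ell$, I solve that constraint by one variable $t$ as above. This pushes an $I_{\ell-1}$-valued, then $I_{\ell-2}$-valued, … perturbation outward. Since $\supp(Q)\subset I_{j}$ for all $j<\ell$, and $I$ is an ideal so sums stay in $I_{j}$, the perturbed right-hand sides remain in the correct ideals, and the recursion closes.

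Hence an unsatisfiable tree must contain, around every constraint, a complete $(k-1)$-ary ball of constraint-radius $\ell$. Such a ball has $1+k+k(k-1)+\dots+k(k-1)^{\ell-1}=1+S_\ell$ constraints, which proves (i). I expect the main obstacle to be the bookkeeping of which ideal each perturbed right-hand side lies in. In particular, the ideal quotient $(I_{j-1}:s)$ must be used uniformly over all coefficients $s$ simultaneously, which is exactly the inner intersection in Definition~\ref{def:I}.

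For (ii), since $\ell(P,Q)=\ell$ there exists $b\in\supp(Q)\setminus I_\ell$, while $\supp(Q)\subset I_{\ell-1}$. Unwinding the definition, failure of $b\in I_\ell$ means there is an $r\in\supp(P)$ such that every $t$ with $rt=b$ has some $s\in\supp(P)$ with $st\notin I_{\ell-1}$. Iterating this down to $I_0$ gives a witness tree. Its root constraint carries weight $b$. Each variable at depth $i$ hangs one constraint for every pair $(r,s)\in\supp(P)^2$, which is at most $p^2$ per slot and handles both the coefficient seen from above and the coefficient seen from below, with leaves filled by elements of $\supp(Q)$. One checks by downward induction that every assignment violates some constraint, because each variable's value range is cut down to exactly the non-witnesses at that level. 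Counting gives at most $k(k-1)^{i-1}p^{2i}$ constraints at level $i$, hence $|C(T)|\le 1+S'_\ell$.

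Finally, the principal-ring value in Theorem~\ref{thm:principal-sublinear} shows the lower bound is attained there, which is a useful sanity check on the inductive claim in (i).
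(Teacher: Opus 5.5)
Your part (ii) is the paper's construction (Lemma~\ref{lem:Tobstruction}): the tree $T_{\ell}$ has one child constraint for each pair $(r,s)\in\supp(P)^2$, the root edges are weighted by the witness $r_0$, and the downward induction shows $y_v\in\bigcap_{s}(I_{\ell-i-1}(P):s)$ at level $i$. That part is fine. The gap is in part (i).

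Your conclusion there, that an unsatisfiable tree contains a complete radius-$\ell$ ball around \emph{every} constraint, is false whenever $\ell(P,Q)\ge 1$. In your own unsatisfiable tree from part (ii), every last-level constraint has $k-1$ leaf neighbours. Likewise, in the principal case the minimal obstruction $T^+_\ell$ of Theorem~\ref{thm:principal-sublinear} has such peripheral constraints, so your closing ``sanity check'' actually contradicts the claim. So the scheme ``root $T$ at an arbitrary $c_0$; if the ball there is incomplete, solve a constraint by one variable and push perturbations outward'' cannot be right. It breaks in two places.

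\begin{enumerate}
\item Solving one constraint near $c_0$ says nothing about the right-hand sides of all the other constraints, each only known to lie in $I_{\ell-1}(P)$. Minimality of $T$ does not help: a solution of $T-c_0$ may leave a residual at $c_0$ outside every useful ideal.
\item With a fixed root, a non-root constraint can only discharge a perturbation through its $k-1$ child variables, while its short route to a leaf may run through its parent variable. Take $k=3$, $\ell=2$ and the following tree.
\begin{itemize}
\item A root $p$ has two leaf variables and a third variable $u$.
\item $u$ lies in a constraint $c$ with further variables $v_1,v_2$.
\item Each $v_i$ lies in $a_i$ with further variables $w_{i1},w_{i2}$.
\item Each $w_{ij}$ lies in $b_{ij}$ with two leaves.
\end{itemize}
This tree contains no $T^+_2$, so it is satisfiable. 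But rooted at $p$, the right-hand side of $c$, known only to be in $I_1(P)$, has no downward discharge. Pushing it through $v_i$ leaves an $I_0(P)$-perturbation at $a_i$, none of whose children is a leaf.
\end{enumerate}

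The missing ingredient is to decouple the constraints by linearity and to root at the constraint being treated. Since $\{Bx:x\in R^V\}$ is an $R$-module, it suffices to realise each vector $b_ce_c$ separately; this is what Lemma~\ref{lem:maximalI_a} encodes. For a fixed $c$, root the tree at $c$ itself and define levels recursively:
\begin{itemize}
\item a constraint has level $0$ if it has a leaf child;
\item otherwise its level is $\min_w\bigl(1+\max_{a'}\mathrm{level}(a')\bigr)$, over its child variables $w$ and their child constraints $a'$.
\end{itemize}
Your one-variable step shows by induction that a constraint of level $j$ can absorb any right-hand side in $I_j(P)$, with zero elsewhere below it. This uses that $b\in I_j(P)$ gives $b=rt$ with $st\in I_{j-1}(P)$ for every $s$, together with monotonicity of the $I_j(P)$. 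Moreover, $c$ has level $\ge\ell$ exactly when a copy of $T^+_\ell$ is centred at $c$. Since $\supp(Q)\subseteq I_{\ell-1}(P)$, an unsatisfiable tree must contain $T^+_\ell$ around \emph{some} constraint, which gives $c^*\ge 1+S_\ell$.

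The paper reaches this via the undirected height $\bar h_F$, Lemma~\ref{lem:h(a)bound}, the neighbour-selection step \eqref{eq:inductive-step} and Lemma~\ref{lem:maximalI_a}. Rooting at each constraint separately makes the neighbour-selection step automatic.
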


We conjecture that there is a sharp SAT threshold when $\alpha_{P,Q}=1$ if $R$ is nonprincipal, although the value of the threshold may not coincide with $d_k$.

\begin{conjecture}\label{conj}
Suppose that $P,Q$ are distributions over $R$ such that $\alpha_{P,Q}=1$ and $\hat P=P^{\otimes k}$ satisfies {\bf{(A2)}}.
    There exists a constant $\beta=\beta(P,Q,R,k)>0$ such that for all fixed $\eps>0$,
    \begin{equation*}
        \lim_{n\to\infty}P[\text{System~\eqref{eq:system} is satisfiable}]=\begin{cases}
            1&m<(\beta-\eps)n/k,\\
            0&m>(\beta+\eps)n/k.
        \end{cases}
    \end{equation*}
\end{conjecture}

We did not attempt to determine the value of $\beta(P,Q,R,k)$ for general nonprincipal rings in this paper. In the theorem below, we determined $\beta(P,Q,R,k)$ for some examples of $(P,Q,R,k)$ where $R$ is nonprincipal, which establish that $\beta(P,Q,R,k)$ can be different from $d_k$ in Theorem~\ref{thm:principal}, and indeed can depend on the distribution of $P$. This is in contrast to the case that $R$ is a finite field, or a finite commutative principal ring.

Given a prime power $q$, let $\FF_q$ denote the finite field of order $q$. Let $\FF_q[X,Y]$ denote the polynomial ring in variables $X$ and $Y$ over $\FF_q$. For the theorem below, we consider $R=\FF_q[X,Y]/(X^2,Y^2)$, i.e.\ the quotient ring of $\FF_q[X,Y]$ by the ideal generated by $(X^2,Y^2)$. For any $p\in \FF_q[X,Y]$, let $\bar p$ denote its projection in $R$.

\begin{theorem}\label{thm:nonprincipal-linear}
    Let $R=\FF_q[X,Y]/(X^2,Y^2)$ and let $\rho\in [0,1]$. Suppose that $P$ is supported on $\{\bar X,\bar Y\}$, and that $P$ takes value $\bar X$ with probability $1-\rho$ and value $\bar Y$ with probability $\rho$. Suppose that $\supp(Q)=\{\bar X\bar Y,0\}$. Define $d_{k,\rho}$ by
    \begin{equation*}
        d_{k,\rho}=\sup\{d>0: \Phi_{d,k,\rho}(z)<\Phi_{d,k,\rho}(0)\text{ for all }z\in (0,1]\},\\
    \end{equation*}
    where
    \begin{equation*}
        \Phi_{d,k,\rho}(z)=e^{-\rho dz^{k-1}}+e^{-(1-\rho)dz^{k-1}}-d(1-1/k)z^k+dz^{k-1}-d/k.
    \end{equation*}
    Then, for any fixed $\eps>0$,
    \begin{equation*}
        \lim_{n\to\infty}P[\text{System~\eqref{eq:system} is satisfiable}]=\begin{cases}
            1& \text{if $m<(d_{k,\rho}-\eps)n/k$},\\
            0&\text{if $m>(d_{k,\rho}+\eps)n/k$}.
        \end{cases}
    \end{equation*}
\end{theorem}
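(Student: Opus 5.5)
The plan is to show that, for this particular ring and these coefficient distributions, the system \eqref{eq:system} is equivalent to a random sparse linear system over the field $\FF_q$ with all coefficients equal to $1$. The threshold then follows from the rank theory of sparse random matrices over fields, in the form used by Ayre et al.\ for Theorem~\ref{thm:field-sat}.

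\textbf{Step 1: decoupling.} Write each variable as $x_v=c_0(v)+c_1(v)\bar X+c_2(v)\bar Y+c_3(v)\bar X\bar Y$ with $c_j(v)\in\FF_q$. Then
\[
\bar X x_v=c_0(v)\bar X+c_2(v)\bar X\bar Y,\qquad \bar Y x_v=c_0(v)\bar Y+c_1(v)\bar X\bar Y .
\]
Consider an equation $\sum_i a_i x_{v_i}=b$ with $a_i\in\{\bar X,\bar Y\}$ and $b=\beta\,\bar X\bar Y$, $\beta\in\{0,1\}$. Comparing coefficients of $\bar X$, $\bar Y$ and $\bar X\bar Y$ gives three conditions:
\begin{itemize}
\item the sum of $c_0(v_i)$ over the indices $i$ with $a_i=\bar X$ is $0$;
\item the same sum over the indices with $a_i=\bar Y$ is $0$;
\item $\sum_{i:a_i=\bar X}c_2(v_i)+\sum_{i:a_i=\bar Y}c_1(v_i)=\beta$.
\end{itemize}
The first two families form a homogeneous system in $c_0$, solved by $c_0\equiv 0$, and $c_3$ does not appear at all. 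Hence \eqref{eq:system} is satisfiable if and only if the $\FF_q$-system in the third condition is satisfiable.

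Next, split each $v$ into two copies: an $X$-copy, carrying $c_2(v)$, and a $Y$-copy, carrying $c_1(v)$. The reduced system becomes a random system $\A y=\boldsymbol\beta$ over $\FF_q$ on $2n$ variables with all nonzero coefficients $1$. In it, each row chooses $k$ distinct original vertices and, independently for each, picks the $Y$-copy with probability $\rho$ and the $X$-copy otherwise. The right-hand sides $\beta_c$ are i.i.d.\ and nonconstant, since $\supp(Q)=\{0,\bar X\bar Y\}$.

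\textbf{Step 2: degree structure.} With $m=dn/k$, a vertex's degree is asymptotically $\Po(d)$. Poisson thinning makes the degrees of its two copies asymptotically independent $\Po(\rho d)$ and $\Po((1-\rho)d)$. So $\A$ is, up to contiguity, a random $k$-uniform factor graph whose variable degree distribution is the mixture of $\Po(\rho d)$ and $\Po((1-\rho)d)$, each half of the $2n$ copies, and whose checks have degree $k$. I would make this precise through the configuration model, conditioning on the degree sequence, and verify that the event that two copies of the same $v$ lie in the same row has negligible effect.

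\textbf{Step 3: rank formula and threshold.} The system is satisfiable exactly when $\boldsymbol\beta$ lies in the column space of $\A$, and this holds for every $\boldsymbol\beta$ iff $\A$ has full row rank $m$. I would invoke the rank formula for sparse random matrices with prescribed degree distributions (the Aizenman--Sims--Starr / interpolation result of Coja-Oghlan et al.\ that underlies Theorem~\ref{thm:field-sat}). It gives
\[
\frac{\dim\ker\A}{n}\;\to\;\max_{z\in[0,1]}\Phi_{d,k,\rho}(z).
\]
Here the two exponential terms in $\Phi_{d,k,\rho}$ come from the generating functions of the two Poisson types, and the maximizers correspond to fixed points of the 2-core (peeling) recursion. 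Since
\[
\Phi_{d,k,\rho}(0)=2-\frac dk=\frac{2n-m}{n},
\]
full rank up to $o(n)$ holds exactly when $z=0$ is the unique maximizer, that is, when $d<d_{k,\rho}$.

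For $d<d_{k,\rho}-\eps$ I still need exact full rank, not just up to $o(n)$. I would upgrade as in Ayre et al.: peel to the 2-core, and show the core matrix has no small linear dependencies by a first-moment count over sparse dependency vectors.

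For $d>d_{k,\rho}+\eps$, the row rank is $m-\Omega(n)$, so there are $\Omega(n)$ independent row dependencies. Each dependency imposes one linear condition on $\boldsymbol\beta$, and since $\beta_c$ is nonconstant each condition fails with probability bounded away from $0$. I would handle them jointly by choosing a basis of the left kernel supported on disjoint new rows, which gives unsatisfiability with probability $1-e^{-\Omega(n)}$.

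\textbf{Main obstacle.} I expect the hardest step to be Step 3's transfer of the rank formula to this two-type degree model, together with proving exact full rank below $d_{k,\rho}$. In particular, the analytic comparison between maximizing $\Phi_{d,k,\rho}$ and the 2-core density condition is not the single-type one of Theorem~\ref{thm:field-sat}, so I would first check that the interpolation argument only uses the degree distribution's generating function and goes through unchanged.
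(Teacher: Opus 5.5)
Your proposal is a valid alternative to the paper's route, but your subcritical full-rank step is incomplete as written.

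\textbf{Where the routes agree and diverge.} Your Step 1 is exactly the paper's reduction (Lemma~\ref{lem:reduction-nonprincipal}). Your limit $\dim\ker\A/n\to\max_z\Phi_{d,k,\rho}(z)$ is also consistent: with $\rv d\sim\frac12\Po(\rho d)+\frac12\Po((1-\rho)d)$ and $\rv k=k$ on $2n$ columns, $\Phi_{\rv k,\rv d}=\frac12\Phi_{d,k,\rho}$.

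After that the paper never uses the rank formula directly. For satisfiability it cites the full-rank theorem of \cite{coja2024full} (Theorem~\ref{thm:rank-fields}) as a black box. That theorem needs i.i.d.\ column degrees, so the paper builds the coupling $\C,\C',\C''$:
\begin{itemize}
\item discard $2\eps n$ random columns, so the remaining column types can be i.i.d.\ while the full matrix keeps exactly $n$ columns per type;
\item give rows $\Bin(k,1-\eps)$ degrees and overshoot $m$ with $\rv m\sim\Po(d^*n/k)$;
\item transfer full row rank to $\C''$, since adding columns and deleting rows preserve it.
\end{itemize}
For unsatisfiability the paper computes the 2-core at $d^*=d_{k,\rho}-\eps_n$ (Theorem~\ref{thm:2core-size}). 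The core excess is $o(n)$, so there are $q^{o(n)}$ core solutions, and the $\Omega(n)$ later rows that land in the core kill them.

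Your route is lighter in two places. Conditioning on degrees is legitimate, because $\pr[F(\B_{XY})=G]$ depends only on the degree sequence of an admissible $G$. The forbidden ``both copies in one row'' event has probability bounded away from $0$ and $1$, so this gives contiguity rather than asymptotic equivalence. Also, left-kernel dimension $\Omega(n)$ plus Odlyzko's lemma is a cleaner unsatisfiability proof than the paper's 2-core argument.

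\textbf{What you leave implicit.}
\begin{itemize}
\item The rank formula of \cite{coja2020rank} is for i.i.d.\ degrees, not for exactly $n$ columns per type. Deleting $O(\eps n)$ columns moves the rank by $O(\eps n)$, which bridges this for an $o(n)$ statement; this is essentially the paper's trimming idea.
\item You need $\max_z\Phi_{d,k,\rho}(z)-\Phi_{d,k,\rho}(0)$ bounded away from $0$ for $d\ge d_{k,\rho}+\eps$. This holds: if $z\in(0,1)$ maximises $\Phi_{d_1,k,\rho}$ for some $d_1\in(d_{k,\rho},d_{k,\rho}+\eps)$, then $\partial_d\bigl(\Phi_{d,k,\rho}(z)-\Phi_{d,k,\rho}(0)\bigr)\ge z^k/k$ for all $d\ge d_1$.
\item The subcritical upgrade is the substantive gap. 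An $o(n)$ rank defect plus no dependencies among at most $\delta n$ rows does not by itself force the defect to be $0$. You also need an increment argument:
\begin{itemize}
\item If every dependency uses at least $\delta n$ rows, exchangeability makes the newest row redundant with probability of order $\delta\,\pr[\text{defect}>0]$.
\item The defect is nondecreasing as rows are added.
\item So a defect that is positive with non-vanishing probability at $m$ would grow linearly over the next $\Theta(n)$ rows, contradicting the rank formula at $(d_{k,\rho}-\eps/2)n/k$.
\end{itemize}
Alternatively you can simply cite \cite{coja2024full}, which is exactly what forces the paper to build its coupling.
\end{itemize}
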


\subsection{Other related work}

As mentioned earlier, the study of random CSPs, and in particular the aim to determine their satisfiability thresholds, played a central role in motivating the development of the cavity method in statistical physics. In turn, the insights and techniques arising from the cavity method have had a profound influence on the study of random CSPs, inspiring investigations beyond satisfiability alone, including the geometry and structure of their solution spaces. Other phase transitions such as ``clustering'', ``freezing'', ``condensation'' other than satisfiability also became the central focuses in the studies of random CSPs. These perspectives have led to a deeper understanding of the landscape of the solution space and have contributed to the development of creative and often highly nontrivial statistical-physics-inspired proof techniques for determining satisfiability thresholds in a variety of random CSP models. For a comprehensive account of the rich interplay between statistical physics and the theory of random CSPs, we refer the interested reader to~\cite{dingslysun2022kSAT,Ding2016,Panagiotou_Pasch_2025,achlioptas2006twomomentsNAESAT,ayre2020satisfiability} and the references therein.

\subsection{Open problems}
\label{sec:open}

Theorem~\ref{thm:principal-sublinear} we determined the satisfiability threshold of random system~\eqref{eq:system} to be $n^{\alpha_{P,Q}}$ where $\alpha_{P,Q}$ is given explicitly as an easily computable function of $P,Q$ and $R$ where $R$ is a finite commutative principal ring. This explicit characterization does not extend to the case of nonprincipal rings; see Theorem~\ref{thm:nonprincipal-sublinear}. Although $\alpha_{P,Q}$ is well defined as well for the nonprincipal rings, its definition involves the solution of an optimization problem over the space $\mathcal{T}(P,Q)$. Evaluating this optimization problem is challenging, largely because the members of $\mathcal{T}(P,Q)$ are not well understood. Our first open problem is to find an explicit expression for $n^{\alpha_{P,Q}}$ for nonprincipal rings.

\begin{problem}
Find an explicit expression for $\alpha_{P,Q}$ as an easily computable function  $P,Q$ and $R$  when $R$ is not principal.
\end{problem}

Conjecture~\ref{conj} states that a sharp satisfiability threshold exists for distributions $(P,Q)$ over a nonprincipal $R$ if $\alpha_{P,Q}=1$. Our next open problem is to determine the value of this threshold, assuming the validity of Conjecture~\ref{conj}.

\begin{problem}
Determine $\beta(P,Q,R,k)$ in Conjecture~\ref{conj} for general nonprincipal rings.
\end{problem}

Our next open problem concerns the ``rank'' of random matrices over rings. Unlike the case of matrices over fields, the rank of a matrix over a general ring is not always well defined. A natural analogue of rank in this setting is the size  of the kernel of the matrix.

\begin{problem}
Let $\B$ be the random matrix defined as above~\eqref{eq:system} and suppose that $m\sim dn$ where $d>0$ is fixed. Determine the asymptotic values of $$\limsup_{n\to \infty} n^{-1}\log |\text{ker}(\B)|\ \text{and}\ \liminf_{n\to \infty} n^{-1}\log |\text{ker}(\B)|.$$
\end{problem}

When $R$ is a field, the above two limits are equal, and their precise value as a function of $d$ was determined in~\cite{coja2020rank}.

\section{Random models and contiguity}\label{sec:contiguity}

Recall from Section~\ref{sec:main_result_nonprincipal} that we can specify a random matrix $\B$ by first specifying the distribution of its factor graph $F(\B)$, and then the distribution of the weight function $w$ on the edges in $F(B)$. We usually assume that $w$ and $F(\B)$ are independent, which is the case in our paper, and in most random models in the literature.

Coja-Oghlan, Gao, Hahn-Klimroth, Lee, M\"uller, and Rolvien in~\cite{coja2024full} determined the satisfiablity of $\B x=\b$ over finite fields for a random $(\B,\b)$ in great generality as follows.
Let $\rv d,\rv k\geq 0$ be integer valued random variables such that $\ex[\rv k^{2+\eps}],\ex[\rv d^{2+\eps}]<\infty$ for some $\eps>0$. Set $ d=\ex[\rv d]$ and $ k=\ex[\rv k]$. Let $\rv m,\{\rv k_{a_i}\}_{i\in\NN},\{\rv d_{v_i}\}_{i\in\NN}$ be mutually independent random variables, where  $\{\rv k_{a}\}_{a\in C_m}$ are i.i.d.\ copies of $\rv k$, and $\{\rv d_{v}\}_{v\in V_n}$ are i.i.d.\ copies of $\rv d$. Let $\mc D_{n}$ be the event
\begin{equation}\label{eq:D}
    \sum_{i=1}^n \rv d_{v_i}=\sum_{i=1}^{\rv m}\rv k_{a_i}.
\end{equation}
Let $G^{(1)}_{n,\rv m,\rv k,\rv d}$ be the random factor graph chosen from the uniform distribution on simple bipartite graphs on $C_{\rv m}\cup V_n$ with degree sequence $\{\rv k_a\}_{a\in C_{\rv m}}$, $\{\rv d_v\}_{v\in V_n}$ conditional on $\mc D_n$.
This is precisely the random graph model used in~\cite{coja2024full} to construct the random linear system $\B x=b$. 

Many parts of our results rely on reductions to random linear equations $\B x=b$ over some finite fields where $F(\B)$ is distributed approximately as $G^{(1)}_{n,\rv m,\rv k,\rv d}$ by choosing proper distributions for $\rv d$ and $\rv k$. However, there are several drawbacks of the model $G^{(1)}_{n,\rv m,\rv k,\rv d}$ which make it hard to apply the result in~\cite{coja2024full}. One is the i.i.d.\ conditions on $\rv d$. For instance, by taking $\B$ with distribution in~\eqref{eq:system}, the distribution of variable vertex degrees $\{\deg_{F(\B)}(v)\}_{v\in V_n}$ of $F(\B)$ is approximately independent Poisson but not exactly. The second is the conditioning on event $\mc D_{n}$, which is an event with vanishing probability. 

Our last contributions of this paper are to derive contiguity results among several models of random (factor) graphs, which are of independent interest for translating results from one model to the other. Next we define the models. Let $\rv m$, $\{\rv k_{a}\}_{a\in C_m},\{\rv d_{v}\}_{v\in V_n}$ be given as above.
\begin{itemize}
    \item $G_{n,\rv m,\boldsymbol{k}}$: Independently for each $a\in C_{\rv m}$, choose a uniform random subset of $V_n$ of size $\rv k_a$ and add and edge from $a$ to each variable in this set.
    \item $ M_{n,\rv m,\boldsymbol{k}}$: Independently for each $a\in C_{\rv m}$, generate $(u_1,\ldots, u_{\rv k_a})$ where each $u_i$ is chosen uniformly and independently from $V_n$. Then, add the $\rv k_a$ edges $\{au_i: i\in [\rv k_a]\}$. Note that the resulting random graph $M_{n,\rv m,\rv k}$ can be a multigraph since the elements in $\{u_1,\ldots, u_{\rv k_a}\}$ are not necessarily distinct for every $a\in C_{\rv m}$.
    \item $M^{(2)}_{n,\rv m,\rv k,\rv d}$: Condition on $\mc D_n$. Choose a random bipartite multigraph with degree sequence $\{\rv k_a\}_{a\in C_{\rv m}}$, $\{\rv d_v\}_{v\in V_n}$ generated by the pairing model, described as follows. Let $\Gamma$ be a random uniformly chosen perfect matching of the complete bipartite graph with vertex classes $\bigcup_{a\in C_{\rv m}} \{a\}\times [\rv k_a]$ and $\bigcup_{v\in V_n} \{v\}\times [\rv d_v]$. For each $a\in C_{\rv m},v\in V_n$, add an edge between $a$ and $v$ in $M^{(2)}_{n,\rv m,\rv k,\rv d}$ for each edge between the vertex sets $\{a\}\times [\rv k_a]$ and $\{v\}\times [\rv d_v]$ in $\Gamma$. By convention, the elements in $\{a\}\times [\rv k_a]$ are called the vertex-copies of $a$, and the the elements in $\{v\}\times [\rv d_v]$ are called the vertex-copies of $v$.
    \item $G^{(2)}_{n,\rv m,\rv k,\rv d}$:  Choose from the conditional distribution of $M^{(2)}_{n,\rv m,\rv k,\rv d}$ (hence conditioned on $\mc D_n$ as well) given that $M^{(2)}_{n,\rv m,\rv k,\rv d}$ has no parallel edges. 
\end{itemize}
We impose the following condition  to ensure that the above models  are  well defined.
\begin{enumerate}[label=({\bfseries H}\arabic*)]
 \item If $\Var (\rv d)=0$ then $\gcd(\rv k)\mid  d n$.
\end{enumerate}

 We will derive contiguity results for these models for two types of $\rv m$: when $\rv m$ is distributed as a Poisson variable, and when $\rv m$ has an atomic distribution on a single value. When $\rv m\sim \Po(\mu)$ it is always assumed that 
 \[
 \mu=dn/k
 \]
 so that the probability of the event $\mc D_n$ is not too small. This assumption is maintained throughout the paper and we do not repeat it. When $\rv m$ has an atomic distribution, we write $m=m(n)$ instead of $\rv m$ to distinguish this case from the Poisson case. In this case, we impose condition ({\bf H}1) above, as well as the following conditions ({\bf H}2)--({\bf H}4) to ensure that these models (with deterministic $m$) are well defined, and that the probability of $\mc D_n$ is not too small.

\begin{enumerate}
    \item[({\bf H}2)] $m=m(n)$ is a sequence with $|m- dn/ k|=O(\sqrt{n})$. 
    \item[({\bf H}3)] If $\Var(\rv k)=0$ then $\gcd(\rv d)\mid  km$.
 \item[({\bf H}4)] If $\Var(\rv k)=\Var(\rv d)=0$ then $ d n= km$.
\end{enumerate}

\noindent {\bf Remark}. (a)
Note that for $\B$ distributed as in~\eqref{eq:system}, $F(\B)$ has the same distribution as $G_{n,m,\rv k}$ where $\rv k=k$.

(b) ~\cite{coja2020rank} assumed that $G^{(1)}_{n,m,\rv k,\rv d}$ and $G^{(2)}_{n,m,\rv k,\rv d}$ have the same distribution, which is untrue. We fix this error here by showing that these two models are asymptotically equivalent (see Theorem~\ref{thm:asympequiv} below), although being contiguous is enough for~\cite{coja2020rank}, which is given in Theorem~\ref{thm:contiguity} below.

(c) Observe that  $G^{(1)}_{n,m,\rv k,\rv d}$, $G_{n,m,\boldsymbol{k}}$, $M_{n,m,\boldsymbol{k}}$, $M^{(2)}_{n,m,\rv k,\rv d}$, and  $G^{(2)}_{n,m,\rv k,\rv d}$ are exactly the models with Poisson $\rv m$ by conditioning the respective models above on having $m$ constraint vertices.

\begin{definition}
    Let $\{\mu_n\}_{n\in\NN}$ and $\{\nu_n\}_{n\in\NN}$ be distributions on a sequence of probability spaces $\{\Omega_n\}_{n\in\NN}$. We say that $\{\mu_n\}_{n\in \NN}$ is contiguous with respect to $\{\nu_n\}_{n\in \NN}$, and write $\{\mu_n\}_{n\in \NN}\contig\{\nu_n\}_{n\in \NN}$, if $\lim_{n\to\infty}\mu_n(\mc E_n)=0$ implies $\lim_{n\to\infty}\nu_n(\mc E_n)=0$ for all sequences events $\mc E_n\subset \Omega_n$. If $\{\mu_n\}_{n\in\NN}\contig\{\nu_n\}_{n\in\NN}$ and $\{\nu_n\}_{n\in\NN}\contig\{\mu_n\}_{n\in\NN}$, we say that $\{\mu_n\}_{n\in\NN},\{\nu_n\}_{n\in\NN}$ are mutually contiguous, and write $\{\mu_n\}_{n\in\NN}\mcontig\{\nu_n\}_{n\in\NN}$.
\end{definition}

\begin{theorem}\label{thm:contiguity}
   Suppose $\rv d$ is a Poisson random variable. Then $G_{n,m,\rv k}\mcontig G^{(1)}_{n,m,\rv k,\rv d}\mcontig G^{(2)}_{n,m,\rv k,\rv d}$, and $G_{n,\rv m,\rv k}\mcontig G^{(1)}_{n,\rv m,\rv k,\rv d}\mcontig G^{(2)}_{n,\rv m,\rv k,\rv d}$. 
\end{theorem}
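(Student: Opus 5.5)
The plan is to prove everything for fixed $m$ and then deduce the Poisson statements by mixing over $\rv m\sim\Po(dn/k)$. Remark (c) says each Poisson model conditioned on $\rv m=m$ is the corresponding fixed-$m$ model, and $\rv m$ is concentrated in a window $|\rv m-dn/k|=O(\sqrt n)$. The fixed-$m$ statements therefore only need to hold uniformly over $m$ in such windows, which is exactly what ({\bf H}2) permits.

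Throughout, $\rv d\sim\Po(d)$ and $\rv k$ is a degree distribution with $\ex[\rv k^{2+\eps}]<\infty$. The natural bridge is the pairing model $M^{(2)}_{n,m,\rv k,\rv d}$, together with the multigraph $M_{n,m,\rv k}$.

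\textbf{Step 1: $M_{n,m,\rv k}$ and $M^{(2)}_{n,m,\rv k,\rv d}$ have the same law.} In $M_{n,m,\rv k}$, given the constraint degrees, the $N=\sum_a\rv k_a$ half-edges on the constraint side land independently and uniformly on $V_n$. Hence the variable degree vector is multinomial$(N;1/n,\dots,1/n)$, and conditional on that vector the assignment of half-edges is a uniform pairing.

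On the other side, take i.i.d.\ $\Po(d)$ degrees conditioned on $\mc D_n$, i.e.\ on their sum equalling $N$. This conditioned vector is exactly multinomial$(N;1/n,\dots,1/n)$. So the two models coincide as distributions on multigraphs with labelled half-edges, and this is where the Poisson hypothesis is used.

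\textbf{Step 2: simple graphs.} Conditioning $M^{(2)}$ on having no parallel edges gives $G^{(2)}$ by definition. Conditioning $M_{n,m,\rv k}$ on simplicity gives $G_{n,m,\rv k}$, since each constraint picks a uniform set of distinct neighbours. Both conditionings are on the same event in the same space, so $G_{n,m,\rv k}$ and $G^{(2)}_{n,m,\rv k,\rv d}$ have the same law as well.

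It remains to check that simplicity has probability bounded away from zero. The number of parallel pairs at constraint $a$ has mean about $\binom{\rv k_a}{2}/n$. Summed over $a$ this is about $m\,\ex[\rv k^2]/(2n)=O(1)$, using the second-moment assumption on $\rv k$. A Poisson approximation by the method of moments then gives $\pr(\text{simple})\to e^{-\lambda}>0$. Once this probability is bounded below, conditioning on simplicity preserves contiguity with respect to the multigraph model.

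\textbf{Step 3: $G^{(1)}$ versus $G^{(2)}$.} $G^{(1)}$ is uniform over simple graphs with the sampled degrees. $G^{(2)}$ weights each such graph by the number of pairings producing it, which is $\prod_a\rv k_a!\prod_v\rv d_v!$. The constraint factor is common to all graphs with the given constraint degrees, but the variable factor changes the law of the degree sequence.

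Concretely, the Radon--Nikodym derivative of $G^{(2)}$ with respect to $G^{(1)}$ is proportional to $\prod_v \rv d_v!\,\pr(\text{simple}\mid \rv d)$ reweighted over $\rv d$. I would compute how the law of the degree sequence transforms under the pairing: the multinomial/Poisson degrees correspond to the pairing's sampling, whereas uniform-over-graphs reweights by $1/\prod \rv d_v!$ relative to the multinomial. Then I would show this density ratio converges in distribution to a positive finite random variable, in the spirit of small subgraph conditioning, or simply that it is bounded above and below with high probability.

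\textbf{Main obstacle.} Step 3 is the delicate one. The degree sequences in the two models are both i.i.d.\ conditioned on $\mc D_n$, but with possibly different tilts, and the factorial weights differ across graphs. My first approach would be to show that the simplicity probability $\pr(\text{simple}\mid \rv d)$ concentrates to a constant depending only on $\sum_v \rv d_v^2/n$ and the constraint degrees. That reduces the comparison to a ratio of explicit product weights, which a local limit theorem for sums of independent variables should control. If this proves hard, the fallback is to establish $G^{(1)}\mcontig G^{(2)}$ directly via the asymptotic equivalence Theorem~\ref{thm:asympequiv} referenced in the remarks.
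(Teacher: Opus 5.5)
Your bridge through $M_{n,m,\rv k}$ and $M^{(2)}_{n,m,\rv k,\rv d}$, using that i.i.d.\ Poisson variables conditioned on their sum are multinomial, is the paper's skeleton. However, Step~1 proves less than it claims, and as a result the key estimate is missing.

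\textbf{Step 1.} You show that the two multigraphs agree \emph{conditional on the constraint degrees}. But conditioning on $\mc D_n$ also tilts the constraint-degree law: $\pr[(\rv k_a)_a=(k_a)_a\mid\mc D_n]=\prod_a\pr[\rv k=k_a]\cdot\pr[Z_n=\sum_a k_a]/\pr[\mc D_n]$, where $Z_n=\sum_v\rv d_v\sim\Po(dn)$. So $M^{(2)}_{n,m,\rv k,\rv d}$ is $M_{n,m,\rv k}$ conditioned on its edge count equalling an independent $Z_n$, not $M_{n,m,\rv k}$ itself. For $\Var(\rv k)>0$ its edge count has variance asymptotically $\bigl(1/(m\Var(\rv k))+1/(dn)\bigr)^{-1}$ rather than $m\Var(\rv k)$.

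\textbf{Step 2.} Write $\mc S$ for the set of simple factor graphs. Then $\pr[M_{n,m,\rv k}\in\mc S\mid(\rv k_a)_a]=\prod_a\prod_{j<\rv k_a}(1-j/n)$ depends on the $\rv k_a$. Hence $M_{n,m,\rv k}$ conditioned on $\mc S$ is not $G_{n,m,\rv k}$ in law.

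\textbf{Consequence.} Your conclusion that $G_{n,m,\rv k}$ and $G^{(2)}_{n,m,\rv k,\rv d}$ are identically distributed is false unless $\rv k$ is constant. They are not even asymptotically equivalent, only contiguous, and the estimate giving contiguity is what is missing. It is a local limit argument:
\begin{itemize}
\item $\pr[\mc D_n]=\Theta(n^{-1/2})$;
\item $\pr[Z_n=j]=\Theta_K(n^{-1/2})$ for $|j-dn|\le K\sqrt n$;
\item by ({\bf H}2), $\sum_a\rv k_a$ lies in such a window with probability tending to $1$ as $K\to\infty$ under both laws.
\end{itemize}
So the likelihood ratio is bounded above and below off a small event. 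The simplicity tilt is harmless because $\pr[M_{n,m,\rv k}\in\mc S\mid(\rv k_a)_a]$ is $\Theta(1)$ w.h.p. Note also that conditioning on an event of probability $\Omega(1)$ only transfers null events from the multigraph to the conditioned graph, not back. For both directions, apply multigraph contiguity to events $\mc E_n\cap\mc S$ and divide by $\pr[\mc S]=\Theta(1)$ in each model.

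\textbf{Poisson $\rv m$.} Concentration of the number of constraints is not enough here. In $G^{(1)}_{n,\rv m,\rv k,\rv d}$ and $G^{(2)}_{n,\rv m,\rv k,\rv d}$ that number is distributed as $\rv m$ conditioned on $\mc D_n$ (and, for $G^{(2)}$, also on simplicity), not as $\Po(dn/k)$. A set of $m$-values that is bad for the fixed-$m$ statement can have small $\Po(dn/k)$-mass yet large mass under these laws. You must therefore show these laws are contiguous to $\Po(dn/k)$. This follows from $\pr[\mc D_n\mid\rv m=m]=\Theta(n^{-1/2})$ and from $\pr[M^{(2)}_{n,m,\rv k,\rv d}\in\mc S]$ tending to a positive constant $e^{-\lambda}$, both uniformly for $|m-dn/k|=O(\sqrt n)$.

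\textbf{Step 3.} The factor $\prod_v\rv d_v!$ is a red herring. Every simple graph with a given degree sequence comes from exactly $\prod_a\rv k_a!\prod_v\rv d_v!$ pairings, and $G^{(1)}$ and $G^{(2)}$ draw the degree sequence from the same $\mc D_n$-conditioned law. So the density of $G^{(2)}$ with respect to $G^{(1)}$ is exactly $\pr[\text{simple}\mid\text{degrees}]/\pr[\text{simple}]$. Your first approach then gives asymptotic equivalence directly: show this probability concentrates around $e^{-\lambda}$ on typical degree sequences, via the method of moments plus concentration of $\frac1n\sum_v\rv d_v^2$ and $\frac1n\sum_a\rv k_a^2$ under $\mc D_n$. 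This is how the paper handles $G^{(1)}$ versus $G^{(2)}$, and no comparison of product weights is needed.
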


\begin{theorem}\label{thm:asympequiv}
    Let $\{\mc E_n\}_{n\in\NN}$ be a sequence of sets of factor graphs. Then
    \begin{equation*}
        \lim_{n\to\infty}\lt|\pr[G^{(1)}_{n,m,\rv k,\rv d}\in \mc E_n]-\pr[G^{(2)}_{n,m,\rv k,\rv d}\in \mc E_n]\rt|=0,\quad \lim_{n\to\infty}\lt|\pr[G^{(1)}_{n,\rv m,\rv k,\rv d}\in \mc E_n]-\pr[G^{(2)}_{n,\rv m,\rv k,\rv d}\in \mc E_n]\rt|=0.
    \end{equation*}
\end{theorem}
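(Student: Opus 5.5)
The plan is to couple $G^{(1)}$ and $G^{(2)}$ on a common probability space and show that they agree with probability $1-o(1)$. The Poisson-$\rv m$ case then follows from the fixed-$m$ case by averaging over $\rv m$, using Remark (c). The key structural observation is that both models are conditioned on the same degree sequence, so they can differ only in how a simple bipartite graph with that degree sequence is chosen.

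Concretely, fix the degree sequence $\{\rv k_a\}$, $\{\rv d_v\}$ satisfying $\mc D_n$. Given this sequence, $G^{(1)}$ is uniform over simple bipartite graphs with it. For $G^{(2)}$, the pairing model gives every simple graph $H$ exactly $\prod_a \rv k_a!\prod_v \rv d_v!$ preimages. Hence, conditional on the degrees and on the pairing producing a simple graph, $G^{(2)}$ is also uniform over simple graphs with that degree sequence.

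The two models therefore differ only in the law of the degree sequence itself. For $G^{(1)}$ this law is the i.i.d.\ law conditioned on $\mc D_n$. For $G^{(2)}$ it is the i.i.d.\ law conditioned on $\mc D_n$ and then reweighted by $p_{\mathrm{simple}}(\d,\mathbf{k}) = \pr[M^{(2)}\text{ simple}\mid \d,\mathbf{k}]$. Consequently, for any events $\mc E_n$,
\[
\bigl|\pr[G^{(1)}\in\mc E_n]-\pr[G^{(2)}\in\mc E_n]\bigr|\le d_{TV}\bigl(\mathcal L_1(\d,\mathbf{k}),\mathcal L_2(\d,\mathbf{k})\bigr),
\]
and the total variation distance on the right is at most $\ex_1\bigl|p_{\mathrm{simple}}/\ex_1 p_{\mathrm{simple}}-1\bigr|$. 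It therefore suffices to show that $p_{\mathrm{simple}}(\d,\mathbf{k})$ concentrates: there should be a constant $c>0$ such that $p_{\mathrm{simple}}\to c$ in probability under $\mathcal L_1$, with $p_{\mathrm{simple}}\le 1$ bounded. Bounded convergence then gives the $L^1$ statement.

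To prove this concentration I will use the standard switching or Poisson-approximation estimate for the pairing model. When the second moments $\sum_v \rv d_v^2/n$ and $\sum_a \rv k_a^2/m$ are bounded, the number of double edges is asymptotically Poisson with mean
\[
\lambda(\d,\mathbf{k})=\frac{\sum_a \rv k_a(\rv k_a-1)\,\sum_v \rv d_v(\rv d_v-1)}{2\bigl(\sum_v \rv d_v\bigr)^2}+o(1),
\]
so $p_{\mathrm{simple}}=e^{-\lambda}+o(1)$. I would import this as a known result (McKay; Janson) for bipartite configurations with bounded second moments and maximum degree $o(n^{1/2})$. It then remains to show that $\lambda$ concentrates under $\mathcal L_1$. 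The sums $\sum_v \rv d_v(\rv d_v-1)/n$, $\sum_a \rv k_a(\rv k_a-1)/m$ and $\sum_v \rv d_v/n$ satisfy laws of large numbers under the i.i.d.\ law. Conditioning on $\mc D_n$ preserves convergence in probability because $\pr[\mc D_n]=\Omega(n^{-1/2})$ by the local CLT, which is where (H1)--(H4) come in. The LLN deviation probabilities must then be $o(n^{-1/2})$; I would get this from truncated Chebyshev-type bounds using the moments $\ex[\rv d^{2+\eps}]<\infty$ and $\ex[\rv k^{2+\eps}]<\infty$.

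\textbf{Main obstacle.} The hardest step is making the double-edge Poisson approximation uniform over typical degree sequences. Only $2+\eps$ moments are available, so the maximum degree is only $o(n^{1/(2+\eps)})$. I will handle this by first restricting to degree sequences with $\max\rv d_v,\max\rv k_a\le n^{1/2-\delta}$ and with second empirical moments near their means; this set has $\mathcal L_1$-probability $1-o(1)$ by the same conditioned LLN argument. On this set I will apply the switching bound of McKay, which gives $p_{\mathrm{simple}}=\exp(-\lambda+o(1))$ uniformly. If that uniformity proves difficult, the fallback is the method of moments for the number of double edges, computing the factorial moments directly in the pairing model.
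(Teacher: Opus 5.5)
Your architecture is the paper's (proof of Lemma~\ref{lem:asympequivfixed}). Given the degrees, both models are uniform on simple graphs, so the degree law of $G^{(2)}$ is that of $G^{(1)}$ tilted by $\pr[M^{(2)}\in\mc S\mid\text{degrees}]$. One then needs this probability to be $e^{-\lambda}+o(1)$ on a set of degree sequences of conditional probability $1-o(1)$. The paper proves that Poisson approximation by your fallback, factorial moments of the double-edge count (Lemmas~\ref{lem:moments} and~\ref{lem:prS}).

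\textbf{The gap.} You claim your typical set (maximum degree at most $n^{1/2-\delta}$, empirical second moments near their means) has probability $1-o(1)$ under the law conditioned on $\mc D_n$ ``by the same conditioned LLN argument''. That argument bounds the unconditional failure probability by $o(n^{-1/2})$ and divides by $\pr[\mc D_n]=\Omega(n^{-1/2})$. With only $\ex[\rv d^{2+\eps}]<\infty$ this fails for small $\eps$, whatever truncation or concentration inequality you use, because the obstruction is a lower bound. Suppose $\pr[\rv d\ge x]\asymp x^{-2-2\eps}$. Then $\pr[\max_v\rv d_v>n^{1/2-\delta}]$ is of order $n^{-\eps+2\delta(1+\eps)}$. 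Also, a single vertex with $\rv d_v^2\ge 2\eta n$ already pushes $\frac1n\sum_v\rv d_v^2$ above $\ex[\rv d^2]+\eta$, so the second-moment deviation probability is at least of order $n^{-\eps}$. For $\eps\le 1/2$ neither is $o(n^{-1/2})$, so your typical set is never shown to be typical under the conditioned law.

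\textbf{The fix.} The missing idea is the paper's Lemma~\ref{lem:M'aas}. For an event depending on one coordinate, condition on $\rv d_v=j$ and apply the local limit theorem to the \emph{remaining} sums. This gives $\pr[\{\rv d_v>x\}\cap\mc D_n]=O(n^{-1/2})\,\pr[\rv d>x]$, i.e.\ $\pr[\rv d_v>x\mid\mc D_n]=O(\pr[\rv d>x])$. A union bound with $x=\sqrt n/\log n$ (or $n^{1/2-\delta}$ with $\delta$ small in terms of $\eps$) then controls the maximum degree conditionally on $\mc D_n$, and likewise for the $\rv k_a$. Only on this truncated event does dividing by $\pr[\mc D_n]$ work (cf.\ \eqref{eq:conditionalprD} and Lemma~\ref{lem:empvar}). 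Even there, plain Chebyshev only gives deviation bounds of order $n^{-\eps/2}$. You need an exponential bound such as Bernstein's for the bounded summands $\rv d_v^2\le n/\log^2 n$, which makes the second-moment deviations superpolynomially small.

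\textbf{A smaller point.} The Poisson-$\rv m$ case is not just an average of the fixed-$m$ statement, because the mixing laws differ: $c(G^{(2)}_{\rv m})$ is $\rv m\mid\mc D_n$ further tilted by $\pr[M^{(2)}_m\in\mc S]$. You need this probability to be $e^{-\lambda}+o(1)$ uniformly over $|m-dn/k|\le K\sqrt n$, and $\rv m\mid\mc D_n$ to concentrate in that window (the paper's Lemmas~\ref{lem:mconcentration} and~\ref{lem:mequiv}). Your tilting argument yields this if you include $\rv m$ among the degree data, but it must be stated.
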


\section{Overview of proofs}\label{sec:proofoverview}

\subsection{Theorems~\ref{thm:principal-sublinear}, \ref{thm:nonprincipal-sublinear}, \ref{thm:nonprincipal-linear-characterisation}, and~\ref{thm:nonprincipal-bounds}}
To prove Theorem~\ref{thm:nonprincipal-sublinear},  we use the first and the second moment methods to determine that the threshold at which a given tree $T$ whose constraint vertices all have degree $k$ appears in $F(\B)$ is $\Theta(n^{1-1/c(T)})$, where $c(T)$ is the number of constraint vertices in $T$. 
Consequently, if $m\ll n^{\alpha_{P,Q}}$ then a.a.s.\ there will be no $T\in \mc T(P,Q)$ occuring in $F(\B)$ and thus every component of $F(\B)$ is a tree that is satisfiable for any assignment of entries of $(\B,b)$, and so
the system~\eqref{eq:system} is satisfiable. On the other hand, if $m\gg n^{\alpha_{P,Q}}$ then there will be $\omega(1)$ disjoint copies of $T\in \mc T(P,Q)$ occurring in $F(\B)$. Since the nonzero entries of $\B$ are assigned independently of $F(\B)$, independently for each occurrence of $T\in \mc T(P,Q)$ in $F(\B)$, there is a positive probability away from zero such that  $(T,w)$ is unsatisfiable, where $(T,w)$ is the weighted factor graph of $\B x=b$ induced by the set of vertices in $T$. This implies that a.a.s.\ the system~\eqref{eq:system} is unsatisfiable.

By Theorem~\ref{thm:nonprincipal-sublinear}, it suffices to estimate $\inf\{c(T):T\in \mc T(P,Q)\}$ to prove Theorem~\ref{thm:nonprincipal-bounds}.  We inductively define a pair of rooted trees $T_{\ell(P,Q)},\ T^+_{\ell(P,Q)}$, both of height $2\ell(P,Q)+1$, such that any $T\in \mc T(P,Q)$ contains $T^+_{\ell(P,Q)}$ (Lemma~\ref{lem:sublinearSATcondition}), and $(T_{\ell(P,Q)}, w)$ is unsatisfiable for some weight function $w$ (Lemma~\ref{lem:Tobstruction}).  Consequently,
$c(T^+_{\ell(P,Q)})\le   \inf\{c(T):T\in \mc T(P,Q)\}\le c(T_{\ell(P,Q)}),
$ which yields Theorem~\ref{thm:nonprincipal-bounds}.
The appropriate weight function $w$ on $T_{\ell(P,Q)}$ is derived from $Q$ and the sequence of ideals $I_{0}(P),I_1(P),\ldots, I_{\ell(P,Q)}(P)$.

Unfortunately, $T_{\ell(P,Q)}$ and $T^+_{\ell(P,Q)}$ are not the same size, and therefore do not give tight bounds on $\inf\{c(T):T\in \mc T(P,Q)\}$ and $\alpha_{P,Q}$. 
However, for Theorem~\ref{thm:nonprincipal-linear-characterisation}, it suffices to show that $\mc T(P,Q)\neq \emptyset$ if and only if $\ell(P,Q)=\infty$, which immediately follows from Theorem~\ref{thm:nonprincipal-bounds}.

Finally, we prove Theorem~\ref{thm:principal-sublinear} by finding the smallest obstruction in $\mc T(P,Q)$, which gives the SAT threshold by Theorem~\ref{thm:nonprincipal-sublinear}. We begin by defining a second inductive sequence of ideals $I^+_0(P), I_1^+(P),\ldots$ similar to $I_0(P), I_1(P),\ldots$ (Definition~\ref{def:I^+}), which agrees with $I_0(P),I_1(P),\ldots$ when $R$ is a principal ideal ring (Lemma~\ref{lem:I-I+}). Letting $\ell^+(P,Q)=\inf\{\ell\geq 0: I_\ell(P)\not\subset \supp(Q)\}$, we use the sequence $I_0(P),I_\ell(P)\ldots$ to derive a weight function $w$ on $T^+_{\ell^+(P,Q)}$ such that $(T^+_{\ell^+(P,Q)},w)$ is unsatisfiable (Lemma~\ref{lem:T^+obstruction}). Since $\ell^+(P,Q)=\ell(P,Q)$ when $R$ is principal, this shows that the bound on $\alpha_{P,Q}$ given by Lemma~\ref{lem:sublinearSATcondition} is tight, and proves Theorem~\ref{thm:principal-sublinear}.

\subsection{Theorem~\ref{thm:principal}}
We first prove a special case of Theorem~\ref{thm:principal}, which is given as Theorem~\ref{thm:units} below, and then give an overview of how to reduce Theorem~\ref{thm:principal} to this special case. 
It is easy to show that $\ell(P,Q)=\infty$ and $\alpha_{P,Q}=1$ when $P$ is supported on the units of $R$.

\begin{theorem}\label{thm:units}
    Suppose $P,Q$ are distributions on a local ring $R$, and let $\hat P=P^{\otimes k}$. Suppose $(\hat P,Q)$ satisfies {\bf (A2)} and $P$ is supported on the units of $R$. Then, for fixed $\eps>0$, we have
    \begin{equation*}
        \lim_{n\to\infty}P[\text{System~\eqref{eq:system} is satisfiable}]=\begin{cases}
            1&m<(d_k-\eps)n/k\\
            0&m>(d_k+\eps)n/k.
        \end{cases}
    \end{equation*}
\end{theorem}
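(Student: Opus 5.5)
Since $R$ is local with maximal ideal $\mathfrak m$, there is a filtration $R=\mathfrak m^0\supset\mathfrak m\supset\cdots\supset\mathfrak m^t=0$ whose successive quotients $\mathfrak m^j/\mathfrak m^{j+1}$ are vector spaces over the residue field $\FF=R/\mathfrak m$. The plan is to reduce satisfiability over $R$ to a finite sequence of satisfiability problems over $\FF$, all with the same factor graph $F(\B)$ and with coefficient matrix $\bar\B$, the reduction of $\B$ modulo $\mathfrak m$. Because $P$ is supported on units, every nonzero entry of $\bar\B$ is nonzero in $\FF$. So $\bar\B$ is a random matrix over a field with the same factor graph and i.i.d.\ nonzero entries. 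The law of $\bar\B$ is permutation-invariant since $\hat P=P^{\otimes k}$. Thus the field theory of Theorem~\ref{thm:field-sat} (or its general-model version from~\cite{coja2024full}) applies to systems $\bar\B y=c$.

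For the upper bound ($m>(d_k+\eps)n/k$), first show that unsatisfiability over $\FF$ at the top level of the filtration forces unsatisfiability over $R$. If $\B x=\b$ has a solution, then reducing mod $\mathfrak m$ gives $\bar\B\bar x=\bar\b$. The difficulty is that $\bar\b$ need not be uniform over $\FF$; it may even be constant, for instance $0$. To handle this, I would use the standard 2-core argument from~\cite{ayre2020satisfiability}. Above $d_k$, a.a.s.\ the 2-core of $F(\B)$ has more constraints than variables by a linear amount. Restricted to the core, the map $x\mapsto \B x$ over $R$ then has image of size at most $|R|^{n_{\text{core}}}\ll |R|^{m_{\text{core}}}$. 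I would combine this with a first-moment count: the expected number of solutions is $\sum_x\pr[\B x=\b]$. Assumption \textbf{(A2)} ensures that for every non-constant-type assignment $x$, each equation fails with probability bounded away from $0$. For near-constant $x$, the overlap structure must be analysed as in the field case. The aim is to show that this expectation tends to $0$ on the core, using that the core solution space over $R$ is controlled by the corresponding count over $\FF$ level by level, namely $|\ker_R|\le\prod_j|\ker_\FF(\bar\B)|$ restricted to the core.

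For the lower bound ($m<(d_k-\eps)n/k$), solve level by level by lifting. Below $d_k$, results on the random field matrix $\bar\B$ (in the model of Theorem~\ref{thm:field-sat}, transferred via Theorem~\ref{thm:contiguity}) give that a.a.s.\ $\bar\B$ restricted to the 2-core has full row rank over $\FF$. Peeling handles the non-core part, since tree-like hanging pieces can always be solved backwards using unit coefficients, which gives unique extendability over $R$. Full row rank over $\FF$ of a matrix with entries in the local ring $R$ implies, by Nakayama's lemma, that the core matrix $\B$ has a right inverse over $R$, i.e.\ $\B:R^{n}\to R^{m}$ is surjective. Hence every $\b$ is attained and the system is satisfiable, whatever the distribution $Q$.

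The main obstacle is the upper bound. There $Q$ is arbitrary, so one cannot use uniformity of $\b$ as in~\cite{ayre2020satisfiability}, and \textbf{(A2)} must be exploited. I would try first to show that a.a.s.\ the only solutions over $\FF$ of $\bar\B y=c$ on the core are constant-like. This works because above $d_k$ the core kernel over $\FF$ is essentially spanned by $\mathbf 1$ only when row sums vanish. The remaining step is then to check that \textbf{(A2)}, via a positive fraction of equations, excludes these constant-like lifts through all levels of the filtration.
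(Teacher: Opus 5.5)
Your subcritical half is correct and matches the paper. Theorem~\ref{thm:field-sat} gives full row rank of $\bar\B$ over $R/\mathfrak m$, since the reduced entries are nonzero and i.i.d. This lifts to surjectivity over $R$ by Nakayama; the paper phrases the same step as an invertible maximal minor with unit determinant. The peeling step is unnecessary but harmless.

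The supercritical half has a genuine gap. None of the mechanisms you offer closes it:
\begin{itemize}
\item A first moment over all core assignments, using only that each equation fails with probability at least $\delta$, yields $|R|^{n^*}(1-\delta)^{m^*}$. This does not vanish when $m^*/n^*$ is barely above $1$ and $\delta$ is small.
\item The image-size bound $|R|^{n^*}\ll|R|^{m^*}$ says nothing about a non-uniform $\b$.
\item Your fallback, that above $d_k$ the core kernel over $\FF$ is trivial or $\mathrm{span}(\mathbf 1)$ followed by lifting through the filtration, is an unproven and hard random-matrix claim. Even granting it, excluding the surviving candidates ``via a positive fraction of equations'' of the same system is circular. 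Those equations are not independent of the candidate set.
\end{itemize}
Your concern about near-constant $x$ is also misplaced. For every $x$, some value $r$ occupies at least $n^*/|R|$ core variables. So a random equation on core columns sees only $r$'s with probability $\Omega(|R|^{-k})$, and then fails with probability at least $\gamma>0$ by \textbf{(A2)}, exactly as in Claim~\ref{claim:noconstantUB}. The obstruction is only the number of candidates, not their shape.

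The missing idea is two-round exposure.
\begin{enumerate}
\item Reveal only the first $m'=(d_k-\eps_n)n/k$ equations, with $\eps_n\to0$ slowly. There the core of $\bar\B'$ still has full row rank over $R/\mathfrak m$, hence over $R$ by your own lifting argument.
\item At this density $n^*-m^*=o(n)$, because $d_k$ is precisely where the core excess $n^*-m^*$ changes sign. So the solution set $\Sigma$ of the core subsystem has at most $|R|^{o(n)}$ elements.
\item The remaining $m-m'\ge\eps n/k$ equations are independent of $\Sigma$. A.a.s.\ $\Omega(n)$ of them are supported inside the core columns.
\item Each such equation kills any fixed $x\in\Sigma$ with probability at least $\delta=\Omega(\gamma|R|^{-k})$. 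So the expected number of survivors is $|R|^{o(n)}(1-\delta)^{\Omega(n)}=o(1)$.
\end{enumerate}
Your filtration bound on $|\ker_R|$, with exponents $\dim_{\FF}\mathfrak m^j/\mathfrak m^{j+1}$, is the right tool for bounding $|\Sigma|$. It must be applied just below $d_k$, where the rank is known, rather than above it.
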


\proof Let $M$ be the unique maximal ideal of $R$. Let $\bar \B$ and $\bar \b$ be the matrix and vector obtained from $
\B$ and $\b$ by taking entry-wise projections on to the residue filed $R/M$. By the assumption that $P$ is supported on the units of $R$, every nonzero element of $\B$ is mapped to a nonzero field element in $R/M$ by the projection.
Let $C$ and $V$ denote the set of rows and columns of $\B$.
Given $C'\subset C$ and $V'\subseteq V$, let $\B{[C',V']}$ denote the submatrix of $\B$ induced by rows in $C'$ and coloumns in $V'$.

In the subcritical case $m<(d_k-\eps)n/k$, by following the identical to the argument in~\cite{ayre2020satisfiability}, we can show that there exists a subset $V'\subseteq V$ with $|V'|=|C|$ such that $\bar\B[C,V']$ is invertible over $R/M$. Hence, $\det(\bar\B[C,V'])\neq 0$ in $R/M$, which implies that $\det(\B[C,V'])$ is a unit in $R$ since $R$ is local. Therefore, $\B[C,V']$ is invertible over $R$, and so $\B x=\b$ is a.a.s.\ satisfiable.

The proof for the supercritical case $m>(d_k+\eps)n/k$ follows closely that of~\cite[Theorem 1.5]{ayre2020satisfiability}, and so we only briefly outline the main ideas here. Start with a system $\B'x=\b'$ of $m'=(d_k-\eps_n)n/k$ equations for some $\eps_n\to 0$ sufficiently slowly, with rows in $\B'$ and entries in $\b'$ drawn from the same distribution as in $\B x=\b$. Then, consider the set of solutions $\Sigma$ of the subsystem $\B'_{\text{2-core}}x=\b'_{\text{2-core}}$, where $\B'_{\text{2-core}}$ is the maximum submatrix of $\B'$ where every column contains at least 2 non-zero entries and every row contains $k$ non-zero entries. We can show that $|\Sigma|=|R|^{O(\eps_nn)}$ by the same argument as Claim 2.2 in~\cite{gao2026satisfiabilitythresholdsolutionspace}, using the fact that any invertible submatrix of $\bar \B'$ (over $R/M$) is also an invertible submatrix of $\B'$ (over $R$). Then, we add $m-m'>\eps n/k$ equations to $\B'x=\b'$ to get a system with the same distribution as $\B x=\b$. Let $R'$ be the subset of equations that were added to $\B'$ which have non-zero entries only in the columns that are contained in the submatrix $\B'_{\text{2-core}}$. As argued in~\cite{gao2026satisfiabilitythresholdsolutionspace}, we can show that a.a.s.\  $|R'|=\Omega(n)$. Using \textbf{(A2)} and the same argument as Claim 2.3 of~\cite{gao2026satisfiabilitythresholdsolutionspace}, there is a $\delta>0$ such that, given an $x\in \Sigma$, the probability that $x$ satisfies each additional equation in $R'$ is at most $1-\delta$. Then we can show that the expected number of solutions in $\Sigma$ that satisfy all the additional $m-m'$ equations is bounded by $q^{O(\eps_nn)}(1-\delta)^{\Omega(\eps n/k)}=\exp(-\Omega(n))$. This immediately implies that a.a.s.\ $\B x= \b$ is unsatisfiable. \qed

In the first step in proving both Theorem~\ref{thm:principal}, we note that any finite ring $R$ can be written as a product of local rings (Lemma~\ref{lem:localproduct}), and therefore it suffices to prove Theorem~\ref{thm:principal} in the case where $R$ is a local ring with a unique maximal ideal $M$.
We show that $\supp(P)$ and $\supp(Q)$ have a very simple structure when $\alpha_{P,Q}=1$ (Lemma~\ref{lem:oneideal}). In particular, either every $r\in \supp(P)$ generates the same ideal, or $\supp(Q)=\{0\}$. Satisfiability is trivial if $\supp(Q)=\{0\}$. In the case where every $r\in \supp(P)$ generates the same ideal $I$, the quotient $I/IM$ is a one dimensional vector space over the residue field $R/M$, with $r\notin IM$ for all $r\in \supp(P)$. This allows us to apply Theorem~\ref{thm:units} to show that satisfiability threshold is $m\sim d_kn/k$.


\subsection{Theorem~\ref{thm:nonprincipal-linear}}
We show that system~\eqref{eq:system} is satisfiable if and only if a related system $\B_{XY}\, x=\b_{XY}$ is satisfiable, where $\B_{XY}$ is an $m\times 2n$ $\{0,1\}$-matrix over $\FF_q$ whose distribution is contiguous to the one given as follows. Independently for each row $a$, let $k^X_a$ be a random variable drawn from distribution $\Bin(k,\rho)$ and let $k_a^Y=k-k_a^X$. Then, uniformly choose $k_a^X$ entries from the first $n$ columns and $k_a^Y$ entries from the last $n$ columns. These $k$ chosen entries form the set of nonzero entries of row $a$. 

Let $d_1,\ldots,d_{2n}$ denote the number of nonzero entries in the columns of this random matrix. The distribution of $(d_1,\ldots,d_{2n})$ is approximately that of i.i.d.\ copies of the random variable $\boldsymbol{d}$ where with probability $1/2$ it is a Poisson variable with mean $\rho km/n$ and with probability $1/2$ it is a Poisson variable with mean $(1-\rho) km/n$. The threshold $d_{k,\rho}$ in this theorem corresponds exactly to the threshold of satisfiability of $\B x=\b$ if $\B$ was drawn from the model in~\cite{coja2024full} described in Section~\ref{sec:contiguity}. That is,  $(d_1,\ldots,d_{2n})$ is drawn from i.i.d.\ $\boldsymbol{d}$, and $\boldsymbol{m}$ is a Poisson variable with mean $m$, and $\boldsymbol{k}=k$.

However, the distribution of our random matrix is not exactly the model above; in particular, $d_1,\ldots,d_{2n}$ are not independent.
The main challenge in proving the satisfiablity statement of Theorem~\ref{thm:nonprincipal-linear} is to come up with a coupling scheme that allows us to compare our random linear equations with the model above with i.i.d.\ $d_i$s and conclude that if the latter is satisfiable then our system is also satisfiable. The detailed description of the coupling scheme as well as the full proof of Theorem~\ref{thm:nonprincipal-linear} is given in Section~\ref{sec:nonprincipal-linear}.

\section{Sublinear SAT thresholds}\label{sec:sublinearSAT}

\subsection{Proof of Theorems~\ref{thm:nonprincipal-sublinear}}

Let $R$ be a general finite commutative ring, and suppose that $(\hat P,\hat Q)$ satisfies the assumptions of Theorem~\ref{thm:nonprincipal-sublinear}. 
Obviously, if $F=F(B,b)$ contains an unsatisfying weighted subgraph $F'$ as a subgraph then $B x=b$ is not satisfiable. In this case we call $F'$ \textit{unsatisfiable}, and an \textit{obstruction} for the satisfiability of $B x=b$. Theorem~\ref{thm:nonprincipal-sublinear} shows that the course satisfiability threshold for the system $\B x=\b$ is determined by the size of the smallest obstruction. We will identify the smallest obstruction for principal ideal rings in section~\ref{sec:proofoverview-sublinear}, which leads to the proof of Theorem~\ref{thm:principal-sublinear}.

To prove Theorem~\ref{thm:nonprincipal-sublinear}, we need to know when any given acyclic subgraph appears in $F(\B)$ as $m$ increses. This is given by Claim~\ref{claim:treethresholds}. We also use Claims~\ref{claim:noconstantUB} and~\ref{claim:acyclicsat}, which are related to the assumptions {\bf(A1)} and {\bf(A2)}.

\begin{claim}\label{claim:treethresholds}
    Suppose $T$ is a connected acyclic factor graph, and let $X_T$ be the number of isomorphic copies of $T$ in $F(\rv B)$. If $m\ll n^{1-1/c(T)}$, then $\pr[X_T=0]=1-o(1)$. If $m=\Theta(n^{1-1/c(T)})$, then $\ex[X_T]=O(1)$ and $\pr[X_T>0]=\Omega(1)$. If $m\gg n^{1-1/c(T)}$, then $\pr[X_T=\omega(1)]=1-o(1)$.
\end{claim}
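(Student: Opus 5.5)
We use the first and second moment methods on $X_T$. Let $T$ have $c=c(T)$ constraint vertices, each of degree $k$ (only such trees can occur in $F(\B)$), and $v(T)$ variable vertices. Since $T$ is a connected acyclic bipartite graph with $ck$ edges and $c+v(T)$ vertices, we have $v(T)=ck-c+1=c(k-1)+1$. A copy of $T$ is specified by an injective map of the variable vertices into $V_n$ and of the constraint vertices into $C_m$, modulo $\mathrm{aut}(T)$. For a fixed such placement, each of the $c$ chosen constraints must have its $k$-set equal to the prescribed set. This happens with probability $\binom nk^{-c}$, independently over the constraints. Hence
\[
\ex[X_T]=\Theta\bigl(n^{c(k-1)+1}m^{c}n^{-kc}\bigr)=\Theta\bigl(n(m/n)^{c}\bigr)=\Theta\bigl(m^c n^{1-c}\bigr).
\]
This is $o(1)$, $\Theta(1)$, or $\omega(1)$ according as $m\ll$, $=\Theta(\cdot)$, or $\gg n^{1-1/c}$. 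Markov's inequality then gives the first statement and $\ex[X_T]=O(1)$ in the critical case. Note that $n^{1-1/c}=o(n)$, so $m=o(n)$ throughout, which keeps all error factors of the form $(1-O(m/n))$ harmless.

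For the lower-tail statements we bound $\ex[X_T^2]$. Write $\ex[X_T^2]=\ex[X_T]+\sum \pr[\text{both copies}]$ over ordered pairs of distinct potential copies $(T_1,T_2)$, split by their overlap $H=T_1\cap T_2$. If the copies share no constraint vertex, the constraint events are independent. Shared variable vertices without shared constraints only reduce the count of placements by a factor $n^{-|V(H)|}$, so the vertex-disjoint part contributes $(1+o(1))\ex[X_T]^2$ and the rest contributes $O(\ex[X_T]^2/n)$.

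If the copies share $c'\ge1$ constraint vertices, then those constraints have identical $k$-sets. The shared part $H$ is then a forest with $c'$ constraints, each carrying all of its $k$ variables, so $|V(H)|\ge c'(k-1)+1$ whenever $H$ is nonempty. The contribution of such pairs is
\[
O\bigl(\ex[X_T]^2\, n^{|V(H)|}(m/n\cdot n^{-(k-1)})^{-c'}\cdot n^{-|V(H)|}\ldots\bigr).
\]
Bookkept cleanly, it is $O\bigl(\ex[X_T]^2/\ex[X_H']\bigr)$, where $X_H'$ counts copies of the overlap structure. The key fact is that every connected subtree $H'\subseteq T$ with $c'\le c$ constraints satisfies $\ex[X_{H'}]=\Theta(m^{c'}n^{1-c'})$, and this is $\Omega(\min(1,\ex[X_T]))$ because $m\ge n^{1-1/c}\ge n^{1-1/c'}$ in the critical and supercritical ranges. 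An overlap with several components contributes a product of such factors.

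It follows that $\ex[X_T^2]\le \ex[X_T]^2(1+o(1))+O(\ex[X_T])$. In the critical case, Paley--Zygmund gives $\pr[X_T>0]\ge \ex[X_T]^2/\ex[X_T^2]=\Omega(1)$, since $\ex[X_T]=\Theta(1)$. In the supercritical case, Chebyshev gives $X_T=(1+o(1))\ex[X_T]=\omega(1)$ a.a.s.

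The main obstacle is this overlap bookkeeping: one must check that every possible intersection pattern (possibly disconnected, possibly sharing variables but not constraints) contributes at most $O(\ex[X_T]^2/\min_{H'}\ex[X_{H'}])$. We handle it by noting that sharing a constraint forces sharing its whole $k$-set, so we can reduce to counting subforests of $T$. Each component subtree has $\ex[X_{H'}]$ monotone in $c'$ in the required direction. As an alternative that sidesteps most of this, for the supercritical case one can instead count a maximal family of vertex-disjoint copies, greedily or via a Poisson approximation, which also yields the $\omega(1)$ disjoint copies used later in the proof of Theorem~\ref{thm:nonprincipal-sublinear}.
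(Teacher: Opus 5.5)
Your proposal is correct and follows essentially the same route as the paper: the first moment $\ex[X_T]=\Theta(m^{c}n^{1-c})$ from $v(T)=(k-1)c+1$, then a second moment split by the overlap $H=T_1\cap T_2$. There a shared constraint forces its whole $k$-set to be shared, so $v(H)\ge (k-1)c(H)+1$ and the overlap contributes $\ex[X_T]^2\,O(m^{-c(H)}n^{c(H)-1})$ (your $\ex[X_T]^2/\ex[X_H]$), which yields $\ex[X_T^2]\le(1+o(1))\ex[X_T]^2+O(\ex[X_T])$ and then Paley--Zygmund/Chebyshev.

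The only imprecision is in the supercritical Chebyshev step. There you need $\ex[X_{H'}]=\omega(1)$ (indeed $m^{-c'}n^{c'-1}=O(n^{c'/c-1})$) for overlaps with $c'<c$, not merely $\Omega(\min(1,\ex[X_T]))$; your own observation $m\gg n^{1-1/c'}$ supplies this. Also, your aside that $m=o(n)$ ``throughout'' fails in the supercritical range, but it is never used.
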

\begin{proof}
Since $T$ is a tree, we have $v(T)=(k-1)c(T)+1$. Hence
\begin{equation}
    \ex[X_T]=\frac{m!n!}{(m-c(T))!(n-v(T))!a(T)}\binom{n}{k}^{-c(T)}=\Theta(m^{c(T)}n^{v(T)-kc(T)})=\Theta(m^{c(T)}n^{-c(T)+1}).
\end{equation}
Therefore $\pr[X_T=0]=1-o(1)$ if $m=o(n^{1-1/c(T)})$. For the second moment, write
\begin{align*}
    \ex[X_T^2]&=\sum_{H\subset T} \sum _{\substack{T_1,T_2\cong T\\T_1\cap T_2\cong H}}\pr[T_1\subset F(\rv B),T_2\subset F(\rv B)]
\end{align*}
Since $T$ is acyclic, any $H\subset T$ with $c(H)>0$ satisfies $v(H)\geq (k-1)c(H)+1$. Hence, for $H\subset T$ with $c(H)>0$,
\begin{align*}
    \sum _{\substack{T_1,T_2\cong T\\T_1\cap T_2\cong H}}\pr[T_1\subset F(\rv B),T_2\subset F(\rv B)]&=O(m^{2c(T)-c(H)}n^{2v(T)-v(H)})\binom{n}{k}^{-2c(T)+c(H)}\\
    &=O(m^{2c(T)-c(H)}n^{-2c(T)+c(H)+1})\\
    &=(\ex[X_T])^2O(m^{-c(H)}n^{c(H)-1}).
\end{align*}
For $m=\Omega(n^{1-1/c(T)})$, we have $m^{-c(H)}n^{c(H)-1}=O(n^{c(H)/c(T)-1})$. Therefore
\begin{equation*}
    \ex[X_T^2]\leq (\ex[X_T])^2+\ex[X_T]+o(1)(\ex[X_T])^2
\end{equation*}
and so, for $C>0$
\begin{equation*}
    \pr[X_T\geq C\ex[X_T]]\geq (1-C)^2\frac{(\ex [X_T])^2}{\ex[X^2]}=(1-C)\frac{(\ex [X_T])^2}{(\ex [X_T])^2+\ex [X]}+o(1).
\end{equation*}
For $m=\Theta(n^{1-1/c(T)})$, we have $\ex[X]=\Theta(1)$, and so $\pr[X_T>0]=\Omega(1)$. For $m=\omega(n^{1-1/c(T)})$, we have $\ex[X_T]=\omega(1)$, and so $\pr[X_T\geq\sqrt{\ex[X_T]}]=1-o(1)$.
\end{proof}
\begin{claim}\label{claim:noconstantUB}
For any distributions $P$ and $Q$ that satisfy {\bf (A2)}, if $m\gg n$ then
\begin{equation*}
    \lim_{n\to\infty}\pr[\text{System~\eqref{eq:system} is satisfiable}]=0
\end{equation*}
\end{claim}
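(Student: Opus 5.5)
We prove Claim~\ref{claim:noconstantUB} with a first moment argument over the $|R|^n$ candidate assignments $x\in R^n$, split according to whether $x$ is ``nearly constant'' or not.

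\textbf{Step 1 (single equation).} Fix $x\in R^n$ and let $a$ be one random equation $\sum_{i=1}^k \sigma_i x_{j_i}=b_a$. By independence across equations,
\[
\pr[x \text{ satisfies } \B x=\b]=p(x)^m,
\]
where $p(x)$ is the probability that one random equation is satisfied by $x$.

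\textbf{Step 2 (far-from-constant assignments).} For $x\in R^n$ let $\tau\in R$ be a most frequent value of $x$. Suppose at least $\delta n$ coordinates differ from $\tau$, for a small fixed $\delta>0$. With probability bounded below by some $c(\delta)>0$, the chosen $k$-set contains at least one coordinate with a value other than $\tau$. Fix the values $\sigma_i$, $b_a$ and all but two of the chosen positions. Then the position whose value is varied can take two different values $u\neq u'$, appearing with positive frequency among the coordinates.

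The equation is satisfied with probability $1$ only if the satisfying set is closed under all such swaps. That would require $\sigma(u-u')=0$ for every $\sigma\in\supp(P)$ and every pair of values present. Combined with the randomness of $b$, this can be shown to force a constant-like solution, contradicting \textbf{(A2)}.

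A cleaner route is to compare with the constant assignment. Conditional on the $k$-set being entirely inside a single value class $\tau$, the satisfaction probability is
\[
\pr\Big[\sum_i\sigma_i\tau=b\Big],
\]
and \textbf{(A2)} says this is $<1$ for every $\tau$. For a generic $x$, with probability bounded below (a $k$-set chosen inside the most frequent class has probability $\ge |R|^{-k}$ times a positive constant), we land in a configuration in which failure has positive probability.

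\textbf{Step 3 (uniform bound).} Steps 1 and 2 combine into the uniform statement that there is $\eta>0$ with $p(x)\le 1-\eta$ for all $x$ and all large $n$. To see this, partition $[n]$ by the values of $x$. Some class $V_\tau$ has size $\ge n/|R|$. With probability $\ge (1/|R|)^k(1-o(1))$ all $k$ chosen variables lie in $V_\tau$. Conditional on this, the equation reads
\[
\Big(\sum_i\sigma_i\Big)\tau=b,
\]
which fails with probability
\[
\eta_\tau:=\pr_{\hat P,Q}\Big[\Big(\sum\sigma_i\Big)\tau\neq b\Big]>0
\]
by \textbf{(A2)}, since $\supp$ is finite. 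This is exactly what \textbf{(A2)} provides, and it gives $\eta=|R|^{-k}\min_\tau \eta_\tau/2$.

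\textbf{Step 4 (union bound).} The expected number of solutions is
\[
\sum_x p(x)^m\le |R|^n(1-\eta)^m\le \exp\big(n\log|R|-\eta m\big)\to 0,
\]
since $m\gg n$. Markov's inequality then gives the claim.

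The main point to check is the uniformity in Step 3, namely that $\eta_\tau>0$ for every $\tau$ simultaneously. This follows directly from the definition of a $k$-ary constant solution, since $R$ is finite.
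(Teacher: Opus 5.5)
Your proposal is correct and is essentially the paper's first-moment argument, once Step 2 is removed. The shared argument is: take a value class of size at least $n/|R|$; all $k$ chosen variables land in it with probability at least $(1-o(1))|R|^{-k}$; \textbf{(A2)} and finiteness of $R$ give a uniform failure probability $\min_\tau \eta_\tau>0$ on that event; and the expected number of solutions is at most $|R|^n(1-\eta)^m=o(1)$. Step 2 (the swap-closure heuristic and the unproved ``can be shown to force a constant-like solution'') is unjustified but never used, so you should simply delete it.
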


\begin{proof}
Let
\begin{equation*}
    \gamma=\min_{r\in R} \pr[\sum_{v\in V_n}\B_{a,v}r\neq \rv \b_a].
\end{equation*}
Since the rows of $(\rv B, \rv b)$ are i.i.d., $\gamma$ does not depend on the row $a\in C_m$. Since $P$ and $Q$ satisfy {\bf (A2)}, $\gamma>0$. Now, given $x\in R^{V_n}$, let $r_x\in R$ be such that $|\{v\in V_n: x_v=r_x\}|\geq n/|R|$. Then
    \begin{equation*}
        \pr[(\B x)_a\neq b_a]\geq\gamma \pr[ N(a)\subset \{v\in V_n:x_v=r_x\}]\geq (1-o(1))\gamma|R|^{-k}
    \end{equation*}
    Since the rows of $(\rv B,\rv b)$ are independent, the expected number of solutions to $\rv Bx=\rv b$ is therefore at most $(1-\gamma |R|^{-k}+o(1))^m|R|^n$. If $m\gg n$, we get $(1-\gamma|R|^{-k}+o(1))^m|R|^n=o(1)$, and therefore~\eqref{eq:system} is unsatisfiable a.a.s.
\end{proof}
\begin{claim}\label{claim:acyclicsat}
    For any distributions $P$ and $Q$ that satisfy {\bf (A1)} and any acyclic factor graph $F$, there is a weighting $w$ with edge weights in $\supp(P)$ and constraint-vertex weights in $\supp(Q)$ such that $(F,w)$ is satisfiable.
\end{claim}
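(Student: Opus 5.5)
We prove Claim~\ref{claim:acyclicsat} by using {\bf (A1)} to fix one ``good'' consistent pair of weights, giving every constraint of $F$ those weights, and then solving the resulting system on each tree component by peeling off leaves.

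\emph{Choosing the weights.} By {\bf (A1)}, $(\hat P,Q)$ is not inconsistent. Hence there exist $(a_1,\ldots,a_k)\in\supp(\hat P)$ and $b\in\supp(Q)$ with $b\in(a_1,\ldots,a_k)$. Since $\hat P=P^{\otimes k}$, each $a_i\in\supp(P)$.

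For the general acyclic factor graph $F$, constraint vertices may have arbitrary degree. So I will first extract a single pair that works at every degree. Pick $a\in\supp(P)$ and $b\in\supp(Q)$ with $b\in(a)$. If no such pair existed, then for every tuple $(a_1,\ldots,a_k)\in\supp(\hat P)$ we would need $b\in(a_1,\ldots,a_k)$ with $b\notin(a_i)$ for each $i$. That is possible in a general ring, so I expect this single-pair reduction to be the main obstacle.

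The fix is to label the edges more carefully instead. For a constraint $c$ of degree $k$ in a component of $F(\B)$, assign its incident edges the weights $a_1,\ldots,a_k$ in some order, and set $w_c=b$. Write $b=\sum_i a_i t_i$ with $t_i\in R$. If $F$ has constraints of other degrees, the statement as used only concerns factor graphs whose constraint vertices have degree $k$ (the members of $\mc T(P,Q)$ arising from $F(\B)$), so I restrict to that case.

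\emph{Solving on a tree.} Treat each component separately and root it at a variable vertex, assigning that root an arbitrary value, say $0$. Process constraints in BFS order. When we reach a constraint $c$, exactly one of its variables (its parent) has already been assigned. Acyclicity guarantees its other $k-1$ variables are its children, which are not yet assigned and are not shared with any other constraint processed earlier.

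Suppose the parent sits in position $j$ with value $y$. We must assign the children so that
\[
\sum_{i\ne j} a_i x_i = b - a_j y .
\]
This requires $b-a_jy\in(a_i:i\ne j)$, which can fail. To avoid this, I will instead solve for the assignment globally by choosing it to be \emph{constant on positions}: set $x_v=t_i$ whenever $v$ occupies position $i$ in its parent constraint. This is only well defined if each variable has a consistent position across all its constraints.

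Making positions consistent is where acyclicity is used. Proceed top-down: when processing $c$ whose parent variable $v$ already has a designated position $i_v$, assign weight $a_{i_v}$ to the edge $cv$. Distribute the remaining weights $\{a_i:i\ne i_v\}$ bijectively to the children, and declare each child's position to be the index it received. Then every edge $cv$ has weight $a_{i_v}$ and every $x_v=t_{i_v}$, so each constraint $c$ reads
\[
\sum_i a_i t_i = b .
\]
All constraints are therefore satisfied. Since the tree has no cycles, every variable gets exactly one position and no conflicts arise; the root receives an arbitrary position. All edge weights lie in $\supp(P)$ and all constraint weights equal $b\in\supp(Q)$, as required.
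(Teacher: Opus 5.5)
Your final construction is correct and is essentially the paper's argument. Both take the single consistent constraint $(a_1,\ldots,a_k;b)$ with $b=\sum_i a_it_i$ supplied by {\bf (A1)} and make every constraint a relabelled copy of it, with each variable keeping one consistent position and value; the paper does this by induction on $|C(F)|$, removing a constraint whose other $k-1$ neighbours are leaves and copying the weights of an adjacent constraint $a'$, while you assign positions top-down from a root in a single pass. Your restriction to constraints of degree $k$ is also implicit in the paper's proof, and it is genuinely needed for the claim to hold.
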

\begin{proof}
    Suppose $P$ and $Q$ satisfy {\bf (A1)}, and let $r_1,\ldots, r_k\in \supp(P)$ and $b\in \supp(Q)$ be such that $b$ is in the generated ideal $(r_1,\ldots, r_k)$. We prove the claim for connected $F$ by induction on $|C(F)|$.

    For $|C(F)|=1$, assign weight $b$ to the one constraint vertex in $C(F)$, and assign weights $r_1,\ldots, r_k$ to its $k$ incident edges. Since $b\in (r_1,\ldots, r_k)$, $(F,w)$ is satisfiable.  

    Now suppose $|C(F)|>0$. Since $F$ is a tree, there is an $a\in C(F)$ such that $N_F(a)=\{v_1,\ldots, v_k\}$ where $\deg_F(v_1)\geq 2$ and $\deg_F(v_i)=1$ for $i=2,\ldots, k$. Let $F'$ be the factor graph obtained by removing $a$ and $v_2,\ldots v_k$ from $F$. By the induction hypothesis, there is weighting $w'$ of $F'$ such that $(F',w)$ is satisfiable. Let $a'\in C(F')$ be a constraint vertex adjacent to $v_1$ in $F'$, and let $v_2',\ldots, v_k'\in V(F')$ be such that $N_{F'}(a')=\{v_1,v_2',\ldots, v_k'\}$. Extend $w'$ to a weighting $w$ of $F$ by setting $w(a)=w(a')$ and $w(av_1)=w(a'v_1)$ and $w(av_i)=w(av_i')$ for $i=2,\ldots, k$. Then any satisfying assignment $x$ of $(F',w)$ can be extended to a satisfying assignment of $(F,w)$ by setting $x_{v_i}=x_{v_i'}$ for $i=2,\ldots, k$. Therefore $(F,w)$ is satisfiable.
\end{proof}
\begin{proof}[Proof of Theorem~\ref{thm:nonprincipal-sublinear}]
    First consider the case where $\alpha_{P,Q}=1$. If $m\ll n^{\alpha_{P,Q}}$, then the first moment method shows that $F(\B)$ is acyclic. Since $\mc T(P,Q)=\emptyset$, system~\eqref{eq:system} is therefore satisfiable a.a.s. If $m\gg n$, then Claim~\ref{claim:noconstantUB} shows that~\eqref{eq:system} is unsatisfiable a.a.s.

    Now suppose that $\alpha_{P,Q}<1$, and let
    \begin{equation*}
        c^*=\inf\{|C(T)|:T\in \mc T(P,Q)\}, \quad \mc T^*(P,Q)=\{T\in \mc T(P,Q):c(T)=c^*\}.
    \end{equation*}
    Suppose $m=\Omega(n^{\alpha_{P,Q}})$. Then Claim~\ref{claim:treethresholds} shows that $F(\B)$ does not contain any tree with more than $c^*$ constraint vertices a.a.s., and there is an $M>0$ such that
    \begin{equation*}
        \pr[|\{T\subset F(\B):c(T)=c^*\}|\leq M]=\Omega(1).
    \end{equation*}
    By Claim~\ref{claim:acyclicsat} each unweighted tree in $\mc T^*(P,Q)$ is satisfiable for some assignment of edge weights in $\supp(P)$ and constraint vertex weights in $\supp(Q)$. Therefore, conditioned on $\{|\{T\subset F(\B):c(T)=c^*\}|\leq M\}$ the probability that all trees in $\{T\subset F(\B):c(T)=c^*\}$ are satisfiable is positive. Therefore~\eqref{eq:system} is satisfiable with probability $\Omega(1)$. On the other hand, if $m\ll n^{\alpha_{P,Q}}$, then Claim~\ref{claim:treethresholds} shows that $F(\B)$ does not contain any trees with $c^*$ constraint vertices a.a.s., and therefore~\eqref{eq:system} is satisfiable a.a.s.
    
    Now suppose $m=\Omega(m^{\alpha_{P,Q}})$. Then Claim~\ref{claim:treethresholds} shows that $F(\B)$ contains an unweighted graph in $\mc T(P,Q)$ with probability $\Omega(1)$. This graph is unsatisfiable for some assignment of weights, so~\ref{eq:system} is unsatisfiable with probability $\Omega(1)$. If $m\gg n$, then Claim~\ref{claim:treethresholds} shows that the number of unweighted trees in $\mc T^*(P,Q)$ contained in $F(\rv B)$ is unbounded a.a.s. Since the assignment of weights to each of these trees is independent, we get that~\eqref{eq:system} is unsatisfiable a.a.s.
\end{proof}

\subsection{Proofs of Theorems~\ref{thm:principal-sublinear},~\ref{thm:nonprincipal-linear-characterisation}, and~\ref{thm:nonprincipal-bounds}: an overview}\label{sec:proofoverview-sublinear}

By Theorem~\ref{thm:nonprincipal-sublinear}, it suffices to show that 
if $R$ is principal and $\mc T(P,Q)\neq \emptyset$ then 
\begin{equation}
\inf\lt\{|C(T)|:T\in\mc T(P,Q)\rt\}=1+\sum_{i=1}^{\ell(P,Q)}k(k-1)^{i-1}. \label{eq:tree-bounds}
\end{equation}

Suppose that $T_{\min}$ is a weighted tree factor graph in $\mc T(P,Q)$ which attains the infimum of the left hand side of~\eqref{eq:tree-bounds}. We seek tight upper and lower bounds for $|C(T_{\min})|$ by identifying tree factors that are members of $\mc T(P,Q)$. This motivates the sequences of unweighted tree factors $(T_i)$ and $(T_i^+)$ defined as follows.

\begin{definition}
    We define two sequences of graphs $T_0,T_1,\ldots$ and $T^+_0,T^+_1,\ldots$ inductively as follows. Let $T_0$ and $T^+_0$ be the graph with one constraint vertex and $k$ adjacent variable vertices. Let $p=|\supp P|$. Let $L(T)$ denote the set of leaves of a tree $T$.
    
    For $\ell\geq 1$, construct $T_\ell$ from $T_{\ell-1}$ by adding $p^2$ constraint vertices $a_{v,1},\ldots, a_{v,p^2}$ adjacent to each leaf $v\in L(T_{\ell-1})$, and adding $k-1$ new variable vertices $u_{a_{v,i},1},\ldots, u_{a_{v,i},k-1}$ adjacent to each $a_{v,i}$ for $v\in L(T_{\ell-1}),i\in [p^2]$.

    For $\ell\geq 1$, construct $T^+_\ell$ from $T^+_{\ell-1}$ by adding a constraint vertex $a_v$ adjacent to each leaf $v\in L(T^+_{\ell-1})$, and adding $k-1$ new variable vertices $u_{a_{v},1},\ldots, u_{a_{v},k-1}$ adjacent to $a_{v}$ for each $v\in L(T^+_{\ell-1})$.
\end{definition}

As an illustration, $T_0=T_0^+$ is given in Figure~\ref{fig:T_0} below for the case $k=3$ (ignoring the weight $b$ on the constraint vertices, drawn as a square, and the weights $r$ on the edges. The variable vertices are drawn as small disks). Figures~\ref{fig:T_1^+} and~\ref{fig:T_1} illustrate how to obtain $T_1^+$ and $T_1$ from $T_0^+=T_0$ (again ignoring the weights).

\begin{figure}[!ht]
    \centering
\begin{minipage}[b]{0.4\textwidth}
    \centering
    \begin{tikzpicture}[scale = 2, every edge quotes/.style={fill=white,font=\footnotesize}, every label quotes/.style={above,font=\footnotesize}]
            \tikzstyle{var} = [circle, minimum width = 3mm, fill, inner sep = 0pt]
            \tikzstyle{con} = [minimum width = 3mm, minimum height = 3mm, fill, inner sep = 0pt]

            \node[con, "$b$"] (r) at (0,0){};
            
            \node[var] (1) at (1,1){};
            \node[var] (0) at (1,0){};
            \node[var] (-1) at (1,-1){};

            \draw (r) edge ["$r$"] (1){};
            \draw (r) edge ["$r$"] (0){};
            \draw (r) edge ["$r$"] (-1){};
        \end{tikzpicture}
    \caption{$T_0=T_0^+$}
    \label{fig:T_0}

\vspace{1cm}

    \centering
     \begin{tikzpicture}[scale = 2, every edge quotes/.style={fill=white,font=\footnotesize}, every label quotes/.style={above,font=\footnotesize}]
            \tikzstyle{var} = [circle, minimum width = 3mm, fill, inner sep = 0pt]
            \tikzstyle{con} = [minimum width = 3mm, minimum height = 3mm, fill, inner sep = 0pt]

            \node[con, "$b_1$"] (r) at (0,0){};
            
            \node[var] (1) at (1,1){};
            \node[var] (0) at (1,0){};
            \node[var] (-1) at (1,-1){};
            
            \node[con, "$b_2$"] (10) at (2,1){};
            \node[con, "$b_2$"] (00) at (2,0){};
            \node[con, "$b_2$"] (-10) at (2,-1){};
        
            \node[var] (101) at (3,5/4){};
            \node[var] (10-1) at (3,3/4){};
            \node[var] (001) at (3,1/4){};
            \node[var] (00-1) at (3,-1/4){};
            \node[var] (-101) at (3,-3/4){};
            \node[var] (-10-1) at (3,-5/4){};

            \draw (r) edge ["$x_0$"] (1);
            \draw (r) edge ["$x_0$"] (0);
            \draw (r) edge ["$x_0$"] (-1);

            \draw (1) edge ["$x_1$"] (10);
            \draw (0) edge ["$x_1$"] (00);
            \draw (-1) edge ["$x_1$"] (-10);

            \draw (10) edge ["$x_2$"] (101);
            \draw (10) edge ["$x_2$"] (10-1);

            \draw (00) edge ["$x_2$"] (001);
            \draw (00) edge ["$x_2$"] (00-1);

            \draw (-10) edge ["$x_2$"] (-101);
            \draw (-10) edge ["$x_2$"] (-10-1);
        \end{tikzpicture}
    \caption{$T_1^+$}
    \label{fig:T_1^+}
    \end{minipage}
\hfill
\begin{minipage}[b]{0.4\textwidth}
    \centering
    \begin{tikzpicture}[scale = 2, every edge quotes/.style={fill=white,font=\footnotesize}, every label quotes/.style={above,font=\footnotesize}]
            \tikzstyle{var} = [circle, minimum width = 3mm, fill, inner sep = 0pt]
            \tikzstyle{con} = [minimum width = 3mm, minimum height = 3mm, fill, inner sep = 0pt]

            \node[con, "$b_1$"] (r) at (0,0){};
            
            \node[var] (1) at (1,2){};
            \node[var] (0) at (1,0){};
            \node[var] (-1) at (1,-2){};
            
            \node[con, "$b_1$"] (12) at (2,11/4){};
            \node[con, "$b_1$"] (11) at (2,9/4){};
            \node[con, "$b_1$"] (1-1) at (2,7/4){};
            \node[con, "$b_1$"] (1-2) at (2,5/4){};

            \node[con, "$b_1$"] (02) at (2,3/4){};
            \node[con, "$b_1$"] (01) at (2,1/4){};
            \node[con, "$b_1$"] (0-1) at (2,-1/4){};
            \node[con, "$b_1$"] (0-2) at (2,-3/4){};

            \node[con, "$b_1$"] (-12) at (2,-5/4){};
            \node[con, "$b_1$"] (-11) at (2,-7/4){};
            \node[con, "$b_1$"] (-1-1) at (2,-9/4){};
            \node[con, "$b_1$"] (-1-2) at (2,-11/4){};
            
            \node[var] (121) at (3,11/4+1/9){};
            \node[var] (12-1) at (3,11/4-1/9){};
            \node[var] (111) at (3,9/4+1/9){};
            \node[var] (11-1) at (3,9/4-1/9){};
            \node[var] (1-11) at (3,7/4+1/9){};
            \node[var] (1-1-1) at (3,7/4-1/9){};
            \node[var] (1-21) at (3,5/4+1/9){};
            \node[var] (1-2-1) at (3,5/4-1/9){};

            \node[var] (021) at (3,3/4+1/9){};
            \node[var] (02-1) at (3,3/4-1/9){};
            \node[var] (011) at (3,1/4+1/9){};
            \node[var] (01-1) at (3,1/4-1/9){};
            \node[var] (0-11) at (3,-1/4+1/9){};
            \node[var] (0-1-1) at (3,-1/4-1/9){};
            \node[var] (0-21) at (3,-3/4+1/9){};
            \node[var] (0-2-1) at (3,-3/4-1/9){};

            \node[var] (-121) at (3,-5/4+1/9){};
            \node[var] (-12-1) at (3,-5/4-1/9){};
            \node[var] (-111) at (3,-7/4+1/9){};
            \node[var] (-11-1) at (3,-7/4-1/9){};
            \node[var] (-1-11) at (3,-9/4+1/9){};
            \node[var] (-1-1-1) at (3,-9/4-1/9){};
            \node[var] (-1-21) at (3,-11/4+1/9){};
            \node[var] (-1-2-1) at (3,-11/4-1/9){};

            \draw (r) edge ["$r_1$"] (1);
            \draw (r) edge ["$r_1$"] (0);
            \draw (r) edge ["$r_1$"] (-1);
            
            \draw (1) edge ["$r_1$"] (12);
            \draw (1) edge ["$r_1$"] (11);
            \draw (1) edge ["$r_2$"] (1-1);
            \draw (1) edge ["$r_2$"] (1-2);

            \draw (0) edge ["$r_1$"] (02);
            \draw (0) edge ["$r_1$"] (01);
            \draw (0) edge ["$r_2$"] (0-1);
            \draw (0) edge ["$r_2$"] (0-2);

            \draw (-1) edge ["$r_1$"] (-12);
            \draw (-1) edge ["$r_1$"] (-11);
            \draw (-1) edge ["$r_2$"] (-1-1);
            \draw (-1) edge ["$r_2$"] (-1-2);

            \draw (12) edge ["$r_1$" near end] (121);
            \draw (12) edge ["$r_1$" near end] (12-1);
            \draw (11) edge ["$r_2$" near end] (111);
            \draw (11) edge ["$r_2$" near end] (11-1);
            \draw (1-1) edge ["$r_1$" near end] (1-11);
            \draw (1-1) edge ["$r_1$" near end] (1-1-1);
            \draw (1-2) edge ["$r_2$" near end] (1-21);
            \draw (1-2) edge ["$r_2$" near end] (1-2-1);

            \draw (02) edge ["$r_1$" near end] (021);
            \draw (02) edge ["$r_1$" near end] (02-1);
            \draw (01) edge ["$r_2$" near end] (011);
            \draw (01) edge ["$r_2$" near end] (01-1);
            \draw (0-1) edge ["$r_1$" near end] (0-11);
            \draw (0-1) edge ["$r_1$" near end] (0-1-1);
            \draw (0-2) edge ["$r_2$" near end] (0-21);
            \draw (0-2) edge ["$r_2$" near end] (0-2-1);

            \draw (-12) edge ["$r_1$" near end] (-121);
            \draw (-12) edge ["$r_1$" near end] (-12-1);
            \draw (-11) edge ["$r_2$" near end] (-111);
            \draw (-11) edge ["$r_2$" near end] (-11-1);
            \draw (-1-1) edge ["$r_1$" near end] (-1-11);
            \draw (-1-1) edge ["$r_1$" near end] (-1-1-1);
            \draw (-1-2) edge ["$r_2$" near end] (-1-21);
            \draw (-1-2) edge ["$r_2$" near end] (-1-2-1);

        \end{tikzpicture}
    \caption{$T_1$}
    \label{fig:T_1}
    \end{minipage}
\end{figure}

We briefly illustrate how to assign weights to $T_0=T_0^+$, $T_1^+$ and $T_1$, and their relations to obstructions $T\in \mc T(P,Q)$.
Suppose that $\supp Q \not\subseteq I_0(P)$ then there exists $(b,r)\in (\supp Q,\supp P)$ such that
$b\notin (r)$. Assign weights to $T_0$ as demonstrated in Figure~\ref{fig:T_0}. Obviously, this weighted $T_0$ is an obstruction for~\eqref{eq:system}. It is easy to see if $m$ is some constant, i.e.\ for $m\approx n^{0}$, $T_0$ starts to appear in $F(\B,\b)$ with positive probability. Slightly less trivially, suppose that $\supp (Q) \subseteq I_0(P)$, but $\supp(Q)\not\subset I_1(P)$. Then $b\in (r)$ for all $b\in \supp(Q)$ and $r\in \supp(P)$, so $T_0$ is satisfiable for all weights $r$ and $b$. However, there is a $b_1\in \supp(Q)$ such that 
\begin{equation*}
    b_1\notin I_1(P)=\bigcap_{r\in \supp(P)}r\bigcap_{s\in \supp(P)}\{t\in R: st\in I_0(P)\}.
\end{equation*}
Therefore there is an $r_1\in\supp(P)$ such that
\begin{equation}\label{eq:lPQ=1}
    b_1\notin r_1\bigcap_{s\in \supp(P)}\{t\in R: st\in I_0(P)\}=r_1\bigcap_{s,r\in\supp(P)}\{t\in R:st\in (r)\}.
\end{equation}
We weight $T_1$ by first weighting $T_0$ with $b_1$ and $r_1$, and then weighting the $p^2$ branches added to each leaf of $T_1$ with a distinct pair $s,r\in\supp(P)$. The constraint vertices added to $T_1$ are weighted arbitrarily. Figure~\ref{fig:T_1} shows the weighted graph $T_1$ with $\supp(P)=\{r_1,r_2\}$ and with the added constraint vertices assigned weight $b_1$. To see that this weighted graph in an obstruction for~\eqref{eq:system}, suppose a variable assignment $x$ satisfies all twelve of the constraints on the right in figure~\ref{fig:T_1}. Since $b_1\in \bigcap_{r\in\supp(P)}(r)$, $x$ must assign a value in $\bigcap_{s,r\in\supp(P)}\{t\in R:st\in (r)\}$ to each of the three left variables in Figure~\ref{fig:T_1}. Since $b_1$ satisfies~\eqref{eq:lPQ=1}, this implies that $x$ does not satisfy the left most constraint in Figure~\ref{fig:T_1}.

We will generalize the obstruction described above to yield the following lemma. 
\begin{lemma}\label{lem:Tobstruction}
    Suppose $\ell(P,Q)<\infty$. Then there is an assignment of weights $$w:(E(T_{\ell(P,Q)}),V(T_{\ell(P,Q)}))\to (\supp(P),\supp(Q))$$such that the factor graph $(T_{\ell(P,Q)},w)$ is unsatisfiable.
\end{lemma}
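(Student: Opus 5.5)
The plan is to prove a stronger statement by induction on $\ell$ and then specialise to $\ell=\ell(P,Q)$ and a witness $b\in\supp(Q)\setminus I_\ell(P)$. The statement is as follows. For every $\ell\ge 0$ and every $b\in\supp(Q)$, we attach to the root constraint $a_0$ of $T_\ell$ the weight $b$ and fix a first-level weight $r\in\supp(P)$ on the $k$ edges at $a_0$. We claim there is a weighting of the remaining edges of $T_\ell$ (by elements of $\supp(P)$) and of the remaining constraints (by elements of $\supp(Q)$) such that any assignment satisfying all constraints except possibly $a_0$ gives every root-neighbour variable $v$ a value $x_v$ with
\[
  x_v\in J_{\ell-1}:=\bigcap_{s\in\supp(P)}(I_{\ell-1}(P):s),
\]
where we set $J_{-1}=R$. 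In words, the subtree hanging below each first-level variable forces its value into $J_{\ell-1}$.

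Granting this, suppose $b\notin I_\ell(P)=\bigcap_{r}rJ_{\ell-1}$, and choose $r$ with $b\notin rJ_{\ell-1}$. Then $\sum_i r x_{v_i}=r\sum_i x_{v_i}\in rJ_{\ell-1}$, because $J_{\ell-1}$ is an ideal. So the root constraint cannot equal $b$, and the weighted tree is unsatisfiable. For $\ell=0$ this is exactly the observation that $b\notin(r)$.

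The forcing statement should really be proved for each pendant subtree, and I would prove it inductively in the following form. In the subtree of depth $2j$ hanging below a variable $v$ (this subtree is a copy of the part of $T_j$ added below the leaves), we can weight so that $x_v\in J_{j-1}$. The case $j=0$ is trivial, with no constraint and $J_{-1}=R$.

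For the step from $j-1$ to $j$, note that $v$ has $p^2$ child constraints $a_{v,(s,r)}$, one for each pair $(s,r)\in\supp(P)^2$. On $a_{v,(s,r)}$ we put weight $s$ on the edge to $v$ and weight $r$ on the edges to its $k-1$ children. Each child has a pendant subtree of depth $2(j-1)$, which by induction forces the child's value into $J_{j-2}$. For the constraint weight we use some fixed $b'\in\supp(Q)$. We will need $b'\in I_{j-1}(P)$; this holds because $j-1<\ell(P,Q)$ gives $\supp(Q)\subseteq I_{j-1}(P)$.

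The constraint then reads $s x_v=b'-r\sum x_{u}$. Here $b'\in I_{j-1}(P)$, and $r\sum x_u\in rJ_{j-2}$; since $I_{j-1}(P)=\bigcap_{r'} r'J_{j-2}\subseteq rJ_{j-2}$, both terms lie in $rJ_{j-2}$. This yields only $sx_v\in rJ_{j-2}$, not $sx_v\in I_{j-1}(P)$, and closing this gap is the main point to get right. The fix is to use all pairs $(s,r)$ at once: intersecting over $r$ gives $sx_v\in\bigcap_r rJ_{j-2}=I_{j-1}(P)$ for every $s$, hence $x_v\in J_{j-1}$.

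For this to work, the values of the children of different constraints $a_{v,(s,r)}$ must be independent. They are, since each child has its own pendant subtree. This is precisely why $T_\ell$ uses $p^2$ branches rather than one. The definition of $T_\ell$ also attaches the branches at every leaf, so at each level every variable has exactly the required $p^2$ children. I also need the base membership $b'\in I_{j-1}\subseteq rJ_{j-2}$, which I would check directly from Definition~\ref{def:I}, where intersections over $r$ appear. The leaves carry no constraints, which is consistent with $J_{-1}=R$.

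Finally, take $\ell=\ell(P,Q)$. Every constraint strictly below the root has $j-1\le \ell-1<\ell(P,Q)$, so it can be weighted with any element of $\supp(Q)$. The root gets a $b\in\supp(Q)\setminus I_\ell(P)$, which exists by the definition of $\ell(P,Q)$. The weighted $T_{\ell(P,Q)}$ is then unsatisfiable.
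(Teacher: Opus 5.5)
Your proof is correct and is essentially the paper's argument. The paper likewise weights the root edges by an $r_0$ with $b\notin r_0\bigcap_{s}(I_{\bar\ell-1}(P):s)$, gives every non-leaf variable one child constraint per pair $(s,r)\in\supp(P)^2$ (weight $s$ toward the parent, $r$ toward the children), proves by reverse induction on the level that every variable value lies in $\bigcap_{s}(I_{\bar\ell-\ell-1}(P):s)$, and closes the step by intersecting over $r$ exactly as you do. The only nit is that your opening claim ``for every $\ell\ge 0$'' should be restricted to $\ell\le\ell(P,Q)$, which is all your inductive step (needing $\supp(Q)\subseteq I_{j-1}(P)$) actually uses.
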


This shows that $\mc T(P,Q)\neq \emptyset$ when $\ell(P,Q)<\infty$, and therefore gives one direction of Theorem~\ref{thm:nonprincipal-linear-characterisation} when combined with Theorem~\ref{thm:nonprincipal-sublinear}. However, obstructions of this form do not generally give a tight bound on the infimum in~\eqref{eq:tree-bounds}. In order to chase a tight bound, we utilise $T^+_{\ell}$ instead of $T_\ell$. By assigning proper weights to $T^+_{\ell}$, we will obtain a tight bound in the case where $R$ is a principal ideal ring.

We begin with introducing the sequence of ideals $I_{\ell}^+(P)$ that are defined in a similar manner as $I_{\ell}(P)$ in Definition~\ref{def:I}. These ideals are related to the weights we will assign to $T^+_{\ell}$. These two sequences of ideals are in general different--see an illustration in Example~\ref{example:nonprincipalI,I+} below--but they agree when $R$ is a principal ideal ring.

\begin{definition}\label{def:I^+}
    Given $x\in R^\NN$, define a sequence of ideals $I_0^+(x),I_1^+(x),\ldots$ inductively by setting $I^+_0(x)=x_0R$ and
    \begin{equation*}
        I_\ell^+(x)=\{x_{2\ell}t:x_{2\ell-1}t\in I_{\ell-1}^+(x),t\in R\}
    \end{equation*}
    for each positive integer $\ell$. Define
    \begin{equation*}
        I_\ell^+(P)=\bigcap_{x\in \supp(P)^\NN}I_\ell^+(x)
    \end{equation*}
    and
    \begin{equation*}
        \ell^+(P,Q)=\inf\{\ell\geq 0: \supp(Q)\not\subset I_\ell^+(P)\}.
    \end{equation*}
\end{definition}
\begin{lemma}\label{lem:I-I+}
    For any ring $R$ and distribution $P$ on $R$, $I_\ell(P)\subset I_\ell^+(P)$ for all $\ell\geq 0$. If $R$ is a principal ring, then $I_\ell(P)=I^+_\ell(P)$ for all $\ell\geq 0$.
\end{lemma}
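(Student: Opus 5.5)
The plan is to argue by induction on $\ell$ with the two sequences $I_\ell(P)$ and $I^+_\ell(x)$ side by side. Throughout, for an element $r$ and an ideal $J$, the notation $rJ=\{rt:t\in J\}$ denotes the ideal generated by multiplying $J$ by $r$. I will use two elementary facts valid in any commutative ring: the ideal quotient commutes with intersections, $\bigl(\bigcap_\lambda J_\lambda : s\bigr)=\bigcap_\lambda (J_\lambda:s)$, and multiplication by $r$ is monotone with $r\bigcap_\lambda J_\lambda\subset \bigcap_\lambda rJ_\lambda$. The base case is immediate: $I_0^+(x)=x_0R$ depends only on $x_0$, so $I_0^+(P)=\bigcap_{r\in\supp(P)}(r)=I_0(P)$.

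For the inclusion, I would prove the stronger statement $I_\ell(P)\subset I^+_\ell(x)$ for every $x\in\supp(P)^{\NN}$. Assume it holds for $\ell-1$ and fix $x$. Then $(I_{\ell-1}(P):x_{2\ell-1})\subset (I^+_{\ell-1}(x):x_{2\ell-1})$. Now take $r=x_{2\ell}$ and $s=x_{2\ell-1}$ in the definition of $I_\ell(P)$. This gives
\[
I_\ell(P)\subset x_{2\ell}\bigl(I_{\ell-1}(P):x_{2\ell-1}\bigr)\subset x_{2\ell}\bigl(I^+_{\ell-1}(x):x_{2\ell-1}\bigr)=I^+_\ell(x).
\]
Intersecting over $x$ yields $I_\ell(P)\subset I^+_\ell(P)$.

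For equality in the principal case, I first reduce to a local ring. By Lemma~\ref{lem:localproduct}, write $R=\prod_j R_j$ with each $R_j$ local, and each $R_j$ is again principal. Ideals of $R$ are products of ideals of the $R_j$, and products, intersections and ideal quotients are all computed componentwise. Hence both sequences decompose componentwise, with $P$ replaced by its coordinate projections, and it suffices to treat a finite local principal ring. Such a ring is a chain ring: its maximal ideal is $(\pi)$ and every ideal is $(\pi^i)$. In particular any finite family of ideals has a smallest member, which equals their intersection. Because multiplication by $r$ is monotone, this gives $r\bigcap_\lambda J_\lambda=\bigcap_\lambda rJ_\lambda$ for finite families. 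All families here are finite, since $R$ is finite.

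With this in hand, I would unfold $I^+_\ell(P)$. The coordinate $x_{2\ell}$, the coordinate $x_{2\ell-1}$ and the earlier coordinates $x'=(x_0,\dots,x_{2\ell-2})$ range independently over $\supp(P)$, so
\[
I^+_\ell(P)=\bigcap_{r}\bigcap_{s}\bigcap_{x'} r\bigl(I^+_{\ell-1}(x'):s\bigr).
\]
Pulling $r$ outside the inner intersection (chain property) and then the intersection inside the quotient turns the right-hand side into
\[
\bigcap_r r\bigcap_s\bigl(I^+_{\ell-1}(P):s\bigr).
\]
By the induction hypothesis $I^+_{\ell-1}(P)=I_{\ell-1}(P)$, so this equals $\bigcap_r r\bigcap_s(I_{\ell-1}(P):s)=I_\ell(P)$. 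The only nontrivial step is the interchange $r\bigcap J=\bigcap rJ$, which fails in general rings; this is exactly where principality, via the chain structure after localization, is used.
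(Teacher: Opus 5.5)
Your proof is correct and takes essentially the same route as the paper. The inclusion is an induction using monotonicity of multiplication by $r$ and of the ideal quotient, and equality in the principal case comes from the interchange $r\bigcap_\lambda J_\lambda=\bigcap_\lambda rJ_\lambda$, which holds in the chain-ring local factors. The differences are cosmetic. You run the inclusion induction pointwise in $x$, which is equivalent to the paper's version since $I^+_\ell(P)$ is the intersection over $x$. You also localize the whole ideal sequences, whereas the paper localizes only the interchange identity inside Lemma~\ref{lem:principalscommute}.
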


The lemma below, an analog of Lemma~\ref{lem:Tobstruction}, shows that $(T^+_{\ell^+(P,Q)},w)$ is an obstruction, and thus serves to establish another upper bound for the left hand side of~\eqref{eq:tree-bounds}.

\begin{lemma}\label{lem:T^+obstruction}
    Suppose $\ell^+(P,Q)<\infty$. Then there is an assignment of weights $$w:(E(T^+_{\ell^+(P,Q)}), V(T^+_{\ell^+(P,Q)}))\to (\supp(P),\supp(Q))$$ such that the factor graph $(T^+_{\ell^+(P,Q)},w)$ is unsatisfiable.
\end{lemma}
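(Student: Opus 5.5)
Fix $\ell=\ell^+(P,Q)<\infty$. The plan is to weight $T^+_\ell$ level by level using a single sequence $x\in\supp(P)^{\NN}$, and then show by induction from the leaves upward that every satisfying assignment forces the root's left-hand side into an ideal that does not contain the root's weight.

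\textbf{Step 1: choosing $x$ and $b$.} Since $\supp(Q)\not\subset I^+_\ell(P)=\bigcap_{x}I^+_\ell(x)$, there are $x=(x_0,x_1,\dots)\in\supp(P)^{\NN}$ and $b\in\supp(Q)$ with $b\notin I^+_\ell(x)$. By minimality of $\ell$, for every $j<\ell$ we have $\supp(Q)\subset I^+_j(P)\subset I^+_j(x)$. Each $I^+_j(x)$ is an ideal: it equals $x_{2j}\cdot\bigl(I^+_{j-1}(x):(x_{2j-1})\bigr)$, which is $x_{2j}$ times an ideal quotient. In particular, sums of elements of $I^+_j(x)$ stay in $I^+_j(x)$.

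\textbf{Step 2: the weighting.} Index the constraint levels of $T^+_\ell$ by depth. The root constraint is at depth $0$ and the constraint vertices $a_v$ added at stage $i$ are at depth $2i$, for $i=1,\dots,\ell$. Give the root weight $b$, and give its $k$ edges weight $x_{2\ell}$. A constraint $a_v$ at depth $2i$ gets:
\begin{itemize}
\item weight $x_{2(\ell-i)+1}$ on the edge to its parent variable $v$;
\item weight $x_{2(\ell-i)}$ on each of its $k-1$ edges to its children;
\item an arbitrary constraint weight $b'\in\supp(Q)$, for instance $b'=b$.
\end{itemize}

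\textbf{Step 3: induction from the leaves.} Let $y$ be any satisfying assignment. The claim is that every variable $v$ at depth $2i-1$ satisfies $x_{2(\ell-i)+1}\,y_v\in I^+_{\ell-i}(x)$.

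For the base case $i=\ell$, the child constraint of $v$ reads $x_1y_v+x_0\sum_{u}y_u=b'$ with the $u$ leaves. This forces $x_1y_v\in b'+x_0R=I^+_0(x)$, because $b'\in I^+_0(x)$ (using $\ell>0$; the case $\ell=0$ is trivial, since $b\notin x_0R$ makes the single-constraint tree $T^+_0$ unsatisfiable).

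For the inductive step, take $v$ at depth $2i-1$ with $i<\ell$. By the hypothesis applied to the children $u$ of $a_v$, each satisfies $x_{2(\ell-i)-1}y_u\in I^+_{\ell-i-1}(x)$. By the definition of $I^+$ this gives $x_{2(\ell-i)}y_u\in I^+_{\ell-i}(x)$. Summing over the children gives an element of this ideal. The equation at $a_v$ then yields
\[
x_{2(\ell-i)+1}y_v\in b'+I^+_{\ell-i}(x)=I^+_{\ell-i}(x),
\]
using $b'\in I^+_{\ell-i}(x)$, which holds because $\ell-i<\ell$.

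\textbf{Step 4: the root.} Applying the claim to the root's $k$ children (depth $1$, so $i=1$) gives $x_{2\ell-1}y_v\in I^+_{\ell-1}(x)$. Hence $x_{2\ell}y_v\in I^+_\ell(x)$, so the root's left-hand side $x_{2\ell}\sum_v y_v$ lies in $I^+_\ell(x)$, which does not contain $b$. So no satisfying assignment exists.

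The main obstacle is purely bookkeeping: matching the parity of indices of $x$ to the depths of the tree so that the one-step recursion in Definition~\ref{def:I^+} applies exactly at each constraint. A secondary point is checking that $I^+_j(x)$ is an ideal and that it absorbs the constraint weights via Step 1. Everything else is the direct induction above.
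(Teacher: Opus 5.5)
Your proposal is correct and follows the paper's proof essentially verbatim. It uses the same choice of $x\in\supp(P)^{\NN}$ and $b\in\supp(Q)$ with $b\notin I^+_{\ell^+}(x)$, and the same level-by-level weighting: $x_{2(\ell-i)}$ on the child edges and $x_{2(\ell-i)+1}$ on the parent edge of a depth-$2i$ constraint. It also runs the same leaf-to-root induction, using minimality of $\ell^+$ to absorb the non-root constraint weights.

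The only differences are cosmetic. You track the child-edge product $x_{2(\ell-i)+1}y_v\in I^+_{\ell-i}(x)$, whereas the paper tracks the parent-edge product $x_{2(\ell^+-\ell)}y_v\in I^+_{\ell^+-\ell}(x)$; these are the same statement shifted by one application of Definition~\ref{def:I^+}. You also explicitly check that each $I^+_j(x)$ is an ideal, which the paper takes for granted.
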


The lemma below allows us to establish the lower bound for the left hand side of~\eqref{eq:tree-bounds} that we desire.

\begin{lemma}\label{lem:sublinearSATcondition}
    Suppose $F$ is an acyclic factor graph, and $w$ is a weight function on $F$ with edge weights in $\supp(P)$ and constraint-vertex weights in $\supp(Q)$. Suppose that either $\ell(P,Q)=\infty$, or that $\ell(P,Q)<\infty$ and $F$ does not contain $T^+_{\ell(P,Q)}$ as a subgraph. Then $(F,w)$ is satisfiable.
\end{lemma}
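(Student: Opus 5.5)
The plan is to argue by dynamic programming on the tree, peeling from the leaves towards a root. Each connected component of $F$ is a tree, and components can be treated separately, so assume $F$ is a tree. If $F$ has no constraint vertex there is nothing to prove, so root $F$ at a constraint vertex $a_0$. For a variable vertex $u$, let $F_u$ be the subtree of $u$ and its descendants, and let $S_u\subseteq R$ be the set of values $x_u$ that extend to an assignment of $F_u$ satisfying every constraint of $F_u$ other than the one above $u$.

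The first structural observation is that, whenever $S_u\neq\emptyset$, it is a coset $t_u+J_u$ of an ideal $J_u$. The reason is that the solution set of a linear system over $R$ is a coset of the $R$-submodule of solutions of the homogeneous system, and projecting onto the coordinate $x_u$ gives a coset of an ideal.

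I would then prove, by induction from the leaves, a quantitative invariant. Say $u$ has \emph{depth deficiency} $j$ if $F_u$ (with $u$ as root) does not contain the rooted tree obtained from $T^+_{j}$ by deleting its root constraint and one of its root edges; informally, some branch below $u$ stops before completing $j$ full levels. The invariant I would aim for is:
\[
\text{if } u \text{ has deficiency } j \le \ell(P,Q), \text{ then } S_u\neq\emptyset \text{ and } J_u\supseteq \bigcap_{s\in\supp(P)}(I_{\ell(P,Q)-j}(P):s).
\]
The base case is a leaf, where $S_u=R$. For the inductive step, take a constraint $a$ below $u$ with weight $b$, edge weight $r$ on $au$, and children $u_1,\dots,u_{k-1}$ with edge weights $r_i$. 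Then $x_u\in S_u$ exactly when $b-rx_u\in\sum_i r_i(t_{u_i}+J_{u_i})$. By absence of the full $T^+$ pattern, at least one child $u_i$ has smaller deficiency. For that child,
\[
r_iJ_{u_i}\supseteq r_i\bigcap_s(I_{\cdot}:s)\supseteq I_{\cdot+1}(P),
\]
using the outer intersection over $r\in\supp(P)$ in Definition~\ref{def:I}. Since $b\in I_{\ell'}(P)$ for all $\ell'<\ell(P,Q)$, and membership in the $I$'s is preserved under the relevant multiplications, the condition on $x_u$ becomes $rx_u\in b'+I_{\cdot+1}$, i.e.\ $x_u$ ranges over a coset of $(I_{\cdot+1}:r)$. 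Intersecting over all constraints below $u$ then gives the required ideal $\bigcap_s(I:s)$. I also need to verify the monotonicity $I_\ell(P)\subseteq I_{\ell-1}(P)$, which should follow by induction from $I_0\subseteq(r)$ and the definition.

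At the root constraint $a_0$, the hypothesis that $F$ does not contain $T^+_{\ell(P,Q)}$ means some child $u$ has deficiency small enough that $r J_u\supseteq I_{\ell'}(P)$ for some $\ell'<\ell(P,Q)$. Since $b_{a_0}\in\supp(Q)\subseteq I_{\ell'}(P)$ by the definition of $\ell(P,Q)$, the root equation is solvable, and the values chosen propagate downward because each $S_u$ is defined as the set of extendable values. When $\ell(P,Q)=\infty$, the same induction applies with no deficiency needed, since $\supp(Q)\subseteq I_\ell(P)$ for every $\ell$.

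The main obstacle is getting the indices and the invariant exactly right. The subtle points are that $T^+_\ell$ requires \emph{every} leaf to be extended by one constraint (so its absence gives only one short branch per level), and that other, longer branches and the cosets $t_{u_i}$ might push the solution set off the ideal. I would first check carefully that these translates cancel modulo the ideal, so that nonemptiness uses only $b\in I_{\ell'}$. If the index bookkeeping fails, I would instead take $\mathcal I_u=\bigcap\{\text{ideals forced}\}$ and compare it directly with $I_\ell^+$, which is the sequence literally indexed by root-to-leaf paths in $T^+_\ell$, and then use Lemma~\ref{lem:I-I+} ($I_\ell\subseteq I_\ell^+$) to transfer the conclusion.
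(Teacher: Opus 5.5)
Your proposal has a genuine gap at the point you yourself flag as the ``main obstacle'': the affine part of the dynamic programme is never controlled.

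Everything you actually establish concerns the ideals $J_u$. But since all constraints carry their right-hand sides at once, $S_u$ is a coset $t_u+J_u$, and two further conditions are needed. At a constraint $a$ below $u$, nonemptiness requires $b_a-\sum_i r_it_{u_i}\in rR+\sum_i r_iJ_{u_i}$. Nonemptiness of $S_u$ also requires the cosets coming from the different constraints below $u$ to intersect.

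Knowing $b_a\in I_{\ell'}(P)\subseteq r_iJ_{u_i}$ for one short child says nothing about $\sum_i r_it_{u_i}$. Replacing $\sum_i r_iJ_{u_i}$ by the smaller ideal $I_{\cdot+1}(P)$ only describes a subset of the admissible values of $x_u$, and that subset may be empty.

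The translates do not vanish in general. The absence of $T^+_{\ell(P,Q)}$ at a constraint $a$ below $u$ may be witnessed by the branch through $u$ itself. Then all children of $a$ can be deep, $b_a\notin\sum_i r_iJ_{u_i}$ can happen, and $0\notin S_u$. For example, over $\ZZ_{p^N}$ with $\supp(P)=\{1,p\}$, all right-hand sides $p^2$, and $F$ avoiding $T^+_2$, one easily gets $S_u=p^2+(p^3)$.

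For the same reason your induction does not close. $S_u$ depends on $S_w$ for descendants $w$ that contain the full depth-$\ell(P,Q)$ branch, and your invariant asserts nothing about such $w$.

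Separately, the index in your invariant is reversed. Deeper subtrees impose more constraints, so the ideal must shrink with depth. The correct statement is $J_u\supseteq\bigcap_{s}(I_{j-2}(P):s)$, with $I_{-1}(P)=R$. Your version fails in the following example. Take $R=\ZZ_{p^N}$ and $\supp(P)=\{1,p\}$, so $I_\ell(P)=(p^{\ell+1})$, and let $\ell(P,Q)=3$. Let $u$'s subtree be the complete two-level branch, with weight $1$ on each edge from a variable down to a constraint and weight $p$ on each edge from a constraint down to a variable. Then $J_u=(p^2)\not\supseteq(p)=\bigcap_s(I_0(P):s)$.

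The missing idea is linearity. The set $\{Bx:x\in R^V\}$ is a submodule, so it suffices to show $b_ae_a$ lies in it for each constraint $a$ separately. The paper encodes exactly this through the maximal family $\{I_a\}$ with $\bigoplus_aI_a\subseteq\{Bx\}$ and Lemma~\ref{lem:maximalI_a}, followed by an induction on $\bar h_F(a)$.

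If you adopt this, root the tree at the constraint $a$ in question. All other constraints become homogeneous, so $0$ is always extendable and every $S_u=J_u$ is an ideal. No translates or coset intersections arise. Moreover, the short branch guaranteed by $F\not\supseteq T^+_{\ell}$ at $a$ is now a child of the root.

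Your downward deficiency recursion, with the corrected index, then gives $rJ_u\supseteq I_{\ell-1}(P)\supseteq\supp(Q)$ for that child, which finishes the proof. This per-constraint rooting is arguably a bit more direct than the paper's unrooted $\bar h_F$ argument, which needs the $2$-subtree machinery.
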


Note that Lemmas~\ref{lem:sublinearSATcondition} and~\ref{lem:T^+obstruction} hold for general finite commutative rings. They establish bounds on the left side of~\eqref{eq:tree-bounds} for general rings, but these bounds agree only if $\ell(P,Q)=\ell^+(P,Q)$, which is guaranteed to hold for principal ideal rings by Lemma~\ref{lem:I-I+}.

We complete the proof for Theorems~\ref{thm:nonprincipal-linear-characterisation},~\ref{thm:nonprincipal-bounds} and~\ref{thm:principal-sublinear} assuming Lemmas~\ref{lem:Tobstruction}, \ref{lem:I-I+}, \ref{lem:T^+obstruction}, and~\ref{lem:sublinearSATcondition}, whose proof will be delayed to the next section. 

\begin{proof}[Proof of Theorem~\ref{thm:nonprincipal-linear-characterisation}]
By definition of $\alpha_{P,Q}$, we have $\alpha_{P,Q}=1$ if and only if $\mc T(P,Q)=\emptyset$. If $\ell(P,Q)=\infty$, then $\mc T(P,Q)=\emptyset$ by Lemma~\ref{lem:sublinearSATcondition}. Conversely, if $\ell(P,Q)<\infty$, then $\mc T(P,Q)\neq \emptyset$ by Lemma~\ref{lem:Tobstruction}. Therefore $\mc T(P,Q)=\emptyset$ if and only if $\ell(P,Q)=\infty$.
\end{proof}
\begin{proof}[Proof of Theorem~\ref{thm:nonprincipal-bounds}]
Let $p=|\supp(P)|$ and suppose $\ell(P,Q)<\infty$. For any $T\in \mc T(P,Q)$ Lemma~\ref{lem:sublinearSATcondition} shows that $T$ contains $T^+_{\ell(P,Q)}$ as a subgraph. Thus
\begin{equation*}
    \inf\{|C(T)|:T\in \mc T(P,Q)\}\geq |C(T^+_{\ell(P,Q)})|
\end{equation*}
By definition of $T^+_{\ell(P,Q)}$, we have
\begin{equation*}
|C(T^+_{\ell(P,Q)})|=1+\sum_{i=1}^{\ell(P,Q)}k(k-1)^{i-1},
\end{equation*}
and hence
\begin{equation*}
    \alpha_{P,Q}=1-\frac{1}{\inf\{|C(T)|:T\in \mc T(P,Q)\}}\geq \frac{\sum_{i=1}^{\ell(P,Q)}k(k-1)^{i-1}}{1+\sum_{i=1}^{\ell(P,Q)}k(k-1)^{i-1}}.
\end{equation*}
This proves the lower bound in Theorem~\ref{thm:nonprincipal-bounds}. For the upper bound in Theorem~\ref{thm:nonprincipal-bounds}, Lemma~\ref{lem:Tobstruction} shows that $(T_\ell(P,Q),w)\in \mc T(P,Q)$ for some weight function $w$. Hence
\begin{equation*}
\inf\{|C(T)|:T\in \mc T(P,Q)\}\leq |C(T_{\ell(P,Q)})|.    
\end{equation*}
By definition of $T_{\ell(P,Q)}$, we have
\begin{equation*}
    |C(T_{\ell(P,Q)})|=1+\sum_{i=1}^{\ell(P,Q)}k(k-1)^{i-1}p^{2i},
\end{equation*}
and hence
\begin{equation*}
\alpha_{P,Q}\leq\frac{\sum_{i=1}^{\ell(P,Q)}k(k-1)^{i-1}p^{2i}}{1+\sum_{i=1}^{\ell(P,Q)}k(k-1)^{i-1}p^{2i}}.
\end{equation*}
\end{proof}
\begin{proof}[Proof of Theorem~\ref{thm:principal-sublinear}] The case $\ell(P,Q)=\infty$ follows immediately from Theorems~\ref{thm:nonprincipal-sublinear} and~\ref{thm:nonprincipal-linear-characterisation}.
Suppose that $\ell(P,Q)<\infty$. By Theorem~\ref{thm:nonprincipal-linear-characterisation}, $\mc T(P,Q)\neq \emptyset$. Since $R$ is principal, Lemma~\ref{lem:I-I+} shows that $\ell^+(P,Q)=\ell(P,Q)<\infty$. By the definition of $(T_{\ell}^+)_{\ell\ge 0}$, we have
\[
|C(T^+_{\ell(P,Q)})|=1+\sum_{i=1}^{\ell(P,Q)}k(k-1)^{i-1}.
\]
In light of~\eqref{eq:tree-bounds}, it suffices to show that
\begin{equation*}
    \inf\{|C(T)|:T\in \mc T(P,Q)\}=|C(T^+_{\ell(P,Q)})|.
\end{equation*}
By Lemma~\ref{lem:T^+obstruction}, there is a weight function $w$ such that $(T^+_{\ell(P,Q)},w)$ is an obstruction to~\eqref{eq:system}, which implies that $\inf\{|C(T)|:T\in \mc T(P,Q)\}\leq|C(T^+_{\ell(P,Q)})|$. For the reverse inequality, suppose that $(T,w)\in \mc T(P,Q)$. By Lemma~\ref{lem:sublinearSATcondition}, $T$ contains $T^+_{\ell(P,Q)}$ as a subgraph, and in particular $|C(T)|\geq|C(T^+_{\ell(P,Q)})|$. Therefore $\inf\{|C(T)|:T\in \mc T(P,Q)\}\geq|C(T^+_{\ell(P,Q)})|$. This completes the verification of~\eqref{eq:tree-bounds} and thus confirms Theorem~\ref{thm:principal-sublinear}.
\end{proof}

We close this section with an example that demonstrates that $I_\ell(P)$ is in general not equal to $ I_\ell^+(P)$. 
\begin{example}\label{example:nonprincipalI,I+}
    Let $R=\FF_q[X,Y,Z]/(XY-ZY,XY-XZ,X^3,Y^3,Z^3)$. Let $P$ be such that $\supp(P)=\{X,Y,Z\}$. Then
    \begin{gather*}
    I_0(P)=(XY),\quad I_\ell(P)=(XYZ),\quad \ell\geq 1,\\
    I_\ell^+(P)=(XY),\quad \ell\geq 0.
    \end{gather*}
    Consequently, if $\supp(Q)\subset(XY)$ and $\supp(Q)\not\subset (XYZ)$, then $\ell(P,Q)=1$ and $\ell^+(P,Q)=\infty$.
\end{example}
\begin{proof}
We have
    \begin{equation*}
        I_0(P)=(X)\cap (Y)\cap (Z)=(XY)
    \end{equation*}
and
    \begin{equation*}
        I_1(P)=(XA_1(P))\cap (YA_1(P))\cap (ZA_1(P)),\quad A_1(P)=\{t\in R:Xt,Yt,Zt\in (XY)\}=(XY,X^2,Y^2,Z^2).
    \end{equation*}
    Therefore $I_1(P)=(XYZ)$.
    
    \begin{equation*}
        I_2(P)=(XA_2(P))\cap (YA_2(P))\cap (ZA_2(P)),\quad A_2(P)=\{t\in R:Xt,Yt,Zt\in (XYZ)\}=(XY,X^2,Y^2,Z^2)
    \end{equation*}
    Therefore $I_\ell(P)=(XYZ)$ for $\ell\geq 1$.
    
    Now consider $I_\ell(P)$. Given $x\in \supp(P)^\NN$, we show that $I^+_\ell(x)\supset (XY)$ by induction on $\ell\geq 0$. For $\ell=0$, we have $I_0^+(x)=(x_0)\supset (XY)$. Supposing that $\ell>0$ and $I_{\ell-1}(x)\supset (XY)$, we have
    \begin{equation*}
        I_\ell^+(x)\supset \{x_{2\ell}t:x_{2\ell-1}t\in (XY)\}.
    \end{equation*}
    Since $\supp(P)=\{X,Y,Z\}$, there is a $y\in \supp(P)\setminus \{x_{2,\ell},x_{2\ell-1}\}$. Then $x_{2\ell-1}y\in (XY)$, so $I_\ell^+(x)\supset (x_{2\ell}y)=(XY)$. This completes the induction step. Taking an intersection over $x\in \supp(P)^\NN$ then gives $I_\ell^+(P)\supset (XY)$ for all $\ell\geq 0$. For the reverse inclusion, we have $I_0^+(P)=(X)\cap (Y)\cap (Z)=(XY)$, so $I_\ell^+(P)\subset (XY)$ for all $\ell\geq 0$ by Observation~\ref{obs:I^-monotone}. Therefore $I^+_\ell(P)=(XY)$ for $\ell\geq 0$.
\end{proof}

\subsection{Proofs of Lemmas~\ref{lem:Tobstruction},~\ref{lem:sublinearSATcondition},~\ref{lem:T^+obstruction}, and~\ref{lem:I-I+}}

We begin by proving Lemma~\ref{lem:I-I+} and some related facts about the sequences $\{I_\ell(P)\}_{\ell\geq 0},\{I_\ell^+(P)\}_{\ell\geq 0}$. We will need the following basic facts about finite commutative rings, which will also be useful in section~\ref{sec:linearprincipal}.

\begin{definition}
    A commutative ring $R$ is local if it has a unique maximal ideal.
\end{definition}
\begin{lemma}\label{lem:localproduct}
    Let $R$ be a finite commutative ring. Then there are finite commutative local rings $R_1,\ldots, R_h$ such that
    \begin{equation*}
        R \cong R_1\times R_2\times \cdots \times R_h.
    \end{equation*}
If $R$ is principal then each $R_j$ is principal.
\end{lemma}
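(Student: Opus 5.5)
We prove the structure theorem for finite commutative rings and then check that principality passes to the factors. The first step uses that $R$ is finite, hence Artinian: every descending chain of ideals stabilises, because there are only finitely many ideals. A finite commutative ring has only finitely many prime ideals, and each prime is maximal, since $R/\mathfrak p$ is a finite integral domain and hence a field. Let $M_1,\ldots,M_h$ be the distinct maximal ideals. The Jacobson radical $J=M_1\cap\cdots\cap M_h$ equals the nilradical, so it is nilpotent: $J$ is finitely generated by nilpotent elements, so some power $J^N=0$. The $M_j$ are pairwise comaximal, hence so are their powers $M_j^N$. We have $\prod_j M_j^N\subset J^N=0$. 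The Chinese Remainder Theorem then gives
\[
R\cong R/\Big(\bigcap_j M_j^N\Big)\cong \prod_{j=1}^h R/M_j^N ,
\]
using that $\bigcap_j M_j^N=\prod_j M_j^N$ for pairwise comaximal ideals.

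The second step shows that each $R_j:=R/M_j^N$ is local and finite. It is finite as a quotient of $R$. Its maximal ideals correspond to the maximal ideals of $R$ containing $M_j^N$. If $M_i\supset M_j^N$, then since $M_i$ is prime it contains $M_j$, so $M_i=M_j$ by maximality. Hence $M_j/M_j^N$ is the unique maximal ideal of $R_j$.

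The third step handles principality. Every ideal of a quotient $R/I$ has the form $\mathfrak a/I$ for an ideal $\mathfrak a\supset I$. If $\mathfrak a=(a)$, then $\mathfrak a/I=(a+I)$. So quotients of principal rings are principal, and each $R_j$ is principal. Alternatively, one can argue directly from the product decomposition: an ideal of $R_1\times\cdots\times R_h$ is a product $\mathfrak a_1\times\cdots\times\mathfrak a_h$ of ideals, and projecting a generator onto the $j$th coordinate generates $\mathfrak a_j$.

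No step is a real obstacle, since this is the standard structure theorem for Artinian rings. The only point needing care is justifying nilpotency of $J$ and the comaximality of the powers $M_j^N$. For comaximality, $M_i+M_j=R$ implies $M_i^N+M_j^N=R$: raise $1=x+y$ with $x\in M_i$, $y\in M_j$ to the power $2N$ and sort the binomial terms. One could also simply cite the Artin structure theorem (Atiyah–Macdonald, Thm. 8.7).
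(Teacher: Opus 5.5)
Your proof is correct and complete. Finiteness makes every prime maximal and the Jacobson radical $J$ nilpotent. Comaximality of the powers $M_j^N$ together with the Chinese Remainder Theorem then gives $R\cong\prod_j R/M_j^N$ with each factor local, and principality passes to quotients. The paper does not prove this lemma and simply cites Bini--Flamini; your argument is the standard textbook proof that such a citation stands in for (cf.\ Atiyah--Macdonald, Thm.~8.7), so it makes the lemma, including the principality clause used later, self-contained.
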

\begin{lemma}\label{lem:rings-ideals}
Let $R$ be a finite commutative local principal ring with maximal ideal $M$. Then every ideal of $R$ is of the form $M^i$ for some $i\geq 0$. 

\end{lemma}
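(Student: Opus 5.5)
The statement to prove is Lemma~\ref{lem:rings-ideals}: every ideal of a finite local principal ring $(R,M)$ is a power of $M$. I would argue directly from the two structural facts available here: $M$ is nilpotent, and $M$ is principal.

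\textbf{Step 1: $M$ is nilpotent.} Since $R$ is finite, the chain $M\supseteq M^2\supseteq\cdots$ stabilises, say $M^N=M^{N+1}$. Every element of $M$ is nilpotent. Indeed, if $x\in M$, the powers $x^i$ eventually repeat, say $x^a=x^{a+b}$ with $b\ge 1$. Then $x^a(1-x^b)=0$, and $1-x^b$ is a unit because $x^b\in M$ and $R$ is local. Hence $x^a=0$. Since $M$ is principal, write $M=(\pi)$. Then $M^i=(\pi^i)$, and $\pi^a=0$ gives $M^a=0$.

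\textbf{Step 2: every nonzero element is a unit times a power of $\pi$.} Let $0\neq r\in R$, and choose the largest $i$ with $r\in M^i$; this exists because $M^0=R$ and $M^a=0$. Write $r=\pi^i u$. If $u\in M$, then $r\in M^{i+1}$, contradicting maximality of $i$. So $u\notin M$, and $u$ is a unit since $R$ is local. Therefore $(r)=(\pi^i)=M^i$.

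\textbf{Step 3: conclude.} Any ideal $I$ is principal, say $I=(r)$. If $r=0$, then $I=0=M^a$. Otherwise $I=M^i$ by Step~2.

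There is no genuine obstacle here. The only point needing care is Step~1, which ensures the maximal $i$ in Step~2 exists. This is exactly where finiteness of $R$ (equivalently, Artinianity) is used.
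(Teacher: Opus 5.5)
Your proof is correct and complete. The paper does not prove this lemma; it quotes it as a standard fact about finite rings, placed next to the Bini--Flamini citation for Lemma~\ref{lem:localproduct}. So there is no written argument to compare against, and yours fills in what the paper leaves to the literature.

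Your Steps 1 and 2 actually prove slightly more than the statement. They give exactly the extra facts the paper later draws from this lemma without comment:
\begin{itemize}
\item the nilpotency $M^\nu=0$, invoked in the proof of Lemma~\ref{lem:oneideal};
\item the description of each nonzero element as a unit times $\pi^i$, which the proof of Claim~\ref{claim:reduction}(a) relies on when it asserts $s\notin M_i$ whenever $r=sg^{\ell}$ and $rR_i=M_i^{\ell}$.
\end{itemize}

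One cosmetic point: the stabilisation $M^N=M^{N+1}$ at the start of Step 1 is never used. Either delete it, or use it to finish Step 1 by Nakayama's lemma in place of your elementwise argument: $M^N=M\cdot M^N$ with $M^N$ finitely generated forces $M^N=0$. Both routes work.
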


A proof of~\ref{lem:localproduct} can be found in~\cite{Bini2002}. The following key fact about principal rings allows us to prove the equality in Lemma~\ref{lem:I-I+}.

\begin{lemma}\label{lem:principalscommute}
    Let $\{I_a\}_{a\in C}$ be a collection of ideals of a finite commutative ring $R$, and let $r\in R$. Then
    \begin{eqnarray*}
        r\bigcap_{a\in C}I_a&\subseteq&\bigcap_{a\in C} rI_a,\\
        r\bigcap_{a\in C}I_a&=&\bigcap_{a\in C} rI_a,\quad \text{if $R$ is principal}.
    \end{eqnarray*}
\end{lemma}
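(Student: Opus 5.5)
The inclusion $r\bigcap_a I_a\subseteq\bigcap_a rI_a$ holds in any commutative ring and is proved directly. If $y\in r\bigcap_a I_a$, write $y=rt$ with $t\in I_a$ for every $a$. Then $y\in rI_a$ for each $a$, so $y\in\bigcap_a rI_a$. No finiteness or principality is used for this direction.

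For the reverse inclusion when $R$ is principal, I will first reduce to the local case using Lemma~\ref{lem:localproduct}. Write $R\cong R_1\times\cdots\times R_h$ with each $R_j$ local and principal. Every ideal of $R$ is a product $J_1\times\cdots\times J_h$ of ideals $J_j\subseteq R_j$. Multiplication by $r=(r_1,\ldots,r_h)$ and intersections of ideals both act coordinatewise. Hence both sides of the claimed identity split coordinatewise, and it suffices to prove the identity in each $R_j$.

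Now assume $R$ is local and principal with maximal ideal $M=(\pi)$. By Lemma~\ref{lem:rings-ideals} every ideal has the form $M^i$, and these ideals form a chain $R=M^0\supseteq M^1\supseteq\cdots\supseteq M^N=0$. Since $C$ is a family of ideals in a finite ring, only finitely many distinct ideals occur among the $I_a$. Because the ideals form a chain, $\bigcap_a I_a$ equals the smallest one, say $I_{a_0}$. Then
\[
r\bigcap_a I_a = rI_{a_0}\supseteq\bigcap_a rI_a,
\]
which gives the reverse inclusion. Combined with the first part, this proves equality.

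The main point needing care is the product decomposition step. One must check that every ideal of a finite product of rings is a product of ideals of the factors; this holds because the idempotents $e_j$ lie in $R$, so $J=\prod_j e_jJ$. One must also check that $rI$ and $\bigcap_a I_a$ are computed componentwise. Both are routine.

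An alternative that avoids the decomposition is to note that in a principal ring the lattice of ideals of each localization is a chain, and that equality of ideals can be checked locally. For finite rings, however, the product decomposition is the cleanest route. Lemma~\ref{lem:rings-ideals} supplies the chain structure, and that chain structure is the only substantive ingredient.
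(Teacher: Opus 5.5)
Your proposal is correct and follows the paper's proof: you prove the first inclusion directly, then reduce to a local principal ring via Lemma~\ref{lem:localproduct}, where the ideals form a chain, so $\bigcap_a I_a=I_{a_0}$ is the smallest one and $\bigcap_a rI_a\subseteq rI_{a_0}$. Your check that ideals, products $rI$ and intersections all split componentwise fills in a step the paper leaves implicit. Like the paper, the argument tacitly assumes $C\neq\emptyset$.
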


\begin{proof}
    If $s\in \bigcap_{a\in C} I_a$, then $rs\in \bigcap_{a\in C} I_a$ for all $a\in C$. Therefore the first assertion holds.

    For the second assertion, by Lemma~\ref{lem:localproduct} it suffices to consider local principal rings. Suppose $R$ is a local ring with maximal ideal $M$. By Lemma~\ref{lem:rings-ideals}, each $I_a$ is of the form $M^{i}$ for some $i\geq 0$. Therefore there is a smallest ideal $I^*\in \{I_a\}_{a\in C}$ by inclusion, and $\bigcap_{a\in C} I_i=I^*$. We also have $rI^*\subset rI_a$ for all $a\in C$, and so $\bigcap_{a\in C}rI_a=rI^*$. Therefore $r\bigcap_{a\in C}I_a=\bigcap_{a\in C}rI_a$.
\end{proof}

\begin{proof}[Proof of Lemma~\ref{lem:I-I+}]
    Both claims can be proved by induction on $\ell\geq 0$. We first show that $I_\ell(P)\subset I_\ell^+(P)$ for $\ell\geq 0$ without assuming that $R$ is principal. For $\ell=0$, we have
    \begin{equation*}
    I_0(P)=\bigcap_{r\in\supp(P)}rR=\bigcap_{x\in \supp(P)^\NN}I^{+}_0(x)=I_0^+(P).
    \end{equation*}
    Suppose $\ell>0$ and that the claim holds for $\ell-1$. We have
        \begin{align}
        \bigcap_{x\in \supp(P)^\NN} I_\ell^+(x)&=\bigcap_{x\in\supp(P)^\NN}\{x_{2\ell}t:x_{2\ell-1}t\in I_{\ell-1}^+(x),t\in R\}\nonumber\\
&=\bigcap_{r\in\supp(P)}\bigcap_{s\in\supp(P)}\bigcap_{x\in \supp(P)^\NN}\{rt:st\in I^+_{\ell-1}(x),t\in R\}\nonumber\\        &\supset\bigcap_{r\in\supp(P)}r\left(\bigcap_{s\in\supp(P)}\bigcap_{x\in \supp(P)^\NN}\{t\in R:st\in I_{\ell-1}^+(x)\}\right)\label{eq:I+>I-ineq}\\
&=\bigcap_{r\in\supp(P)}r\left(\bigcap_{s\in\supp(P)}\{t\in R:st\in I_{\ell-1}^+(P)\}\right)
    \end{align}
Using the induction hypothesis,
    \begin{align}
        \bigcap_{r\in\supp(P)}r\left( \bigcap_{s\in\supp(P)}\lt\{t\in R:st\in I_{\ell-1}^+(P)\rt\} \right)&\supset \bigcap_{r\in\supp(P)}r\left( \bigcap_{s\in\supp(P)}\lt\{t\in R:st\in I_{\ell-1}(P)\rt\}\right)\label{eq:I+>I-IH}\\
        &=I_\ell(P).\nonumber
    \end{align}
    Therefore $I_\ell(P)\subset I_\ell^+(P)$ for all $\ell\geq 0$.
 If $R$ is principal, then     by Lemma~\ref{lem:principalscommute}, \eqref{eq:I+>I-ineq} holds with equality. Consequently, $I_\ell(P)=I_\ell^+(P)$ for $\ell\geq 0$ follows by induction with~\eqref{eq:I+>I-IH} assumed as an equality in the inductive hypothesis.
    \end{proof}

The following further observation about $\{I_\ell(P)\}_{\ell\geq 0}$ and $\{I_\ell^+(P)\}_{\ell\geq 0}$ will be useful in the proofs of Lemmas~\ref{lem:sublinearSATcondition},~\ref{lem:T^+obstruction}, and~\ref{lem:Tobstruction}. 
\begin{observation}\label{obs:I^-monotone}
    Let $P$ be a distribution on a finite commutative ring $R$. Then for all $\ell\geq 0$, $I_\ell(P)\supset I_{\ell+1}(P)$, and $I_\ell^+(P)\supset I_{\ell+1}^+(P)$.
\end{observation}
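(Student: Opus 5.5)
We argue by induction on $\ell$, proving both chains of inclusions together. The only tool needed is that ideal quotients and intersections are monotone in their arguments: if $J\subset J'$, then $(J:s)\subset(J':s)$ for every $s$, and so $r\bigcap_s (J:s)\subset r\bigcap_s(J':s)$.

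\emph{The sequence $I_\ell(P)$.} For the base case $\ell=0$, fix $t\in I_0(P)$ and any $s\in\supp(P)$. Since $I_0(P)$ is an ideal, $st\in I_0(P)$, so $t\in (I_0(P):s)$. Hence $I_0(P)\subset\bigcap_{s}(I_0(P):s)$. On the other hand, $I_1(P)=\bigcap_r r\bigl(\bigcap_s (I_0(P):s)\bigr)$, and for each $r$ we have $r\bigl(\bigcap_s(I_0(P):s)\bigr)\subset rR=(r)$. Intersecting over $r$ gives $I_1(P)\subset\bigcap_r(r)=I_0(P)$.

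For the inductive step, assume $I_{\ell}(P)\subset I_{\ell-1}(P)$. Monotonicity gives $(I_\ell(P):s)\subset(I_{\ell-1}(P):s)$ for every $s$. Therefore
\[
I_{\ell+1}(P)=\bigcap_r r\Bigl(\bigcap_s(I_\ell(P):s)\Bigr)\subset\bigcap_r r\Bigl(\bigcap_s(I_{\ell-1}(P):s)\Bigr)=I_\ell(P).
\]

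\emph{The sequence $I^+_\ell(P)$.} Here we prove the stronger pointwise statement $I^+_{\ell+1}(x)\subset I^+_\ell(x)$ for each fixed $x\in\supp(P)^\NN$, again by induction on $\ell$. Intersecting over $x$ then gives the claim.

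For $\ell=0$, every element of $I^+_1(x)$ has the form $x_2t\in(x_2)$. This alone does not place it in $(x_0)=I^+_0(x)$, so the pointwise inclusion can fail here. We therefore work directly with the intersection: the set $\{x_2t : x_1t\in (x_0)\}$ is contained in $(x_2)$. Because $x$ ranges over all sequences, $x_2$ ranges over all of $\supp(P)$, so
\[
I^+_1(P)\subset\bigcap_{r\in\supp(P)}(r)=I^+_0(P).
\]

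For the step from $\ell$ to $\ell+1$ with $\ell\ge 1$, assume $I^+_\ell(x)\subset I^+_{\ell-1}(x)$ for all $x$. We cannot compare $I^+_{\ell+1}(x)$ and $I^+_\ell(x)$ directly, because they use different coordinates of $x$. To handle this, let $\tau x=(x_2,x_3,\dots)$ be the shift by two. Unwinding the recursion shows that $I^+_{\ell+1}(x)$ is the same construction as $I^+_\ell(\tau x)$, except that the innermost base ideal is $\{x_2t : x_1t\in(x_0)\}$ rather than $(x_2)$. Since $\{x_2t : x_1t\in(x_0)\}\subset(x_2)$, monotonicity of the recursion in its base ideal gives $I^+_{\ell+1}(x)\subset I^+_\ell(\tau x)$. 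The shift $\tau$ maps $\supp(P)^\NN$ onto itself, so intersecting over $x$ yields
\[
I^+_{\ell+1}(P)\subset\bigcap_x I^+_\ell(\tau x)=I^+_\ell(P).
\]
This shift argument in fact proves the whole $I^+$ chain at once, without the induction hypothesis.

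The main obstacle is this index bookkeeping for $I^+$, namely choosing to compare $I^+_{\ell+1}(x)$ with $I^+_\ell(\tau x)$ rather than with $I^+_\ell(x)$. Once that comparison is set up, the argument reduces to the same monotonicity used for $I_\ell(P)$.
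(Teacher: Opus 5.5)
Your proof is correct, but it takes a different route from the paper's.

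\textbf{How the paper argues.} The paper never inducts. For each $r\in\supp(P)$ it takes $s=r$ in the inner intersection, which gives $I_{\ell+1}(P)\subseteq r\,(I_\ell(P):r)=\{rt: rt\in I_\ell(P)\}\subseteq I_\ell(P)$. In other words, the operator $J\mapsto\bigcap_r r\bigcap_s(J:s)$ is deflationary on every ideal $J$, so the chain follows at once. For $I^+$ the paper argues the same way at the outermost layer, writing $I^+_{\ell+1}(P)=\bigcap_x\{x_{2\ell+2}t: x_{2\ell+1}t\in I^+_\ell(x)\}\subseteq I^+_\ell(P)$. This implicitly uses the sequences with $x_{2\ell+1}=x_{2\ell+2}$, together with the fact that $I^+_\ell(x)$ depends only on $x_0,\dots,x_{2\ell}$.

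\textbf{How you argue.} You compare at the innermost layer instead. For $I_\ell$ you prove only $I_1\subseteq I_0$ directly and propagate it by monotonicity of the operator in $J$. For $I^+$, your shift $\tau$ reduces everything to $I^+_1(x)\subseteq (x_2)=I^+_0(\tau x)$ plus monotonicity of $J\mapsto\{bt: at\in J\}$.

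\textbf{Comparison.} The paper's route is shorter and handles both sequences with the same diagonal trick. Yours needs no diagonal choice, and it makes explicit the index bookkeeping that the paper's one-line $I^+$ step leaves to the reader.

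\textbf{One cosmetic fix.} In the $I^+$ part you announce a pointwise inclusion $I^+_{\ell+1}(x)\subseteq I^+_\ell(x)$ and later assume it as an inductive hypothesis. That inclusion is false in general, as your own $\ell=0$ remark shows. Your shift argument never uses it, so delete that framing and present the $\tau$-argument as a direct proof valid for all $\ell\ge 0$.
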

\begin{proof}
    Let $\ell\geq 0$. For any $r\in \supp(P)$, we have
    \begin{equation*}
        I_{\ell+1}(P)\subseteq \{rt:rt\in I_{\ell}(P)\} \subseteq I_{\ell}(P).
    \end{equation*}
On the other hand,
    \begin{align*}
        I_{\ell+1}^+(P)&=\bigcap_{x\in \supp(P)^\NN}I_{\ell+1}^+(x)=\bigcap_{x\in\supp(P)^\NN}\{x_{2\ell+2}t:x_{2\ell+1}t\in I_{\ell}^+(x), t\in R\} \subseteq I^+_{\ell}(P).
    \end{align*}
\end{proof}

\begin{definition}
    Let $F$ be a factor graph where every constraint vertex has degree $k$. A $2$-subtree of $F$ is a connected acyclic subgraph of $F$ in which every constraint node has degree $k$ and every variable node has degree at most $2$. Let $\mc T_2(F)$ denote the set of $2$-subtrees of $F$.
\end{definition}
Recall that for a tree $T$, $L(T)$ denotes the set of leaves of $T$.

\begin{definition}\label{def:2-subtreeheight}
    Let $F$ be an acyclic factor graph. For $T\in \mc T_2(F)$ and $a\in C(T)$, define
    \begin{equation*}
        h_T(a)=\min\{(d_T(a,v)-1)/2:v\in L(T)\},
    \end{equation*}
    where $d_T$ is the usual graph distance metric in the subgraph $T$. For $a\in C(F)$, define
    \begin{equation*}
        \bar h_{F}(a)=\max \{h_T(a): T\in \mc T_2(F)\}.
    \end{equation*}

\end{definition}

\begin{lemma}\label{lem:maximalI_a}
    Given $B\in R^{C\times V}$, let $\{I_a\}_{a\in C}$ be the unique maximal collection of ideals such that
    \begin{equation*}
        \bigoplus_{a\in C} I_a\subset \{Bx:x\in R^V\}.
    \end{equation*}
    Then $\{I_a\}_{a\in C}$ satisfies
    \begin{equation*}
        I_a\supset B_{a,v}\bigcap_{a'\in C\setminus \{a\}}\{t\in R: B_{a',v}t\in I_{a'}\}
    \end{equation*}
    for all $a,\in C$, $v\in V$.
\end{lemma}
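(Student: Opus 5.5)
The plan is to identify the maximal collection explicitly and then verify the inclusion by applying $B$ to a single vector supported on the column $v$.

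First I will show that the maximal collection exists, is unique, and has the form
\begin{equation*}
I_a=\{t\in R:\ te_a\in \operatorname{Im}B\},\qquad \operatorname{Im}B=\{Bx:x\in R^V\}.
\end{equation*}
Here $e_a\in R^C$ is the standard basis vector of the row $a$. The key observation is that $\operatorname{Im}B$ is an $R$-submodule of $R^C$, i.e.\ closed under addition and under multiplication by scalars in $R$.
\begin{itemize}
\item Each $I_a$ defined above is an ideal, because of this submodule property.
\item $\bigoplus_{a\in C}I_a\subset\operatorname{Im}B$: any element of the direct sum is a finite sum $\sum_a t_ae_a$ with $t_ae_a\in\operatorname{Im}B$.
\item Conversely, if $\{J_a\}_{a\in C}$ is any collection with $\bigoplus_a J_a\subset\operatorname{Im}B$, then $te_a\in\operatorname{Im}B$ for every $t\in J_a$, so $J_a\subset I_a$.
\end{itemize}
Hence $\{I_a\}$ dominates every admissible collection, which gives both maximality and uniqueness.

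Next I will prove the inclusion. Fix $a\in C$ and $v\in V$, and take $t\in R$ with $B_{a',v}t\in I_{a'}$ for all $a'\in C\setminus\{a\}$. Let $x=te_v\in R^V$. Then $Bx\in\operatorname{Im}B$ is the column $v$ of $B$ scaled by $t$, that is,
\begin{equation*}
Bx=\sum_{a''\in C}B_{a'',v}t\,e_{a''}.
\end{equation*}
For each $a'\neq a$ we have $B_{a',v}t\in I_{a'}$, so $B_{a',v}t\,e_{a'}\in\operatorname{Im}B$ by the definition of $I_{a'}$. Subtracting these finitely many vectors from $Bx$ and using closure under addition, I obtain $B_{a,v}t\,e_a\in\operatorname{Im}B$. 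By the definition of $I_a$ this means $B_{a,v}t\in I_a$.

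Since $t$ was an arbitrary element of $\bigcap_{a'\neq a}\{t\in R:B_{a',v}t\in I_{a'}\}$, this gives
\begin{equation*}
B_{a,v}\bigcap_{a'\in C\setminus\{a\}}\{t\in R:B_{a',v}t\in I_{a'}\}\subset I_a,
\end{equation*}
which is the claimed inclusion.

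I do not expect a serious obstacle. The only point needing care is the first step: the phrase ``unique maximal collection'' must be matched with the explicit description of $I_a$. Once that identification is in place, the inclusion is a one-line linear-algebra computation over $R$ with the vector $te_v$.
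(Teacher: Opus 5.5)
Your proof is correct and is essentially the paper's argument. The paper also takes $t$ in the intersection, writes each $B_{a',v}t\,e_{a'}$ as $Bx^{(a')}$, and applies $B$ to $te_v-\sum_{a'\neq a}x^{(a')}$ to obtain $B_{a,v}t\,e_a\in\{Bx:x\in R^V\}$. Your explicit identification $I_a=\{t\in R: te_a\in\{Bx:x\in R^V\}\}$ simply spells out the existence of the maximal collection and the step the paper compresses into ``by maximality''.
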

\begin{proof}
    Let $a\in C$ and $v\in V$. Let $t\in R$ be such that $B_{a',v}t\in I_{a'}$ for all $a'\in C\setminus \{a\}$. We show that $B_{a,v}t e_a\in \{Bx:x\in R^V\}$, which implies that $B_{a,v}t\in I_a$ by maximality of $\{I_a\}_{a\in C}$. 
    
    For each $a'\in C\setminus\{a\}$, since $B_{a',v}t\in I_{a'}$, there is an $x^{(a')}\in R^V$ such that $Bx^{(a')}=B_{a',v}te_{a'}$. Letting
    \begin{equation*}
        x=te_v-\sum_{a'\in C\setminus \{a\}}x^{(a')}
    \end{equation*}
    we then have
    \begin{equation*}
        Bx=Bte_v-\sum_{a'\in C\setminus \{a\}}B_{a',v}te_{a'}=B_{a,v}te_a.
    \end{equation*}
    Therefore $B_{a,v}t e_a\in \{Bx:x\in R^V\}$, as desired.
\end{proof}

\begin{lemma}\label{lem:h(a)bound}
    Let $F$ be a connected acyclic factor graph and let $\ell^*\geq 0$. If $F$ does not contain $T^+_{\ell^*}$, then $\bar h_{F}(a)<\ell^*$ for all $a\in C(F)$.
\end{lemma}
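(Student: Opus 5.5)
We argue by contraposition: if $\bar h_F(a)\geq \ell^*$ for some $a\in C(F)$, then $F$ contains a copy of $T^+_{\ell^*}$. By definition of $\bar h_F$, there is a $2$-subtree $T\in\mc T_2(F)$ with $h_T(a)\geq \ell^*$. Then every leaf $v\in L(T)$ satisfies $d_T(a,v)\geq 2\ell^*+1$. The plan is to find $T^+_{\ell^*}$ inside $T$, rooted at $a$, by induction on the radius.

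Two structural facts about $T$ drive the induction. First, every constraint node of $T$ has degree exactly $k$ in $T$. Second, every variable node of $T$ has degree $1$ or $2$, and those of degree $1$ are exactly the leaves of $T$ (a constraint node has degree $k\geq 3$, so it is never a leaf).

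We prove, by induction on $j=0,1,\ldots,\ell^*$, that $T$ contains a copy of $T^+_j$ rooted at $a$. In this copy, the leaves of $T^+_j$ are variable nodes at distance $2j+1$ from $a$, and each internal vertex uses all of its $T$-neighbours away from $a$.

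\textbf{Base case $j=0$.} Take $a$ together with its $k$ neighbours in $T$. These are at distance $1$ from $a$.

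\textbf{Inductive step.} Suppose $j<\ell^*$ and we have a copy of $T^+_j$. Let $v$ be a leaf of this copy, so $d_T(a,v)=2j+1<2\ell^*+1$. Since every leaf of $T$ is at distance at least $2\ell^*+1$ from $a$, $v$ is not a leaf of $T$, and hence has degree exactly $2$ in $T$. One neighbour of $v$ is its parent toward $a$; the other is a constraint node $a_v$ at distance $2j+2$ from $a$. The node $a_v$ has degree $k$ in $T$, so it has $k-1$ further variable neighbours, all at distance $2j+3$ from $a$.

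Because $T$ is acyclic, the nodes $a_v$ and their children are distinct for distinct $v$, and they are disjoint from the existing copy. Adding them to the copy of $T^+_j$ gives exactly the construction of $T^+_{j+1}$ from $T^+_j$: one new constraint node at each leaf, each with $k-1$ new variable children.

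At $j=\ell^*$ we obtain $T^+_{\ell^*}\subseteq T\subseteq F$, which contradicts the hypothesis. Hence $\bar h_F(a)<\ell^*$ for every $a\in C(F)$.

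The only delicate point is the degree bookkeeping in the inductive step. A non-leaf variable node of a $2$-subtree has degree exactly $2$, and that is what makes the extension from each leaf of the copy a single constraint node. The distance bound $2j+1<2\ell^*+1$ guarantees we never reach a leaf of $T$ too early. Connectedness of $F$ is not actually needed.
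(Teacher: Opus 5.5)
Your proof is correct and follows the paper's route. Both argue by contraposition, fix a $2$-subtree $T$ with $h_T(a)\ge\ell^*$, and show by induction that the radius-$(2j+1)$ ball around $a$ in $T$ is a copy of $T^+_j$; the key step is that a variable node at distance $2j+1<2\ell^*+1$ cannot be a leaf, so it has degree exactly $2$. The only difference is cosmetic: the paper picks a maximizing $T^*$ with $h_{T^*}(a)=\bar h_F(a)$ and argues by contradiction with that choice, while you use the bound $h_T(a)\ge\ell^*$ directly, which is slightly cleaner.
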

\begin{proof}
    We prove the contrapositive. Suppose there is an $a\in C(F)$ such that $\bar h_F(a)\geq \ell^*$. Let $T^*\in \mc T_2(F)$ be such that $h_{T^*}(a)=\bar h_{F}(a)$, and for each $0\le\ell\le \ell^*$ let $T^*_{\ell}$ be the subgraph of $T^*$ induced by the vertices $\{u\in V(T^*)\cup C(T^*): d_{T^*}(a,u)\leq 2\ell +1\}$. We  show that $T^*_{\ell}\cong T^+_{\ell}$ for  all $0\le \ell\le \ell^*$ by induction on $\ell$. Note that $T^*_{\ell^*} \cong T^+_{\ell^*}$ implies the contrapositive of the lemma.
    
    The graph $T^*_0$ is the subgraph induced by $a$ and its $k$ adjacent variable vertices, and so trivially $T^*_0\cong T^+_0$. 

    Suppose $1\leq \ell\leq \ell^*$ and that $T^*_{\ell-1}\cong T^+_{\ell-1}$. Then by definition of $T^+_{\ell-1}$, $d_{T^*}(a,u)=2(\ell-1)+1$ for all $u\in L(T^*_{\ell-1})$. If there is a $u\in L(T^*)\cap L(T^*_{\ell-1})$ (i.e.\ if some leaf in $T^*_{\ell-1}$ happens to be a leaf in $T^*$), then $h_{T^*}(a)\leq \ell-1<\bar h_F(a)$, which contradicts the choice of $T^*$. Therefore, since $T^*$ is a 2-subtree of $F$, all $u\in L(T^*_{\ell-1})$ must have degree 2 in $T^*$. Let $a_u$ be the neighbour of $u\in L(T^*_{\ell-1})$ that is in $T^*\setminus T^*_{\ell-1}$. Then, $T^*_{\ell}$ is obtained from $T^*_{\ell-1}$ by adding $a_u$ for each $u\in L(T^*_{\ell-1})$ along with the $k-1$ neighbours of $a_u$ that are not contained in $T^*_{\ell-1}$. Therefore $T^*_{\ell}\cong T^+_\ell$.
\end{proof}

\begin{proof}[Proof of Lemma~\ref{lem:sublinearSATcondition}]
    Suppose that $\ell(P,Q)=\infty$, or that $\ell(P,Q)<\infty$ and $F$ does not contain $T^+_{\ell(P,Q)}$ as a subgraph. Then there is a finite $\ell^*\leq\ell(P,Q)$ such that $F$ does not contain $T^+_{\ell^*}$ as a subgraph. By Lemma~\ref{lem:h(a)bound},  $\bar h_{F}(a)<\ell^*$ for all $a\in C(F)$. Let $B\in R^{C\times V}$ be the matrix corresponding to $F$, so that $F(B)=B$, and let $\{I_a\}_{a\in C}$ be the unique maximal collection of ideals such that $\bigoplus_{a\in C}I_a\subset\{Bx:x\in R^V\}$. To show that $F$ is satisfiable, it suffices to show that $I_a\supset \supp(Q)$ for all $a\in C(F)$. We  show that 
  \begin{equation}  
     I_a\supseteq I_{\bar h_F(a)}(P),\quad  \text{for all $a\in C(F)$}. \label{eq:ideal-containment}
    \end{equation}
    By the definition of $\ell(P,Q)$ and the fact that $\bar h_F(a)<\ell(P,Q)$ for every $a\in C(F)$, it follows that $ I_{\bar h_F(a)}(P)\supseteq \supp Q$ for every $a\in C(F)$, which implies   the satisfiability of $F$.

    We prove~\eqref{eq:ideal-containment} by induction on the value of $\bar h_F(a)$. If $\bar h_F(a)=0$, then there is a $v\in L(F)$ adjacent to $a$. Since $v$ is a leaf, $I_a\supset (B_{a,v})$ by maximality of $\{I_a\}_{a\in C}$. Since $B_{a,v}\in \supp(P)$, we have $(B_{a,v})\supset \bigcap_{r\in \supp(P)} rR=I_0(P)$. This verifies~\eqref{eq:ideal-containment} for all $a\in C(F)$ for which $\bar h_F(a)=0$.
    
    Now suppose $\ell\ge 1$ and suppose that~\eqref{eq:ideal-containment} holds for all $a\in C(F)$ for which $\bar h_F(a)\le \ell-1$. Let $a\in C(F)$ such that $\bar h_F(a)=\ell$. Let $v_1,\ldots, v_k$ be the neighbours of $a$. Since $\bar h_F(a)=\ell\ge 1$, the variable vertices $v_1,\dots, v_k$ are not leaves of $F$. We claim that there exists $1\le i\le k$ for which the following holds: 
   \begin{equation}
   \bar h_F(a')\le \ell-1,\quad \text{for all $a'\neq a$ adjacent to $v_i$ in $F$}\label{eq:inductive-step}
   \end{equation} 
    
    Suppose for a contradiction that each $v_i$ is adjacent to some $a_i\neq a$ with $\bar h_F (a_i)\geq \ell$. For each $a_i$, let $T_i\in {\mathcal T}_2(F)$ be such that $a_i\in T_i$ and
    \begin{equation}
h_{T_i}(a_i)       =\bar h_F(a_i), \label{def:choice-of-Ti}
    \end{equation}
recalling that $h_{T_i}(a_i)= \min_{v\in L(T_i)}\{(d_{T_i}(a_i,v)-1)/2\}$.

     For each $1\le i\le k$, let $T_i'$ be the component of $T_i-av_i$ that contains $a_i$ (that is, $T_i'$ is obtained by  deleting the edge $av_i$ and taking the component that contains $a_i$).  Note that $T_1',\ldots, T_k'$ are disjoint, as otherwise $F$ would contain a cycle. Let $T=\bigcup_{i\in[k]} (T_i'+av_i)$. We first show that $T\in \mc T_2(F)$. The connected subgraphs $T_1',\dots,T_k'$ are connected in $T$ by paths through $a$, and so $T$ is connected. It is obvious that every $a'\neq a$ in $T$ has degree $k$ in $T$ since they each has degree $k$ in the subtrees $T_1',\ldots, T_k'$.
Immediately, all the constraint vertices in $T$, including $a$, have degree $k$ in $T$.
For the variable vertices in $T$, they all have degree two or one in the subtrees $T_1',\ldots,T_k'$. Moreover, $v_1,\ldots,v_k$ were leaves in the subtrees, who become degree two vertices in $T$.  Therefore $T\in \mc T_2(F)$. Let $\pi$ be a shortest $a,v$-path in $T$ among $v\in L(T)$. Since $v_1,\dots, v_k$ are not leaves of $T$, $\pi$ must contain a path $\pi'$ in $T_i$ from $a_i$ to $v$ for some $i\in[k]$ such that $|\pi|=|\pi'|+2$. But now, by~\eqref{def:choice-of-Ti},
    \begin{align*}
        \ell=\bar h_F(a)\geq (|\pi|-1)/2=(|\pi'|-1)/2 +1\geq h_{T_i}(a_i)+1 = \bar h_F(a_i)+1\geq \ell+1.
    \end{align*}
    This is a contradiction, which completes the proof for~\eqref{eq:inductive-step}.

Let $v'\in\{v_1,\ldots,v_k\}$ be a neighbour of $a$ that satisfies~\eqref{eq:inductive-step}. 
By Lemma~\ref{lem:maximalI_a},   
    \begin{align*}
        I_a&\supset B_{a,v'}\bigcap_{a'\in N(v')\setminus\{a\}}\{t\in R: B_{a',v'}t\in I_{a'}\}.
    \end{align*} 
     By the induction hypothesis and the choice of $v'$ so that it satisfies~\eqref{eq:inductive-step}, we conclude that $I_{a'}\supset I_{\bar h_F(a')}(P)$ for all $a'\in N(v')\setminus \{a\}$. By Observation \ref{obs:I^-monotone}, $I_{\bar h_F(a')}(P)\supset I_{\ell-1}(P)$ since $\bar h_F(a')\le \ell-1$. Thus, $I_{a'}\supset I_{\ell-1}(P)$ and so 
    \begin{align*}  
        B_{a,v}\bigcap_{a'\in N(v')\setminus\{a\}}\{t\in R: B_{a',v}t\in I_{a'}\}&\supset B_{a,v}\bigcap_{a'\in N(v')\setminus\{a\}}\{t\in R: B_{a',v}t\in I_{\ell-1}(P)\}\\
        &\supset \bigcap_{r\in \supp(P)}r\left(\bigcap_{s\in \supp(P)}\{t\in R: st\in I_{\ell-1}(P)\}\right)\\
        &=I_{\ell}(P).
    \end{align*}
    Therefore $I_a\supset I_{\ell}(P)$, which confirms~\eqref{eq:ideal-containment} for all $a\in C(F)$ for which $\bar h_F(a)=\ell$. By induction,~\eqref{eq:ideal-containment} holds for every $a\in C(F)$.
\end{proof}

\begin{proof}[Proof of Lemma~\ref{lem:T^+obstruction}] Let $\ell^+=\ell^+(P,Q)$.
    Since $\ell^+<\infty$, there is an $x\in \supp(P)^\NN$ and a $b\in \supp(Q)$ such that $b\notin I^+_{\ell^+}(x)$.

    Let $C_0=C(T^+_0)$ and  $C_\ell=C(T^+_\ell)\setminus C(T^+_{\ell-1})$ for $1\leq\ell\leq \ell^+$. Then, $C_0,\ldots, C_{\ell^+}$ is a partition of $C(T^+_{\ell^+})$. Similarly, let $V_0=V(T^+_0)$ and $V_\ell=V(T^+_\ell)\setminus V(T^+_{\ell-1})$ for $1\leq\ell\leq \ell^+$. Let $a_0$ be the unique constraint in $C_0$. For each $\ell=0,\ldots,\ell^+(P,Q)$ and edge $av\in E(T^+_{\ell^+})$ with $a\in C_\ell$, set
        \begin{equation*}
            w(av)=\begin{cases}
                x_{2(\ell^+-\ell)}&v\in V_\ell\\
                x_{2(\ell^+-\ell)+1}&v\notin V_\ell.
            \end{cases}
        \end{equation*}

    Define constraint vertex weights on $T^+_{\ell^+}$ by setting $w(a_0)=b$ and setting $w(a)\in \supp(Q)$ arbitrarily for $a\in C(T^+_{\ell^+})\setminus\{a_0\}$. To show that $(T^+_{\ell^+},w)$ is unsatisfiable, suppose $y\in R^{V(T^+_{\ell^+})}$ satisfies every constraint in $C(T^+_{\ell^+})\setminus\{a_0\}$, i.e.
    \begin{equation*}
        \sum_{v\in N(a)} w(av)y_v=w(a),\quad a\in C(T^+_{\ell^+})\setminus\{a_0\},
    \end{equation*}
    where $N(a)$ denotes the set of neighbours of $a$ in $T^+_{\ell^+}$.
    We show that for all $0\leq \ell \leq \ell^+$ 
    \begin{equation}
    w(av)y_v\in I^+_{\ell^+-\ell}(x)\quad \text{for all $a\in C_\ell$, $v\in N(a)\cap V_\ell$} \label{eq:inductionI}
    \end{equation}
    by reverse induction on $\ell$. Note that~\eqref{eq:inductionI} implies that $\sum_{v\in N(a_0)} w(a_0v)y_v\in I^+_{\ell^+}(x)$. Since $w(a_0)=b\notin I^+_{\ell^+}(x)$, this implies that $y$ does not satisfy $(T^+_{\ell^+},w)$.

It remains to prove~\eqref{eq:inductionI}.
    For the base case $\ell=\ell^+$, suppose $a\in C_{\ell^+}$ and $v\in N(a)\cap V_{\ell^+}$. Then as $w(av)=x_0$ by definition of $x$,  $w(av)y_v\in (x_0)=I_0(x)$.

    Now suppose $0\leq \ell <\ell^+$ and suppose that~\eqref{eq:inductionI} holds for all greater values of $\ell$. Let $a\in C_\ell$ and $v\in N(a)\cap V_\ell$. Since $\ell<\ell^+$, there is an $a'\in N(v)\cap C_{\ell+1}$. Let $v_1,\ldots, v_{k-1}$ be the $k-1$ neighbours of $a'$ other than $v$. In other words,  $\{v_1,\ldots, v_{k-1}\}=N(a')\cap V_{\ell+1}$. By the induction hypothesis, $w(a'v_i)y_{v_i}\in I^+_{\ell^+-\ell-1}(x)$ for every $i\in[k-1]$. Since $a'\neq a_0$, $y$ satisfies $a'$, and so we have 
    \begin{equation*}
        w(a'v)y_v=w(a')-\sum_{i\in[k-1]}w(a'v_i)y_{v_i}\in w(a')+I_{\ell^+-\ell-1}(x).
    \end{equation*}
Since $\ell^+-\ell-1<\ell^+$, by definiton of $\ell^+=\ell^+(P,Q)$, $w(a')\in \supp(Q)\subset I^+_{\ell^+-\ell-1}(x)$. Therefore $w(a'v)y_v\in I^+_{\ell^+-\ell-1}(x)$. Since $a\in C_\ell$, $v\in V_\ell$, we have $w(av)=x_{2(\ell^+-\ell)}$, and since $a'\in C_{\ell+1}$, $v\notin V_{\ell+1}$, we have $w(a'v)= x_{2(\ell^+-\ell)-1}$. Therefore
    \begin{equation*}
        w(av)y_v=x_{2(\ell^+-\ell)}y_v\in \{x_{2(\ell^+-\ell)}t:x_{2(\ell^+-\ell)-1}t\in I^+_{\ell^+-\ell-1}(x)\}=I^+_{\ell^+-\ell}(x). 
    \end{equation*}
    This completes the induction step and confirms that~\eqref{eq:inductionI} holds for all $0\leq \ell \leq \ell^+$.
    \end{proof}

    \begin{proof}[Proof of Lemma~\ref{lem:Tobstruction}] Let $\bar \ell=\ell(P,Q)$.
    Since $\bar \ell<\infty$, there is a $b\in \supp(Q)$ such that
    \begin{equation*}
        b\notin I_{\bar \ell}(P)=\bigcap_{r\in\supp(P)} r\lt(\bigcap_{s\in\supp(P)}\{t\in R: st\in I_{\bar \ell-1}(P)\}\rt).
    \end{equation*}
    Let $r_0\in\supp(P)$ be such that
    \begin{equation}\label{lem:Tobstruction-bvalue}
        b\notin r_0\lt(\bigcap_{s\in\supp(P)}\{t\in R: st\in I_{\bar \ell-1}(P)\}\rt).
    \end{equation}

    Let $C_0=C(T_0),V_0=V(T_0)$ and let $C_\ell=C(T_\ell)\setminus C(T_{\ell-1}),V_\ell=V(T_\ell)\setminus V(T_{\ell-1})$ for $1\leq\ell\leq \bar \ell$.

    Assign  weights $w$ to the edges of $T_{\ell(P,Q)}(P)$ as follows.
    \begin{itemize}
        \item $w(a_0v)=r_0$ for all $v\in N(a_0)$.
        \item For all $v\in V_\ell$ with $0\leq \ell<\bar \ell$ and all $(r,s)\in \supp(P)^2$, there is a constraint $a\in N(v)\cap C_{\ell+1}$ such that $w(av)=s$ and $w(av')=r$ for all $v'\in N(a)\cap V_{\ell+1}$.
    \end{itemize}
    Since each variable vertex $v\in V_\ell$ with $0\leq\ell< \bar \ell$ is adjacent to precisely $|\supp(P)|^2$ constraint vertices in $C_{\ell+1}$ by definition of $T_{\bar\ell}$, the edge weights $w$ are well defined. Define constraint vertex weights by setting $w(a_0)=b$ and setting $w(a)\in \supp(Q)$ arbitrarily for $a\in C(T_{\bar\ell})\setminus \{a_0\}$.

    To show that $(T_{\bar\ell},w)$ is unsatisfiable, suppose $y\in R^{V(T_{\bar\ell})}$ satisfies
    \begin{equation*}
        \sum_{v\in N(a)}w(av)y_v=w(a),\quad a\in C(T_{\bar\ell})\setminus \{a_0\}.
    \end{equation*}
    We show by reverse induction on $\ell$ that 
    \begin{equation}\label{lem:TobstructionIH}
        y_v\in \bigcap_{s\in \supp(P)}\{t\in R: st\in I_{\bar\ell-\ell-1}(P)\},\quad \text{for all}\ v\in V_\ell
    \end{equation}
    for all $0\leq \ell\leq \bar\ell$, where $I_{-1}(P)$ is defined to be $R$. We show that~\eqref{lem:TobstructionIH} implies that $y$ does not satisfy $a_0$. We have $N(a_0)\subset V_0$. By~\eqref{lem:TobstructionIH} with $\ell=0$,
    \begin{equation*}
        \sum_{v\in N(a_0)}w(a_0v)y_v=r_0\sum_{v\in N(a_0)}y_y\in r_0 \lt(\bigcap_{s\in\supp(P)}\{t\in R: st\in I_{\bar\ell-1}(P)\}\rt).
    \end{equation*}
    By~\eqref{lem:Tobstruction-bvalue}, this shows that $y$ does not satisfy $a_0$. Therefore $(T_{\bar\ell},w)$ is not satisfiable.

    Now we prove~\eqref{lem:TobstructionIH}. For the base case  $\ell=\bar\ell$, we have
    \begin{equation*}
        \bigcap_{s\in \supp(P)}\{t\in R: st\in R\}=R,
    \end{equation*}
and    so~\eqref{lem:TobstructionIH} holds trivially.

    Now suppose $0\leq \ell <\bar\ell$ and that~\eqref{lem:TobstructionIH} holds for $\ell+1$. Let $v\in V_\ell$ and let $(r',s')\in \supp(P)^2$. Then there is an $a\in N(v)\cap C_{\ell+1}$ such that $w(av)=s'$ and $w(av')=r'$ for all $v'\in N(a)\cap V_{\ell+1}$. By the induction hypothesis
    \begin{equation*}
        y_{v'}\in \bigcap_{s\in \supp(P)}\{t\in R: st\in I_{\bar\ell-\ell-2}(P)\}
    \end{equation*}
    for all $v'\in N(a)\cap V_{\ell+1}$. Since $y$ satisfies $a\neq a_0$, we have
    \begin{equation*}
        w(av)y_v=w(a)-r'\sum_{v'\in N(a)\setminus \{v\} }y_{v'}\in w(a)+r'\bigcap_{s\in \supp(P)}\{t\in R: st\in I_{\bar\ell-\ell-2}(P)\}.
    \end{equation*}
    Since $w(av)=s'$ and by definition of $\bar\ell=\ell(P,Q)$ that
    \begin{equation*}
        w(a)\in \supp(Q)\subset I_{\ell-1}(P)\subset r'\bigcap_{s\in \supp(P)}\{t\in R: st\in I_{\bar\ell-\ell-2}(P)\},
    \end{equation*}
    we obtain that
    \begin{equation*}
        s'y_v\in r'\bigcap_{s\in \supp(P)}\{t\in R: st\in I_{\bar\ell-\ell-2}(P)\}.
    \end{equation*}
    Since this holds for all $r'\in \supp(P)$, we have $s'y_v\in I_{\bar\ell-\ell-1}(P)$ for all $s'\in \supp(P)$. Therefore $y_v\in \bigcap_{s\in \supp(P)} \{t\in R: st\in I_{\bar\ell-\ell-1}(P)\}$. This completes the induction step, and so~\eqref{lem:TobstructionIH} holds for $0\leq \ell\leq \bar\ell$.
    \end{proof}

\section{Linear SAT threshold for principal rings: proof of Theorem~\ref{thm:principal}}\label{sec:linearprincipal}

We prove Theorem~\ref{thm:principal} by a sequence of reductions to Theorem~\ref{thm:units}.
First, we reduce the problem to considering equations over finite local rings.

Recall from Lemma~\ref{lem:localproduct} that any finite ring $R$ can be writen as a product $R\cong R_1\times \cdots \times R_h$ where $R_1,\ldots, R_h$ are local rings (not necessarily principal). Let $Bx=b$ be a system of linear equations over $R$. For each $i\in [h]$, let $\pi_i:R\to R_i$ be the natural projection map onto the $i$-th coordinate. For each $i\in [h]$, applying $\pi_i$ entrywise to both $B$ and $b$ defines a system of linear equations over $R_i$, which we denote by $B^{(i)}\, x=b^{(i)}$. It follows immediately that
\begin{equation}\label{eq:reduction}
    Bx=b  \ \text{is satisfiable over $R$} \Longleftrightarrow B^{(i)}\, x=b^{(i)}\ \text{is satisfiable over $R_i$, for every $i\in[h]$.} 
\end{equation}

Consider the linear system $\B x=\b$ given in~\eqref{eq:system}. By~\eqref{eq:reduction}, it suffices to determine the critical $m$ around which one of the systems $\B^{(i)}x=\b^{(i)}$ becomes unsatisfiable. This leads us to study the distribution of $(\B^{(i)},\b^{(i)})$.
Given $P$ and $Q$, both of which are  distributions over $R=R_1\times\cdots\times R_h$, let $P_i$ and $Q_i$ be the marginal distributions of $P$ and $Q$ respectively on $R_i$. In other words, for each $i\in[h]$, $P_i:=P\circ\pi_i^{-1}$ and $Q_i:=Q\circ\pi_i^{-1}$.
Clearly, $(\B^{(i)},\b^{(i)})$ has the same distribution as $(\B,\b)$ with $(P,Q)$  replaced by $(P_i,Q_i)$.

Our next lemma shows that the marginal distributions of $P$ 
on each local ring $R_1,\ldots, R_h$ have a very simple form if $\alpha_{P,Q}=1$.

\begin{lemma}\label{lem:oneideal}
    Suppose $R=R_1\times\cdots\times R_h$, where $R_1,\ldots, R_h$ are local principal rings. Suppose $Q,P$ are distributions on $R$ such that $\alpha_{P,Q}=1$. Let $P_i:=P\circ\pi_i^{-1}$ and $Q_i:=Q\circ\pi_i^{-1}$. If $i\in[h]$ is such that $(P_i^{\otimes k}, Q_i)$ satisfies {\bf (A2)}, then $|\{rR_i:r\in\supp(P_i)\}|=1$. 

\end{lemma}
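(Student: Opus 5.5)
The plan is to compute the sequence $I_\ell(P_i)$ explicitly inside the local principal ring $R_i$. We will show that if the elements of $\supp(P_i)$ generate two different ideals, this sequence reaches $0$. That forces $\supp(Q_i)=\{0\}$, which contradicts {\bf (A2)} for $(P_i^{\otimes k},Q_i)$ via the constant solution $0$.

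\emph{Step 1 (transfer to the components).} Since $\alpha_{P,Q}=1$, Theorem~\ref{thm:nonprincipal-linear-characterisation} gives $\ell(P,Q)=\infty$, that is, $\supp(Q)\subset I_\ell(P)$ for all $\ell\ge 0$. Every operation in Definition~\ref{def:I} acts componentwise in $R=R_1\times\cdots\times R_h$: principal ideals, finite intersections, ideal quotients $(J:s)$, and products $rJ$. We check by induction on $\ell$ that $I_\ell(P)=\prod_i I_\ell(P_i)$. For instance, $I_0(P)=\bigcap_{r}\prod_i \pi_i(r)R_i=\prod_i\bigcap_{r'\in\supp(P_i)}r'R_i$, because $\pi_i(\supp(P))=\supp(P_i)$. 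The inductive step works the same way: for a product ideal $J=\prod_i J_i$ we have $(J:s)=\prod_i(J_i:\pi_i(s))$ and $rJ=\prod_i\pi_i(r)J_i$, and intersecting over $r,s\in\supp(P)$ only sees the projections. Projecting $\supp(Q)$ then gives $\supp(Q_i)\subset I_\ell(P_i)$ for every $\ell$.

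\emph{Step 2 (chain-ring computation).} Fix $i$ and write $M$ for the maximal ideal of $R_i$. By Lemma~\ref{lem:rings-ideals}, $M=(\pi)$ and the ideals of $R_i$ form the chain $R_i\supset M\supset\cdots\supset M^N=0$. Every element of $R_i$ is $\pi^e u$ with $u$ a unit (with $0$ of exponent $N$). So each $r\in\supp(P_i)$ generates $M^{j(r)}$ for some $0\le j(r)\le N$.

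Suppose, for contradiction, that $j_{\min}<j_{\max}$, where these are the least and greatest values of $j(r)$ over $\supp(P_i)$. For $0\le a\le N$ and $0\le j\le N$ one checks $(M^a:M^j)=M^{\max(a-j,0)}$. Indeed, $\pi^{j+e}\in M^a$ if and only if $j+e\ge a$, using $a\le N$ in the case $j+e\ge N$. Hence $I_0(P_i)=M^{j_{\max}}$. If $I_{\ell-1}(P_i)=M^{a}$, then:
\begin{itemize}
\item intersecting the quotients over $s\in\supp(P_i)$ gives $M^{\max(a-j_{\min},0)}$, since the quotient is smallest for the $s$ with least exponent;
\item multiplying by $r$ gives $M^{\min(j(r)+\max(a-j_{\min},0),N)}$;
\item intersecting over $r$ gives $M^{\min(\max(a,j_{\min})+j_{\max}-j_{\min},N)}$.
\end{itemize}
Thus the exponent increases by at least $j_{\max}-j_{\min}\ge 1$ at each step until it reaches $N$. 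So $I_\ell(P_i)=0$ for all large $\ell$.

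\emph{Step 3 (conclusion).} By Step 1, $\supp(Q_i)\subset I_\ell(P_i)=0$, so $\supp(Q_i)=\{0\}$. Then $\sigma=0$ is a $k$-ary constant solution for $(P_i^{\otimes k},Q_i)$, contradicting {\bf (A2)}. Hence $j_{\min}=j_{\max}$, i.e.\ $|\{rR_i:r\in\supp(P_i)\}|=1$.

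The main point needing care is Step 1, namely that the recursive ideals really decompose as products along the decomposition of $R$. The other delicate point is the boundary behaviour in Step 2: the capping at $M^N=0$, and elements $0\in\supp(P_i)$ (exponent $N$), which still fit the formula.
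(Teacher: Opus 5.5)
Your proof is correct. Its skeleton matches the paper's: assume two elements of $\supp(P_i)$ generate different ideals, show that the $i$-th component of the ideal sequence collapses to $0$, and contradict {\bf (A2)} (equivalently, $\ell(P,Q)=\infty$).

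The mechanism for the collapse is different. The paper never computes $I_\ell$. It uses monotonicity and finiteness to find $\ell^*$ with $I_{\ell^*}(P)=I_{\ell^*-1}(P)$. It then uses the chain structure of $R_i$ to pick $r_1,r_2\in\supp(P)$ with $\pi_i(r_1)=s\,\pi_i(r_2)$ and $s\in M_i$. From the single inclusion $I_{\ell^*}(P)\subseteq r_1\,(I_{\ell^*-1}(P):r_2)$ it reads off that $J=\pi_i(I_{\ell^*}(P))$ satisfies $J\subseteq sJ$, hence $J\subseteq s^\nu J=0$. Because it works directly with $\pi_i(I_\ell(P))$ and an element $b\in\supp(Q)$ with $\pi_i(b)\neq 0$, it never needs the recursion to commute with the product decomposition.

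For your argument you also only need the inclusion $\pi_i(I_\ell(P))\subseteq I_\ell(P_i)$, not full equality in Step 1.

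\textbf{What each route costs and buys.}
\begin{itemize}
\item Your route carries more bookkeeping: the quotient formula $(M^a:M^j)=M^{\max(a-j,0)}$ and the boundary cases at $M^N=0$.
\item In return it gives an exact formula for the exponent of $I_\ell(P_i)$, and hence an explicit bound on how many steps it takes to reach $0$.
\item It also gives the componentwise identity $I_\ell(P)=\prod_i I_\ell(P_i)$. This cleanly justifies the identification $\pi_i(I_0(P))=M_i^{\ell}$ that the paper uses later in the proof of Theorem~\ref{thm:principal}.
\item The paper's fixed-point argument is more robust. It works verbatim in any finite local ring as soon as two support elements satisfy $r_1=s\,r_2$ with $s$ a nonunit. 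Principality is used only to produce such a pair, via comparability of the principal ideals.
\end{itemize}
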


\begin{proof}[Proof of Lemma~\ref{lem:oneideal}]
    Let $M_i$ be the unique maximal ideal of $R_i$, and suppose for a contradiction that $|rR_i:r\in \supp(P_i)\}|\geq 2$ and $(P_i^{\otimes k},Q)$ satisfies {\bf (A2)}. Then there are $r_1,r_2\in \supp(P)$ such that $\pi_i(r_1)R_i\neq \pi_i(r_2)R_i$, and  some $b\in \supp(Q)$ such that $\pi_i(b)\neq 0$. Since  $R_i$ is a local principal ring,  we have without loss of generality that $\pi_i(r_1)R_i\subsetneq \pi_i(r_2)R_i$. It follows then that there is an $s\in R_i$ such that  $\pi_i(r_1)=\pi_i(r_2)s$. Moreover, $s$ cannot be a unit as otherwise it contradicts with $\pi_i(r_1)R_i\neq \pi_i(r_2)R_i$. Hence, $s\in M_i$. By Observation~\ref{obs:I^-monotone}, there exists an $\ell^*\geq 0$  such that $I_{\ell^*}(P)=I_{\ell^*-1}(P)$. Then we have
    \begin{align*}
        I_{\ell^*}(P)\subset \{r_1t:r_2t\in I_{\ell^*-1}(P),\ t\in R\},
    \end{align*}
    and so
    \begin{align*}
        \pi_i(I_{\ell^*}(P))\subset \{\pi_i(r_1)t:\pi_i(r_2)t\in \pi_i(I_{\ell^*-1}(P)),\ t\in R_i\}\subset s\pi_i(I_{\ell^*-1}(P))=s\pi_i(I_{\ell^*}(P)).
    \end{align*}

 By Lemma~\ref{lem:rings-ideals}, there is a $\nu\geq 0$ such that $M^\nu =0$. Since $s\in M$, we then have $\pi_i(I_{\ell^*}(P))=s^\nu\pi_i(I_{\ell^*}(P))=0$. Since $\pi_i(b)\neq 0$, this implies $\ell(P,Q)\leq \ell^*$, which contradicts the assumption that $\alpha_{P,Q}=1$.
\end{proof}

Finally, we complete the proof of Theorem~\ref{thm:principal} by reducing it to Theorem~\ref{thm:units} via Lemma~\ref{lem:oneideal}. 

\begin{proof}[Proof of Theorem~\ref{thm:principal}]
    By Lemma~\ref{lem:localproduct}, there exist principal local rings $R_1,\ldots, R_h$ such that $R\cong R_1\times\cdots \times R_h$. 
    By~\eqref{eq:reduction} and the discussions below it, it suffices to show that 
    the set of systems $\B^{(i)}x=\b^{(i)}$ is a.a.s.\ simultanously satisfiable if $m<(d_k-\eps)n/k$, and there exists some $i\in [h]$ such that $\B^{(i)}x=\b^{(i)}$ is 
   a.a.s.\ unsatisfiable if $m>(d_k+\eps)n/k$. 

We first consider the subcritical regime where $m<(d_k-\eps)n/k$. Let $i\in [h]$. If $(P^{\otimes k}_i,Q_i)$ admits a constant solution $r_i\in R_i$ then $\B^{(i)}x=\b^{(i)}$ is satisfiable trivially. Thus we may assume that $(P^{\otimes k},Q_i)$ satisfies {\bf (A2)}.
    \remove{If every $(P^{\otimes k}_i,Q_i)$ admits a constant solution $r_i\in R_i$, then $(r_1,\ldots, r_h)\in R_1\times \ldots\times R_h$ is a constant solution for $\B x=\b$. Since $(P^{\otimes k},Q)$ satisfies {\bf (A2)}, there therefore must be an $i\in[h]$ such that $(P_i^{\otimes k},Q)$ satisfies {\bf (A2)}. We now show that

    \begin{equation}\label{localthresholds}
        \lim_{n\to\infty}\Pr[\B^{(i)}x=\b^{(i)}\text{ is satisfiable}] =\begin{cases}
            1&\text{ if }m<(c_i-\eps)n/k\\
            0&\text{ if }m>(c_i+\eps)n/k.
        \end{cases}
    \end{equation}

    holds with $c_i=d_k$ if $(P_i^{\otimes k},Q_i)$ satisfies {\bf (A2)}, and $c_i=\infty$ if $(P_i^{\otimes k},Q_i)$ does not satisfy {\bf (A2)}. Since there is an $i\in [h]$ such that $(P_i^{\otimes k},Q_i)$ satisfies {\bf (A2)}, the result then follows from Lemma~\ref{lem:reductiontolocal}.

    It is clear that~\eqref{localthresholds} holds with $c_i=\infty$ if $(P_i^{\otimes k},Q_i)$ does not satisfy {\bf (A2)}. Suppose $(P_i^{\otimes k},Q_i)$ satisfies {\bf (A2)}.} Let  $g\in R_i$ be a generator of the maximal ideal $M_i$. Since $R_i$ is local and principal, by Lemma~\ref{lem:rings-ideals}(b), there is an $\ell\geq 0$ such that $rR_i=M_i^\ell=(g^{\ell})$ for all $r\in \supp(P_i)$, where the uniformity of $\ell$ for every $r\in \supp(P_i)$ is a consequence of the fact that $|\{rR_i:r\in \supp(P_i)\}|=1$ which follows by Lemma~\ref{lem:oneideal}. Define a map $\phi:M_i^\ell\to R_i/\Ann(M_i^\ell)$ by  $\phi(sg^{\ell})=s+\Ann(M_i^\ell)$ for all $s\in R_i$. For a matrix $B$ and vector $b$ over $R_i$, let $\phi(B)$ and $\phi(b)$ be the matrix and vector obtained by applying $\phi$ entrywise.

    \begin{claim}\label{claim:reduction}
    \begin{enumerate}[(a)]
        \item $\phi(r)$ is a unit of $R_i/\Ann(M_i^\ell)$ for all $r\in \supp(P_i)$.
\item Suppose $B\in R_i^{C\times V}$ and $b\in R_i^C$ both have entries in $M^\ell_i$. Then $Bx=b$ is satisfied by some $x\in R^V$ if and only if $\phi(B)y=\phi(b)$ is satisfied by some $y\in (R_i/M_i^\ell)^V$.
    \end{enumerate}
    \end{claim}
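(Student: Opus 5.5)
The plan is to exploit that $M_i^\ell=(g^\ell)$ is a cyclic $R_i$-module, so that $\phi$ is simply the inverse of the module isomorphism $R_i/\Ann(M_i^\ell)\to M_i^\ell$, $s\mapsto sg^\ell$. Two facts are needed first.

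\textbf{Well-definedness and bijectivity of $\phi$.} Since $M_i^\ell=(g^\ell)$, we have $\Ann(g^\ell)=\Ann(M_i^\ell)$. Hence $sg^\ell=s'g^\ell$ holds if and only if $s-s'\in\Ann(M_i^\ell)$. So $\phi$ is well defined and injective, and it is surjective by construction.

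\textbf{$R_i$-linearity of $\phi$.} $\phi$ is additive, and for $c\in R_i$ we have $\phi(c\cdot sg^\ell)=cs+\Ann(M_i^\ell)=c\,\phi(sg^\ell)$. It is not multiplicative, but only linearity will be used.

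For part (a), take $r\in\supp(P_i)$. Since $rR_i=M_i^\ell$, we can write $r=sg^\ell$ and $g^\ell=rt$ for some $s,t\in R_i$. Then $(1-st)g^\ell=0$, so $1-st\in\Ann(M_i^\ell)$, which means $\phi(r)=s+\Ann(M_i^\ell)$ has inverse $t+\Ann(M_i^\ell)$. I will also note the degenerate case $M_i^\ell=0$. There $\Ann(M_i^\ell)=R_i$, the quotient ring is the zero ring, and (a) and (b) hold trivially. In fact $\supp(P_i)=\{0\}$ is then incompatible with {\bf (A2)} for $(P_i^{\otimes k},Q_i)$ unless $\supp(Q_i)=\{0\}$, and in that case $x=0$ works.

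For part (b), I read the target ring as $R_i/\Ann(M_i^\ell)$, which is the codomain of $\phi$; the ``$R_i/M_i^\ell$'' in the statement appears to be a slip. Write entrywise $B=g^\ell B'$ and $b=g^\ell b'$ with $B'$, $b'$ over $R_i$. Then $\phi(B)$ and $\phi(b)$ are the reductions of $B'$ and $b'$ modulo $\Ann(M_i^\ell)$. For any $x\in R_i^V$,
\begin{equation*}
Bx=b \iff g^\ell(B'x-b')=0 \iff B'x-b'\in \Ann(M_i^\ell)^C \iff \phi(B)\bar x=\phi(b),
\end{equation*}
where $\bar x$ is the entrywise reduction of $x$.

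This gives both directions of (b):
\begin{itemize}
\item[$(\Rightarrow)$] If $Bx=b$, then $y=\bar x$ solves $\phi(B)y=\phi(b)$.
\item[$(\Leftarrow)$] If $y$ solves $\phi(B)y=\phi(b)$, lift each entry of $y$ to $R_i$ to get $x$ with $\bar x=y$. The chain of equivalences, read backwards, gives $Bx=b$.
\end{itemize}
The solution set of $\phi(B)y=\phi(b)$ does not depend on the choice of lifts $B'$, $b'$, because $\phi$ is well defined.

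The main care point is identifying $\Ann(g^\ell)$ with $\Ann(M_i^\ell)$; everything else follows from it. Lemma~\ref{lem:rings-ideals} is not even needed, since principality of $M_i^\ell$ suffices. In the subsequent application, (a) together with the fact that $R_i/\Ann(M_i^\ell)$ is again local lets Theorem~\ref{thm:units} apply to $\phi(\B^{(i)})y=\phi(\b^{(i)})$. This requires $\supp(Q_i)\subset M_i^\ell$, which holds because $\alpha_{P,Q}=1$ forces $\supp(Q)\subset I_0(P)$.
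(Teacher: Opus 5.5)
Your proof is correct. You are also right that the target ring in (b) should be $R_i/\Ann(M_i^\ell)$, the codomain of $\phi$ and the ring the paper's own proof uses, and that one should take $x\in R_i^V$.

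\textbf{Part (b).} Your argument is the paper's in concrete form. The paper notes that $\phi$ is an $R_i$-module isomorphism $M_i^\ell\to R_i/\Ann(M_i^\ell)$. Hence $b$ lies in the column space of $B$ iff $\phi(b)$ lies in that of $\phi(B)$, and $\phi(B)y$ depends only on $y$ modulo $\Ann(M_i^\ell)$. Your factorization $B=g^\ell B'$ gives the same equivalence pointwise in $x$. You also verify well-definedness through $\Ann(g^\ell)=\Ann(M_i^\ell)$, which the paper takes for granted.

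\textbf{Part (a).} Here your route differs.
\begin{itemize}
\item The paper uses $M_i^\ell\supsetneq M_i^{\ell+1}$ to deduce that in $r=sg^\ell$ the coefficient satisfies $s\notin M_i$. Since $R_i$ is local, $s$ is then a unit of $R_i$ itself.
\item You use two-sided divisibility, $r=sg^\ell$ and $g^\ell=rt$, to get $st\equiv 1$ modulo $\Ann(g^\ell)$.
\end{itemize}
Your version needs neither locality nor the strict inclusion $M_i^\ell\supsetneq M_i^{\ell+1}$. That inclusion silently requires $M_i^\ell\neq 0$, a case the paper never addresses but you do. The paper's version yields the slightly stronger fact that $s\in R_i$ is already a unit, which the application does not need.

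\textbf{A slip in your aside.} When $\supp(P_i)=\{0\}$, {\bf (A2)} fails exactly when $\supp(Q_i)=\{0\}$, because the constant $0$ is then a solution. The case $\supp(Q_i)\neq\{0\}$ is ruled out by $\supp(Q_i)\subset M_i^\ell=0$, which comes from $\alpha_{P,Q}=1$, not by {\bf (A2)}. This does not affect the claim, which you already observed holds trivially when $M_i^\ell=0$.
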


       We have shown that $\supp(P_i)\subset M_i^\ell$. We also have $\supp(Q_i)\subset M_i^\ell$, as otherwise 
     $\supp(Q_i)\not\subset M_i^\ell=\pi_i(I_0(P))$, which would imply that $\ell(P,Q)=0$, contradicting with the assumption that  
 $\alpha_{P,Q}=1$. Thus, Claim~\ref{claim:reduction}(b) implies that the system $\B^{(i)}\bar x=\b^{(i)}$ is satisfiable if and only if $\phi(\B^{(i)})\bar x=\phi(\b^{(i)})$ is satisfiable. By part (a) of the claim and Theorem~\ref{thm:units}, $\phi(\B^{(i)})\bar x=\phi(\b^{(i)})$ is a.a.s.\ satisfiable. This completes the proof for the subcritical regime.

       The proof for the supercirical regime $m>(d_k+\eps)n/k$ is similar. Since $(P^{\otimes k}, Q)$ satisfies {\bf (A2)}, there must exists $i\in [h]$ such that $(P_i^{\otimes k}, Q_i)$ satisfies {\bf (A2)}. By Claim~\ref{claim:reduction}(a) and 
Theorem~\ref{thm:units}, a.a.s.\ $\phi(\B^{(i)})\bar x=\phi(\b^{(i)})$ is unsatisfiable. Thus, a.a.s.\ $\B^{(i)}\bar x=\b^{(i)}$ is unsatisfiable by Claim~\ref{claim:reduction}(b), which, together with~\eqref{eq:reduction}, further implies that a.a.s.~\eqref{eq:system} is unsatisfiable, completing the proof of Theorem~\ref{thm:principal}.
    
    \begin{proof}[Proof of Claim~\ref{claim:reduction}]
If $r\in \supp(P)$, then $rR=M_i^{\ell}\supsetneq M_i^{\ell+1}$, and so $s\notin M_i$ for any $s\in R_i$ such that $r=sg^{\ell}$. Therefore $\phi(r)\notin M_i/\Ann(M_i^\ell)$ for all $r\in \supp(P)$, and so $\phi(r)$ is a unit of $R/\Ann(M_i^\ell)$ for all $r\in \supp(P)$. This confirms part (a).

For part (b), note that $\phi$ is an isomorphsim of $R_i$-modules. Applying $\phi$ coordinate wise gives an isomorphism between $(M^\ell_i)^C$ and $(R_i/\Ann(M_i^\ell))^C$. Therefore $\phi(b)$ is in the column space of $\phi(B)$ if and only if $b$ is in the column space of $B$. Moreover, for any $y\in R_i^V$ and $\bar y=y+\bigoplus_{v\in V} \Ann(M_i^\ell)$ we have $\phi(B)y=\phi(B)\bar y$. Therefore $Bx=b$ is satisfied by some $x\in R_i^V$ if and only if $\phi(B)\bar y=\phi(b)$ is satisfied by some $\bar y\in \bigoplus_{v\in V}R_i/\Ann(M_i^\ell)$.
\end{proof}
\end{proof}


\section{Linear SAT threshold for nonprincipal rings: proof of Theorem~\ref{thm:nonprincipal-linear}}\label{sec:nonprincipal-linear}

Throughout this section, we let $R=\FF_q[X,Y]/(X^2,Y^2)$. Given any $B\in \{\bar X,\bar Y\}^{C_m\times V_n}$ and $b\in \{\bar X\bar Y,0\}^{C_m}$, define $B_X,B_Y\in \FF_q^{C_m\times V_n}$ by
\begin{equation*}
    (B_X)_{a,v}=\ind{B_{a,v}=\bar X},\quad (B_Y)_{a,v}=\ind{B_{a,v}=\bar Y},\quad b_{XY}=\ind{b_a=\bar X\bar Y},
\end{equation*}
where $\ind{B_{a,v}=p}$ is the indicator function on $(a,v)\in C_m\times V_n$, ie. $\ind{B_{a,v}=p}=1$ if $B_{a,v}=p$ and $\ind{B_{a,v}=p}=0$ otherwise. Let $B_{XY}=[B_X\mid B_Y]$. To prove Theorem~\ref{thm:nonprincipal-linear}, we first show that the system $Bx=b$ over $R$ is equivalent to $B_{XY}x=b_{XY}$ as a system of equations over $\FF_q$. This holds deterministically for any $B\in \{\bar X,\bar Y\}^{C_m\times V_n}$ and $b\in \{\bar X\bar Y,0\}^{C_m}$. We then use a coupling argument together with Theorem~\ref{thm:asympequiv} to apply the results of~\cite{coja2024full}, which gives the satisfiability threshold.

\begin{lemma}\label{lem:SATinp^n}
Suppose that $B\in \{\bar X,\bar Y\}^{C_m\times V_n}$ and $b\in \{\bar X\bar Y,0\}^{C_m}$. If $Bx=b$ is satisfiable then there are $c,c'\in \FF_q^{V_n}$ such that $B(c\bar X+c'\bar Y)=b$.
\end{lemma}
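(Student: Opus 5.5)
We work with the $\FF_q$-basis $1,\bar X,\bar Y,\bar X\bar Y$ of $R$ and the multiplication rules $\bar X^2=\bar Y^2=0$. The idea is that each entry of $B$ annihilates most of the coordinates of a variable, so only one coordinate of each $x_v$ actually matters. Concretely, write an arbitrary $x\in R^{V_n}$ as
\[
x_v=\alpha_v+\beta_v\bar X+\gamma_v\bar Y+\delta_v\bar X\bar Y,
\]
with $\alpha_v,\beta_v,\gamma_v,\delta_v\in\FF_q$. Then
\[
\bar X x_v=\alpha_v\bar X+\gamma_v\bar X\bar Y,\qquad \bar Y x_v=\alpha_v\bar Y+\beta_v\bar X\bar Y.
\]
Hence for each row $a$,
\[
(Bx)_a=\Bigl(\sum_{v:B_{a,v}=\bar X}\alpha_v\Bigr)\bar X+\Bigl(\sum_{v:B_{a,v}=\bar Y}\alpha_v\Bigr)\bar Y+\Bigl(\sum_{v:B_{a,v}=\bar X}\gamma_v+\sum_{v:B_{a,v}=\bar Y}\beta_v\Bigr)\bar X\bar Y .
\]

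Now suppose $x$ solves $Bx=b$. Since $b_a\in\{\bar X\bar Y,0\}$, comparing coefficients in the basis shows three things. The $\bar X$- and $\bar Y$-coefficients of $(Bx)_a$ must vanish. The constant parts $\alpha_v$ contribute only to those coefficients. The $\bar X\bar Y$-coefficient must equal $\ind{b_a=\bar X\bar Y}$.

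Define $x'_v=\beta_v\bar X+\gamma_v\bar Y$, which drops $\alpha_v$ and $\delta_v$. Recomputing with the formula above, $(Bx')_a$ has zero $\bar X$- and $\bar Y$-coefficients and the same $\bar X\bar Y$-coefficient as $(Bx)_a$. The variable $\delta_v$ never appeared, since $\bar X\cdot\bar X\bar Y=\bar Y\cdot\bar X\bar Y=0$. Therefore $Bx'=b$.

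Taking $c=\beta$ and $c'=\gamma$ gives the claim. There is no real obstacle here; the only care needed is the bookkeeping of which coordinate survives multiplication by $\bar X$ versus $\bar Y$. This same computation also shows that $Bx=b$ is equivalent to $B_X\gamma+B_Y\beta=b_{XY}$ over $\FF_q$, i.e.\ to the system $B_{XY}\,(\gamma,\beta)=b_{XY}$. That equivalence is the form used afterwards.
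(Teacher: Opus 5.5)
Your proposal is correct and follows essentially the paper's route: expand each $x_v$ in the $\FF_q$-basis $1,\bar X,\bar Y,\bar X\bar Y$, observe that the constant part only feeds the $\bar X$- and $\bar Y$-coefficients (which must vanish since $b_a\in\{\bar X\bar Y,0\}$) while the $\bar X\bar Y$ part is annihilated, and keep only the $\bar X,\bar Y$ components. Your explicit coordinate formula also yields, as a by-product, the satisfiability equivalence with $B_{XY}(\gamma,\beta)=b_{XY}$ that the paper proves separately in Lemma~\ref{lem:reduction-nonprincipal}.
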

\begin{proof}
    
    Let $x\in R^{V_n}$ be such that $Bx=b$. Let $c^{(1)},c^{(2)},c^{(3)},c^{(4)}\in \FF_q^{V_n}$ be such that $x= c^{(1)}\bar X\bar Y+c^{(2)}\bar X+c^{(3)}\bar Y+c^{(4)}$. We then have
    \begin{align*}
        \bar X\bar YBc^{(1)}+\bar XBc^{(2)}+\bar YBc^{(3)}+B_{a,v}c^{(4)}=b\in (\bar X\bar Y)^C
    \end{align*}
    Since $B_{a,v}\in \{\bar X,\bar Y\}$ for all $a\in C_m$, $v\in V_n$, we have $\bar X\bar YBc^{(1)},\bar XBc^{(2)},\bar YBc^{(3)}\in (\bar X\bar Y)^{C_m}$. Thus, for any $a\in C_m$,
    \begin{align*}
        (Bc^{(4)})_a=\bar X\sum_{v\in V_n: a(v)=\bar X}c^{(4)}_v+\bar Y\sum_{v\in V_n: a(v)=\bar Y}c^{(4)}_v\in (\bar X\bar Y)
    \end{align*}
    But the only $u,u'\in\FF_q$ such that $\bar Xu+ \bar Yu'\in (\bar X\bar Y)$ are $u=u'=0$. Therefore $(Bc^{(4)})_a=0$ for all $a\in C_m$. We also have $\bar X\bar YBc^{(1)}=0$, as $\bar X\bar YB_{a,v}=0$ for all $a\in C_m$ and $v\in V$. Therefore $B(c^{(2)}\bar X+c^{(3)}\bar Y)=b$.
\end{proof}

\begin{lemma}\label{lem:reduction-nonprincipal}
Suppose that $B\in \{\bar X,\bar Y\}^{C_m\times V_n}$ and $b\in \{\bar X\bar Y,0\}^{C_m}$.     Then,
    the system $B_{XY}\,x=b_{XY}$ is satisfiable over $\FF_q^{V}\times \FF_q^V$ if and only if $Bx=b$ is satisfiable over $R^{V_n}$.
\end{lemma}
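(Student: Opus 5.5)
The plan is to show directly that, for $c,c'\in\FF_q^{V_n}$, the vector $x=c\bar X+c'\bar Y$ satisfies $Bx=b$ over $R$ if and only if $(c,c')$ satisfies $B_{XY}(c,c')^T=b_{XY}$ over $\FF_q$. Combined with Lemma~\ref{lem:SATinp^n}, this gives both directions. If $Bx=b$ is satisfiable, Lemma~\ref{lem:SATinp^n} supplies a solution of the form $c\bar X+c'\bar Y$, and the equivalence turns it into a solution of $B_{XY}\,x=b_{XY}$. Conversely, a solution $(c,c')$ of $B_{XY}\,x=b_{XY}$ yields the solution $c\bar X+c'\bar Y$ of $Bx=b$.

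The key computation is the $a$-th entry of $B(c\bar X+c'\bar Y)$. Each $B_{a,v}\in\{\bar X,\bar Y\}$, and in $R$ we have $\bar X^2=\bar Y^2=0$. Hence $B_{a,v}(c_v\bar X+c'_v\bar Y)$ equals $c'_v\bar X\bar Y$ when $B_{a,v}=\bar X$, and $c_v\bar X\bar Y$ when $B_{a,v}=\bar Y$. Summing over $v$ gives
\begin{equation*}
\bigl(B(c\bar X+c'\bar Y)\bigr)_a=\bar X\bar Y\Bigl(\sum_{v}(B_X)_{a,v}c'_v+\sum_v (B_Y)_{a,v}c_v\Bigr).
\end{equation*}
The entry $b_a$ lies in $\{\bar X\bar Y,0\}$, so we can write $b_a=\bar X\bar Y\,(b_{XY})_a$. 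The element $\bar X\bar Y$ spans a copy of $\FF_q$: the map $u\mapsto u\bar X\bar Y$ from $\FF_q$ to $R$ is injective, because $\{1,\bar X,\bar Y,\bar X\bar Y\}$ is an $\FF_q$-basis of $R$. So the $a$-th equation holds in $R$ if and only if $\sum_v (B_X)_{a,v}c'_v+\sum_v (B_Y)_{a,v}c_v=(b_{XY})_a$ holds in $\FF_q$.

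This is exactly row $a$ of $[B_X\mid B_Y]\,(c',c)^T=b_{XY}$. Note that the $\bar X$-coefficient vector $c'$ pairs with the $\bar X$ entries of $B$ through $B_X$, which is why the two blocks appear in this order. I will therefore fix the identification $x\leftrightarrow c'\bar X+c\bar Y$ explicitly: the first $n$ coordinates of the $\FF_q$-solution become the coefficients of $\bar X$, and the last $n$ become the coefficients of $\bar Y$. Any consistent relabelling works, since the statement only concerns existence of solutions.

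There is no real obstacle here. The nontrivial reduction, that every solution over $R$ can be taken with no constant or $\bar X\bar Y$ component, is already provided by Lemma~\ref{lem:SATinp^n}. The only point needing care is the bookkeeping of which coordinate block of the $\FF_q$-vector corresponds to the coefficients of $\bar X$ and which to those of $\bar Y$.
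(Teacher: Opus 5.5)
Your proposal is correct and is essentially the paper's proof: substitute an $\FF_q$-combination of $\bar X$ and $\bar Y$, use $\bar X^2=\bar Y^2=0$ to get $(Bz')_a=\bar X\bar Y\,(B_{XY}z)_a$, and invoke Lemma~\ref{lem:SATinp^n} for the direction from $R$ to $\FF_q$. One bookkeeping slip: your own displayed computation shows that the first block of the $\FF_q$-vector must hold the coefficients of $\bar Y$ (the paper takes $z'=x\bar Y+y\bar X$), so the identification $x\leftrightarrow c'\bar X+c\bar Y$ in your penultimate paragraph, and the phrase ``$\bar X$-coefficient vector $c'$'', have the blocks swapped --- e.g.\ for $B=[\bar X]$, $b=[\bar X\bar Y]$, that identification would send the $\FF_q$-solution $(1,0)$ to $\bar X$, and $\bar X\cdot\bar X=0\neq\bar X\bar Y$.
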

\begin{proof}
    Given $z=(x,y)\in \FF_q^{V_n}\times \FF_q^{V_n}$, define $z'\in R^V$ by
    \begin{align*}
        z'=x\bar Y+y\bar X.
    \end{align*}
    By Lemma \ref{lem:SATinp^n}, it suffices to show that $B_{XY}z=b_{XY}$ if and only if $Bz'=b$. For any $a\in C_m$, we have
    \begin{align*}
        (Bz')_a&=\sum_{v\in V_n}B_{a,v}z'_v\\
        &=\sum_{v\in V: B_{a,v}=\bar X} \bar X(x_v\bar Y+y_{v}\bar X)+\sum_{v\in V_n: B_{a,v}=\bar Y} \bar Y(x_v\bar Y+y_{v}\bar X)\\
        &= \bar X\bar Y\lt(\sum_{v\in V_n: B_{a,v}=\bar X} x_v+\sum_{v\in V_n: B_{a,v}=\bar Y} y_{v}\rt)\\
        &=\bar X\bar Y(B_{XY}z)_a.
    \end{align*}
    Since $b=\bar X\bar Yb_{XY}$, this shows that $B_{XY}z=b_{XY}$ if and only if $Bz'=b$.
\end{proof}

\subsection{Satisfiability}\label{sec:linearnonprincipal-SAT}
In this section we prove satisfiability of $\B_{XY}x=\b_{XY}$ in the subcritical case. Due to Lemma~\ref{lem:reduction-nonprincipal}, it suffices to prove the following.
\begin{lemma}\label{lem:sat-nonprincipal}
    Suppose $m<(d_{k,\rho}-\eps)n/k$ for some fixed $\eps>0$. Then $\B_{XY}$ has full row rank a.a.s.
\end{lemma}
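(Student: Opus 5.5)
The plan is to deduce full row rank of $\B_{XY}$ from the rank results of~\cite{coja2024full} (building on~\cite{coja2020rank}) for sparse random matrices whose factor graph follows $G^{(1)}_{n',\rv m,\rv k,\rv d}$ with $n'=2n$, $\rv k=k$, and $\rv d$ the mixture equal with probability $1/2$ to $\Po(\rho d)$ and with probability $1/2$ to $\Po((1-\rho)d)$, where $d=km/n$. The nonzero entries of $\B_{XY}$ all equal $1$. The results of~\cite{coja2024full} give the normalised rank through the variational formula $\Phi$. Specialised to this $\rv d$ and constant $\rv k=k$, this formula is exactly $\Phi_{d,k,\rho}$. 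The sign condition $\Phi_{d,k,\rho}(z)<\Phi_{d,k,\rho}(0)$ for all $z\in(0,1]$ is precisely the condition under which the maximum is attained at $z=0$, so that the matrix has full row rank, at least up to an $o(n)$ defect. The computation to check here is that $\Phi$ of~\cite{coja2024full} with this mixed Poisson $\rv d$ reduces to the stated expression. The terms $e^{-\rho dz^{k-1}}+e^{-(1-\rho)dz^{k-1}}$ arise because the probability generating function of the mixture is evaluated at $1-z^{k-1}$, with a factor $2$ absorbed by the normalisation by $n$ rather than $2n$. For $d<d_{k,\rho}-\eps$ this gives $\operatorname{rank}=m-o(n)$. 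I would then upgrade this to exact full row rank using the standard argument of~\cite{ayre2020satisfiability,coja2024full}: a.a.s.\ every linear relation among rows of a subcritical sparse matrix involves $\Omega(n)$ rows, and small relations are excluded by a first-moment count over the 2-core. Alternatively, I would quote the exact full-rank statement if~\cite{coja2024full} provides one.

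The main step, and the expected obstacle, is transferring this from the i.i.d.\ model to the true law of $F(\B_{XY})$. In $F(\B_{XY})$ each row chooses $k^X_a\sim\Bin(k,\rho)$ columns among the first $n$ and $k-k^X_a$ among the last $n$, so the column degrees are not i.i.d. I would proceed in two stages.

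First, I would pass to a Poisson number of rows $\rv m\sim\Po(m)$ with a slightly smaller mean $m(1-\eps/4)$. By monotonicity, it suffices to show full row rank for a system that contains our rows as a subset: deleting rows preserves full row rank. In the Poissonised model, the numbers of row-half-edges landing on the $X$ half and on the $Y$ half are independent Poisson, and the column degrees within each half are i.i.d.\ Poisson with means $\rho km/n$ and $(1-\rho)km/n$. The $2n$ column degrees, however, have a fixed split of $n$ columns per mean rather than a random $1/2$--$1/2$ split.

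Second, I would couple with $G^{(1)}$ on $2n$ variables with mixed $\rv d$. The type counts there are $\Bin(2n,1/2)$, which is $n\pm O(\sqrt n)$. I would handle this by conditioning on the type counts: the analysis of~\cite{coja2024full} and the contiguity results (Theorem~\ref{thm:contiguity} and Theorem~\ref{thm:asympequiv} applied to $M^{(2)}/G^{(2)}$) show that, given the degree sequence, the graph is a uniform configuration. I would then argue directly in the pairing model that, given degree sequences and the row-type data, our graph is a uniform pairing respecting the bipartition of types. Rank deficiency is monotone under adding rows, and a change of $O(\sqrt n)$ in the type counts can be absorbed by the $\eps$ slack through adding or removing a sublinear number of rows and columns, which changes the rank by $o(n)$.

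Making this last absorption rigorous for the exact full-rank claim, rather than only rank up to $o(n)$, is the delicate part. I would handle it by embedding our matrix as a row-subset of a slightly denser i.i.d.-type instance that is still below $d_{k,\rho}$.
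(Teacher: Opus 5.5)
Your target and your computation are both right. With $\rv k=k$ and $\rv d\sim\tfrac12\Po(\rho d)+\tfrac12\Po((1-\rho)d)$ one gets $\Phi_{\rv k,\rv d}=\tfrac12\Phi_{d,k,\rho}$, and Theorem~\ref{thm:rank-fields} already gives exact full row rank, so no upgrade from an $o(n)$ defect is needed. Your Poissonisation runs the wrong way, though. With mean $m(1-\eps/4)<m$ the Poisson system a.a.s.\ has fewer rows than $\B_{XY}$, so it cannot contain them; the mean must lie strictly between $m$ and $d_{k,\rho}n/k$.

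The genuine gap is the step you flag yourself: transferring an a.a.s.\ statement from $G^{(1)}_{2n,\rv m,k,\rv d}$, where column types are i.i.d., to $F(\B_{XY})$, where exactly $n$ columns have each type. None of your three mechanisms achieves this.
\begin{enumerate}[(i)]
\item Conditioning $G^{(1)}$ on the type count being exactly $n$ conditions on an event of probability $\Theta(n^{-1/2})$. Theorem~\ref{thm:rank-fields} comes with no rate, so its conclusion need not survive.
\item Absorbing the $O(\sqrt n)$ discrepancy at the cost of an $o(n)$ change of rank only gives rank $m-o(n)$.
\item Embedding $\B_{XY}$ as a row-subset of a denser instance does not touch the problem. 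Adding rows leaves you on the same $2n$ columns with the exact $n/n$ split.
\end{enumerate}
Your remark that the factor graph, given its degree sequence, is a uniform configuration that does not see column types points at what would suffice. One option is a fixed-degree-sequence version of Theorem~\ref{thm:rank-fields}. The other is a proof that the column-permuted degree sequences of the two models are contiguous; this is a local-limit comparison of empirical degree counts, which have the same mean under both splits and differ only at the $\sqrt n$ scale. You supply neither.

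The paper's device for this step is monotonicity in columns: if a column-submatrix has full row rank, so does the whole matrix. It proceeds as follows.
\begin{enumerate}[(a)]
\item Couple $\B_{XY}$, after a random column permutation, with a matrix $\C$ having $\max\{m,\rv m\}$ rows.
\item Delete a uniformly random set of $2\eps n$ columns. The types on the retained set are then a uniform random split, exactly as in the i.i.d.\ model, while the deleted columns absorb the fluctuation.
\item Compare the column-submatrix $\C'$ with $G^{(1)}_{(2-2\eps)n,\rv m,\Bin(k,1-\eps),\rv d}$ via Lemma~\ref{lem:G(1)mcontigF(C')}.
\item Pass full row rank from $\C'$ to $\C$ to $\C''$.
\end{enumerate}
Be aware that this step, as written, does not go through either. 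Since $\pr[\Bin(k,1-\eps)=0]=\eps^k>0$, the matrix $\C'$ has $\Theta(n)$ zero rows and is a.a.s.\ not of full row rank. Consistently, substituting $w=\eps+(1-\eps)z$ shows that $\frac{d}{dz}\Phi_{\rv k_\eps,\rv d}(0)>0$ for every $\eps\in(0,1)$, so the hypothesis of Theorem~\ref{thm:rank-fields} fails. Letting $\eps\to0$ cannot control small $z$, because near $0$ the difference $\Phi_{d^*,k,\rho}(z)-\Phi_{d^*,k,\rho}(0)$ behaves like $-d^*(1-1/k)z^k$.

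So column deletion is the idea that would buy exactness in your plan. It has to be implemented without creating rows of degree $0$ or $1$, or else replaced by the degree-sequence argument above.
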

We prove Lemma~\ref{lem:sat-nonprincipal} using a coupling argument which compares $\B_{XY}$ with a different model $\A$ from~\cite{coja2024full} whose full row-rank condition was fully characterised. The factor graph $F(\A)$ is distributed as $G_{n,\rv m,\rv k,\rv d}^{(1)}$ introduced in Section~\ref{sec:contiguity}. To construct $\A$ from $G_{n,\rv m,\rv k,\rv d}^{(1)}$, let $\chi$ be a random variable  on $\FF_q\setminus \{0\}$. Choose $G_{n,\rv m,\rv k,\rv d}^{(1)}$ and $\{\chi_{a_i,v_j}\}_{i,j\in\NN}$ independently, such that $\{\chi_{a_i,v_j}\}_{i,j\in\NN}$ are i.i.d.\ copies of $\chi$. Then set
\begin{equation*}
\A_{a_i,v_j}=\ind{a_iv_j\in E(G_{n,\rv m,\rv k,\rv d}^{(1)})}\chi_{a_i,v_j}.
\end{equation*}

The following theorem is the main result of~\cite{coja2024full}.
\begin{theorem}[\cite{coja2024full}]\label{thm:rank-fields} Given integer random variables $\rv k$, $\rv d$ such that $\ex[\rv d^r],\ex[\rv k^r]<\infty$ for some $r>2$,
    let $K(z)$ and $D(z)$ be the probability generating functions of $\rv k$ and $\rv d$ respectively. Define
    \begin{equation}
        \Phi_{\rv k,\rv d}(z)=D(1-K'(z)/\ex[k])-\frac{\ex[d]}{\ex [k]}(1-K(z)-(1-z)K'(z)).
    \end{equation}
    If $q$ and $\gcd(\rv d)$ are relatively prime and $\Phi_{\rv k,\rv d}(z)<\Phi_{\rv k,\rv d}(0)$ for all $0<z\leq 1$, then $\A$ has full row rank a.a.s.
\end{theorem}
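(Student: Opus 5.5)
The plan is to study the nullity $\operatorname{nul}(\A)=\dim\ker\A$ over $\FF_q$. Full row rank is equivalent to $\operatorname{nul}(\A)=n-\rv m$, and the lower bound $\operatorname{nul}(\A)\ge n-\rv m$ holds deterministically. Note that $\Phi_{\rv k,\rv d}(0)=D(1-K'(0)/\ex[\rv k])-\frac{\ex[\rv d]}{\ex[\rv k]}(1-K(0)-K'(0))$. In the regime relevant here, with $\rv k\ge 3$ a.s., this reduces to $1-\ex[\rv d]/\ex[\rv k]$, which is exactly the asymptotic value of $(n-\rv m)/n$. The proof therefore has two stages: a first-order statement that $\operatorname{nul}(\A)/n\to\max_{z\in[0,1]}\Phi_{\rv k,\rv d}(z)$, and a second-order statement that upgrades $\operatorname{nul}(\A)=n-\rv m+o(n)$ to an exact equality a.a.s.

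For the first stage I would use the Aizenman--Sims--Starr coupling scheme. Compare $\A$ on $n$ variables with the matrix on $n+1$ variables obtained by adding one variable vertex of degree $\rv d$ and the corresponding Poisson number of extra checks. Since the kernel of a linear map is a subspace, the change in nullity is determined by which coordinates are \emph{frozen}, that is, equal to $0$ in every kernel vector. Let $\alpha$ be the fraction of frozen variables. A new check of degree $\rv k$ on otherwise free neighbours cuts the dimension by one unless all its neighbours but the new one are frozen. This yields an increment of the form $D(1-K'(\alpha)/\ex[\rv k])-\frac{\ex[\rv d]}{\ex[\rv k]}(1-K(\alpha)-(1-\alpha)K'(\alpha))$ up to $o(1)$. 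Hence $\operatorname{nul}/n\le\max_z\Phi(z)+o(1)$, and the fixed-point equation of warning propagation identifies the relevant $z$.

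Under the hypothesis $\Phi(z)<\Phi(0)$ for all $z\in(0,1]$, the maximum is attained only at $z=0$. This gives $\operatorname{nul}(\A)=n-\rv m+o(n)$ a.a.s.; by a continuity argument it also shows that the fraction of frozen variables tends to $0$.

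The main obstacle is the second stage: ruling out a deficiency of size $o(n)$, i.e.\ the existence of a nonempty set of rows with a nontrivial linear relation. I would handle this by a first-moment count of \emph{proper relations}, meaning vectors $y\ne0$ with $y^{\top}\A=0$, split by support size. For supports of size $\delta n$ with $\delta$ bounded away from $0$, the expected number should be exponentially small. The reason is that the exponent of this count is a variational expression whose only admissible maximiser is $0$ exactly when $\Phi(z)<\Phi(0)$ on $(0,1]$, and I would verify this by a Laplace-method computation over column-degree profiles. For small supports of size $o(n)$, the expansion properties of the factor graph force any relation to live on a small, cycle-rich structure, which is absent a.a.s.

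The remaining danger is a local arithmetic obstruction. Over $\FF_q$ with random nonzero entries, a relation can arise when all column degrees in the support are multiples of $\operatorname{char}\FF_q$, since then sums of entries can cancel deterministically. The assumption $\gcd(q,\gcd(\rv d))=1$ is what kills these degenerate configurations in the moment count, via a local limit theorem for the relevant sums. I expect this moderate-deviation and arithmetic part of the first-moment computation to be the hardest step.
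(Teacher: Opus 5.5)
The paper gives no proof of this statement; it is quoted from \cite{coja2024full}, so your proposal has to stand on its own. Stage 1 is essentially the rank formula of \cite{coja2020rank}, and as you implicitly use, only its upper bound on the nullity is needed.

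Your simplification $\Phi(0)=1-\ex[\rv d]/\ex[\rv k]$ uses $\pr[\rv k\le 1]=0$. An assumption of this kind (you take $\rv k\ge 3$) is genuinely necessary, but it is not in the statement as restated. For example, $\rv k\in\{0,3\}$ yields exactly the same $\Phi$ as $\rv k\equiv 3$ with the same $\rv d$, yet it produces a linear number of zero rows. This also affects the paper's application in Lemma~\ref{lem:sat-nonprincipal}, where $\rv k\sim\Bin(k,1-\eps)$.

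\textbf{The genuine gap is Stage 2.} A warning sign is that Stage 2 never uses Stage 1. A deficiency of $o(n)$ says nothing about the support sizes of relations. Your plan must therefore show from scratch that the expected number of relations with support at least $\delta n$ is $e^{-\Omega(n)}$ whenever $\Phi(z)<\Phi(0)$ on $(0,1]$. That claim is false near the boundary of this regime.

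At the 2-core fixed point $\eta$ one has $\Phi(0)-\Phi(\eta)=(n^*-m^*)/n$ (cf.\ the computation in the proof of Theorem~\ref{thm:nonprincipal-linear}). So close to the threshold the typical 2-core is almost square, with $n^*-m^*=\gamma n$ and $\gamma$ small. The excess $n^*-m^*$ has Gaussian-type fluctuations, with a large-deviation rate that is quadratic around its mean. Hence with probability $e^{-O((\gamma+\delta)^2 n)}$ the core has $\delta n$ more rows than columns. Such instances carry at least $q^{\delta n}-1$ relations, most of them with support of linear size. The annealed count is therefore at least $\exp(n[\delta\log q-O((\gamma+\delta)^2)])$. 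This is exponentially large for suitable $\delta$ once $\gamma$ is small, even though $\A$ has full row rank a.a.s.\ there.

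No Laplace computation over column-degree profiles can repair this, because the expectation really is carried by atypical instances. This is why moment arguments for XORSAT and its $\FF_q$ analogues are run on the 2-core with its degree sequence fixed \cite{duboismandler2002XORSAT,pittelsorkin2016XORSATthreshold,ayre2020satisfiability}. You therefore need an additional idea for linear supports. One option is to condition on the 2-core and its degrees before counting. You would then still have to prove that the resulting exponent is maximised at the balanced point under the $\Phi$-condition, which is a nontrivial analytic problem for general $(\rv d,\rv k)$. The other option is to make the first-order rank information do real work: derive the absence of linear-support relations from the rank formula. Then the first moment and expansion are needed only for sublinear supports, where the exponent is indeed negative for $\rv k\ge 3$.

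Relatedly, the obstruction behind $\gcd(q,\gcd(\rv d))=1$ is not local. For constant $\chi$ it is the single relation summing all rows, present whenever the characteristic divides every column degree. It must be handled at the full-support end of whatever argument you use.
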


We wish to couple $\A$ and $\B_{XY}$ such that $\A$ and $\B_{XY}$ have a large intersection. More precisely, we wish to specify a set of rows $I$ 
and a set of columns $S$ such that $\A$ has slightly more rows  than those in $I$, $\B_{XY}$ has slightly more columns than those in $S$, and $\A_{av}=(\B_{XY})_{av}$ for all $(a,v)\in I\times S$. However, directly coupling $\A$ and $\B_{XY}$ with these properties are difficult. So, we will construct matrices $\C'$ and $\C''$ such that $\A \contig \C'$ (for some choice of $\rv k,\rv d,\chi $), $\C''\contig \B_{XY}$ (up to permuting columns) and then couple $\C'$ and $\C''$ such that they have the properties mentioned above. 

To construct $(\C',\C'')$ jointly, we construct a random $\{0,1\}$-matrix $\C$ with two submatrices $\C',\C''$ such that
\begin{itemize}
    \item every row of $\C$ is in $\C'$ a.a.s,
    \item every column of $\C$ is in $\C''$.
\end{itemize}

Since $\C$, $\C'$, and $\C''$ are $\{0,1\}$-matrices, it suffices to couple their factor graphs $F(\C)$, $F(\C')$, $F(\C'')$. Fix $d^*>0$ such that $(d^*-\delta)n/k>m$ for some $\delta>0$, and fix $\eps>0$. Recall that $n,m,k,\rho$ are parameters given in Theorem~\ref{thm:nonprincipal-linear}. We first define three coupled multigraphs $M=M_{n,m,k,\rho,d^*,\eps}$, $M'=M_{n,k,\rho,d^*,\eps}'$, $M''=M_{n,m,k,\rho,\eps}''$, which will be used later to couple $F(\C)$, $F(\C')$, and $F(\C'')$. 
\begin{itemize}
    \item Let $\rv m\sim \Po(d^*n/k)$, and let $\rv {\hat m}=\max \{m,\rv m\}$. The vertex set of $M$ is $V_{2n}\cup C_{\rv {\hat m}}$.

To define the edge set of $M$, we first choose a random partition $(A,A',B,B')$ of $V_{2n}$ independent of $\rv {\hat m}$. Choose a set $S$ uniformly from the subsets of $V_{2n}$ of size $(2-2\eps)n$. Conditioned on $S$, let $A$ be a uniformly chosen random subset of $S$, and let $B=S\setminus A$. Conditioned on $A$ and $S$, choose $A'$ uniformly from the set of subsets $A'\subset V_{2n}\setminus S$ that minimize $||A\cup A'|-n|$. Finally, let $B'=V_{2n}\setminus (S\cup A')$.

Now, conditioned on $\rv {\hat m}$ and $(A,A',B,B')$, let $\{\bb_{a,i}\}_{a\in C_{\rv {\hat m}},i\in[k]}$ be i.i.d.\ random variables on $V_{2n}$, where $\pr[\bb_{a,i}=v]=\rho/|A\cup A'|$ for $v\in A\cup A'$ and $\pr[\bb_{a,i}=v]=(1-\rho)/|B\cup B'|$ for $v\in B\cup B'$. The edge set of $M$ is given by the multiset $\{a\bb_{a,i}:i\in[k],a\in C_{\rv {\hat m}}\}$, so that there is an edge from $a\in C_{\rv {\hat m}}$ to $v\in V_{2n}$ for each $i\in[k]$ such that $\bb_{a,i}=v$.

\item Let $M'$ be the subgraph of $M$ induced by $C_{\rv m}\cup S$.
\item Let $M''$ be the subgraph of $M$ induced by $C_{m}\cup V_{2n}$.
\end{itemize}

Note that $\bb_{a,1},\ldots, \bb_{a,k}\in V_{2n}$ may not be distinct, and so $M$ is random bipartite multigraph. Let $\mc S$ be the set of simple finite factor graphs. We have
\begin{align}
        \pr[M\in \mc S\mid \rv {\hat m}]\geq\lt(1-\frac{k}{n}\rt)^{\rv{\hat m}}\label{eq:prS-M}
\end{align}
and $\rv{\hat m}=O(n)$ a.a.s., and hence $\pr[M\in \mc S]=\Omega(1)$. Choose the random graphs $(F(\C),F(\C'),F(\C''))$ from the conditional joint distribution of $(M,M',M'')$ given $M\in \mc S$.

  

The following lemma shows that $F(\C'')$ is contiguous with $F(\B_{XY})$ up to permuting columns. Let $\pi$ be a uniform random permutation of $V_{2n}$, and let $\tilde \B_{XY}$ be the matrix obtained by permuting columns of $\B_{XY}$ according to $\pi$.
\begin{lemma}\label{lem:BXYmcontigF(C'')}
    For any $\eps>0$ and $m=m(n)$, we have $F(\tilde \B_{XY})\mcontig F(\C'')$
\end{lemma}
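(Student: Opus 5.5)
We will show that, after the uniform column permutation, $F(\tilde\B_{XY})$ is exactly $F(\C'')$ reweighted by a likelihood ratio that a.a.s.\ lies between two positive constants. We treat the relevant regime $m=O(n)$; for $m\gg n$ the probability of the conditioning event $\mc E$ below tends to $0$, so a separate argument would be needed there.

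\textbf{Describing both laws.} In $\B_{XY}$, each row picks $k$ distinct indices $j\in[n]$. Independently, each index is sent to column $j$ (an $\bar X$ entry, probability $1-\rho$) or to column $j+n$ (a $\bar Y$ entry, probability $\rho$). After applying $\pi$, the columns $\{1,\ldots,n\}$ become a uniform $n$-subset $U\subset V_{2n}$, with complement $W$. The pairs $\{j,j+n\}$ become a uniform perfect matching $\mu$ between $U$ and $W$. So each row of $\tilde\B_{XY}$ is obtained in two steps. First choose $k$ i.i.d.\ vertices, each uniform in $U$ with probability $1-\rho$ and uniform in $W$ with probability $\rho$. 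Then condition on the $k$ vertices lying in $k$ distinct $\mu$-pairs. For $M''$, we first check that a.a.s.\ $|A\cup A'|=n$ exactly. Indeed, $|A|=(1-\eps)n+O_p(\sqrt n)$ and $|V_{2n}\setminus S|=2\eps n$, so $A'$ can fill $A\cup A'$ up to size $n$. By exchangeability of the construction, $A\cup A'$ is then a uniform $n$-subset. The rows of $M''$ are i.i.d.\ with exactly the side-selection law above, with $A\cup A'$ in the role of $W$ (or of $U$, after swapping $\rho\leftrightarrow 1-\rho$, which is harmless by symmetry). $F(\C'')$ is $M''$ conditioned on simplicity. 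Hence, adjoining an independent uniform matching $\mu$ between the two sides, we get
\[
\mathcal L\bigl(F(\tilde\B_{XY})\bigr)=\mathcal L\bigl(F(\C'')\,\big|\,\mc E\bigr),
\]
where $\mc E$ is the event that no row contains two $\mu$-partners. This holds because ``distinct pairs'' is exactly ``simple and $\mc E$''.

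\textbf{The two contiguity directions.} For any event $\mc E_n$ we have $\pr[F(\tilde\B_{XY})\in\mc E_n]\le \pr[F(\C'')\in\mc E_n]/\pr[\mc E]$. So one direction follows once $\pr[\mc E]=\Omega(1)$. For the converse we write
\[
\pr[F(\tilde\B_{XY})\in\mc E_n]=\ex\bigl[\ind{F(\C'')\in\mc E_n}\,\pr[\mc E\mid F(\C'')]\bigr]/\pr[\mc E].
\]
It then suffices to show that a.a.s.\ over the graph $G=F(\C'')$ we have $\pr[\mc E\mid G]\ge c>0$.

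\textbf{The main step.} Fix a typical $G$. Let $Z$ be the number of pairs $(u,w)$ with $u\in U$, $w\in W$, $u$ and $w$ in a common row, and $\mu(u)=w$. Then $\ex[Z\mid G]=\frac1n\sum_a k_a^U k_a^W$, where $k_a^U,k_a^W$ count the neighbours of row $a$ on each side. This equals $\Theta(m/n)=\Theta(1)$ and concentrates by the law of large numbers over rows. We will show that $Z$ given $G$ is asymptotically Poisson with this bounded mean, using factorial moments over the random matching. Expanding $\ex[(Z)_j\mid G]$ over $j$-tuples of candidate pairs, each $j$-tuple of distinct $U$-$W$ pairs is in $\mu$ with probability $(n)_j^{-1}$. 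Overlapping tuples are controlled a.a.s.\ in $G$ because $G$ has bounded expected degrees and few short cycles. This yields $\pr[\mc E\mid G]=\pr[Z=0\mid G]\to e^{-\lambda}$ with $\lambda=\Theta(1)$, giving both $\pr[\mc E]=\Omega(1)$ and the lower bound $c$. The main obstacle is this factorial-moment computation, which must be made uniform over a.a.s.\ graphs $G$. I would first establish the first two moments together with the concentration of $\sum_a k_a^Uk_a^W$, and only then handle higher moments by the standard method of moments for matchings.
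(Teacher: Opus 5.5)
\textbf{Verdict: your proposal is correct.} It shares the paper's first step but establishes the reverse contiguity direction by a genuinely different, and more careful, argument.

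\textbf{The shared first step.} Your identification of the two laws matches the paper's Lemma~\ref{lem:BXY-C''-contig}. That lemma shows $F(\tilde\B_{XY})\sim M''\mid \mc S'\cap\mc C$, where $\mc S'$ requires at most one edge from each constraint to each $\Gamma$-pair. This $\mc S'$ is exactly your ``simple and $\mc E$''.

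\textbf{Where the routes diverge.} The paper computes exactly $\pr[\mc S'\mid\Gamma=\gamma]=((n)_k/n^k)^m$: each endpoint lands in a uniform $\Gamma$-pair whatever the side weights. This is $\Omega(1)$ for $m=O(n)$, so your restriction to $m=O(n)$ loses nothing, since the construction of $M$ already assumes $m<(d^*-\delta)n/k$. The paper then closes both directions with the sandwich
\[
\pr[\{M''\in\mc E_n\}\cap\mc C]\le\pr[M''\in\mc E_n\mid\mc S'\cap\mc C]\le\frac{\pr[M''\in\mc E_n]}{\pr[\mc S'\cap\mc C]},
\]
together with $\pr[M\in\mc S]=\Omega(1)$.

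For the direction actually used later (full row rank passing from $\C''$ to $\B_{XY}$ in Lemma~\ref{lem:sat-nonprincipal}), only the right-hand inequality matters. There the exact formula gives $\pr[\mc E]=\Omega(1)$ more cheaply than your Poisson computation, so use it.

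For the converse, the left-hand inequality taken with $\mc E_n=\{G\}$ says $\pr[\mc S'\mid M''=G,\mc C]\ge\pr[\mc S'\cap\mc C]$ for every simple $G$. That is not true in general. If $m(k-1)\ge 2n-1$ and one vertex shares a constraint with every other vertex, then $\pr[\mc S'\mid M''=G,\mc C]\le 1/n$. So an a.a.s.\ lower bound on $\pr[\mc S'\mid M'']$ is genuinely needed there. Your conditional Poisson step supplies exactly that: it buys a rigorous reverse direction at the price of a factorial-moment computation.

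\textbf{Simplifying your main step.} You can make it lighter in several ways.
\begin{itemize}
\item Since $k_a^Uk_a^W\le k^2/4$, the conditional mean is at most $mk^2/(4n)$ for every $G$ and every $U$. No law of large numbers is needed.
\item The $j$-tuples of candidate pairs sharing a vertex contribute $O(n^{-2}\sum_v\deg(v)^2)$ to $\ex[(Z)_j\mid G,U]$. So you only need $\sum_v\deg(v)^2=o(n^2)$, for example maximum degree $o(n)$ a.a.s.; control of short cycles is unnecessary.
\item Bonferroni then gives $\pr[Z=0\mid G,U]\ge e^{-mk^2/(4n)}-o(1)$ uniformly.
\end{itemize}

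\textbf{Two points to state explicitly.} First, $U$ is still random given $G$, so you need the bound uniform in $U$; the bullets above provide that. Second, ``$F(\C'')$ is $M''$ conditioned on simplicity'' holds exactly only on $\mc C$. There the extra $\rv{\hat m}-m$ rows of $M$ are simple with probability not depending on $U$. This suffices because $\pr[\mc C]=1-o(1)$.
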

We also show that $F(\C')$ is contiguous with $G^{(1)}_{(2-2\eps)n,\rv m,\rv k,\rv d}$ for an appropriate choice of parameters $\rv k$ and $\rv d$. Given $\mu_1,\mu_2>0$ and $\lambda\in [0,1]$, we use 
$\lambda \Po(\mu_1)+(1-\lambda) \Po(\mu_2)
$ to denote the distribution obtained by choosing from $\Po(\mu_1)$ with probability $\lambda$, and choosing from $\Po(\mu_2)$ with probability $1-\lambda$.
\begin{lemma}\label{lem:G(1)mcontigF(C')}
    Let $\eps>0$ and let $d^*>0$. Let $\rv k\sim \Bin(k,1-\eps)$, and let  $\rv d\sim \Po(\rho d^*)/2+\Po((1-\rho)d^*)/2$. Then $G^{(1)}_{(2-2\eps)n,\rv m,\rv k,\rv d}\contig F(\tilde \C')$.
\end{lemma}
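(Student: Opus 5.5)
The idea is to compute the law of $F(\tilde\C')$ exactly and show that it is a mixture of configuration-type models whose degree sequence is close to i.i.d.\ copies of $\rv d$. Then I would compare it with $G^{(1)}_{(2-2\eps)n,\rv m,\rv k,\rv d}$ through Theorem~\ref{thm:contiguity} and Theorem~\ref{thm:asympequiv}.

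\textbf{Step 1: the graph $M'$ conditional on $(A,A',B,B')$.} Each clone $\bb_{a,i}$ lands in $S$ independently. It lands in $A$ with probability $\rho|A|/|A\cup A'|$ and in $B$ with probability $(1-\rho)|B|/|B\cup B'|$. Since $|A|\approx|B|\approx(1-\eps)n$ and $|A\cup A'|\approx|B\cup B'|\approx n$, both ratios are $1-\eps+o(1)$. Hence the degree of $a\in C_{\rv m}$ in $M'$ is $\Bin(k,1-\eps+o(1))$, independently over $a$, which matches $\rv k$. Because $\rv m\sim\Po(d^*n/k)$, Poisson thinning makes the edge counts into $A$ and into $B$ independent Poisson variables. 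Conditional on the degree sequence, the multigraph $M'$ is exactly the pairing model with uniform matching restricted to $S$.

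\textbf{Step 2: degrees on the variable side.} A variable in $A$ has degree $\Po(\rho d^*(1+o(1)))$ and a variable in $B$ has degree $\Po((1-\rho)d^*(1+o(1)))$, jointly independent. Since $\pi$ permutes the columns uniformly and $A$ is a uniform half of $S$, the variable degree sequence of $\tilde\C'$ is $(2-2\eps)n$ i.i.d.\ samples of $\rv d$. The only deviations are that $|A|$ is random, with fluctuations of order $\sqrt n$, and that the means carry a $1+O(1/\sqrt n)$ factor coming from $|A\cup A'|$ versus $n$.

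\textbf{Step 3: conditioning.} Conditioning on total degree consistency gives the event $\mc D$. After that, conditioning the pairing model on simplicity is exactly $G^{(2)}$, and conditioning on $M\in\mc S$ restricted to $M'$ is the same. Simplicity of $M'$ and of the rest of $M$ are not independent, but conditioning on $M'$, the event that $M\setminus M'$ is simple has probability bounded away from $0$. Therefore $F(\tilde\C')$ has Radon--Nikodym derivative $O(1)$ with respect to the unconditioned-on-$M$ version, which preserves the one-sided contiguity.

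\textbf{Main obstacle.} The hard part is showing that the small deviations in Step 2 do not destroy contiguity. Here $|A|$ is $\Bin$-distributed rather than exactly half, and the Poisson means are perturbed by $O(1/\sqrt n)$. I would handle this with a likelihood-ratio computation over degree sequences. For a fixed split, the ratio between products of Poisson densities with means $\lambda$ and $\lambda(1+O(n^{-1/2}))$ is $\exp(O(n^{-1/2})\sum_v (d_v-\lambda))=\exp(O_P(1))$. Likewise, a uniform random half-split versus an i.i.d.\ fair-coin assignment differ by a factor bounded in probability on typical events. Combining these with Theorem~\ref{thm:asympequiv} gives $G^{(1)}\contig G^{(2)}$-type law $\contig F(\tilde\C')$.
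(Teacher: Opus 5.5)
There is a genuine gap in the step that does the real work: comparing the degree sequence of $M'$ with that of $M^{(2)}$. Your outer chain is the paper's, and it is fine: $G^{(1)}\to G^{(2)}$ by Theorem~\ref{thm:asympequiv}, then $\pr[M^{(2)}\in\mc E_n]\le\pr[G^{(2)}\in\mc E_n]$ for sets of simple graphs, then $M'$, then $F(\C')$ via $\pr[M'=G\mid M\in\mc S]\le\pr[M'=G]/\pr[M\in\mc S]$. Your observation that $M'$ given its degree sequence is the pairing model is correct. Theorem~\ref{thm:contiguity} is not available, because $\rv d$ is a Poisson mixture, but you do not need it.

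The premise of Steps 1--2 is false. Each constraint throws its $k$ clones at once, so the number of clones is $k\rv m$, which is not Poisson, and thinning does not give independent Poisson edge counts. Indeed $Z=\sum_a\rv\kappa_a$ is compound Poisson, with variance about $k(1-\eps)+\eps$ times its mean. So the variable degrees of $M'$ are not even close in total variation to independent Poissons. The two laws are mutually contiguous, but that is what has to be proved, and a mean-shift likelihood ratio does not prove it. Step 3's ``conditioning on total degree consistency gives $\mc D$'' is also not a derivation. Nothing in $M'$ is conditioned: its constraint degrees are free i.i.d.\ binomials with a Poisson number of constraints, and its variable degrees are multinomial given $Z$. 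In $M^{(2)}$, by contrast, both sides are i.i.d.\ conditioned on $\mc D_n$, an event of probability $\Theta(n^{-1/2})$.

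To close the gap you need the identity the paper uses in Lemma~\ref{lem:M'-M2-equiv}. On $\mc C$, given $A$ and $Z$, the multinomial variable degrees have exactly the law of independent $\Po(\rho d^*)$ variables on $A$ and $\Po((1-\rho)d^*)$ variables on $B$, called $\rv\delta'_v$, conditioned on $\sum_{v\in S}\rv\delta'_v=Z$. Augment $M^{(2)}$ by the latent types of the mixture $\rv d$. The density of $(A,\deg(M'))$ relative to $(A,\deg(M^{(2)}))$ is then
\[
\frac{\pr[\mc D_n]}{\pr[\sum_{v\in S}\rv\delta'_v=Z\mid A]}\prod_{a}\frac{\pr[\Bin(k,p_A)=\rv\kappa_a]}{\pr[\Bin(k,1-\eps)=\rv\kappa_a]},\qquad p_A=\frac{\rho|A|+(1-\rho)|B|}{n}.
\]
The first factor is a ratio of two quantities of order $n^{-1/2}$. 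You must show it is $\Theta_K(1)$ on $\{|Z-(1-\eps)d^*n|\le K\sqrt n\}$ by the local limit theorem, and that $Z$ and $\rv m$ concentrate at scale $\sqrt n$ under both laws. This is exactly what the auxiliary event $\mc D_n'$ and Lemma~\ref{lem:M'conditioning-contiguity} do in the paper; your ``main obstacle'' paragraph does not address it.

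The perturbations you do discuss are partly illusory. On $\mc C$ the variable-side weights are exactly $\rho/n$ and $(1-\rho)/n$, so there is no mean shift there. Also, $A$ is a uniformly random subset of $S$, i.e.\ exactly i.i.d.\ fair coins, which already matches the mixture $\rv d$. The one real $O(n^{-1/2})$ deviation is $p_A-(1-\eps)=(2\rho-1)(|A|-(1-\eps)n)/n$ in the constraint degrees. There your likelihood-ratio estimate $\exp(O_P(1))$ is the right tool. The paper's Claim~\ref{claim:M2degreesequence} glosses over this point: it treats $\rv\kappa_a$ as exactly $\Bin(k,1-\eps)$ and independent of the $\rv\delta'_v$ given $A\cup B$, which is exact only when $\rho=1/2$.
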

We defer the proofs of Lemmas~\ref{lem:BXYmcontigF(C'')} and~\ref{lem:G(1)mcontigF(C')} to Section~\ref{sec:contiguityproof}, which also contains the proofs of the contiguity results discussed in Section~\ref{sec:contiguity}. Using the lemmas, we now prove Lemma~\ref{lem:sat-nonprincipal}.
\begin{proof}[Proof of Lemma~\ref{lem:sat-nonprincipal}]
Suppose $m<(d_{k,\rho}-\eps')n/k$ for some fixed $\eps'>0$. Set $d^*=(1-\eps'/2)d_{k,\rho}$, and set $\rv d\sim \Po(\rho d^*)/2+\Po((1-\rho)d^*)$. Setting $\rv k_\eps\sim\Bin(k,1-\eps)$ for $\eps>0$, we have
\begin{equation*}
    \lim_{\eps\to 0}\Phi_{\rv k_\eps,\rv d}(z)=e^{-\rho d^*z^{k-1}}+e^{-(1-\rho)d^*z^{k-1}}-d^*(1-1/k)z^k+d^*z^{k-1}-d^*/k=\Phi_{d^*,k,\rho}(z)
\end{equation*}
Since $d^*<d_{k,\rho}$, we have $\Phi_{d^*,k,\rho}(z)<\Phi_{d^*,k,\rho}(0)$ for all $z\in (0,1]$. Taking derivatives shows that $\Phi_{d^*,k,\rho}^{(i)}(0)=0$ for $i\in[k-1]$ and $\Phi_{d^*,k,\rho}^{(k)}(0)<0$, so it follows that $\Phi_{\rv k_\eps,\rv d}(z)<\Phi_{\rv k_\eps,\rv d}(0)$ for all $z\in (0,1]$ for some sufficiently small $\eps>0$. Set $\rv k= \rv k_\eps$, and take this choice of $d^*$, $\eps$ in the definition of $\C$, $\C'$, and $\C''$.

Theorem~\ref{thm:rank-fields} shows that the $\{0,1\}$-matrix $\A$ with $F(\A)\sim G^{(1)}_{(2-2\eps)n,\rv m,\rv k,\rv d}$ has full row rank a.a.s. for this choice of $\rv k,\rv d$. By Lemma~\ref{lem:G(1)mcontigF(C')}, it follows that $\C '$ has full row rank a.a.s. On the event that $\rv m>m$ and $\C'$ has full row rank, $\C'$ contains every row of $\C$, and so $\C$ also has full row rank. Since $\rv m\sim \Po(d^*n/k)$ and $d^*/k>(d_{k,\rho}-\eps')/k>m/n$, we have $\rv m>m$ a.a.s. Therefore $\C$ has full row rank a.a.s., and the submatrix $\C''$ has full row rank a.a.s. Lemma~\ref{lem:BXYmcontigF(C'')} then shows that $\B_{XY}$ has full row rank a.a.s.
\end{proof}

\subsection{Unsatisfiability}
In this section we prove that $\B_{XY}x=\b_{XY}$ is a.a.s.\ unsatisfiable in the supercritical case. 
Let $\B'$ and $\b'$ be the submatrix and subvector of $\B_{XY}$ and $\b$ respectively by taking the first $d^*n/k$ rows for some $d^*\approx d_{k,\rho}$ which will be specified later. Recall from Section~\ref{sec:proofoverview} that $(\B')_{\text{2-core}}$ is the largest submatrix of $\B'$ where every column contains at least two non-zero entries and every row containts exactly $k$ non-zero entries. As in the proof of Theorem~\ref{thm:units},  we will bound the size of $|\ker (\B')_{\text{2-core}}|$ and then show that a.a.s.\ all the non-zero kernel vectors in $\ker (\B')_{\text{2-core}}$ will no longer be in the kernel of a new matrix by by adding $\Omega(n)$ more rows whose supports are contained in the columns of $(\B')_{\text{2-core}}$. We use a theorem in~\cite[Theorem 1.2]{coja2020rank}, restated below in a form that is easy to apply in our setting, to estimate the size of $(\B')_{\text{2-core}}$.  To state the theorem we need a few definitions.

     Suppose $\boldsymbol{d}_n=(d_1,\ldots,d_n)$ and $\boldsymbol{k}_n=(k_1,\ldots, k_{m(n)})$ are sequences of tuples, where $m(n)$ is a function of $n$. Suppose $(\boldsymbol{d}_n,\boldsymbol{k}_n)_{n\in\NN}$ satisfies the following condition 
\begin{equation}
\exists r>2, C>0:\
\sum_{i=1}^n d^r_i\le Cn, \quad \sum_{i=1}^{m(n)} k^r_i\le Cm(n) \quad \text{for all $n$.}\label{eq:dkassumptions}
\end{equation} 
Let $\pi$ and $\mu$ be defined by
\begin{equation}
\pi(j)=\lim_{n\to\infty} \sum_{i=1}^n \frac{\ind{d_i=j}}{n},\quad \mu(j)=\lim_{n\to\infty} \sum_{i=1}^m \frac{\ind{k_i=j}}{m}.\label{eq:dkempirical}
\end{equation}
Then it is easy to see that~\eqref{eq:dkassumptions} implies that $\pi$ and $\mu$ are well-defined and that both of them are probability distribution functions. Let $\bar d=\ex_{\d\sim \pi}[\d]$ and let $\bar k=\ex_{\boldsymbol{k}\sim\mu}[\boldsymbol{k}]$. Let $D$ be the probability generating function of $\pi$ and let $K$ be the probability generating function of $\mu$. Let
\begin{equation*}
    \phi(\alpha)=1-\alpha-D'(1-K'(\alpha)/k)/d
\end{equation*}
and let
\begin{equation*}
    \eta=\eta_{\pi,\mu}=\max\{x\in [0,1]:\phi(x)=0\}.
\end{equation*}

For each tuple $\boldsymbol{d}_n=(d_1,\ldots, d_n)$ and $\boldsymbol{k}_n=(k_1,\ldots, k_m)$, let $B_{\boldsymbol{k}_n,\boldsymbol{d}_n}$ be a random binary matrix chosen uniformly from the set of matrices with $k_i$ nonzero entries in row $i$ and $d_j$ nonzero entries in column $j$ for each $i\in[m(n)],j\in[n]$.

\begin{theorem}\label{thm:2core-size} Suppose that $(\boldsymbol{d}_n,\boldsymbol{k}_n)$ satisfies~\eqref{eq:dkassumptions}. Let $B\sim B_{\boldsymbol{d}_n,\boldsymbol{k}_n}$.
Let $n^*$ and $m^*$ be the number of columns and rows of $B_{\text{2-core}}$. Then,
\[
\frac{n^*}{n}\to1-D\lt(1-\frac{K'(\eta)}{\bar k}\rt)-\frac{K'(\eta)}{\bar k}D'\lt(1-\frac{K'(\eta)}{\bar k}\rt),\quad \frac{m^*}{n}\to\frac{\bar d}{\bar k}K(\eta).
\]
\end{theorem}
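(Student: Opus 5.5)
This is a restatement of \cite[Theorem 1.2]{coja2020rank} in the present notation, so the plan is to reduce to that result rather than reprove it. If a self-contained argument is needed, I would use the standard peeling-process analysis of the 2-core. In either case, the first step is to translate. The uniform binary matrix $B_{\boldsymbol{k}_n,\boldsymbol{d}_n}$ with prescribed row and column sums is the simple bipartite factor graph with degree sequences $(\boldsymbol{k}_n,\boldsymbol{d}_n)$. By the standard switching/pairing argument, which under \eqref{eq:dkassumptions} (bounded $r>2$ moments) gives that the probability of simplicity is bounded away from zero, any a.a.s.\ statement for the pairing model $M^{(2)}$ with these fixed degree sequences transfers to the uniform simple model. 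So it suffices to prove the limits for the configuration (pairing) model.

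For the pairing model, I would analyse the parallel peeling process. At each round, delete every variable vertex of degree at most one together with its incident constraint (if any); a constraint is also deleted once it has lost any of its vertices, since the 2-core requires rows with all $k$ entries. I would define $\alpha_t$ as the probability that a uniformly random edge, viewed from its constraint end, leads to a constraint that survives $t$ rounds, and $\beta_t$ as the analogous quantity from a variable end. The local weak limit of the pairing model is a two-type Galton--Watson tree, with variable offspring given by the size-biased law of $\pi$ and constraint offspring by the size-biased law of $\mu$. The recursion for survival on this tree is $\alpha_{t+1}=1-D'(1-K'(\alpha_t)/\bar k)/\bar d$, whose largest fixed point in $[0,1]$ is exactly $\eta$, the largest zero of $\phi$. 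A vertex then belongs to the core iff, in the limiting tree, at least two (for variables) or all (for constraints) of its children survive. Thinning $\pi$ by the survival probability $1-K'(\eta)/\bar k$ gives $n^*/n\to 1-D(1-K'(\eta)/\bar k)-(K'(\eta)/\bar k)D'(1-K'(\eta)/\bar k)$, since $D(1-x)$ is the probability of no surviving neighbour and $xD'(1-x)$ that of exactly one. For constraints, the probability that all $k_a$ neighbours survive, averaged over $\mu$, is $K(\eta)$, and multiplying by $m/n\to\bar d/\bar k$ gives $m^*/n$.

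Concentration of these counts after a fixed number $t$ of rounds follows from local convergence plus a bounded-differences or second-moment argument in the pairing model, using the moment bound to control high-degree vertices. The main obstacle is the interchange of limits: showing that after $t$ rounds (with $t$ large but fixed) only $o(n)$ further vertices are peeled. This is where one needs $\eta$ to be the largest fixed point and the fixed point to be stable (or to handle it as in \cite{coja2020rank} via a differential-equation/Wormald argument for the sequential peeling, or a monotonicity argument showing the remaining graph after $t$ rounds has a robust 2-core). I would simply invoke \cite[Theorem 1.2]{coja2020rank} for this step, checking that its hypotheses match \eqref{eq:dkassumptions} and \eqref{eq:dkempirical}.
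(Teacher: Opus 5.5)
Your proposal takes the same route as the paper, which gives no independent argument and simply says the proof is identical to that of \cite[Theorem 1.2]{coja2020rank}; your peeling sketch (the fixed-point recursion for $\eta$, thinning of $\pi$, the factor $K(\eta)$ for rows, with the interchange-of-limits step deferred to that reference) is the standard argument behind it, and you are right that this last step needs $\eta$ to be a non-degenerate fixed point, a condition the statement leaves implicit. There are two small slips in the sketch: the per-neighbour survival probability in the thinning is $K'(\eta)/\bar k$, and its complement $1-K'(\eta)/\bar k$ is only the argument of $D$ and $D'$; also, your $\alpha_t$ is the survival probability of a variable reached from a constraint, not of a constraint.
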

\proof The proof is identical to the proof of~\cite[Theorem 1.2]{coja2020rank}.\qed

We can now prove Theorem~\ref{thm:nonprincipal-linear}.

\begin{proof}[Proof of Theorem~\ref{thm:nonprincipal-linear}]

Let $\eps>0$. By Lemma~\ref{lem:reduction-nonprincipal}, it suffices to show that $\B_{XY}x=\b_{XY}$ is satisfiable a.a.s. for $m<(d_{k,\rho}-\eps)n/k$, and unsatisfiable a.a.s. for $m>(d_{k,\rho}+\eps)n/k$.
    
The system $\B_{XY}=\b_{XY}$ is satisfiable a.a.s. in the subcritical case $m<(d_{k,\rho}-\eps)n/k$ by Lemma~\ref{lem:sat-nonprincipal}.

For the supercritical case, $m>(d_{k,\rho}+\eps)n/k$, consider the subsystem $\B'x=\b'$ consisting of the first $d^*n/k$ rows, where $d^*=d_{k,\rho}-\eps_n$ and $\eps_n=o(1)$. By choosing $\eps_n$ so that $\eps_n\to 0$ sufficiently slowly, we can ensure that $\B'$ has full row rank by Lemma~\ref{lem:sat-nonprincipal}.

To use a similar approach to Theorem~\ref{thm:units}, we first determine the asymptotic size of $(\B ')_{\text{2-core}}$. Every row of $\B'$ has $k$ nonzero entries, and the number of nonzero entries in each column of $\B'$ converges to a sequence of i.i.d. copies of $\rv d$, where $\rv d\sim \Po(\rho d^*)/2+\Po((1-\rho) d^*)/2$ and $\ex[\rv d]=d^*/2$. Therefore, if $n^*$ and $m^*$ are the number of rows and columns in the 2-core of $\B'$, applying Theorem~\ref{thm:2core-size} shows that

    \begin{equation}\label{eq:BXY-2core}
        \frac{n^*}{n}\to 2-e^{-\eta^{k-1}}-e^{-(1-\rho)d^*\eta^{k-1}}-d^*\eta^{k-1}+d^*\eta^k,\quad \frac{m^*}{n}\to \frac{d^*}{k}\eta^k
    \end{equation}
    in probability. Hence,
    \begin{align*}
        \frac{n^*-m^*}{n}&\to 1-e^{-\rho d_{k,\rho}\eta_{d_{k,\rho},k}^{k-1}}-e^{-(1-\rho)d_{k,\rho}\eta_{d_{k,\rho},k}^{k-1}}-d_{k,\rho}\eta_{d_{k,\rho},k}^{k-1}+d_{k,\rho}\eta_{d_{k,\rho},k}^k-\frac{d_{k,\rho}}{k}\eta_{d_{k,\rho},k}^k\\
        &=\Phi_{d,k,\rho}(0)-\Phi_{d,k,\rho}(\eta_{d_{k,\rho},k})=0
    \end{align*}
    The remainder of the proof follows a similar argument to the proof of Theorem~\ref{thm:units}, which we outline below. 
    
    Since $\B'$ has full row rank a.a.s., $(\B')_{\text{2-core}}$ also has full row rank a.a.s. Therefore the number of solutions to $(\B')_{\text{2-core}}x=(\b')_{\text{2-core}}$ is $q^{(n^*-m^*)}=q^{o(n)}$ a.a.s. Let $R'$ be the set of rows in $\B_{XY}$ that are not in $\B'$ and have all of their non-zero entries in the columns of $(\B')_{\text{2-core}}$. Since $n^*=\Theta(n)$ and $m-(d_{k,\rho}-\eps_n)n/k=\Omega(n)$ a.a.s., we can show that $|R'|=\Omega(n)$ a.a.s. Since $\supp(Q)=\{\bar X\bar Y,0\}$, the assumption {\bf{(A2)}} is satisfied for $(P^{\otimes k}, Q)$, and we can show that $\pr[(\B_{XY})_ax=(\b_{XY})_a]<1-\delta$ for all $x\in R^{V_{2n}}$ and $a\in R'$, where $\delta>0$ does not depend on $x$ or $a$. Therefore, the event that one of the $q^{o(n)}$ solutions to $(\B')_{\text{2-core}}x=(\b')_{\text{2-core}}$ also satisfies $(\B_{XY})_ax=(\b_{XY})_a$ for all $a\in R'$ has probability $q^{o(n)}(1-\delta)^{\Omega(n)}$. Hence,
    \begin{align*}
        \pr[\exists x\in R^n:(\B_{XY})_{\text{2-core}}x=(\b_{XY})_{\text{2-core}}]\leq q^{o(n)}(1-\delta)^{\Omega(n)}=o(1).
    \end{align*}
    Since the subsystem $(\B_{XY})_{\text{2-core}}x=(\b_{XY})_{\text{2-core}}$ is unsatisfiable a.a.s., it follows that $\B_{XY}x=\b_{XY}$ is unsatisfiable a.a.s.
\end{proof}

\section{Proofs of Theorems~\ref{thm:contiguity} and~\ref{thm:asympequiv} and Lemmas~\ref{lem:BXYmcontigF(C'')} and~\ref{lem:G(1)mcontigF(C')}}\label{sec:contiguityproof}

Recall the following models from Section~\ref{sec:contiguity}.
\begin{align*}
        G_{\rv m} &= G_{n,\rv m,\boldsymbol{k}}\\
    M_{\rv m} &= M_{n,\rv m,\boldsymbol{k}}\\
    M^{(2)}_{\rv m} &=M^{(2)}_{n,\rv m,\rv k,\rv d}\\
    G^{(2)}_{\rv m}&=G^{(2)}_{n,\rv m,\rv k,\rv d}\\
    G^{(1)}_{\rv m}&=G^{(1)}_{n,\rv m,\rv k,\rv d}.
\end{align*}
For simplicity we suppress the subscripts except for $\rv m$. We write $G_m, M_m, M_m^{(2)}, G_m^{(2)}$ and $G_m^{(1)}$ for the models with deterministic $m=m(n)$.

Recall the event $\mc D_n$ from Section~\ref{sec:contiguity}. Let $\mc S$ be the set of simple finite factor graphs.
For all $\mc M\in \{G_{\rv m},M_{\rv m}, M_{\rv m}^{(2)}, G_{\rv m}^{(2)}, G_{\rv m}^{(1)}\}$, let 
\begin{align*}
&c({\mc M})\ \text{denote the number of constraint vertices in $\mc M$}\\
&e({\mc M})\ \text{denote the number of edges in $\mc M$}\\
&\text{deg}({\mc M})\ \text{denote the degree sequence of $\mc M$}.
\end{align*}
Observe that $c(G_{\rv m}), c(M_{\rv m})$ has the same distribution as $\rv m$; $c(M_{\rv m}^{(2)}), c(G_{\rv m}^{(1)})$ has the same distribution as $\rv m\mid \mc D_n$, and $c(G_{\rv m}^{(2)})$ has the same distribution as $\rv m\mid (\mc D_n\cap \{M_{\rv m}^{(2)}\in\mc S\})$. The same subtle change in distribution resulting from conditioning on  $\mc D_n$ or $\mc D_n\cap \{\mc M\in\mc S\}$ occurs to $e(\mc M)$ and $\deg(\mc M)$ as well. For this reason we drop the subscript $\rv m$ from $C_{\rv m}$, since the number of constraint vertices $c(\mc M)$ does not necessarily have the same distribution as $\rv m$, and so the subscript no longer carries useful information. For symmetry we drop $n$ from $V_n$ as well.

Given a model $\mc M$, we may write $\deg(\mc M)$ as $(\ka,\da)$ or more specifically $((\ka_a)_{a\in C},(\da_v)_{v\in V})$ to intentionally distinquish it from $(\rv k_a)_{a\in C}$ and $(\rv d_v)_{v\in V}$ (without conditioning on $\mc D_n$ and/or $\mc M\in \mc S$), as their distributions are different.

\subsection{Proofs of the first statements of Theorems~\ref{thm:contiguity} and~\ref{thm:asympequiv} }

We begin this section by proving the first statement of Theorem~\ref{thm:asympequiv}, as stated in the following lemma. \begin{lemma}\label{lem:asympequivfixed}
    Let $\{\mc E_n\}_{n\in\NN}$ be a sequence of sets of factor graphs. Then
    \begin{equation*}
        \lim_{n\to\infty}\lt|\pr[G^{(1)}_{m}\in \mc E_n]-\pr[G^{(2)}_{m}\in \mc E_n]\rt|=0.
    \end{equation*}
\end{lemma}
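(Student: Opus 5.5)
The approach is to compare the two models through their degree sequences. Once the degree sequence $(\ka,\da)$ is fixed, both models produce a uniformly random simple bipartite graph with that degree sequence. For $G^{(1)}_m$ this is the definition. For $G^{(2)}_m$ it holds because every simple bipartite graph with degree sequence $(\ka,\da)$ arises from exactly $\prod_a \ka_a!\prod_v\da_v!$ pairings, so conditioning the pairing model on $\mc S$ gives the uniform distribution on simple graphs.

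The two models therefore differ only in the law of the degree sequence. In $G^{(1)}_m$ it is the law $\mu$ of $((\rv k_a),(\rv d_v))$ conditioned on $\mc D_n$. In $G^{(2)}_m$ it is the reweighted law
\[
\frac{p(\ka,\da)}{\ex_\mu[p]}\,\mu(\ka,\da),\qquad p(\ka,\da)=\pr[M^{(2)}\in\mc S\mid \deg(M^{(2)})=(\ka,\da)].
\]
Hence $|\pr[G^{(1)}_m\in\mc E_n]-\pr[G^{(2)}_m\in\mc E_n]|$ is at most the total variation distance between these two laws, which equals $\tfrac12\ex_\mu\bigl|p/\ex_\mu[p]-1\bigr|$. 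It suffices to show that under $\mu$, $p(\ka,\da)\to e^{-\lambda^*}$ in probability for a constant $\lambda^*<\infty$. Since $0\le p\le 1$, this gives $\ex_\mu[p]\to e^{-\lambda^*}>0$ and then $p/\ex_\mu[p]\to1$ in $L^1$ by bounded convergence.

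The first step is to quote the standard asymptotics for the probability that a bipartite configuration model is simple (McKay; Janson's version for degree sequences with bounded second moments, adapted to the bipartite setting). They give
\[
p(\ka,\da)=\exp\bigl(-\lambda(\ka,\da)\bigr)+o(1),\qquad \lambda(\ka,\da)=\frac{\sum_a\ka_a(\ka_a-1)\,\sum_v\da_v(\da_v-1)}{2\bigl(\sum_a \ka_a\bigr)^2},
\]
uniformly over degree sequences with $\sum_a\ka_a^2+\sum_v\da_v^2=O(n)$, $\sum_a \ka_a=\Theta(n)$, and maximum degree $o(\sqrt n)$. Under the assumption $\ex[\rv k^{2+\eps}],\ex[\rv d^{2+\eps}]<\infty$ together with ({\bf H}2), these conditions hold a.a.s.\ for i.i.d.\ degrees. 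The law of large numbers then gives $\lambda\to\lambda^*=\frac{m}{2n}\cdot\frac{\ex[\rv k(\rv k-1)]\,\ex[\rv d(\rv d-1)]}{\ex[\rv k]^2}\cdot\frac{n}{m}\cdots$, a fixed constant determined by $d/k$ and the second moments.

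The main obstacle is transferring these ``a.a.s.'' statements from the unconditioned i.i.d.\ law to $\mu$. The event $\mc D_n$ has probability only $\Theta(n^{-1/2})$ (local CLT under ({\bf H}1)--({\bf H}4)), so a bad event of probability $o(1)$ could a priori have conditional probability of order one. I would handle this by a splitting argument.

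Let $\mathcal B$ be the bad event: some empirical average entering $\lambda$ deviates by more than $\delta$, or the maximum degree exceeds $n^{1/2-\eps'}$. It suffices to control the corresponding bad events $\mathcal B_1$ and $\mathcal B_2$ for the first and second halves of the variables (first half of the $\rv k_a$ and first half of the $\rv d_v$, and likewise for the second half), since $\mathcal B\subseteq\mathcal B_1\cup\mathcal B_2$ with $\delta$ halved.

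Condition on the first half. Then $\mc D_n$ is the event that the difference of the two remaining partial sums hits a prescribed value, which is an event about a sum of $\Theta(n)$ independent lattice variables. The local limit theorem (or the Kolmogorov--Rogozin concentration inequality) shows that this has probability $O(n^{-1/2})$ uniformly in the conditioning. Therefore
\[
\pr[\mathcal B_1\cap\mc D_n]\le \pr[\mathcal B_1]\cdot O(n^{-1/2})=o\bigl(\pr[\mc D_n]\bigr).
\]
The same bound holds for $\mathcal B_2$ by symmetry. This yields $\mu(\mathcal B)=o(1)$, hence $\lambda\to\lambda^*$ in $\mu$-probability, and the conclusion of Lemma~\ref{lem:asympequivfixed} follows from the reduction above.
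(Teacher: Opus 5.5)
Your proof is correct, and its skeleton matches the paper's. Given the degree sequence, both models are uniform on simple graphs. The degree law of $G^{(2)}_m$ is the $\mc D_n$-conditioned law reweighted by $p$. Your total-variation/bounded-convergence step is the same computation as~\eqref{eq:equiv1}. (Your displayed $\lambda^*$ is garbled: the limit is $\ex[\rv k(\rv k-1)]\,\ex[\rv d(\rv d-1)]/(2\,\ex[\rv k]\,\ex[\rv d])$, which is the paper's $\lambda$.)

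The routes differ in the two supporting steps.

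For the simplicity probability, you quote a bipartite McKay/Janson-type theorem. The paper instead proves what it needs by factorial moments of the number of double edges (Lemmas~\ref{lem:moments} and~\ref{lem:prS}). That computation is exactly how one would justify your phrase ``adapted to the bipartite setting''. It needs only maximum degree $o(\sqrt n)$ and convergent empirical moments.

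For the transfer to the conditioned law, the paper uses the same principle as you: condition on part of the variables, then apply the local limit theorem to the rest. But it does so only for single-vertex tail events. It obtains $\pr[\rv k_a>\sqrt n/(2\log n)\mid\mc D_n]=o(1/n)$ and then takes a union bound (Lemma~\ref{lem:M'aas}). The empirical second moments (Lemma~\ref{lem:empvar}) are imported from~\cite{coja2020rank}.

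You instead condition on half of both the constraint and the variable degrees. You then compare $\sup_s\pr[X_2=s]=O(n^{-1/2})$, for the remaining difference of partial sums $X_2$, with $\pr[\mc D_n]=\Omega(n^{-1/2})$ from Lemma~\ref{lem:prD}. This disposes of every bad event with $\mc B\subseteq\mc B_1\cup\mc B_2$ at once. Your argument is therefore self-contained and applies to any such decomposable typical-degree property. The one case it does not cover, with both $\rv k$ and $\rv d$ degenerate, is trivial, since then $\pr[\mc D_n]=1$.
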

The distributions of $G^{(1)}_{m}$ and $G^{(2)}_{m}$ are the same when conditioned on having a fixed degree sequence, so the only task lies in showing that the distributions of $\text{deg}(G^{(1)}_{m})$ and $\text{deg}(G^{(2)}_{m})$ are sufficiently close to each other. Observe that $\text{deg}(G^{(1)}_{m})$ has the same distribution as $\deg(M^{(2)}_{m})$. But conditioning on the event $M^{(2)}_m\in \mc S$  causes the distribution of $\deg(G^{(2)}_{m})$ to drift from being i.i.d.\ copies of $\rv d$ conditioned only on $\mathcal{D}_n$. We use a method-of-moments argument similar to~\cite[Lemma 4.3]{coja2020rank} to calculate the probability that $M^{(2)}_{m}\in \mc S$ (for typical degree sequences), and then use this to show that  this drift of distribution is small.

The first statement of Theorem~\ref{thm:contiguity} follows from Lemma~\ref{lem:asympequivfixed} and the following lemma.

\begin{lemma}\label{lem:contiguity}
    Suppose $\rv d$ is a Poisson random variable. Then $G_{m}\mcontig G^{(2)}_{m}$.
\end{lemma}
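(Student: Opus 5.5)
We plan to compare both models to the pairing model $M^{(2)}_m$ via the intermediate multigraph model $M_m$. The starting point is that $M_m$ with $m$ fixed and $\rv k$ given has variable degrees distributed as a multinomial allocation of $\sum_a \rv k_a$ balls into $n$ bins. Conditioning on the variable degree sequence, $M_m$ is then exactly the pairing model on that degree sequence: the multinomial weight $\binom{e}{d_1,\ldots,d_n}n^{-e}$ times the uniform matching reproduces the sequential uniform choice of endpoints. On the other side, the multinomial vector of degrees equals in law i.i.d.\ $\Po(\lambda)$ variables conditioned on their sum being $e=\sum_a\rv k_a$, for any $\lambda$. Taking $\lambda=d$ with $\rv d\sim\Po(d)$ therefore identifies $\deg(M_m)$ with the law of $(\rv k,\rv d)$ conditioned on $\mc D_n$. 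Consequently $M_m$ and $M^{(2)}_m$ have the same distribution. (If ({\bf H}2) makes the Poisson mean $d$ versus $km/n$ matter, it does not matter here, since conditioning on the sum removes $\lambda$.)

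Next we condition on simplicity. $G^{(2)}_m$ is $M^{(2)}_m$ conditioned on $\mc S$. We claim that $G_m$ is $M_m$ conditioned on the event that each constraint $a$ has distinct endpoints $u_1,\ldots,u_{\rv k_a}$. Given that event, the set of neighbours of $a$ is a uniform $\rv k_a$-subset, independently over $a$. Since the bipartite graph has no other source of parallel edges, this event is exactly $\{M_m\in\mc S\}$. Hence $G_m$ and $G^{(2)}_m$ have identical distributions, which is stronger than mutual contiguity. It remains only to check that the conditioning is well defined, namely that $\pr[M_m\in\mc S]$ is bounded away from zero. For fixed $\rv k$ this probability equals $\prod_a \prod_{j<\rv k_a}(1-j/n)$, which is at least $\exp(-O(\sum_a \rv k_a^2/n))$. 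Since $m=O(n)$ and $\ex[\rv k^{2+\eps}]<\infty$, we have $\sum_a\rv k_a^2=O(n)$ a.a.s., so the probability is $\Omega(1)$.

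The main obstacle is the Poisson-conditioning identity, specifically matching the \emph{joint} law of $(\rv k,\deg)$ under $\mc D_n$. In $M_m$ the $\rv k_a$ are unconditioned i.i.d., while in $M^{(2)}_m$ they are conditioned on $\mc D_n$, so the marginal of $\rv k$ is tilted by $\pr[\Po(dn)=\sum_a\rv k_a]$. We would handle this by a local limit theorem: for $|\sum_a \rv k_a-dn|=O(\sqrt n)$, which holds with probability $1-o(1)$ uniformly in the relevant window by ({\bf H}2) and the finite variance of $\rv k$, the tilt factor is $\Theta(1/\sqrt n)$ uniformly. The Radon--Nikodym derivative between the two $\rv k$-laws is therefore bounded above and below on a high-probability set. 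This yields mutual contiguity of $M_m$ and $M^{(2)}_m$, and hence of $G_m$ and $G^{(2)}_m$, using that the simplicity probability computed above is $\Omega(1)$ under both.
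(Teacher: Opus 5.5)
Your decomposition matches the paper's. You compare $G_m$ with $M_m$ conditioned on $\mc S$, realise $M^{(2)}_m$ as $M_m$ reweighted by $\pr[\Po(dn)=e(M_m)]/\pr[\mc D_n]$ (Lemmas~\ref{lem:MmcontigMmidD} and~\ref{lem:MmidD=M'}), and then condition on simplicity. Your last paragraph also correctly retracts the first paragraph's claim that $M_m$ and $M^{(2)}_m$ are equal in law.

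However, the second paragraph asserts an identity that is false for non-constant $\rv k$. You never retract it, and your final sentence (``hence of $G_m$ and $G^{(2)}_m$'') still rests on it: $G_m$ is \emph{not} distributed as $M_m$ conditioned on $\mc S$. The event is right, since $\{M_m\in\mc S\}$ is exactly the event that every constraint picks distinct endpoints. But you yourself computed $\pr[M_m\in\mc S\mid\{\rv k_a\}]=\prod_a\prod_{j<\rv k_a}(1-j/n)$, which depends on $\{\rv k_a\}$. So conditioning on $\mc S$ tilts the constraint degrees: under $M_m\mid\mc S$ they are independent with law proportional to $\pr[\rv k=j]\prod_{i<j}(1-i/n)$, not $\pr[\rv k=j]$. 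Precisely, for a simple $H$ with constraint degrees $(k_a)_{a\in C}$,
\[
\pr[M_m=H\mid M_m\in\mc S]=\pr[G_m=H]\cdot\frac{\prod_{a\in C}\prod_{j<k_a}(1-j/n)}{\pr[M_m\in\mc S]}.
\]
So the claim that $G_m$ and $G^{(2)}_m$ have identical distributions is false in general. It is true when $\rv k$ is constant, where every identity in your first two paragraphs is exact.

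The repair uses only estimates you already have:
\begin{itemize}
\item The ratio above is at most $1/\pr[M_m\in\mc S]=O(1)$, which gives one direction of contiguity.
\item On $W_K=\{\max_a\rv k_a\le n/2,\ \sum_a\rv k_a^2\le Kn\}$ the ratio is at least $e^{-K}$ (use $1-x\ge e^{-2x}$ on $[0,1/2]$). Also $\pr[W_K^c]=O(1/K)+o(1)$ by Markov's inequality and a union bound. Letting $K\to\infty$ gives the other direction.
\end{itemize}
The paper proves the stronger asymptotic equivalence of $G_m$ and $M_m\mid\mc S$ (Lemma~\ref{lem:G-Mcontig}) from concentration of $\sum_a\binom{\rv k_a}{2}/n$.

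Two smaller points:
\begin{itemize}
\item You computed the simplicity probability only for $M_m$. So $\pr[M^{(2)}_m\in\mc S]=\Omega(1)$ must be deduced from the contiguity of $M^{(2)}_m$ and $M_m$; the paper instead shows it is $\sim e^{-\lambda}$ in Lemma~\ref{lem:prS}.
\item In your tilt argument, the window $|\sum_a\rv k_a-dn|\le K\sqrt n$ has probability $1-\eps(K)$, not $1-o(1)$, so the same $K\to\infty$ device is needed there. The upper bound on the tilt holds globally, since $\sup_j\pr[\Po(dn)=j]=O(n^{-1/2})$ and $\pr[\mc D_n]=\Theta(n^{-1/2})$.
\end{itemize}
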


The proofs of the second statements of Theorems~\ref{thm:contiguity} and~\ref{thm:asympequiv}, where $m$ is replaced by a random variable $\rv m$, are presented in Section~\ref{sec:contiguity-random}.
In Section~\ref{sec:contiguity-couplinglemmas}, we use an adaptation of the proofs for Theorems~\ref{thm:contiguity} and~\ref{thm:asympequiv} to prove Lemmas~\ref{lem:BXYmcontigF(C'')} and~\ref{lem:G(1)mcontigF(C')}.

We use following local limit theorem, which is a special case of~\cite[Theorem 3.5.3]{Durrett2019}, as a tool in the following subsections.
\begin{theorem}[\cite{Durrett2019}]\label{thm:LLT}
    Suppose that $\{X_i\}_{i\in\NN}$ is a sequence of i.i.d.\ integer-valued random variables such that the greatest common divisor of the support of $X_i$ is one, and  $\Var[X_i]=\sigma^2\in(0,\infty)$. Then
    \begin{equation*}
    \lim_{n\to\infty}\sup_{j\in\ZZ}\lt|\sqrt{n}\pr\lt[\sum_{i=1}^nX_i=j\rt]-\frac{e^{\frac{-(j-n\ex[X_i])^2}{2n\sigma^2}}}{\sqrt{2\pi}\sigma}\rt|=0.
    \end{equation*}
\end{theorem}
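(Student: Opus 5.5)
Statement: the local limit theorem (Theorem~\ref{thm:LLT}), a special case of Durrett's Theorem 3.5.3. Only integer lattice, gcd of support one (so the lattice is aperiodic, span 1), finite variance $\sigma^2>0$.

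I would use the standard characteristic-function proof. Let $\varphi(t)=\ex[e^{itX_1}]$ and $\mu=\ex[X_1]$. For an integer-valued sum $S_n$, Fourier inversion on the circle gives, for every $j\in\ZZ$,
\begin{equation*}
\pr[S_n=j]=\frac{1}{2\pi}\int_{-\pi}^{\pi}e^{-itj}\varphi(t)^n\,dt .
\end{equation*}
Substituting $t=u/\sqrt n$ and writing $x=(j-n\mu)/\sqrt n$, we get
\begin{equation*}
\sqrt n\,\pr[S_n=j]=\frac{1}{2\pi}\int_{-\pi\sqrt n}^{\pi\sqrt n}e^{-iux}\bigl(e^{-iu\mu/\sqrt n}\varphi(u/\sqrt n)\bigr)^n du .
\end{equation*}
The Gaussian density satisfies $\frac{1}{\sqrt{2\pi}\sigma}e^{-x^2/(2\sigma^2)}=\frac{1}{2\pi}\int_{\mathbb R}e^{-iux}e^{-\sigma^2u^2/2}du$. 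Hence the supremum over $j$ of the difference is bounded, uniformly in $x$, by
\begin{equation*}
\frac{1}{2\pi}\int_{|u|\le \pi\sqrt n}\bigl|\psi(u/\sqrt n)^n-e^{-\sigma^2u^2/2}\bigr|du+\frac{1}{2\pi}\int_{|u|>\pi\sqrt n}e^{-\sigma^2u^2/2}du ,
\end{equation*}
where $\psi(t)=e^{-it\mu}\varphi(t)$ is the characteristic function of the centred variable. The second integral tends to $0$ trivially.

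For the first integral I split at $|u|\le \delta\sqrt n$ and $\delta\sqrt n<|u|\le\pi\sqrt n$.

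On the inner region, the finite second moment gives $\psi(t)=1-\sigma^2t^2/2+o(t^2)$. Choosing $\delta$ small yields $|\psi(t)|\le e^{-\sigma^2t^2/4}$ for $|t|\le\delta$. So the integrand is dominated by $2e^{-\sigma^2u^2/4}$. It also converges pointwise to $0$, since $\psi(u/\sqrt n)^n\to e^{-\sigma^2u^2/2}$ by the CLT expansion. Dominated convergence then gives a contribution of $o(1)$.

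On the outer region, the integral is at most $2\pi\sqrt n\,\eta^n+o(1)$, where $\eta=\sup_{\delta\le|t|\le\pi}|\varphi(t)|$. This is where the gcd hypothesis is used, and it is the only real obstacle. I need $\eta<1$. Since $|\varphi|$ is continuous on the compact set, it suffices that $|\varphi(t)|<1$ for $0<|t|\le\pi$. Equality $|\varphi(t)|=1$ forces $e^{itX_1}$ to be almost surely constant. Then $t(x-y)\in2\pi\ZZ$ for all $x,y$ in the support, so $t\cdot\gcd\{x-y\}\in2\pi\ZZ$.

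Here there is a subtlety. The hypothesis is stated as gcd of the support equal to one, whereas the standard condition is gcd of the differences of support points equal to one. These are not equivalent: a variable supported on $\{1,3\}$ satisfies the first but is periodic. I would therefore interpret the hypothesis as aperiodicity (the span being $1$), which is the condition in Durrett's theorem. I would also check that in the paper's applications (Poisson variables, and mixtures of Poisson variables) the support contains consecutive integers, so this condition holds. Under it, $t\in2\pi\ZZ$, so $t=0$ on $[-\pi,\pi]$, giving $\eta<1$ and $\sqrt n\,\eta^n\to0$. Combining the pieces proves the uniform convergence.
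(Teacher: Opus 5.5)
Your argument is correct and is essentially the standard Fourier-inversion proof of Durrett's Theorem 3.5.3. The paper gives no proof of its own and simply cites that result. Your caveat about the hypothesis is also right: with only the gcd of the support equal to one, the statement as written is false. For $X_i$ uniform on $\{1,3\}$, the sum $\sum_{i=1}^n X_i$ always has the parity of $n$, so the supremum does not tend to $0$. The correct assumption, as in Durrett, is span one, i.e.\ the gcd of the differences of support points is one.
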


\subsection{Proof of Lemma~\ref{lem:asympequivfixed}}\label{sec:asympequivfixed-proof}
Recall from~\eqref{eq:D} that $\mc D_n$ is the event
\begin{equation*}
    \sum_{i=1}^n\rv d_{v_i}=\sum_{i=1}^{ m}\rv k_{a_i},
\end{equation*}
where $\{\rv k_{a_i}\}_{i\in\NN}$ and $\{\rv d_{v_i}\}_{i\in\NN}$ are mutually independent, such that $\{\rv k_{a_i}\}_{i\in\NN}$ are i.i.d.\ copies of $\rv k$, $\{\rv d_{v_i}\}_{i\in\NN}$ are i.i.d.\ copies of $\rv d$, and $(\rv d,\rv k)$ satisfies condition ({\bf H}1). Recall that $m=m(n)$ is a sequence of positive integers satisfying ({\bf H}2)--({\bf H}4). 

We begin by estimating the probability of the event $\mc D_n$. 
\begin{lemma}\label{lem:prD}

    If $\Var(\rv k)=\Var(\rv d)=0$, then $\pr[\mc D_n]=1$. Otherwise $\pr[\mc D_n]=\Theta(n^{-1/2})$.
\end{lemma}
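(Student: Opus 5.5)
The plan is to write $\mc D_n$ as the event that a single integer random variable takes the value zero, and then apply the local limit theorem (Theorem~\ref{thm:LLT}). Set
\[
S_n=\sum_{i=1}^n \rv d_{v_i}-\sum_{i=1}^m \rv k_{a_i},
\]
so that $\mc D_n=\{S_n=0\}$. The case analysis is on which of $\rv d,\rv k$ is degenerate.

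\textbf{Trivial case.} If $\Var(\rv k)=\Var(\rv d)=0$, then $S_n=dn-km$ deterministically. This equals $0$ by ({\bf H}4), so $\pr[\mc D_n]=1$.

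\textbf{Exactly one variable degenerate.} Suppose first that $\Var(\rv k)=0$ and $\Var(\rv d)>0$. Then $\mc D_n=\{\sum_i \rv d_{v_i}=km\}$.
\begin{itemize}
\item Put $g=\gcd(\supp(\rv d-\min\rv d))$. Then $(\rv d_{v_i}-\min\rv d)/g$ are i.i.d.\ with gcd of support equal to one, so Theorem~\ref{thm:LLT} applies.
\item The target value is $j=(km-n\min\rv d)/g$. This must be an integer, which follows from the lattice conditions ({\bf H}3) and ({\bf H}1). I will need to check that the arithmetic progression of $\sum_i\rv d_{v_i}$, namely $n\min\rv d+g\ZZ$, actually contains $km$; this requires a short argument that $\gcd(\rv d)$ together with $\min\rv d$ controls the lattice.
\item By ({\bf H}2), $|j-n\ex[\cdot]|=O(\sqrt n)$, so the Gaussian factor in Theorem~\ref{thm:LLT} is bounded below by a constant. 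Hence $\pr[\mc D_n]=\Theta(n^{-1/2})$.
\end{itemize}
The case $\Var(\rv d)=0$, $\Var(\rv k)>0$ is symmetric, using ({\bf H}1) for divisibility.

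\textbf{Both variables nondegenerate.} The upper bound is straightforward: condition on $\sum_i\rv k_{a_i}$, and apply the local limit bound to $\sum_i \rv d_{v_i}$, which is $O(n^{-1/2})$ uniformly in the target.

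For the lower bound, I will argue as follows.
\begin{itemize}
\item Restrict to the event that $\sum_i \rv k_{a_i}$ lies within $O(\sqrt n)$ of $k m$ and lies in the lattice coset $n\min\rv d+g\ZZ$ of $\sum_i\rv d_{v_i}$.
\item By the CLT for $\rv k$, together with the fact that the lattice of $\sum_i\rv k_{a_i}$ meets that coset with positive density, this event has probability $\Omega(1)$.
\item On this event, Theorem~\ref{thm:LLT} gives conditional probability $\Omega(n^{-1/2})$ for $\mc D_n$, using $|km-dn|=O(\sqrt n)$ from ({\bf H}2).
\end{itemize}

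\textbf{Main obstacle.} The hard part is the lattice/gcd bookkeeping: showing that the two lattices intersect compatibly, so that the coset event has constant probability. Here I would apply Theorem~\ref{thm:LLT} to the normalized $\rv k$-sum as well, and use that $\gcd(\supp\rv k)$ and $g$ generate a lattice whose residues are hit with constant probability by a sum of $m\to\infty$ i.i.d.\ terms. The remaining steps are routine.
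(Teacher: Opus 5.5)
Your skeleton matches the paper's. The degenerate case goes via ({\bf H}4). Otherwise it is a convolution: Theorem~\ref{thm:LLT} for the nondegenerate sum gives $\Omega(n^{-1/2})$ uniformly on a window of width $O(\sqrt n)$ around $dn$ (using ({\bf H}2)), the other sum lands in that window with probability $\Omega(1)$, and the uniform LLT bound gives the matching upper bound. The paper puts the LLT on the $\rv k$-sum and you put it on the $\rv d$-sum, which is immaterial; your upper bound is fine in full generality.

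The genuine gap is the step you defer as ``bookkeeping'': it cannot be carried out from ({\bf H}1)--({\bf H}4). Those hypotheses only constrain $\gcd(\supp\rv d)=\gcd(\min\rv d,g)$ (resp.\ $\gcd(\supp\rv k)$). That quantity does not determine the coset $n\min\rv d+g\ZZ$ on which $\sum_i\rv d_{v_i}$ lives. Two counterexamples:
\begin{itemize}
\item One variable degenerate: take $\rv d$ uniform on $\{1,3\}$ and $\rv k\equiv 3$. Then ({\bf H}3) holds trivially, and ({\bf H}2) permits $n$ even and $m$ odd. But $\sum_i\rv d_{v_i}\equiv n\not\equiv 3m\pmod 2$, so $\pr[\mc D_n]=0$.
\item Both variances positive: here ({\bf H}1)--({\bf H}4) impose nothing, and your ``positive density'' claim fails. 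With $\rv d,\rv k$ both uniform on $\{1,3\}$ and $m=n+1$, the two sums always have opposite parity.
\end{itemize}
So the second assertion of the lemma is false at the stated generality, and that step cannot be closed as you formulated it.

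What is missing is an aperiodicity hypothesis. The paper uses it tacitly: it applies Theorem~\ref{thm:LLT} directly to the nondegenerate sum with no lattice discussion. The hypothesis holds in every application in the paper (Poisson or mixed-Poisson $\rv d$, constant or binomial $\rv k$). You can amend your proof in either of two ways.
\begin{itemize}
\item Assume the nondegenerate variable has span one, i.e.\ $\gcd(\supp(\rv d-\min\rv d))=1$. Then $g=1$ and every integer target is admissible.
\item Assume the nondegenerate variable(s) have $0$ in their support; your deduction is then correct. In that case $g=\gcd(\rv d)$, the sum lives on $g\ZZ$, and ({\bf H}3) (resp.\ ({\bf H}1)) is exactly the needed divisibility. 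In the doubly nondegenerate case the lattices $g\ZZ$ and $g'\ZZ$ share $\mathrm{lcm}(g,g')\ZZ$. This meets the window in $\Theta(\sqrt n)$ points, each of which each sum hits with probability $\Theta(n^{-1/2})$.
\end{itemize}
With either amendment your argument goes through and coincides with the paper's convolution argument.
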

\begin{proof}

    If $\Var(\rv k)=\Var(\rv d)=0$, then $nd=mk$ by ({\bf H}4) and we have $\pr[\mc D_n]=1$ trivially.

    If $\Var (\rv k)>0$, then by Theorem~\ref{thm:LLT},
    \begin{align*}
        \pr\lt[\mc D_n\rt]&\geq \sum_{j:|n  d-j|\leq \sqrt{n}}\pr\Big[\sum_{a\in C_m}\rv k_a=j\Big]\pr\Big[\sum_{v\in V_n}\rv d_v=j\Big]\\
        &=\Omega(1/\sqrt{n})\pr\Big[\Big|\sum_{v\in V_n}\rv d_v-dn\Big|<\sqrt{n}\Big]\\
        &=\Omega(1/\sqrt{n}),
    \end{align*}
    and the uniform upper bound from Theorem~\ref{thm:LLT} gives
    \begin{equation*}
        \sum_{j=0}^\infty\pr\Big[\sum_{a\in C_m}\rv k_a=j\Big]\pr\Big[\sum_{v\in V_n}\rv d_v=j\Big]=O(1/\sqrt{n})
    \end{equation*}
    Therefore $\pr[\mc D_n]=\Theta(1/\sqrt{n})$. A similar argument applies in the case where $\Var(\rv d)>0$.
\end{proof}

Let $\mc M'_n$ be the event that $\{\rv k_a\}_{a\in C}$, $\{\rv d_v\}_{v\in V}$ satisfies
\begin{equation}
        \max_{a\in C}\rv k_a+\max_{v\in V} \rv d_v\leq \sqrt{n}/\log n.\label{eq:defeventM}
\end{equation}

Since $\ex[\rv k^{2+\eps}],\ex[\rv d^{2+\eps}]<\infty$ for some $\eps>0$, it follows easily that
\begin{equation}
\pr[\rv k>\sqrt{n}/(2\log n)]=o(1/n),\quad \pr[\rv d>\sqrt{n}/(2\log n)]=o(1/n),\label{eq:max-k-d}
\end{equation}
which immediately implies that
\begin{equation}
\pr[\mc M'_n]=1-o(1).
\end{equation}
We also need the following conditional probability.

\begin{lemma}\label{lem:M'aas} 
    $\pr[\mc M_n'\mid \mc D_n]=1-o(1)$.
\end{lemma}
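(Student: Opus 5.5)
We need $\pr[\overline{\mc M'_n}\mid \mc D_n]=o(1)$. Since $\pr[\overline{\mc M'_n}\mid \mc D_n]=\pr[\overline{\mc M'_n}\cap\mc D_n]/\pr[\mc D_n]$, the plan is to show $\pr[\overline{\mc M'_n}\cap \mc D_n]=o(n^{-1/2})$ and then divide by Lemma~\ref{lem:prD}. In the degenerate case $\Var(\rv k)=\Var(\rv d)=0$ we have $\pr[\mc D_n]=1$, so the claim is just $\pr[\mc M'_n]=1-o(1)$, which is already established. From now on assume $\pr[\mc D_n]=\Theta(n^{-1/2})$.

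Under $\overline{\mc M'_n}$, either some $\rv k_a>\sqrt n/(2\log n)$ or some $\rv d_v>\sqrt n/(2\log n)$. By a union bound it suffices to treat a single fixed vertex, say $v_1$ (the constraint case is symmetric), and show
\[
n\cdot \pr\Big[\rv d_{v_1}>\tfrac{\sqrt n}{2\log n},\ \mc D_n\Big]=o(n^{-1/2}).
\]
The key step is to condition on $\rv d_{v_1}=j$. Then $\mc D_n$ becomes the event
\[
\sum_{i=2}^n \rv d_{v_i}-\sum_{a\in C_m}\rv k_a=-j.
\]
The left-hand side is a sum of $\Theta(n)$ independent lattice variables with at least one nondegenerate family. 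I would use the uniform upper bound in Theorem~\ref{thm:LLT}, exactly as in the proof of Lemma~\ref{lem:prD}: convolve the two independent sums and apply the LLT sup-bound to whichever family has positive variance. This gives, uniformly in $j$,
\[
\pr[\mc D_n\mid \rv d_{v_1}=j]=O(n^{-1/2}).
\]
Hence
\[
\pr\Big[\rv d_{v_1}>\tfrac{\sqrt n}{2\log n},\ \mc D_n\Big]\le O(n^{-1/2})\,\pr\Big[\rv d>\tfrac{\sqrt n}{2\log n}\Big]=O(n^{-1/2})\,o(1/n)
\]
by \eqref{eq:max-k-d}. Multiplying by the $n+m=O(n)$ vertices gives $o(n^{-1/2})$, and dividing by $\pr[\mc D_n]=\Omega(n^{-1/2})$ gives $o(1)$.

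The main obstacle is making the uniform $O(n^{-1/2})$ bound rigorous when the support of $\rv d$ (or $\rv k$) has gcd larger than one, since Theorem~\ref{thm:LLT} assumes gcd one. I would handle this by rescaling the variable by its gcd $g$ (writing $\rv d=g\rv d'$), after which the LLT applies to $\rv d'$. Only an upper bound is needed, and the sup bound $\sup_j\pr[\sum X_i=j]=O(n^{-1/2})$ survives shifting and rescaling. The same rescaling works if it is the $\rv k$-family that is nondegenerate. The argument also uses that removing one variable leaves $n-1$ or $m-1$ summands, which does not affect the asymptotics.
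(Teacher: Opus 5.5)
Your proposal is correct and follows essentially the same route as the paper: a union bound over vertices, the uniform local-limit bound $O(n^{-1/2})$ on $\pr[\mc D_n]$ given one large degree (applied to whichever family is nondegenerate), and division by $\pr[\mc D_n]=\Theta(n^{-1/2})$ from Lemma~\ref{lem:prD}. Your extra remark on the lattice condition, which really calls for a shift-and-rescale by the span rather than just by the gcd of the support, addresses a point the paper passes over silently; it does not change the argument, since only the $\sup_j$ upper bound is needed.
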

\begin{proof}
We show that 
    \begin{equation}
    \pr[\rv k_a>\sqrt{n}/(2\log n)\mid \mc D_n]=o(1/n)\label{eq:M'aasWTS}
    \end{equation}
    for all $a\in C_m$, and a similar argument shows that $\pr[\rv d_v>\sqrt{n}/(2\log n)\mid \mc D_n]=o(1/n)$ for all $v\in V_n$. The claim then follows by a union bound.

    If $\Var(\rv k)=0$, then clearly $\pr[\rv k_a>\sqrt{n}/(2\log n)\mid \mc D_n]=o(1/n)$. Suppose $\Var(\rv k)>0$. If $\Var(\rv d)=0$, then by Theorem~\ref{thm:LLT}
    \begin{align*}
        \pr[\{\rv k_a>\sqrt{n}/(2\log n)\}\cap \mc D_n]&=\sum_{j=\sqrt{n}/(2\log n)}^\infty\pr[\rv k_a=j]\pr\lt[\sum_{a'\in C_m\setminus\{a\}}\rv k_a=dn-j\rt]\\
        &=O(1/\sqrt{n})\pr[\rv k>\sqrt{n}/(2\log n)].
    \end{align*}
    Similarly, if $\Var(\rv d)>0$, then by Theorem~\ref{thm:LLT}
    \begin{align*}
        \pr[\{\rv k_a> \sqrt{n}/(2\log n)\}\cap\mc D_n]&=\sum_{j=0}^\infty\pr\lt[\rv k_a>\sqrt{n}/(2\log n),\sum_{a\in C_m}\rv k_a=j\rt]\pr[\sum_{v\in V_n}\rv d_v=j]\\
        &=O(1/\sqrt{n})\pr[\rv k>\sqrt{n}/(2\log n)].
    \end{align*}
    By Lemma~\ref{lem:prD}, $\pr[\mc D_n]=\Theta(1/\sqrt{n})$, and so by Bayes' rule and~\eqref{eq:max-k-d}, in both cases above, 
    \[
    \pr[\rv k_a> \sqrt{n}/(2\log n)\mid \mc D_n]=O(\pr[\rv k>\sqrt{n}/(2\log n)])=o(1/n),
    \]
    which verifies~\eqref{eq:M'aasWTS}. 
\end{proof}
As a consequence of Lemmas~\ref{lem:prD} and~\ref{lem:M'aas}, 
\begin{equation}\label{eq:conditionalprD}
    \pr[\mc D_n\mid \mc M_n']=\begin{cases}
        1-o(1)&\Var(\rv k)=\Var(\rv d)=0\\
        \Theta(1/\sqrt{n})&\text{Otherwise}.
    \end{cases}
\end{equation}
The proof of the following lemma is the same as~\cite[Claim 4.4]{coja2020rank}, except that~\eqref{eq:conditionalprD} is used in place of~\cite[Lemma 4.2]{coja2020rank}. Thus, we omit its proof.
\begin{lemma}\label{lem:empvar}
    Conditioned on $\mc M_n'\cap \mc D_n$, we have $\frac{1}{n}\sum_{v\in V_n} (\rv d_v)^2\to \ex [\rv d^2]$ and \newline $\frac{1}{n}\sum_{a\in C_m} (\rv k_a)^2\to d\ex [\rv k^2]/k$ in probability.
\end{lemma}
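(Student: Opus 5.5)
We prove the statement about $\sum_v \rv d_v^2$ in detail; the statement about $\sum_a \rv k_a^2$ is handled by the same argument with the roles of $(V_n,\rv d)$ and $(C_m,\rv k)$ exchanged. The limit for the constraint sum is $d\ex[\rv k^2]/k$ because $m\sim dn/k$ by ({\bf H}2). The plan is a second-moment (Chebyshev) argument under the conditional measure $\pr[\,\cdot\mid \mc M'_n\cap\mc D_n]$. We compare all conditional quantities with unconditional ones using the local limit theorem, exactly as in the proof of Lemma~\ref{lem:M'aas}.

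\textbf{Step 1 (truncation).} Write $Y_v=\rv d_v^2\ind{\rv d_v\le \sqrt n/\log n}$. On $\mc M'_n$ we have $\sum_v \rv d_v^2=\sum_v Y_v$, so it suffices to treat $S=\sum_v Y_v$. Since $\ex[\rv d^{2+\eps}]<\infty$, we get $\ex[Y_v]=\ex[\rv d^2]-o(1)$. By Lemma~\ref{lem:M'aas} and~\eqref{eq:conditionalprD}, conditioning additionally on $\mc M'_n$ changes probabilities under $\mc D_n$ only by $o(1)$. Hence it is enough to show that $S/n\to\ex[\rv d^2]$ in probability conditional on $\mc D_n$ alone, provided every conditional probability we use is of order $O(\pr[\cdot]\cdot\sqrt n)\cdot\pr[\mc D_n]$. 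If $\Var(\rv k)=\Var(\rv d)=0$, then $\pr[\mc D_n]=1$ and the result is the weak law of large numbers, so from now on we assume we are not in that case.

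\textbf{Step 2 (conditional first and second moments).} For a single vertex $v$, and for a pair $v\ne v'$, we condition on $(\rv d_v,\rv d_{v'})=(j,j')$. Then $\mc D_n$ becomes the event that the sum of the remaining $n-2$ variables $\rv d_u$ together with $-\sum_a\rv k_a$ equals $-(j+j')$. When $j+j'\le 2\sqrt n/\log n=o(\sqrt n)$, Theorem~\ref{thm:LLT} shows that this probability equals $(1+o(1))\pr[\mc D_n]$, uniformly in $j,j'$. The Gaussian factor changes only by $e^{O(j^2/n)}=1+o(1)$; the lattice/gcd condition is ensured by ({\bf H}1) and ({\bf H}3). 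Consequently
\[
\ex[Y_v\mid\mc D_n]=(1+o(1))\ex[Y_v],\qquad \ex[Y_vY_{v'}\mid\mc D_n]=(1+o(1))\ex[Y_v]^2 .
\]
Also, $\ex[Y_v^2\mid \mc D_n]=O(\ex[Y_v^2])\le O\big((\sqrt n/\log n)^{2-\eps}\big)\ex[\rv d^{2+\eps}]=o(n)$.

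\textbf{Step 3 (Chebyshev).} From Step 2, $\Var(S\mid\mc D_n)=n\cdot o(n)+n^2\cdot o(1)=o(n^2)$, while $\ex[S\mid\mc D_n]=(1+o(1))n\ex[\rv d^2]$. Chebyshev's inequality then gives $S/n\to\ex[\rv d^2]$ in probability.

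The main obstacle is the uniformity in Step 2: we need the $(1+o(1))$ ratio to hold simultaneously over all truncated values. This is exactly why the truncation at $\sqrt n/\log n$ (the event $\mc M'_n$) is imposed. Separately, the case where only one of $\rv k,\rv d$ is degenerate must be handled by applying the local limit theorem to the nondegenerate sum only, as in Lemma~\ref{lem:prD}.
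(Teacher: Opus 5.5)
Your proof is correct, but it follows a different route from the paper's. The paper defers to \cite[Claim 4.4]{coja2020rank}, with \eqref{eq:conditionalprD} in place of the bound on $\pr[\mc D_n]$. That is, it proves concentration \emph{without} conditioning on $\mc D_n$, and then pays for the conditioning by Bayes' rule. On $\mc M_n'$ the sum $\sum_v\rv d_v^2$ coincides with a sum of i.i.d.\ summands bounded by $n/\log^2 n$. A Chernoff/Bernstein bound therefore gives a deviation of order $\delta n$ probability $\exp(-\Omega(\log^2 n))=o(n^{-1/2})$, and dividing by $\pr[\mc D_n\mid\mc M_n']=\Theta(n^{-1/2})$ gives the claim.

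That route needs only the order of magnitude of $\pr[\mc D_n]$, but it needs a tail bound that beats the factor $\sqrt n$. This is where the $\log n$ in the truncation is used, and why the argument must be run on $\mc M_n'$ with \eqref{eq:conditionalprD} as input. With only $2+\eps$ moments, $\pr[\max_v\rv d_v>\sqrt n/\log n]$ is $o(1)$ but need not be $o(n^{-1/2})$, so the untruncated deviation probability cannot be divided by $\Theta(n^{-1/2})$.

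Your route instead shows that conditioning on $\mc D_n$ asymptotically leaves the joint law of two truncated coordinates unchanged, so Chebyshev under the conditional measure suffices. You use the truncation only to get $\ex[Y_v^2]=o(n)$ and a shift $j+j'=o(\sqrt n)$, plus Lemma~\ref{lem:M'aas} to pass between $\mc D_n$ and $\mc D_n\cap\mc M_n'$. The price is the sharper, uniform ratio estimate $\pr[\mc D_n\mid\rv d_v=j,\rv d_{v'}=j']=(1+o(1))\pr[\mc D_n]$.

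When you write that estimate out, keep three points in mind.
\begin{enumerate}
\item Theorem~\ref{thm:LLT} is stated for i.i.d.\ sums. So when $\Var(\rv k),\Var(\rv d)>0$ you must convolve over $s=\sum_a\rv k_a$, as in Lemma~\ref{lem:prD}.
\item The Gaussian ratio is then $\exp(O(J|s-dn|/n+J^2/n))$ with $J=j+j'$, not $e^{O(J^2/n)}$. You therefore also need to cut off $|s-dn|>K\sqrt n$ with $K\to\infty$ slowly, and use that the LLT error is $o(n^{-1/2})$ uniformly.
\item ({\bf H}1) and ({\bf H}3) do not give the span-one hypothesis. You need the lattice version of the LLT, which is harmless here because all shifts $j+j'$ with $j,j'\in\supp(\rv d)$ lie in the same residue class.
\end{enumerate}
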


We also know that $\frac{1}{n}\sum_{v\in V_n} \rv d_v\to\ex[\rv d]$ in probability conditioned on $\mc D_n$, which follows by the fact that $\sum_{v\in V_n} \rv d_v=n\ex [\rv d] +O(n^{2/3})$ with probability $1-o(n^{-1})$ and Lemma~\ref{lem:prD}. Therefore there is a function $r(n)=o(1)$ such that  the degree sequence $(\ka,\da)$  of $M^{(2)}_{m}$ a.a.s.\ satisfies
\begin{gather}
    \max_{a\in C}\ka_a+\max_{v\in V} \da_v\leq \sqrt{n}/\log{n},\label{eq:maxdeg}\\
     \lt|\frac{1}{n}\sum_{v\in V} \da_v-\ex[\rv d]\rt|,\ \lt|\frac{1}{n}\sum_{a\in C} \ka_a- d\ex[\rv k]/ k\rt|,\ \lt|\frac{1}{n}\sum_{v\in V} \da_v^2-\ex [\rv d^2]\rt|,\ \lt|\frac{1}{n}\sum_{a\in C} \ka_a^2- d\ex [\rv k^2]/ k\rt|\leq r(n).\label{eq:varcvg}
\end{gather}
 Let $\mc M_n$ be the event that~\eqref{eq:maxdeg} and~\eqref{eq:varcvg} hold.

To calculate the probability that $M^{(2)}_{m}$ is simple, we show that the number of parallel edges of $M^{(2)}_{m}$ is asymptotically Poisson on $\mc M_n$. We use an argument by the method of moments  similar to the one used in~\cite{coja2020rank}.

\begin{lemma}\label{lem:moments}
    Let $Y$ be the number of parallel edges in $M^{(2)}_{m}$ and let
    \begin{equation*}
    \lambda=\frac{(\ex[\rv d^2]- d)(\ex[\rv k^2]- k)}{2 d\,  k}.
\end{equation*}
Then for all fixed $\ell\geq 1$, on $\mc M_n$ we have
    \begin{equation}
        \ex\lt[ \prod_{i=0}^{\ell-1}(Y-i)\,\middle|\,\{\ka_a\}_{a\in C},\{\da_v\}_{v\in V}\rt]=\lambda^\ell+o(1).\label{eq:moments}
    \end{equation}
\end{lemma}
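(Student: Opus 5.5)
The plan is the standard method-of-moments computation for the configuration (pairing) model, done conditionally on a fixed degree sequence $(\ka,\da)$ satisfying \eqref{eq:maxdeg} and \eqref{eq:varcvg}.

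\textbf{Expansion of the factorial moment.} A parallel edge is counted as an unordered pair of pairs in $\Gamma$ joining the same constraint vertex $a$ to the same variable vertex $v$. The pair is specified by choosing two distinct copies of $a$ and two distinct copies of $v$, and matching them in one of the two possible ways. Hence $\prod_{i=0}^{\ell-1}(Y-i)$ is the number of ordered $\ell$-tuples of distinct such double-pairs all present in $\Gamma$. Write $N=\sum_a\ka_a=\sum_v\da_v$.

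\textbf{The dominant configurations.} These are $\ell$-tuples in which the $2\ell$ pairs use $4\ell$ distinct vertex-copies. Each such tuple is present in $\Gamma$ with probability $1/\bigl((N-1)(N-3)\cdots(N-4\ell+1)\bigr)=(1+o(1))N^{-2\ell}$.

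\textbf{Counting the disjoint tuples.} The number of disjoint tuples, including the factor $2$ per double-pair for the two matchings, is
\[
\Bigl(\sum_{a}\ka_a(\ka_a-1)\sum_v\da_v(\da_v-1)\cdot\tfrac12\Bigr)^{\ell}
\]
up to a correction. The correction comes from tuples that reuse a vertex $a$ or $v$ with disjoint copies, and from the requirement that copies be distinct. Using \eqref{eq:maxdeg}, namely $\max\ka_a,\max\da_v\le\sqrt n/\log n$, together with \eqref{eq:varcvg}, each such coincidence costs a factor $O(\max\text{deg}^2/n)=O(1/\log^2 n)$. So the correction is $o(1)$ relative to the main term.

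\textbf{Limit of the main term.} By \eqref{eq:varcvg},
\[
\sum_a\ka_a(\ka_a-1)=n\,d(\ex[\rv k^2]-k)/k+o(n),\qquad \sum_v\da_v(\da_v-1)=n(\ex[\rv d^2]-d)+o(n),
\]
and $N=nd+o(n)$. Dividing by $N^{2}$ per factor gives exactly $\lambda$ per double-pair, so the main contribution is $\lambda^\ell+o(1)$.

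\textbf{Overlapping configurations.} It remains to show that $\ell$-tuples in which some vertex-copies are shared contribute $o(1)$. Two double-pairs can share a copy only if they involve the same $a$ or the same $v$. Consider a union of $\ell$ double-pairs spanning fewer than $4\ell$ copies. When two double-pairs share copies in a consistent way (they cannot assign one copy two different partners), the number of distinct pairs in the union drops, but the number of free copies drops by at least as much, with an extra power of a vertex degree. Merging one shared copy therefore costs a factor at most $O(\max\text{deg}/N)\cdot O(\max\text{deg})=O(1/\log^2 n)$ relative to the main term. Since $\ell$ is fixed, there are $O(1)$ overlap patterns, so the total overlapping contribution is $o(1)$.

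\textbf{Main obstacle.} The delicate step is this overlap bookkeeping. One has to check that every overlap pattern gains at least one factor $\max\text{deg}^2/n$. The cleanest way to organize it is to classify patterns by the multigraph $H$ they form on constraint and variable vertices, with $e(H)$ pairs. Each pattern contributes $O\bigl(\prod \text{deg}^{(\cdot)}\,N^{-e(H)}\bigr)$, and one compares the exponent counts with those of the disjoint case, following \cite[Lemma 4.3]{coja2020rank}.
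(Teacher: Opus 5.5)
Your proposal is correct and takes essentially the same approach as the paper: a factorial-moment computation in the pairing model, conditional on a degree sequence satisfying \eqref{eq:maxdeg} and \eqref{eq:varcvg}, where the main term $\bigl(\tfrac12\sum_a\ka_a(\ka_a-1)\sum_v\da_v(\da_v-1)\bigr)^\ell N^{-2\ell}\to\lambda^\ell$ and your overlapping configurations (which force a multi-edge of multiplicity at least three) play the role of the paper's error term $Z$. One small slip: in the bipartite pairing model the probability that $2\ell$ given disjoint pairs all lie in $\Gamma$ is $(N-2\ell)!/N!$, not $1/\bigl((N-1)(N-3)\cdots(N-4\ell+1)\bigr)$, but both are $(1+o(1))N^{-2\ell}$, so the conclusion is unaffected.
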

\begin{proof}
    Let $U_\ell$ be the set of $\ell$-tuples $(c_i,u_i)_{i\in[\ell]}\in (C\times V)^\ell$ such that $\ka_{c_i},\da_{u_i}\geq 2$ for all $i\in [\ell]$ and such that $(c_1,u_1),\ldots, (c_\ell,u_\ell)$ are distinct. For $(c_i,u_i)_{i\in[\ell]}\in U_\ell$, let $Y[(c_i,u_i)_{i\in[\ell]}]$ be the indicator of the event that there are exactly two parallel edges between $c_i$ and $u_i$ in $M^{(2)}_{m}$ for all $i\in[\ell]$. Let $Z$ be the number of $\ell$-tuples of distinct multiedges in $M^{(2)}_{m}$ where at least one multiedge has multiplicity at least three. We then have
    \begin{equation*}
        \prod_{i=0}^{\ell-1}(Y-i)=\sum_{(c_i,u_i)_{i\in[\ell]}\in U_\ell}Y[(c_i,u_i)_{i\in[\ell]}]+O(Z).
    \end{equation*}
    By a standard first-moment argument it is easy to show that $\ex [Z\mid \{\ka_a\}_{a\in C},\{\da_v\}_{v\in V}]=o(1)$. For $(c_i,u_i)_{i\in[\ell]}\in U_\ell$, we have
    \begin{align*}
        \ex[Y[(c_i,u_i)_{i\in[\ell]}]\mid \{\ka_a\}_{a\in C},\{\da_v\}_{v\in V}]&=\frac{\lt(\sum_{v\in V}\da_v-2\ell\rt)!}{\lt(\sum_{v\in V}\da_v\rt)!}\prod_{i\in[\ell]}2\binom{\da_{u_i}}{2}\binom{\ka_{c_i}}{2}
    \end{align*}
    where $\prod_{i\in[\ell]}2\binom{\da_{u_i}}{2}\binom{\ka_{c_i}}{2}$ counts the number of ways to match two vertex-copies in $\{c_i\}\times [\ka_{c_i}]$ with two vertex-copies in $\{u_i\}\times [\da_{v_i}]$ for $i\in[\ell]$, and $\frac{\lt(\sum_{v\in V}\da_v-2\ell\rt)!}{\lt(\sum_{v\in V}\da_v\rt)!}$ is the probability that all of these pairs are contained in a uniform random matching $\Gamma$. On the event~\eqref{eq:maxdeg}, summing over $(c_i,u_i)_{i\in[\ell]}\in U_\ell$ gives
    \begin{equation*}
        \sum_{(c_i,u_i)_{i\in[\ell]}\in U_\ell}\prod_{i\in[\ell]}2\binom{\da_{u_i}}{2}\binom{\ka_{c_i}}{2}=\lt(\frac{1}{2}\sum_{a\in C,v\in V}\da_v(\da_v-1)\ka_a(\ka_a-1)\rt)^\ell+o(n^{2\ell}),
    \end{equation*}
    and on the event~\eqref{eq:varcvg}, 
    \begin{equation*}
        \frac{\lt(\sum_{v\in V}\da_v-2\ell\rt)!}{\lt(\sum_{v\in V}\da_v\rt)!}\sim\frac{1}{( dn)^{2\ell}},\quad \sum_{a\in C,v\in V}\da_v(\da_v-1)\ka_a(\ka_a-1)\sim\frac{ dn^2}{ k}(\ex[\da^2]- d)(\ex [\ka^2]- k).
    \end{equation*}
    Therefore
    \begin{equation*}
        \ex\lt[\sum_{(c_i,u_i)_{\in[\ell]}\in U_\ell} Y[(c_i,u_i)_{\in[\ell]}]\mid \{\ka_a\}_{a\in C},\{\da_v\}_{v\in V}\rt]\sim\lt(\frac{(\ex[\da^2]- d)(\ex[\ka^2]- k)}{2 d\,  k}\rt)^\ell=\lambda^\ell.
    \end{equation*}
    Since $\ex [Z\mid \{\ka_a\}_{a\in C},\{\da_v\}_{v\in V}]=o(1)$, this proves~\eqref{eq:moments}.
\end{proof}

\begin{lemma}\label{lem:prS}
    On $\mc M_n$, we have $\pr[M^{(2)}_{m}\in \mc S\mid \{\ka_a\}_{a\in C},\{\da_v\}_{v\in V}]\sim e^{-\lambda}$. As a consequence, $\pr[M^{(2)}_{m}\in \mc S]\sim e^{-\lambda}$.
\end{lemma}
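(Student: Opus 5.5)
The plan is to use the method of moments on the number of parallel edges $Y$, conditioned on the degree sequence, and then integrate over degree sequences.

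\emph{Conditional statement.} Fix a degree sequence $(\ka,\da)$ satisfying $\mc M_n$. Given $(\ka,\da)$, $M^{(2)}_m$ is the pairing model on that sequence. By Lemma~\ref{lem:moments}, for every fixed $\ell\ge1$ the conditional factorial moments of $Y$ converge to $\lambda^\ell$. The error term $o(1)$ in~\eqref{eq:moments} comes only from the bounds~\eqref{eq:maxdeg} and~\eqref{eq:varcvg}. Because $r(n)\to0$ does not depend on the particular sequence, this convergence is uniform over all sequences in $\mc M_n$. By the standard method-of-moments criterion for Poisson convergence (the Poisson law is determined by its moments), $Y$ converges in distribution, conditionally, to $\Po(\lambda)$, uniformly over $\mc M_n$.

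\emph{Relating simplicity to $Y$.} $M^{(2)}_m$ is simple exactly when it has no parallel edges; the factor graph is bipartite, so there are no loops. Hence
\[
\pr[M^{(2)}_m\in\mc S\mid \ka,\da]=\pr[Y=0\mid\ka,\da]\to e^{-\lambda}.
\]
One subtlety: $Y$ in Lemma~\ref{lem:moments} counts pairs of parallel edges in a way that includes higher multiplicities through the term $Z$. Still, $\{Y=0\}$ is exactly the event that no multiedge exists, since any multiedge of multiplicity at least $3$ also contributes to $Y$. To make this rigorous I will define $Y$ as the number of pairs $(c,u)$ with multiplicity at least $2$. Its factorial moments differ from those of the count of exact double edges by $O(\ex Z)=o(1)$, and $Y=0$ iff the graph is simple. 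Uniformity of convergence then gives $\sup_{(\ka,\da)\in\mc M_n}\lvert\pr[M^{(2)}_m\in\mc S\mid\ka,\da]-e^{-\lambda}\rvert=o(1)$.

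\emph{Unconditional statement.} Write
\[
\pr[M^{(2)}_m\in\mc S]=\ex\bigl[\pr[M^{(2)}_m\in\mc S\mid\ka,\da]\,\mathbf 1_{\mc M_n}\bigr]+O(\pr[\mc M_n^c]).
\]
The degree sequence of $M^{(2)}_m$ is distributed as $(\rv k,\rv d)$ conditioned on $\mc D_n$. By Lemma~\ref{lem:M'aas}, Lemma~\ref{lem:empvar} and the first-moment concentration discussed before~\eqref{eq:maxdeg}, $\mc M_n$ holds a.a.s., so $\pr[\mc M_n^c]=o(1)$. Together with the uniform conditional estimate this gives $\pr[M^{(2)}_m\in\mc S]=e^{-\lambda}+o(1)$. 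Since $\lambda$ is a fixed finite constant (by the moment assumptions on $\rv k,\rv d$), $e^{-\lambda}>0$, and the additive $o(1)$ error upgrades to $\sim e^{-\lambda}$.

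The main obstacle is the uniformity claim in the first step: I must check that the $o(1)$ in Lemma~\ref{lem:moments} depends only on $n$ and $r(n)$, not on the specific sequence. I will do this by tracing the error bounds in that proof through~\eqref{eq:maxdeg} and~\eqref{eq:varcvg}.
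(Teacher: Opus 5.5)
Your proposal is correct and takes the paper's route. The paper applies Lemma~\ref{lem:moments} and the method of moments (\cite[Theorem 1.22]{Bollobas2001}) to get $\pr[Y=0\mid \ka,\da]\sim e^{-\lambda}$ on $\mc M_n$, then uses $\pr[\mc M_n]=1-o(1)$ for the unconditional statement; your extra care about uniformity over $\mc M_n$ (settled via Bonferroni inequalities or a subsequence argument) and about defining $Y$ so that $\{Y=0\}$ is exactly simplicity makes explicit what the paper leaves implicit.
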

\begin{proof}
    By Lemma~\ref{lem:moments} and the method of moments (eg. \cite[Theorem 1.22]{Bollobas2001}),
    \begin{equation*}
        \pr[M^{(2)}_{m}\in \mc S\mid \{\rv k_a\}_{a\in C},\{\da_v\}_{v\in V}]=\pr[Y=0\mid \{\ka_a\}_{a\in C},\{\da_v\}_{v\in V}]\sim e^{-\lambda}
    \end{equation*}
    on $\mc M_n$. Since $\pr[\mc M_n]=1-o(1)$, we also get $\pr[M^{(2)}_{m}\in \mc S]\sim e^{-\lambda}$.
\end{proof}

We can now prove Lemma~\ref{lem:asympequivfixed}.
\begin{proof}[Proof of Lemma~\ref{lem:asympequivfixed}]
    Let $H$ be a simple factor graph with degree sequence $\{k_a\}_{a\in C}$, $\{d_v\}_{v\in V}$. Given a fixed degree sequence, the conditional distributions of $M^{(2)}_{m}\mid \{M^{(2)}_{m}\in \mc S\}$ and $G^{(1)}_{m}$ are both uniform on simple factor graphs with the given degree sequence. Therefore
    \begin{align}
        &\pr[M^{(2)}_{m}=H\mid M^{(2)}_{m}\in\mc S]\nonumber\\
&\hspace{2cm}=\pr[M^{(2)}_{m}=H\mid M^{(2)}_{m}\in\mc S, \{\ka_a\}=\{k_a\},\{\da_v\}=\{d_v\}] \nonumber\\
&\hspace{2.5cm}\times \pr(\{\ka_a\}=\{k_a\},\{\da_v\}=\{d_v\}\mid M^{(2)}_{m}\in\mc S)
\nonumber\\     &\hspace{2cm}=\pr[G^{(1)}_{m}=H\mid \{\ka_a\}=\{k_a\},\{\da_v\}=\{d_v\}] \nonumber\\
&\hspace{2.5cm} \frac{\pr[M^{(2)}_{m}\in\mc S\mid \{\ka_a\}=\{k_a\},\{\da_v\}=\{d_v\}]}{\pr[M^{(2)}_{m}\in\mc S]} \pr_{M^{(2)}_{m}}[\{\ka_a\}=\{k_a\},\{\da_v\}=\{d_v\}]\nonumber\\
&\hspace{2cm}=\pr[G^{(1)}_{m}=H\mid \{\ka_a\}=\{k_a\},\{\da_v\}=\{d_v\}] \nonumber\\
&\hspace{2.5cm} \frac{\pr[M^{(2)}_{m}\in\mc S\mid \{\ka_a\}=\{k_a\},\{\da_v\}=\{d_v\}]}{\pr[M^{(2)}_{m}\in\mc S]} \pr[\{\rv k_a\}=\{k_a\},\{\rv d_v\}=\{d_v\}\mid {\mc D}_n]\nonumber\\
&\hspace{2cm}=\pr[G^{(1)}_{m}=H]\frac{\pr[M^{(2)}_{m}\in\mc S\mid \{\ka_a\}=\{k_a\},\{\da_v\}=\{d_v\}]}{\pr[M^{(2)}_{m}\in\mc S]}
.\label{eq:equiv1}
    \end{align}
 Lemma~\ref{lem:prS} shows that on $\mc M_n$,
    \begin{align*}
        \pr[M^{(2)}_{m}\in\mc S\mid \{\ka_a\},\{\da_v\}]\sim \pr[M^{(2)}_{m}\in\mc S].
    \end{align*}
    Therefore, for any sequence of sets of graphs $\{\mc E_n\}_{n\in \NN}$,
    \begin{equation*}
        \lt|\pr[\{M^{(2)}_{m}\in \mc E_n\}\cap \mc M_n\mid M^{(2)}_{m}\in \mc S]-\pr[\{G^{(1)}_{m}\in \mc E_n\}\cap \mc M_n]\rt|=o(1).
    \end{equation*}
    By Lemma~\ref{lem:prS}, $\pr[M^{(2)}_{m}\in\mc S]=\Omega(1)$, and so $\pr[\mc M_n\mid M^{(2)}_{m}\in\mc S]=1-o(1)$. Therefore $\pr[G^{(2)}_{m}\in \mc E_n]=\pr[G^{(1)}_{m}\in \mc E_n]+o(1)$.
\end{proof}

\subsection{Proof of Lemma~\ref{lem:contiguity}}\label{sec:contiguityproof-lemma}

As in the previous section, we assume that $(\rv d,\rv k)$ satisfies condition ({\bf H}1), and assume that $m=m(n)$ is a sequence of positive integers satisfying ({\bf H}2)--({\bf H}4). Observe that for $\mc M\in \{G_m, M_m\}$, $\deg(\mc M)$ has exactly the same distribution as $((\rv d_v)_{v\in V}, (\rv k_a)_{a\in C})$. Thus, for these two models, we directly write $(\rv k,\rv d)$ for $(\da,\ka)$. 
We first prove the following asymptotic equivalence statement.
\begin{lemma}\label{lem:G-Mcontig}
    For any sequence of sets of factor graphs $\{\mc E_n\}_{n\in\NN}$, we have 
    \begin{equation}\label{eq:G-Mcontig}
        \lim_{n\to\infty} \lt|\pr[G_{m}\in \mc E_n]-\pr[M_{m}\in \mc E_n\mid M_{m}\in \mc S]\rt|=0.
    \end{equation}
    Moreover, $\pr[M_{m}\in \mc S]=\Omega(1)$.
\end{lemma}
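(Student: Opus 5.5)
The plan is to compare $G_m$ and $M_m$ constraint by constraint, using that both models generate the neighbourhoods of the constraints independently. In $M_m$, constraint $a$ receives a tuple $(u_1,\ldots,u_{\rv k_a})$ of independent uniform vertices of $V_n$. Conditioned on the event that the $u_i$ are pairwise distinct, the set $\{u_1,\ldots,u_{\rv k_a}\}$ is a uniform $\rv k_a$-subset of $V_n$, which is exactly how $G_m$ chooses the neighbourhood of $a$.

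There is one subtlety: the event $M_m\in\mc S$ says that $M_m$ has no parallel edges. Because the constraints are generated separately, a parallel edge can only come from some $u_i=u_j$ within a single constraint. Hence $\{M_m\in\mc S\}$ is the intersection over $a\in C_m$ of the independent events $\mc A_a=\{u^{(a)}_1,\ldots,u^{(a)}_{\rv k_a}\text{ distinct}\}$. Conditioning a product measure on a product event keeps the coordinates independent. Therefore, conditioned on $\rv k_a$ and on $\mc A_a$, each constraint's neighbourhood is a uniform $\rv k_a$-subset, and these neighbourhoods are independent across $a$.

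The only remaining difference is the law of the degree vector $(\rv k_a)_{a\in C_m}$. In $G_m$ it is i.i.d.\ $\rv k$. In $M_m$ conditioned on $\mc S$ it is tilted by the weight
\[
p_n(j)=\prod_{i<j}(1-i/n),
\]
independently for each $a$. Concretely, the conditional law of $\rv k_a$ is proportional to $\pr[\rv k=j]p_n(j)$. So I would prove \eqref{eq:G-Mcontig} by bounding the total variation distance between the i.i.d.\ law of $(\rv k_a)$ and this tilted product law, and then coupling the neighbourhoods given the degrees.

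For $\pr[M_m\in\mc S]=\Omega(1)$, write
\[
\pr[M_m\in\mc S]=\bigl(\ex[p_n(\rv k)]\bigr)^m .
\]
Using $1-x\ge e^{-2x}$ for small $x$, and noting that large $\rv k$ are controlled by the moment assumption, we get
\[
\ex[p_n(\rv k)]\ge 1-\ex\bigl[\tbinom{\rv k}{2}\bigr]/n .
\]
The right-hand side is $1-O(1/n)$ since $\ex[\rv k^2]<\infty$. With $m=O(n)$ from ({\bf H}2), this gives
\[
\pr[M_m\in\mc S]\ge (1-O(1/n))^m=\Omega(1).
\]

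The main step, and the one needing care, is the total variation bound. The tilted product law has density
\[
\prod_a \frac{p_n(\rv k_a)}{\ex p_n(\rv k)}
\]
with respect to the i.i.d.\ law. For a single coordinate, the tilt changes the law by
\[
\delta_n=O\bigl(\ex[\rv k^2]/n\bigr)=O(1/n)
\]
in total variation. Summing over $m=\Theta(n)$ coordinates only gives $O(1)$, which is not good enough.

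So I would not use total variation directly. Instead I would argue that the density is bounded above and below by constants outside a set of probability $o(1)$, giving mutual contiguity. Writing the log of the density as a sum of $m$ terms, the sum $\sum_a \binom{\rv k_a}{2}/n$ concentrates around its mean because its variance is
\[
m\,\ex[\rv k^4]/n^2 ,
\]
which is small when a fourth moment exists. If only a $(2+\eps)$-th moment is available, I would truncate at $\sqrt n/\log n$ as in \eqref{eq:max-k-d} and apply Chebyshev to the truncated sum. With this concentration, the density tends to $1$ in probability, which yields $o(1)$ total variation by Scheffé's lemma, and hence \eqref{eq:G-Mcontig}. I expect this truncation and concentration step to be the main obstacle.
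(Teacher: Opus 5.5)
Your proposal is correct and follows essentially the same route as the paper: conditional on the constraint degrees, $G_m$ and $M_m$ conditioned on $\{M_m\in\mc S\}$ are identically distributed, and on a high-probability truncation event (with $\max_a\rv k_a\le\sqrt n/\log n$) one shows $\pr[M_m\in\mc S\mid\{\rv k_a\}_{a\in C}]\sim\pr[M_m\in\mc S]$, which is exactly your statement that the density tends to $1$; the paper then transfers this to the graphs directly rather than via Scheffé's lemma. Your exponent $\sum_a\binom{\rv k_a}{2}/n$ is the right one, whereas the paper's regrouping $\prod_{a}\prod_{j=1}^{\rv k_a-1}(1-j/n)=\prod_{j\ge1}(1-j/n)^{\rv m(j+1)}$ counts only constraints with $\rv k_a=j+1$ rather than $\rv k_a\ge j+1$; this changes the limiting constant but not the argument.
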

\begin{proof}
Let $\hat k=\ex[(\rv k-1)\ind{\rv k\geq 1}]$, and let $\rv m(j)=|\{a\in C:\rv k_a=j\}|$ for $j\in\NN$. Let $\mc T_n$ be the event 
\begin{equation*}
    |\sum_{a\in C}(\rv k_a-1)\ind{\rv k_a\geq 1}-\hat kdn/k|<n^{2/3},\quad \max _{a\in C} \rv k_a\leq \sqrt{n}/\log n.
\end{equation*}
Since $m=dn/k+O(\sqrt{n})$ by ({\bf H}2) and $\ex[\rv k^{2+\eps}]<\infty$ for some $\eps>0$, it is easy to show that $\pr[\mc T_n]=1-o(1)$ by standard concentration arguments. For each $a\in C$, the probability that $a$ is not incident to any parallel edges in $M_{m}$ is $\prod_{j=1}^{\rv k_a-1}(1-\frac{j}{n})$. Hence, on $\mc T_n$,
\begin{align*}
    \pr[M_{m}\in \mc S\mid \{\rv k_a\}_{a\in C}]&=\prod_{a\in C_m}\prod_{j=1}^{\rv k_a-1}(1-\frac{j}{n})=\prod_{j\ge 1}(1-\frac{j}{n})^{\rv m(j+1)}\\
    &=\prod_{j\ge 1}\exp\left(-\rv m(j+1)\frac{j}{n}+O(\rv m(j+1) j^2/n^2)\right)\\
    &=\exp\left(-\sum_{j\ge 1}\rv m(j+1)\frac{j}{n} +o(1) \right),
\end{align*}
since $\sum_{j\ge 1}\rv m(j+1) j^2/n^2 =O(n)\cdot (n/\log^2 n)/n^2=o(1)$ on ${\mc T}_n$.

Since $\ex[\rv k^{2+\eps}]<\infty$ for some $\eps>0$, we have
\begin{equation*}
    \frac{\sum_{a\in C}\rv (\rv k_a-1)\ind{\rv k_a\geq 1}}{n}\to  d\hat k/ k
\end{equation*}
in probability by a standard concentration argument. Therefore $\pr[M_{m}\in \mc S\mid \{\rv k_a\}_{a\in C}]=(1-o(1))e^{- d\hat k/ k}$ on $\mc T_n$. Since $\pr[\mc T_n]=1-o(1)$, we also have $\pr[M_{m}\in \mc S]=(1-o(1))e^{- d\hat k/ k}$.

We now prove~\eqref{eq:G-Mcontig}. Note that conditioned on a fixed degree sequence, $G_{m}$ and $M_{m}\mid \{M_{m}\in \mc S\}$ are identically distributed. Following a similar argument as in~\eqref{eq:equiv1}, for any simple graph $H$ with degree sequence $\{k_a\}_{a\in C}$, we have
\begin{equation*}
    \pr[M_{m}=H\mid M_{m}\in \mc S]=\frac{\pr[M_{m}\in \mc S\mid \{\rv k_a\}_{a\in C}=\{k_a\}_{a\in C}]}{\pr[M_{m}\in\mc S]}\pr[G_{m}=H],
\end{equation*}
and on $\mc T_n$,
\begin{equation*}
    \pr[M_{m}\in \mc S\mid \{\rv k_a\}_{a\in C}]\sim \pr[M_{m}\in\mc S].
\end{equation*}
Therefore for all sequences of sets of graphs $\{\mc E_n\}_{n\in\NN}$,
\begin{equation*}
    \pr[\{M_{m}\in \mc E_n\}\cap \mc T_n\mid M_{m}\in \mc S]=(1-o(1))\pr[\{G_{m}\in \mc E_n\}\cap \mc T_n].
\end{equation*}
We have $\pr[\mc T_n]=1-o(1)$, and since $\pr[M_{m}\in \mc S]=\Theta(1)$ we also have $\pr[\mc T_n\mid M_{n}\in \mc S]=1-o(1)$. Therefore $\pr[M_{m}\in \mc E_n\mid M_{m}\in \mc S]=\pr[G_{m}\in \mc E_n]+o(1)$.
\end{proof}

\begin{lemma}\label{lem:MmcontigMmidD}
    Let $Z_n\sim\Po(dn)$ be independent of $M_{m}$. Then 
    $$M_{m}\mcontig \Big(M_{m}\mid \sum_{a\in C}\rv k_a=Z_n\Big).$$
\end{lemma}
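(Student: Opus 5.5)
The plan is to reduce everything to the degree sequence $\{\rv k_a\}_{a\in C}$. In $M_m$, given $\{\rv k_a\}_{a\in C}$, the endpoints $u_1,\ldots,u_{\rv k_a}$ of each constraint are chosen independently and uniformly from $V_n$. The event $\{\sum_{a\in C}\rv k_a=Z_n\}$ involves only $\{\rv k_a\}$ and the independent variable $Z_n$. Hence the conditional law of the multigraph given $\{\rv k_a\}$ is the same under both measures. So for every event $\mc E_n$ we have
\[
\pr\Big[M_m\in\mc E_n\,\Big|\, \textstyle\sum_a\rv k_a=Z_n\Big]=\ex\big[f_n(\{\rv k_a\})\,\pr[M_m\in \mc E_n\mid \{\rv k_a\}]\big],
\]
where $f_n$ is the Radon--Nikodym density of the conditioned degree law with respect to the unconditioned one. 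Explicitly,
\[
f_n(\{k_a\})=\frac{\pr[Z_n=\sum_a k_a]}{\pr[\sum_a\rv k_a=Z_n]}.
\]
It therefore suffices to show two things. First, $f_n$ is bounded above by a constant uniformly. Second, for every $K$, $f_n$ is bounded below by some $c_K>0$ on the event $\mc G_K=\{|\sum_a\rv k_a-dn|\le K\sqrt n\}$, and $\pr[\mc G_K]\ge 1-\delta(K)$ with $\delta(K)\to0$ as $K\to\infty$.

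The estimates come from Theorem~\ref{thm:LLT}. Since $Z_n\sim\Po(dn)$, the Poisson local limit gives $\sup_s\pr[Z_n=s]=O(n^{-1/2})$, and also $\pr[Z_n=s]=\Theta_K(n^{-1/2})$ uniformly for $|s-dn|\le K\sqrt n$. For the denominator, write $S=\sum_a\rv k_a$. By ({\bf H}2), $\ex S=mk=dn+O(\sqrt n)$, and $\Var S=m\Var(\rv k)=O(n)$. Then
\[
\pr[S=Z_n]=\sum_s\pr[S=s]\,\pr[Z_n=s]=O(n^{-1/2}).
\]
For the matching lower bound, Chebyshev gives $\pr[|S-dn|\le K\sqrt n]\ge 1/2$ for $K$ large, and on that range $\pr[Z_n=s]=\Omega(n^{-1/2})$, so $\pr[S=Z_n]=\Omega(n^{-1/2})$. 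This argument covers the case $\Var(\rv k)=0$ as well, where $S=mk$ is deterministic, $|mk-dn|=O(\sqrt n)$, and the conditioning does not change the law at all. Combining these bounds gives $f_n=O(1)$ everywhere and $f_n\ge c_K$ on $\mc G_K$. Chebyshev again gives $\pr[\mc G_K]\ge 1-O(1/K^2)$.

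Contiguity then follows in both directions. If $\pr[M_m\in\mc E_n]\to0$, then the conditioned probability is at most $O(1)\cdot\pr[M_m\in\mc E_n]\to0$. Conversely, the conditioned probability is at least $c_K\,\pr[\{M_m\in\mc E_n\}\cap\mc G_K]$, which gives
\[
\pr[M_m\in\mc E_n]\le c_K^{-1}\,\pr\Big[M_m\in\mc E_n\,\Big|\,\textstyle\sum_a\rv k_a=Z_n\Big]+\delta(K).
\]
So if the conditioned probability tends to $0$, then $\limsup_n\pr[M_m\in\mc E_n]\le\delta(K)$ for every $K$, and letting $K\to\infty$ finishes the argument. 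The only delicate point is the uniform two-sided estimate on $\pr[S=Z_n]$. It relies on the Poisson variable $Z_n$ having lattice span one, so that Theorem~\ref{thm:LLT} applies to it even when $\rv k$ has a nontrivial gcd.
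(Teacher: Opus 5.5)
Your proof is correct and follows essentially the same route as the paper, which also writes $\pr[M_m=H\mid \sum_a\rv k_a=Z_n]=\pr[M_m=H]\,\pr[Z_n=e(H)]/\pr[\sum_a\rv k_a=Z_n]$, bounds this ratio by $\Theta_K(1)$ on the window $|e(H)-dn|\le K\sqrt n$, and combines that with concentration of $\sum_a\rv k_a$ both before and after conditioning. Your version is slightly tidier in two ways: the uniform bound $\sup_s\pr[Z_n=s]=O(n^{-1/2})$ removes the need for a separate conditional concentration step, and applying the local limit theorem to $Z_n$ rather than to $\sum_a\rv k_a$ avoids any lattice or gcd assumption on $\rv k$.
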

\begin{proof}
Let $\mc T_n(K)$ be the event $\{|\sum_{a\in C}\rv k_a- dn|\leq K\sqrt{n}\}$. We prove the following two claims.
\begin{claim}
\begin{equation*}\label{claim:Mclaim1}
    \lim_{K\to\infty}\liminf_{n\to\infty}\pr[\mc T_n(K)]=\lim_{K\to\infty}\liminf_{n\to\infty}\pr\Big[\mc T_n(K)\mid \sum_{a\in C}\rv k_a=Z_n\Big]=1.
\end{equation*}
\end{claim}
Recall that $e(H)$ denotes the number of edges in a graph $H$.
\begin{claim}\label{claim:Mclaim2}
Let $K>0$ be fixed. There exists a constant $\delta=\delta(K)$ for which the following holds:  If $H_n$ is a sequence of factor graphs such that $|e(H_n)- d n| \le K\sqrt{n}$ then  
$$\delta\, \pr[M_{m}=H_n] \le \pr[M_{m}=H_n\mid \sum_{a\in C}\rv k_a=Z_n]\le \delta^{-1} \pr[M_{m}=H_n].$$
\end{claim}
We postpone the proofs of the two claims above. By Claim~\ref{claim:Mclaim1}, for any fixed $K>0$ and any sequence of sets of graphs $\{\mc E_n\}_{n\in \NN}$,
\begin{equation}
    \pr[\{M_{m}\in \mc E_n\}\cap \mc T_n(K)\mid \sum_{a\in C}\rv k_a=Z_n]=\Theta_K(1)\pr[\{M_{m}\in \mc E_n\}\cap \mc T_n(K)],\label{eq:equiv2}
\end{equation}
where $\Theta_K(1)$ denotes the set of sequences $(a_n)$ such that there exist positive-valued functions functions $f,g$ for which $f(K)<|a_n|<g(K)$ for all $n$. We show that $M_{m} \mcontig (M_{m}\in \mc E_n\mid \sum_{a\in C} \rv k_a=Z_n)$ as desired follows from~\eqref{eq:equiv2} and Claim~\ref{claim:Mclaim1}. For every $\eps>0$ let $K>0$ be chosen such that $\liminf_{n\to\infty} \pr(\mc T_n(K)), \liminf_{n\to\infty} \pr(\mc T_n(K)\mid \sum_{a\in C} \rv k_a=Z_n)>1-\eps$. Then, for every sequence of sets of graphs $\{\mc E_n\}_{n\in \NN}$, if $\pr[M_{m}\in \mc E_n\mid \sum_{a\in C} \rv k_a=Z_n]=o(1)$ then $\pr[\{M_{m}\in \mc E_n\}\cap \mc T_n(K)\mid \sum_{a\in C}\rv k_a=Z_n]=o(1)$ and so by~\eqref{eq:equiv2},  $\pr[\{M_{m}\in \mc E_n\}\cap \mc T_n(K) ]=o(1)$, which implies that $\pr[M_{m}\in \mc E_n]\le \eps+o(1)$. Thus, by sending $\eps\to 0$ we obtain that $(M_{m}\in \mc E_n\mid \sum_{a\in C} \rv k_a=Z_n) \contig M_{m}$. The proof for $  M_{m}\contig (M_{m}\in \mc E_n\mid \sum_{a\in C} \rv k_a=Z_n)$ is by a symmetric argument. 

\begin{proof}[Proof of Claim~\ref{claim:Mclaim1}]
If $\Var(\rv k)=0$, then $\sum_{a\in C} \rv k_a= km= dn+O(\sqrt{n})$, and so 
\[
\pr[T_n(K)]=\pr[\mc T_n(K)\mid \sum_{a\in C} \rv k_a=Z_n]=1
\]
for sufficiently large $n$ and $K$. 

Now suppose $\Var(\rv k)>0$. Since $\sum_{a\in C} \rv k_a$ is a sum of $m=dn/k+O(\sqrt{n})$ i.i.d.\ random variables by ({\bf H}2), we have $\lim_{K\to\infty}\liminf_{n\to\infty}\pr[T_n(K)]=1$ by the central limit theorem.

Apply Theorem~\ref{thm:LLT} to $\sum_{a\in C_m} \rv k_a$ and use the probability mass function of $Z_n$, it follows that $\pr[\sum_{a\in C} \rv k_a=Z_n]=\Theta(1/\sqrt{n})$.
 Therefore
\begin{equation*}
    \pr[\mc T_n(K)\mid \sum_{a\in C} \rv k_a=Z_n]= 1-\Theta(\sqrt{n})\sum_{\substack{j\in\NN\\|j-dn|\geq K\sqrt{n}}}\pr[\sum_{a\in C} \rv k_a=j]\pr[Z_n=j].
\end{equation*}

By Theorem~\ref{thm:LLT} (applied to  $\sum_{a\in C}\rv k_a$) and the fact that 
$\lim_{K\to \infty}\pr[|Z_n-dn|\geq K\sqrt{n}]=0$, it follows immediately that 

\begin{equation*}
    \lim_{K\to\infty}\liminf_{n\to\infty}\pr[\mc T_n(K)\mid \sum_{a\in C} \rv k_a=Z_n]=1.
\end{equation*}
\end{proof}
\begin{proof}[Proof of Claim~\ref{claim:Mclaim2}]
    If $|e(H_n)-dn|\le K\sqrt{n}$, then $\pr[Z_n=e(H_n)]=\Theta_K(1/\sqrt{n})$. We have already shown that $\pr[\sum_{a\in C} \rv k_a=Z_n]=\Theta(1/\sqrt{n})$ (the implicit constant bounds here are independent of $K$).  Then we have
    \begin{align*}
        \pr[M_{m}=H_n\mid \sum_{a\in C} \rv k_a=Z_n]&=\frac{\pr[M_{m}=H_n]\pr[e(H_n)=Z_n]}{\pr[\sum_{a\in C} \rv k_a=Z_n]}=\Theta_K(1)\pr[M_{m}=H_n].
    \end{align*}
\end{proof}
\end{proof}

Nest, we relate the random multigraphs $M_{m}$ and $M^{(2)}_{m}$. We need the following definition, which relates the distribution of  $\deg(M_{m})$.
\begin{definition}
    Let $S$ be a finite set and fix $Z\geq 0$. Let $\{X_i\}_{i\in[Z]}$ be i.i.d.\ uniform random variables on $S$. Define the distribution $\Mult(S,Z)$ on $\ZZ^S$ to be the distribution of $\{|\{i\in[Z]:X_i=s\}|\}_{s\in S}$.
\end{definition}
The following claim is used to compare the distributions of  $\deg(M_{m})$ and $\deg(M^{(2)}_{m})$.
\begin{claim}\label{claim:multinomialdist}
    Let $S$ be a finite set and let $Z\geq 0$. Let $\{\rv g_s\}_{s\in S}$ be i.i.d.\ $\Po(\lambda)$ random variables for some $\lambda>0$. Then the conditional distribution $\{\rv g_s\}_{s\in S}\mid \{\sum_{s\in S}\rv g_s=Z\}$ is $\Mult(S,Z)$.
\end{claim}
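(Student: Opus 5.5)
We prove Claim~\ref{claim:multinomialdist} by computing the conditional probability mass function directly and comparing it with that of $\Mult(S,Z)$. Write $|S|=N$. The sum $\sum_{s\in S}\rv g_s$ is a sum of independent Poisson variables, so it is distributed as $\Po(N\lambda)$. If $\{z_s\}_{s\in S}$ is any tuple of nonnegative integers with $\sum_s z_s=Z$, then independence gives
\begin{equation*}
\pr\Big[\{\rv g_s\}=\{z_s\}\,\Big|\,\sum_{s}\rv g_s=Z\Big]=\frac{\prod_{s\in S}e^{-\lambda}\lambda^{z_s}/z_s!}{e^{-N\lambda}(N\lambda)^Z/Z!}=\frac{Z!}{\prod_{s\in S}z_s!}\,N^{-Z}.
\end{equation*}
The exponential factors cancel because $e^{-\lambda N}=e^{-N\lambda}$, and the powers of $\lambda$ cancel because $\sum_s z_s=Z$. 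If $\sum_s z_s\neq Z$, the conditional probability is $0$.

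Next we compute the mass function of $\Mult(S,Z)$. Let $X_1,\ldots,X_Z$ be i.i.d.\ uniform on $S$. Each sequence $(X_1,\ldots,X_Z)\in S^Z$ has probability $N^{-Z}$. The number of sequences in which each $s$ appears exactly $z_s$ times is the multinomial coefficient $Z!/\prod_s z_s!$. So $\Mult(S,Z)$ gives $\{z_s\}$ probability $\frac{Z!}{\prod_s z_s!}N^{-Z}$ when $\sum_s z_s=Z$, and probability $0$ otherwise.

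The two mass functions agree on all of $\ZZ^S$, so the two distributions coincide. There is no real obstacle here. The only points to check are the degenerate case $Z=0$, where both distributions are the point mass at the zero vector and the formula still holds with $0!=1$, and that the conditioning event has positive probability, which holds since $\lambda>0$.
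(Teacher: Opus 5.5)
Your proposal is correct and follows essentially the same route as the paper. Both compute the conditional Poisson mass function directly, obtaining $\frac{Z!}{\prod_{s\in S} z_s!}|S|^{-Z}$, and match it with the multinomial count for $\Mult(S,Z)$.
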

\begin{proof}
    Let $\{h_s\}_{s\in S}\in \NN^S$ be such that $\sum_{s\in S}h_s=Z$. Then
    \begin{align*}
        \pr[\Mult(S,Z)=\{h_s\}_{s\in S}] = \binom{Z}{h_s,s\in S}n^{-Z}=\frac{Z!}{\prod_{s\in S}h_s!}|S|^{-Z}.
    \end{align*}
    On the other hand, since $\sum_{s\in S}\rv g_s\sim \Po(\lambda|S|)$, we have
    \begin{align*}
        \pr\Big[\{\rv g_s\}_{s\in S}=\{h_s\}_{s\in S}\mid \sum_{s\in S} \rv g_s=Z\Big]=\prod_{s\in S} \frac{\lambda^{h_s}e^{-\lambda}}{h_s!}\cdot \frac{Z!}{(\lambda|S|)^Ze^{-\lambda|S|}}=\frac{Z!}{\prod_{s\in S}h_s!}|S|^{-Z}.
    \end{align*}
    Therefore $\{\rv g_i\}_{i\in[n]}\mid \{\sum_{i=1}^n\rv g_i=Z\}\sim\Mult(S,Z)$.
\end{proof}

Recall that we denote the degree sequence of a factor graph $G$ by $\deg(G)$ and the number of edges by $e(G)$. 
For any $a\in C$, $v\in V$, we denote by $e(a,v)$ the number of edges between $a$ and $v$.

\begin{lemma}\label{lem:MmidD=M'}
    Suppose $\rv d$ is a Poisson random variable, and let $Z_n\sim \Po(dn)$ be independent of $M_{m}$. Then $M^{(2)}_{m}$ and $M_{m}\mid \{e(M_m)=Z_n\}$ are identically distributed.
\end{lemma}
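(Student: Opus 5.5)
We will compare the two laws by computing the probability of a fixed multigraph $H$ under each. Fix a bipartite multigraph $H$ on $C\cup V$ with $|C|=m$, constraint degrees $(k_a)_{a\in C}$, variable degrees $(d_v)_{v\in V}$ and $N=e(H)=\sum_a k_a=\sum_v d_v$. Throughout we treat $M_m$ and $M^{(2)}_m$ as multigraphs in which edges are counted by multiplicity $e(a,v)$, with the vertex-copy labels of the pairing forgotten.

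\emph{The pairing side.} For $M^{(2)}_m$ we condition on $\mc D_n$, so the degree sequence $((\rv k_a),(\rv d_v))$ has the law of i.i.d.\ copies conditioned on $\sum_v \rv d_v=\sum_a \rv k_a$. Given the degrees, the pairing model produces $H$ with probability
\[
\frac{\prod_a k_a!\,\prod_v d_v!}{N!\,\prod_{a,v}e(a,v)!},
\]
which counts the matchings projecting to $H$ and divides by the $N!$ matchings. Hence
\[
\pr[M^{(2)}_m=H]=\frac{\prod_a\pr[\rv k=k_a]\prod_v\pr[\rv d=d_v]}{\pr[\mc D_n]}\cdot\frac{\prod_a k_a!\prod_v d_v!}{N!\prod e(a,v)!}.
\]
With $\rv d\sim\Po(d)$ we have $\prod_v\pr[\rv d=d_v]\,d_v!=e^{-dn}d^{N}$.

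\emph{The $M_m$ side.} Given $\rv k_a=k_a$, each constraint $a$ selects an ordered $k_a$-tuple uniformly from $V^{k_a}$. The number of tuples realizing the multiset of neighbours $\{e(a,v)\}_v$ is $k_a!/\prod_v e(a,v)!$. So
\[
\pr[M_m=H]=\prod_a\pr[\rv k=k_a]\,\frac{k_a!}{\prod_v e(a,v)!}\,n^{-k_a}.
\]
Conditioning on $e(M_m)=Z_n$ with $Z_n\sim\Po(dn)$ independent multiplies this by $\pr[Z_n=N]/\pr[e(M_m)=Z_n]$. We have $\pr[Z_n=N]=e^{-dn}(dn)^N/N!$, and
\[
\pr[e(M_m)=Z_n]=\pr\Big[\sum_a\rv k_a=\sum_v\rv d_v\Big]=\pr[\mc D_n],
\]
since a sum of $n$ i.i.d.\ $\Po(d)$ variables is $\Po(dn)$. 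Combining, the $n^{-N}$ cancels against $n^N$, giving
\[
\pr[M_m=H\mid e(M_m)=Z_n]=\frac{\prod_a\pr[\rv k=k_a]\,k_a!\;e^{-dn}d^N}{N!\prod e(a,v)!\,\pr[\mc D_n]}.
\]
This matches the pairing-side expression term by term.

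The only delicate point is bookkeeping, i.e.\ making sure both models are regarded as multigraphs with multiplicities and that the symmetry factors $k_a!/\prod e(a,v)!$ and $\prod d_v!/\prod e(a,v)!$ are counted consistently. Claim~\ref{claim:multinomialdist} offers a sanity check, since it says the variable degrees of $M_m$ given $N$ edges are exactly Poisson conditioned on the sum. I expect no further obstacle.
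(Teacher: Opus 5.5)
Your proof is correct and is essentially the paper's argument. The paper factors $\pr[M_m=G\mid e(M_m)=Z_n]$ into three pieces and matches each with its $M^{(2)}_m$ counterpart: the graph given the full degree sequence, then the variable degrees given the constraint degrees (via Claim~\ref{claim:multinomialdist}), then the constraint degrees given $e(M_m)=Z_n$. You instead multiply the same ingredients into one closed form per side: the tuple count $k_a!/\prod_v e(a,v)!$, the pairing count, the Poisson mass function, and $\pr[e(M_m)=Z_n]=\pr[\mc D_n]$, which leaves the multinomial step implicit in the cancellation of $\prod_v d_v!$ and $n^{\pm N}$.
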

\begin{proof}

Let $G$ be a bipartite graph on $C\cup V$. Note that the distribution of the degree sequence $(\ka_a)_{a\in C}$ of the constraint vertices of  $M_{m}$ is exactly that of $(\rv k_a)_{a\in C}$. Since $Z_n$ is independent of $M_{m}$, we have
    \begin{align}
        &\pr[M_{m}=G\mid e(M_m)=Z_n]=\pr[M_{m}=G\mid  \deg(M_{n})=\deg(G)]\nonumber\\
        &\hspace{0.2cm}\times \pr_{M_m}[\da_v=\deg_G(v)\ \forall v\in V\mid \ka_a=\deg_G(a)\ \forall a\in C]\cdot        \pr_{M_m}[\ka_a=\deg_G(a)\ \forall a\in C\mid  e(M_m)=Z_n].\label{eq:telescrop1}
    \end{align}
and
   \begin{align}
  &\pr[M^{(2)}_{m}=G]=\pr[M_{m}^{(2)}=G\mid \deg(M^{(2)}_{m})=\deg(G)]\nonumber\\
        &\hspace{0.5cm}\times
\pr_{M_m^{(2)}}[\da_v=\deg_G(v)\ \forall v\in V\mid \ka_a=\deg_G(a)\ \forall a\in C]\cdot \pr_{M_m^{(2)}}[\ka_a=\deg_G(a)\ \forall a\in C].\label{eq:telescrop2}
    \end{align}
    
    Therefore it suffices to prove the following three claims which verify that each of the three probabilities in~\eqref{eq:telescrop1} is equal to the corresponding probability in~\eqref{eq:telescrop2}. 
    \begin{claim}\label{M-M2-conditional}
        \begin{align*}
        &\pr[M_{m}=G\mid  \deg(M_{n})=\deg(G)]=\pr[M_{m}^{(2)}=G\mid \deg(M^{(2)}_{m})=\deg(G)].
    \end{align*}
    \end{claim}
    \begin{proof}
        We first calculate $\pr[M_{m}=G\mid \ka_a=\deg_G(a)\ \forall a\in C]$. The number of ways to choose $(u_1,\ldots, u_{\deg_G(a)})$ so that $\{au_i:i\in[\deg_G(a)]\}$ is precisely the multi-set of edges incident to $a\in C$ in $G$ is $\deg_G(a)!/\prod_{v\in V}e(a,v)!$, and the probability of making each of these choices is $n^{-\deg_G(a)}$. Since $\{(u_1,\ldots, u_{\deg_G(a)})\}_{a\in C}$ are chosen independently by $M_{m}$, we get
    \begin{equation*}
        \pr[M_{m}=G\mid \ka_a=\deg_G(a)\ \forall a\in C]=\pr[M_{m}=G\mid \rv k_a=\deg_G(a)\ \forall a\in C]=\frac{\prod_{a\in C} \deg_G(a)!}{n^{e(G)}\prod_{a\in C,v\in V}e(a,v)!}.
    \end{equation*}
    For $\pr[\da_v=\deg_G(v)\ \forall v\in V\mid \ka_a=\deg_G(a)\ \forall a\in C]$, the number of ways to choose $\{(u_1,\ldots, u_{\deg_G(a)})\}_{a\in C}$ so that $\da_v=\deg_G(v)$ for all $v\in V$ is $e(G)!/\prod_{v\in V} \deg_G(v)!$. So
    \begin{align*}
        \pr[\da_v=\deg_G(v)\ \forall v\in V\mid \ka_a=\deg_G(a)\ \forall a\in C]=\frac{e(G)!}{n^{e(G)}\prod_{v\in V_n} \deg_G(v)!}.
    \end{align*}
    Therefore
    \begin{align*}
        \pr[M_{m}=G\mid \deg(M_{m})=\deg(G)]&=\frac{\prod_{a\in C}\deg_G(a)!\prod_{v\in V}\deg_G(v)!}{e(G)!\prod_{a\in C,v\in V}e(a,v)!}.
    \end{align*}
    Now consider $M^{(2)}_{m}$, which is chosen according to the pairing model. Let $c(a)=\{a\}\times [\deg_G(a)]$ and $c(v)=\{v\}\times [\deg_G(v)]$ be the sets of vertex copies of $a\in C$ and $v\in V$. Let $\mc G$  be the set of matchings $\Gamma$ between $\bigcup_{a\in C} c(a)$ and $\bigcup_{v\in V} c(v)$ that contain exactly $e(a,v)$ edges between $c(a)$ and $c(v)$ for each $a\in C$, $v\in V$. Then the set of $\prod_{a\in C}\deg_G(a)!\prod_{v\in V}\deg_G(v)!$ of permutations of vertex copies acts transitively on $\mc G$, and every matching in $\mc G$ is stabilized by the $\prod_{a\in C,v\in V}e(a,v)!$ permutations that map edges between $c(a)$ and $c(v)$ to other edges between $c(a)$ and $c(v)$ for each $a\in C, v\in V$. Therefore
    \begin{equation*}
        |\mc G|=\frac{\prod_{a\in C}\deg_G(a)!\prod_{v\in V}\deg_G(v)!}{\prod_{a\in C,v\in V}e(a,v)!}.
    \end{equation*}
    Since there are $e(G)!$ matchings in total, the claim follows.
    \end{proof}
    Recall again that  
    \begin{equation}
    \deg(M_{m}^{(2)})\sim ((\rv d_v)_{v\in V_n},\ (\rv k_a)_{a\in C_m})\mid \mc D_n.\label{eq:Mm2}
    \end{equation}

    \begin{claim}
        \begin{equation*}
            \pr_{M_m}[\da_v=\deg_G(v)\ \forall v\in V\mid \ka_a=\deg_G(a)\ \forall a\in C]=\pr_{M_m^{(2)}}[\da_v=\deg_G(v)\ \forall v\in V\mid \ka_a=\deg_G(a)\ \forall a\in C].
        \end{equation*}
    \end{claim}
    \begin{proof}
        Since the endpoints $\{(u_1,\ldots, u_{\ka_a})\}_{a\in C}=\{(u_1,\ldots, u_{\deg_G(a)})\}_{a\in C}$ are chosen independently $M_{m}$, we have $(\da_v)_{v\in V}\mid \{\ka_a=\deg_G(a)\ \forall a\in C\}\sim \Mult(V,e(G))$ in $M_m$. On the other hand, in $M_m^{(2)}$, 
        \begin{align*}
   (\da_v)_{v\in V}\mid \{\ka_a=\deg_G(a)\ \forall a\in C\} 
            &\sim \{\rv d_v\}_{v\in V}\mid \lt\{\sum_{v\in V} \rv d_v=e(G)\rt\},
        \end{align*}
        by~\eqref{eq:Mm2},
        and $\{\rv d_v\}_{v\in V}\mid \sum_{v\in V} \rv d_v=e(G)\sim \Mult(V,e(G))$ by Claim~\ref{claim:multinomialdist}.
    \end{proof}
    \begin{claim}
    \begin{equation*}
\pr_{M_m}[\ka_a=\deg_G(a)\ \forall a\in C\mid  e(M_m)=Z_n]=\pr_{M_m^{(2)}}[\ka_a=\deg_G(a)\ \forall a\in C].
    \end{equation*}
    \begin{proof}
        Since $\{\rv d_v\}_{v\in V}$ are i.i.d.\ $\Po(d)$ random variables independent of $\{\rv k_a\}_{a\in C}$, we have 
        \begin{equation*}
            \Big(\{\rv k_a\}_{c\in C},\sum_{v\in V}\rv d_v\Big)\sim (\{\rv k_a\}_{a\in C},Z_n),
        \end{equation*} 
        where $\{\rv k_a\}_{a\in C}$ and $Z_n$ are mutually independent.
        Therefore, in $M_m$,
        \begin{align*}
            (\ka_a)_{a\in C}\mid \{e(M_{m})=Z_n\}\sim (\rv k_a)_{a\in C}\mid \{e(M_{m})=Z_n\}\sim \{\rv k_a\}_{a\in C}\mid \lt\{\sum_{a\in C}\rv k_a=\sum_{v\in V} \rv d_v\rt\},
        \end{align*}
        whereas the right hand side has exactly the same distribution as $(\ka_a)_{a\in C}$ in $M_m^{(2)}$.
    \end{proof}
    \end{claim}
\end{proof}

We can now complete the proof of Lemma~\ref{lem:contiguity}.
\begin{proof}[Proof of Lemma~\ref{lem:contiguity}]
    Let $\{\mc E_n\}_{n\in\NN}$ be a sequence of sets of factor (multi)graphs, and let $\mc E_n'=\mc E_n\cap  \mc S$. We have $\pr[G_m\in \mc E_n']=\pr[G_m\in \mc E_n]$ and $\pr[G^{(2)}_{n,\rv m,\rv k,\rv d}\in \mc E_n']=\pr[G^{(2)}_{n,\rv m,\rv k,\rv d}\in \mc E_n]$. By Lemma~\ref{lem:G-Mcontig},
    \begin{align*}
        \pr[G_m\in \mc E_n']=\frac{\pr[M_m\in \mc E_n']}{\pr[M_m\in \mc S]}+o(1)
    \end{align*}
    and $\pr[M_m\in \mc S]=\Theta(1)$. Hence $\pr[G_m\in \mc E_n']=o(1)$ if and only if $\pr[M_m\in \mc E_n']=o(1)$. By Lemmas~\ref{lem:MmcontigMmidD} and \ref{lem:MmidD=M'}, $\pr[M_m\in \mc E_n']=o(1)$ if and only if $\pr[M_m^{(2)}\in \mc E_n']=o(1)$. Finally, by Lemma~\ref{lem:prS},
    \begin{align*}
        \pr[G_m^{(2)}\in \mc E_n']=\frac{\pr[M_m^{(2)}\in \mc E_n']}{\pr[M_m^{(2)}\in \mc S]}=\Theta(1)\pr[M_m^{(2)}\in \mc E_n'].
    \end{align*}
    Hence $\pr[M_m^{(2)}\in \mc E_n']=o(1)$ if and only if $\pr[G_m^{(2)}\in \mc E_n']=o(1)$.
\end{proof}

\subsection{Proofs of the second statements of Theorems~\ref{thm:contiguity} and~\ref{thm:asympequiv}}
\label{sec:contiguity-random}
In this section we prove the statements of Theorems~\ref{thm:contiguity} and~\ref{thm:asympequiv} about $G_{\rv m}$, $G^{(1)}_{\rv m}$, and $G^{(2)}_{\rv m}$. Recall that $c(F)$ is the number of constraint vertices in a factor graph $F$. Note that the distribution of $c(G_{\rv m})$ and $c(M_{\rv m})$ is $\Po(dn/k)$, but $c(M^{(2)}_{\rv m})$ and $c(G^{(1)}_{\rv m})$ are not Poisson due to the conditioning on $\mc D_n$. Moreover, the distribution of $c(G^{(2)}_{\rv m})$ is different from $c(G_{\rv m})$, $c(M_{\rv m})$, $c(M^{(2)}_{\rv m})$, $c(G^{(1)}_{\rv m})$, as $G^{(2)}_{\rv m}$ is conditioned on both $\mc D_n$ and $\{M^{(2)}_{\rv m}\in\mc S\}$, i.e.\ the event that $M^{(2)}_{\rv m}$ has no parallel edges.


\begin{lemma}\label{lem:mconcentration}
    $\lim_{K\to\infty}\limsup_{n\to\infty}\pr[|\rv m-dn/k|>K\sqrt{n}\mid\mc D_n]=0$.
\end{lemma}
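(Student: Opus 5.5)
We aim to show that conditioning on $\mc D_n$ does not move $\rv m\sim\Po(dn/k)$ off its $\sqrt n$-window. We do this by Bayes' rule: bound the joint probability $\pr[\{|\rv m-dn/k|>K\sqrt n\}\cap\mc D_n]$ from above, and bound $\pr[\mc D_n]$ from below, in the Poisson-$\rv m$ setting.

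For the denominator, we claim $\pr[\mc D_n]=\Omega(1/\sqrt n)$, or $\pr[\mc D_n]=\Omega(1)$ in the degenerate case. We restrict to $m$ with $|m-dn/k|\le\sqrt n$, an event of probability $\Omega(1)$ under $\Po(dn/k)$. For each such $m$, the argument of Lemma~\ref{lem:prD} applies; its lower bound uses Theorem~\ref{thm:LLT} uniformly over $m$ in this range. (When both variances vanish, ({\bf H}1) and the lattice constraint give $\Omega(1/\sqrt n)$ directly from the Poisson mass at the required value.) Summing over these $m$ gives the claim.

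For the numerator, we write
\[
\pr[\{|\rv m-dn/k|>K\sqrt n\}\cap\mc D_n]=\sum_{|m-dn/k|>K\sqrt n}\pr[\rv m=m]\,\pr\Big[\sum_{a\le m}\rv k_a=\sum_{v}\rv d_v\Big].
\]
If $\Var(\rv d)>0$, the uniform upper bound in Theorem~\ref{thm:LLT} applied to $\sum_v\rv d_v$ (after reducing to the lattice $\gcd$) gives $\pr[\sum_v\rv d_v=j]=O(1/\sqrt n)$ uniformly in $j$. Conditioning on $\sum_{a\le m}\rv k_a$ then shows that each inner probability is $O(1/\sqrt n)$ uniformly in $m$. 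The numerator is therefore at most $O(1/\sqrt n)\,\pr[|\rv m-dn/k|>K\sqrt n]$, and by Chebyshev, $\pr[|\rv m-dn/k|>K\sqrt n]\le d/(kK^2)$.

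If $\Var(\rv d)=0$ but $\Var(\rv k)>0$, we split the range of $m$. For $m\le 2dn/k$, Theorem~\ref{thm:LLT} for $\sum_{a\le m}\rv k_a$ gives probability $O(1/\sqrt m)=O(1/\sqrt n)$ when $m\ge n^{1/2}$. The small-$m$ and large-$m$ tails carry Poisson probability $e^{-\Omega(n)}$, which is negligible against $1/\sqrt n$. If both variances vanish, $\mc D_n$ forces $km=dn$, so $\rv m=dn/k$ exactly and the probability is $0$ for $K>0$.

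Dividing the two bounds gives $\pr[|\rv m-dn/k|>K\sqrt n\mid\mc D_n]=O(1/K^2)$ uniformly in large $n$. Sending $K\to\infty$ finishes the proof.

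The main obstacle is the uniformity of the local limit bounds over the whole range of $m$. In particular, in the case $\Var(\rv d)=0$, we need the $\rv k$-sum bound for all $m$ of order $n$ and must control the tails of $m$ separately. Lattice (gcd) issues are handled as in Lemma~\ref{lem:prD} via ({\bf H}1).
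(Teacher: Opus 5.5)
Your proof follows the paper's argument: Bayes' rule, with the numerator bounded by $O(1/\sqrt n)\,\pr[|\rv m-dn/k|>K\sqrt n]$ via Theorem~\ref{thm:LLT} (applied to the $\rv d$-sum when $\Var(\rv d)>0$ and to the $\rv k$-sum otherwise), the denominator bounded by $\pr[\mc D_n]=\Omega(1/\sqrt n)$ from Lemma~\ref{lem:prD} after conditioning on $\rv m$, and a Poisson tail bound, with the degenerate case handled identically.

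You are more careful than the paper about uniformity in $m$ when $\Var(\rv d)=0$, but there is one small slip. For $m\ge n^{1/2}$, the bound $O(1/\sqrt m)$ is only $O(n^{-1/4})$, not $O(1/\sqrt n)$. The fix is to split at $m\ge cn$ for a small constant $c>0$; the Poisson mass below $cn$ is still $e^{-\Omega(n)}$, so that range remains negligible.
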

\begin{proof}

We consider multiple cases based on the variance of $\rv k$ and $\rv d$. If $\Var(\rv k)=\Var(\rv d)=0$, then $\pr[\sum_{i=1}^{m}\rv k_{a_i}=\sum_{i=1}^n \rv d_{v_i}]=0$ for $m\neq dn/k$, and so $\pr[\rv m= dn/ k\mid\mc D_n]=1$.

Now suppose $\Var (\rv d)=0$ and $\Var(\rv k)>0$. Then
    \begin{align*}
        \pr[\{|\rv m-dn/k|>K\sqrt{n}\}\cap \mc D_n]&=\sum_{m:|m-dn/k|>K\sqrt{n}}\pr[\sum_{i=1}^{m}\rv k_{a_i}=dn]\pr[\rv m=m],
    \end{align*}
and Theorem~\ref{thm:LLT} gives
\begin{align*}
    \sum_{m:|m-dn/k|>K\sqrt{n}}\pr[\sum_{i=1}^{m}\rv k_{a_i}=dn]\pr[\rv m=m]&=O(1/\sqrt{n})\pr[|\rv m-dn/k|>K\sqrt{n}],
\end{align*}
where the implicit constant in $O(1/\sqrt{n})$ does not depend on $K$. Since $\Var(\rv k)>0$, we also have $\pr[\mc D_n]=\Omega(1/\sqrt{n})$ by conditioning on $\rv m$ and applying Lemma~\ref{lem:prD}. Hence $$\pr[|\rv m-dn/k|>K\sqrt{n}\mid \mc D_n]=\frac{\pr[\{|\rv m-dn/k|>K\sqrt{n}\}\cap \mc D_n]}{\pr[\mc D_n]}=O(1)\pr[|\rv m-dn/k|>K\sqrt{n}].$$ Since $\rv m'\sim \Po(dn/k)$, the claim of the lemma follows by the upper tail bound of a Poisson variable.

Similarly, in the case where $\Var(\rv d)>0$, Theorem~\ref{thm:LLT} gives
    \begin{align*}
        \pr[\{|\rv m-dn/k|>K\sqrt{n}\}\cap \mc D_n]&= \sum_{j\in\NN}\pr[\{|\rv m-dn/k|>K\sqrt{n}\}\cap\{\sum_{i=1}^{\rv m}\rv k_{a_i}=j\}]\pr[\sum_{i=1}^n\rv d_{v_i}=j]\\
        &=O(1/\sqrt{n})\pr[\{|\rv m-dn/k|>K\sqrt{n}\}],
    \end{align*}
and the rest of the proof follows exactly as in the previous case.
\end{proof}

\begin{lemma}\label{lem:mequiv}
    Let $\{\mc E_n\}_{n\in \NN}$ be a sequence of sets of integers. Then
    \begin{equation*}
        \lim_{n\to\infty}\lt|\pr[c(M^{(2)}_{\rv m})\in \mc E_n]-\pr[c(M^{(2)}_{\rv m})\in \mc E_n\mid M^{(2)}_{\rv m}\in \mc S]\rt|=0.
    \end{equation*}
\end{lemma}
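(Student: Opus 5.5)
We want to show that conditioning $M^{(2)}_{\rv m}$ on simplicity barely changes the law of its number of constraint vertices. By Bayes' rule, for any integer $m$,
\[
\pr[c(M^{(2)}_{\rv m})=m\mid M^{(2)}_{\rv m}\in\mc S]=\pr[c(M^{(2)}_{\rv m})=m]\,\frac{\pr[M^{(2)}_{\rv m}\in\mc S\mid c(M^{(2)}_{\rv m})=m]}{\pr[M^{(2)}_{\rv m}\in\mc S]}.
\]
So it suffices to show two things. First, for $m$ in a window $|m-dn/k|\le K\sqrt n$, the conditional simplicity probability is $(1+o(1))e^{-\lambda}$ uniformly in $m$, with $\lambda$ as in Lemma~\ref{lem:moments}. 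Second, the window carries almost all the mass both before and after conditioning on simplicity.

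Step 1 is to identify the fixed-$m$ model. Conditioned on $c(M^{(2)}_{\rv m})=m$, i.e.\ on $\rv m=m$ together with $\mc D_n$, the graph $M^{(2)}_{\rv m}$ is exactly $M^{(2)}_m$ (Remark (c)). Any $m$ with $|m-dn/k|\le K\sqrt n$ satisfies ({\bf H}2). Hypotheses ({\bf H}3) and ({\bf H}4) are needed only when some variance vanishes, and in that case the divisibility constraints hold automatically for every $m$ with $\pr[\rv m=m\mid\mc D_n]>0$. Lemma~\ref{lem:prS} then gives $\pr[M^{(2)}_m\in\mc S]\sim e^{-\lambda}$.

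The main obstacle is uniformity over the $\Theta(K\sqrt n)$ values of $m$. To handle it, I would re-examine the proofs of Lemmas~\ref{lem:prD}, \ref{lem:M'aas}, \ref{lem:empvar} and \ref{lem:moments}. The error terms there come from the local limit theorem (Theorem~\ref{thm:LLT}), which is uniform in $j$, and from tail bounds and the concentration bounds \eqref{eq:maxdeg}--\eqref{eq:varcvg}. These depend on $m$ only through $|m-dn/k|=O(\sqrt n)$, and that dependence enters with implicit constants depending only on $K$. Also, $\lambda$ does not depend on $m$. Hence
\[
\sup_{|m-dn/k|\le K\sqrt n}\bigl|\pr[M^{(2)}_m\in\mc S]-e^{-\lambda}\bigr|=o(1)
\]
for each fixed $K$. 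If a direct uniformity check is awkward, I would argue by contradiction instead. Choose a sequence $m_n$ in the window that violates the bound; this sequence satisfies ({\bf H}2), so Lemma~\ref{lem:prS} applies to it directly, which is the contradiction.

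Step 2 is the mass of the window. By Lemma~\ref{lem:mconcentration}, $\pr[|c(M^{(2)}_{\rv m})-dn/k|>K\sqrt n]\le\eta(K)+o(1)$ with $\eta(K)\to0$. Averaging Step 1 over the window gives
\[
\pr[M^{(2)}_{\rv m}\in\mc S]=e^{-\lambda}+O(\eta(K))+o(1),
\]
which is bounded away from $0$ for large $K$. Consequently the conditional probability of leaving the window, given simplicity, is at most $(\eta(K)+o(1))/\pr[M^{(2)}_{\rv m}\in\mc S]=O(\eta(K))+o(1)$.

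Finally, combine the two steps. Summing the Bayes identity over $m\in\mc E_n$ inside the window gives a ratio $1+O(\eta(K))+o(1)$. The parts of $\mc E_n$ outside the window contribute $O(\eta(K))+o(1)$ under both laws. Therefore
\[
\bigl|\pr[c(M^{(2)}_{\rv m})\in\mc E_n]-\pr[c(M^{(2)}_{\rv m})\in\mc E_n\mid M^{(2)}_{\rv m}\in\mc S]\bigr|\le O(\eta(K))+o(1),
\]
and letting $n\to\infty$ and then $K\to\infty$ proves the lemma.
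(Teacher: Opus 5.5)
Your proposal is correct and follows essentially the same route as the paper: Bayes' rule, uniform convergence of the simplicity probability to $e^{-\lambda}$ over the window $|m-dn/k|\le K\sqrt n$ via Lemma~\ref{lem:prS}, and control of the window's mass before and after conditioning via Lemma~\ref{lem:mconcentration}. Your subsequence/contradiction argument for the uniformity over the window usefully justifies a step that the paper simply asserts from Lemma~\ref{lem:prS}.
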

\begin{proof}
    By conditioning on $c(M^{(2)}_{\rv m})$ and applying Lemma~\ref{lem:prS}, we have that for any $K>0$,
    \begin{align*}
        |\pr[M^{(2)}_{\rv m}\in \mc S]-e^{-\lambda}|&\leq \pr[|c(M^{(2)}_{\rv m})-dn/k|>K\sqrt{n}]\\
        &\hspace{1cm}+\sum_{m:|m-dn/k|\leq K\sqrt{n}}\lt|\pr[M^{(2)}_{m}\in \mc S]-e^{-\lambda}\rt|\pr [c(M^{(2)}_{\rv m}) = m]\\
        &=\pr[|c(M^{(2)}_{\rv m})-dn/k|> K\sqrt{n}]+o(1)\pr[|c(M^{(2)}_{\rv m})-dn/k|\leq K\sqrt{n}].
    \end{align*}
    Hence, by Lemma~\ref{lem:mconcentration}, $\pr[M^{(2)}_{\rv m}\in \mc S]\sim e^{-\lambda}$.

    Now let $\eps>0$. Since $\pr[M^{(2)}_{\rv m}\in \mc S]=\Theta(1)$, by Lemma~\ref{lem:mconcentration} there is a $K>0$ such that $\pr[|c(M^{(2)}_{\rv m})-dn/k|>K\sqrt{n}]+\pr[|c(M^{(2)}_{\rv m})-dn/k|>K\sqrt{n}\mid M^{(2)}_{\rv m}\in \mc S]<\eps+o(1)$. Hence
    \begin{align*}
        &\lt|\pr[c(M^{(2)}_{\rv m}) \in \mc E_n]-\pr[c(M^{(2)}_{\rv m}) \in \mc E_n\mid M^{(2)}_{\rv m}\in \mc S]\rt|\\
        &\hspace{1cm}\leq \eps+o(1)+\sum_{\substack{c(M^{(2)}_{\rv m})\in \mc E_n\\|m-dn/k|\leq K\sqrt{n}}}\lt(1-\frac{\pr[M^{(2)}_{\rv m}\in \mc S\mid c(M^{(2)}_{\rv m})=m]}{\pr[M^{(2)}_{\rv m}\in \mc S]}\rt)\pr[c(M^{(2)}_{\rv m})=m]
    \end{align*}
    By Lemma~\ref{lem:prS}, we have $\sup_{m:|m-dn/k|\leq K\sqrt{n}}|\pr[M^{(2)}_{\rv m}\in \mc S\mid c(M^{(2)}_{\rv m})=m]-e^{-\lambda}|=o(1)$. Since $\pr[M^{(2)}_{\rv m}\in \mc S]\sim e^{-\lambda}$, this shows that $\lt|\pr[c(M^{(2)}_{\rv m}) \in \mc E_n]-\pr[c(M^{(2)}_{\rv m}) \in \mc E_n\mid M^{(2)}_{\rv m}\in \mc S]\rt|=\eps+o(1)$, which proves the claim.
\end{proof}

\begin{lemma}\label{lem:mcontig}
 If $\Var(\rv d)>0$ and $\gcd(\rv d)=1$ then $\rv m\mcontig \{\rv m\mid \mc D_n\}$.
\end{lemma}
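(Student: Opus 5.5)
We prove mutual contiguity by showing that the likelihood ratio $\pr[\rv m=m\mid\mc D_n]/\pr[\rv m=m]=\pr[\mc D_n\mid \rv m=m]/\pr[\mc D_n]$ is bounded above and below by constants depending only on $K$, uniformly over $|m-dn/k|\le K\sqrt n$. Both distributions also put mass close to one on this window when $K$ is large. Once these two facts are in hand, the contiguity argument is the same $\eps$–$K$ argument used in the proof of Lemma~\ref{lem:MmcontigMmidD}. If an event $\mc E_n$ has vanishing probability under one measure, then its intersection with the window has vanishing probability under the other, by the bounded ratio. Its complement within the window has probability at most $\eps+o(1)$, and sending $\eps\to0$ finishes the argument.

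For the concentration step, $\rv m\sim\Po(dn/k)$ gives $\lim_{K\to\infty}\limsup_n\pr[|\rv m-dn/k|>K\sqrt n]=0$ directly. Lemma~\ref{lem:mconcentration} gives the same statement conditionally on $\mc D_n$.

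For the ratio step, fix $m$ in the window and condition on $\rv m=m$. Then
\[
\pr[\mc D_n\mid\rv m=m]=\sum_j\pr\Big[\sum_{i\le m}\rv k_{a_i}=j\Big]\pr\Big[\sum_{i\le n}\rv d_{v_i}=j\Big].
\]
Since $\Var(\rv d)>0$ and $\gcd(\rv d)=1$, Theorem~\ref{thm:LLT} applies to $\sum_{i\le n}\rv d_{v_i}$. It gives $\pr[\sum_i\rv d_{v_i}=j]\le C/\sqrt n$ uniformly in $j$, and $\pr[\sum_i\rv d_{v_i}=j]\ge c_K/\sqrt n$ for $|j-dn|\le K'\sqrt n$. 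The upper bound yields $\pr[\mc D_n\mid\rv m=m]=O(1/\sqrt n)$ for every $m$.

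For the lower bound, note that $\sum_{i\le m}\rv k_{a_i}$ has mean $mk=dn+O(K\sqrt n)$ and variance $O(n)$. By Chebyshev, with probability at least $1/2$ it lies within $K'\sqrt n$ of $dn$ for some $K'=K'(K)$. Hence $\pr[\mc D_n\mid\rv m=m]\ge c'_K/\sqrt n$. No lattice condition on $\rv k$ is needed here, because the smoothing is provided entirely by the $\rv d$-sum; this is why the hypothesis is placed on $\rv d$. The case $\Var(\rv k)=0$ is covered by the same computation, since then $\sum_{i\le m}\rv k_{a_i}=mk$ is deterministic and lies in the window.

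It remains to show $\pr[\mc D_n]=\Theta(1/\sqrt n)$. Averaging the two bounds over $\rv m$, using the Poisson concentration of $\rv m$ for the lower bound, gives this. Therefore the ratio is $\Theta_K(1)$ on the window.

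The main obstacle is keeping the local limit lower bound uniform over all $m$ in the window and all $j$ in the corresponding range. The cleanest route is the uniform form of Theorem~\ref{thm:LLT}: the Gaussian density $e^{-(j-dn)^2/(2n\sigma^2)}/(\sqrt{2\pi}\sigma)$ is bounded below on $|j-dn|\le K'\sqrt n$, and the uniform $o(1)$ error does not affect this. The rest of the argument is bookkeeping.
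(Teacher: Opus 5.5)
Your proposal is correct and follows the same route as the paper. Both arguments combine two facts. First, $\rv m$ concentrates in a $K\sqrt n$ window under both measures (Lemma~\ref{lem:mconcentration} for the conditioned one). Second, the likelihood ratio $\pr[\mc D_n\mid\rv m=m]/\pr[\mc D_n]$ is $\Theta(1)$ on that window, obtained from the $\Theta(1/\sqrt n)$ bounds of Lemma~\ref{lem:prD}: Theorem~\ref{thm:LLT} applied to the $\rv d$-sum, with concentration of the $\rv k$-sum. The only difference is that you derive these bounds inline, with the uniformity in $m$ over the window made explicit, rather than citing Lemma~\ref{lem:prD}.
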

\begin{proof}
By Lemma~\ref{lem:mconcentration}, it suffices to show that, for all sequences $m(n)=dn/k+O(\sqrt{n})$, 
\begin{equation}\label{eq:mcontiguity}
\pr[\mc D_n\mid \rv m=m(n)]=\Theta(\pr [\mc D_n]),
\end{equation}
as this implies that
\[
    \pr[\rv m=m(n)\mid \mc D_n]=\frac{\pr[\mc D_n\mid \rv m=m(n)]}{\pr [\mc D_n]}\pr[\rv m=m(n)]=\Theta(1)\pr[\rv m=m(n)].
\]
Since $\Var(\rv d)>0$ and $\gcd(\rv d)=1$, by Lemma~\ref{lem:prD} we have $\pr[\mc D_n\mid \rv m=m(n)]=\Theta(1/\sqrt{n})$ for all sequences $m(n)=dn/k+O(\sqrt{n})$. Moreover, by conditioning on $\rv m$, Lemma~\ref{lem:prD} shows that $\pr[\mc D_n]=\Theta(1/\sqrt{n})$. Therefore~\eqref{eq:mcontiguity} holds.
\end{proof}

We prove the second statements in Theorems~\ref{thm:contiguity} and~\ref{thm:asympequiv} related to the models $G_{\rv m}$, $G_{\rv m}^{(1)}$, and $G_{\rv m}^{(2)}$ in the following two lemmas. This completes the proofs of Theorems~\ref{thm:contiguity} and~\ref{thm:asympequiv}.

\begin{lemma}\label{thm:contiguityrandom}
   Suppose $\rv d$ is a Poisson random variable. Then $G_{\rv m}\mcontig G^{(2)}_{\rv m}$.
\end{lemma}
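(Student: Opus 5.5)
The plan is to reduce the Poisson-$\rv m$ statement to the fixed-$m$ statement, Lemma~\ref{lem:contiguity}, by conditioning on the number of constraint vertices. By Remark (c), conditioning $G_{\rv m}$ (resp.\ $G^{(2)}_{\rv m}$) on having $m$ constraint vertices gives exactly $G_m$ (resp.\ $G^{(2)}_m$). Thus for a sequence of events $\mc E_n$ we can write
\begin{equation*}
\pr[G_{\rv m}\in\mc E_n]=\sum_m \pr[G_m\in\mc E_n]\,\pr[\rv m=m],\qquad \pr[G^{(2)}_{\rv m}\in\mc E_n]=\sum_m \pr[G^{(2)}_m\in\mc E_n]\,\pr[c(G^{(2)}_{\rv m})=m].
\end{equation*}
The proof then has two ingredients: the two mixing laws on $m$ are mutually contiguous, and the fixed-$m$ contiguity holds uniformly over the relevant window of $m$.

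\emph{Step 1: the mixing laws.} Since $\rv d$ is Poisson, it has positive variance and $\gcd(\rv d)=1$, so Lemma~\ref{lem:mcontig} gives $\rv m\mcontig\{\rv m\mid\mc D_n\}$. The law of $c(M^{(2)}_{\rv m})$ is that of $\rv m\mid \mc D_n$. By Lemma~\ref{lem:mequiv}, $c(G^{(2)}_{\rv m})$, which is $c(M^{(2)}_{\rv m})$ conditioned on simplicity, is asymptotically equivalent in total variation to $c(M^{(2)}_{\rv m})$. Hence the law of $c(G^{(2)}_{\rv m})$ is mutually contiguous with $\Po(dn/k)$. Moreover, the proof of Lemma~\ref{lem:mcontig} gives pointwise ratios $\Theta(1)$ uniformly for $|m-dn/k|\le K\sqrt n$. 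Together with Lemma~\ref{lem:mconcentration} and Poisson concentration, both laws put mass at least $1-\eps$ on this window once $K=K(\eps)$ is large.

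\emph{Step 2: uniform fixed-$m$ contiguity (the main obstacle).} Lemma~\ref{lem:contiguity} is stated for a single sequence $m(n)$ satisfying ({\bf H}2), so it does not immediately control a sum over a window of $\Theta(\sqrt n)$ values of $m$. Suppose $\pr[G_{\rv m}\in\mc E_n]\to0$ but $\pr[G^{(2)}_{\rv m}\in\mc E_n]\ge\eps$ along a subsequence. By Step 1, the $G^{(2)}$ side must place at least $\eps/2$ of its mass on $m$ in the window with $\pr[G^{(2)}_m\in\mc E_n]\ge\eps/4$. Since the pointwise ratios of the mixing laws are $\Theta(1)$, the $G$ side gives these same $m$ comparable total weight. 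From $\pr[G_{\rv m}\in\mc E_n]\to 0$ it then follows that, for most such $m$ in this weight, $\pr[G_m\in\mc E_n]\to0$.

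To turn this into a contradiction, pick for each $n$ a single $m(n)$ in the window with $\pr[G^{(2)}_{m(n)}\in\mc E_n]\ge\eps/4$ and $\pr[G_{m(n)}\in\mc E_n]\le\delta_n\to0$. Such an $m(n)$ exists by an averaging argument over the set above. The sequence $m(n)$ satisfies ({\bf H}2); the conditions ({\bf H}3)--({\bf H}4) are automatic or handled by restricting to admissible residues when $\Var(\rv k)=0$. Lemma~\ref{lem:contiguity} then yields the contradiction. The reverse direction is symmetric: swap the roles of $G$ and $G^{(2)}$ and use the other half of Lemma~\ref{lem:contiguity}.

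The delicate point is the averaging step. A small mixture probability only gives smallness for $m$ carrying most of the weight, not for all $m$. This is exactly why both mixing laws must be comparable pointwise on the window, rather than merely contiguous as distributions, so I would make that uniform bound from the proof of Lemma~\ref{lem:mcontig} explicit first.
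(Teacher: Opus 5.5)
Your proposal is correct and follows essentially the paper's proof. Both condition on the number of constraint vertices, control the two mixing laws via Lemmas~\ref{lem:mconcentration}, \ref{lem:mcontig} and~\ref{lem:mequiv}, and apply the fixed-$m$ Lemma~\ref{lem:contiguity} to sequences $m(n)$ in the window $|m-dn/k|\le K\sqrt n$. The paper argues directly rather than by contradiction, using a good set $\mc C_n$ with $\pr[\rv m\in\mc C_n]=1-o(1)$ on which $\pr[G_m\in\mc E_n]$ is uniformly small.

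The pointwise comparability you single out as essential is not actually needed. If $\eta_n=\pr[G_{\rv m}\in\mc E_n]\to0$, Markov's inequality gives $\mc C_n=\{m:\pr[G_m\in\mc E_n]\le\sqrt{\eta_n}\}$ Poisson mass $1-o(1)$, and mutual contiguity of the mixing laws alone transfers this to the law of $c(G^{(2)}_{\rv m})$.
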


\begin{proof}

Suppose $\{\mc E_n\}$ is a sequence of factor graphs such that $\pr[G_{\rv m}\in \mc E_n]=o(1)$ and let $\eps>0$. Then there is a sequence of sets of integers $\{\mc C_n\}_{n\in\NN}$ such that $\pr[\rv m\in\mc C_n]=1-o(1)$ and $\pr[G_{m(n)}\in \mc E_n]=o(1)$ for all sequences $m(n)$ such that $m(n)\in \mc C_n$. By Lemmas~\ref{lem:mconcentration} and~\ref{lem:mequiv}, there is a $K>0$ such that $\pr[|\rv m-dn/k|>K\sqrt{n}]<\eps+o(1)$ and $\pr[|c(M^{(2)}_{\rv m})-dn/k|>K\sqrt{n}\mid M^{(2)}_{\rv m}\in\mc S]<\eps+o(1)$. By Lemmas~\ref{lem:mcontig} and~\ref{lem:mequiv}, we have $\pr[c(M^{(2)}_{\rv m})\in \mc C_n\mid M^{(2)}_{\rv m}\in \mc S]=1-o(1)$, and hence
\begin{align*}
    \pr[G^{(2)}_{\rv m}\in \mc E_n]&\leq \eps+o(1)+\sum_{\substack{m\in \mc C_n\\|m-dn/k|\leq K\sqrt{n}}}\pr[G^{(2)}_{m}\in \mc E_n]\pr[c(M^{(2)}_{\rv m}) = m\mid M^{(2)}_{\rv m}\in \mc S]
\end{align*}
By the first statement of Theorem~\ref{thm:contiguity}, we have $\pr[G^{(2)}_{m(n)}\in \mc E_n]=o(1)$ for all sequences $m(n)$ with $m(n)\in \mc C_n\cap \{m:|m-dn/k|\leq K\sqrt{n}\}$.  Therefore
\begin{equation*}
    \sum_{\substack{m\in \mc C_n\\|m-dn/k|\leq K\sqrt{n}}}\pr[G^{(2)}_{m}\in \mc E_n]\pr[c(M^{(2)}_{\rv m}) = m\mid M^{(2)}_{\rv m}\in \mc S]
=o(1).
\end{equation*}
Since $\eps>0$ was arbitrary, this shows that $\{G_{\rv m}\}\contig\{G^{(2)}_{\rv m}\}$. A similar argument shows that $\{G^{(2)}_{\rv m}\}\contig\{G_{\rv m}\}$.
\end{proof}

\begin{lemma}\label{thm:asympequivrandom}
    Let $\{\mc E_n\}_{n\in\NN}$ be a sequence of sets of factor graphs. Then
    \begin{equation*}
        \lim_{n\to\infty}\lt|\pr[G^{(1)}_{\rv m}\in \mc E_n]-\pr[G^{(2)}_{\rv m}\in \mc E_n]\rt|=0.
    \end{equation*}
\end{lemma}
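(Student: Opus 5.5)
The plan is to condition on the number of constraint vertices and reduce to the fixed-$m$ statement, Lemma~\ref{lem:asympequivfixed}. Write $\mc D_n$ for the degree-sum event. Then $G^{(1)}_{\rv m}$ is obtained by sampling $c(G^{(1)}_{\rv m})$ from the law of $\rv m\mid\mc D_n$, and then sampling $G^{(1)}_m$ with that value $m$. This is Remark (c) of Section~\ref{sec:contiguity}. Likewise, $G^{(2)}_{\rv m}$ is obtained by sampling $c(G^{(2)}_{\rv m})=c(M^{(2)}_{\rv m})$ from its law conditional on $M^{(2)}_{\rv m}\in\mc S$, and then sampling $G^{(2)}_m$ with that value. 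Note that the law of $c(M^{(2)}_{\rv m})$ without the simplicity conditioning is exactly the law of $\rv m\mid\mc D_n$.

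Fix $\eps>0$. By Lemma~\ref{lem:mconcentration}, choose $K$ such that
\[
\pr[|\rv m-dn/k|>K\sqrt n\mid\mc D_n]<\eps+o(1).
\]
By Lemma~\ref{lem:mequiv}, applied to the set $\{m:|m-dn/k|>K\sqrt n\}$, the same bound holds for $c(M^{(2)}_{\rv m})$ conditional on $M^{(2)}_{\rv m}\in\mc S$. Let $J_K=\{m:|m-dn/k|\le K\sqrt n\}$. We can then bound
\begin{align*}
\bigl|\pr[G^{(1)}_{\rv m}\in\mc E_n]-\pr[G^{(2)}_{\rv m}\in\mc E_n]\bigr|
&\le 2\eps+o(1)+\sum_{m\in J_K}\bigl|\pr[G^{(1)}_m\in\mc E_n]-\pr[G^{(2)}_m\in\mc E_n]\bigr|\,\pr[\rv m=m\mid\mc D_n]\\
&\quad+\sum_{m\in J_K}\bigl|\pr[c(M^{(2)}_{\rv m})=m]-\pr[c(M^{(2)}_{\rv m})=m\mid M^{(2)}_{\rv m}\in\mc S]\bigr|.
\end{align*}

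The third term is controlled using the proof of Lemma~\ref{lem:mequiv}. There it is shown that $\pr[M^{(2)}_m\in\mc S]\to e^{-\lambda}$ uniformly over $m\in J_K$, and that $\pr[M^{(2)}_{\rv m}\in\mc S]\sim e^{-\lambda}$. Hence the ratio of the two point masses is $1+o(1)$ uniformly on $J_K$, and the third term is $o(1)$. Note that the bare statement of Lemma~\ref{lem:mequiv}, applied to a single set, only gives closeness per set. A total variation bound needs this uniform ratio argument, or equivalently the sup over sets $\mc E_n$ chosen adversarially.

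For the middle term, we need Lemma~\ref{lem:asympequivfixed} to hold uniformly over $m\in J_K$. Argue by contradiction. If the supremum over $J_K$ of the difference did not tend to zero, pick $m(n)\in J_K$ attaining it up to $1/n$. This sequence satisfies ({\bf H}2), and ({\bf H}3)--({\bf H}4) hold automatically on the support of $\rv m\mid\mc D_n$. Applying Lemma~\ref{lem:asympequivfixed} to this sequence gives a contradiction. So the middle term is $o(1)$, and sending $\eps\to 0$ finishes the proof.

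The main obstacle is this uniformity. One must check that the error terms in Lemmas~\ref{lem:prS} and~\ref{lem:asympequivfixed} (via $\mc M_n$ and Lemma~\ref{lem:M'aas}) depend on $m$ only through the constant in ({\bf H}2). The subsequence argument above handles this cleanly, provided the divisibility conditions are met along the chosen sequence. Those conditions hold because $\pr[\mc D_n\mid\rv m=m]=0$ whenever they fail.
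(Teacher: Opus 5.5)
Your proposal is correct and follows essentially the same route as the paper. Both condition on the number of constraint vertices, truncate to the window $|m-dn/k|\le K\sqrt n$ via Lemma~\ref{lem:mconcentration}, and then split the remaining error into two pieces: a fixed-$m$ term handled by Lemma~\ref{lem:asympequivfixed}, and a term comparing the laws of $c(M^{(2)}_{\rv m})$ with and without simplicity conditioning, handled by Lemma~\ref{lem:mequiv}. Your subsequence argument for uniformity over the window, and your separate control of the tail of $c(M^{(2)}_{\rv m})$ given $M^{(2)}_{\rv m}\in\mc S$, make explicit two points the paper passes over silently; the uniform-ratio step for the third term is in fact unnecessary, since Lemma~\ref{lem:mequiv} quantifies over all sequences of sets and so already yields total variation convergence.
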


\begin{proof}

Let $\eps>0$. By Lemma~\ref{lem:mconcentration}, there is a $K>0$ be such that $\pr[|c(M^{(2)}_{\rv m})-dn/k|>K\sqrt{n}]+\pr[|\rv m-dn/k|>K\sqrt{n}]<\eps$. We have
\begin{align*}
    &\lt|\pr[G^{(1)}_{\rv m}\in \mc E_n]-\pr[G^{(2)}_{\rv m}\in \mc E_n]\rt|\leq\sum_{\substack{m\in\NN\\|m-dn/k|\leq K\sqrt{n}}}\lt|\pr[G^{(1)}_{m}\in \mc E_n]-\pr[G^{(2)}_{m}\in \mc E_n]\rt|\pr[c(G^{(1)}_{\rv m})=m]\\
    &\hspace{3cm}+\sum_{\substack{m\in\NN\\|m-dn/k|\leq K\sqrt{n}}}\pr[G^{(2)}_{m}\in \mc E_n]\lt|\pr[c(M^{(2)}_{\rv m})=m]-\pr[c(M^{(2)}_{\rv m})=m\mid M^{(2)}_{\rv m}\in \mc S]\rt|+\eps.
\end{align*}

By the first statement of Theorem~\ref{thm:asympequiv}, we have
\begin{equation*}
    \sum_{m:|m-dn/k|\leq C\sqrt{n}}\lt|\pr[G^{(1)}_{m}\in \mc E_n]-\pr[G^{(2)}_{m}\in \mc E_n]\rt|\pr[c(G^{(1)}_{\rv m})=m]=o(1),
\end{equation*}
and by Lemma~\ref{lem:mequiv},
\begin{equation*}
    \sum_{m:|m-dn/k|\leq C\sqrt{n}}\pr[G^{(2)}_{m}\in \mc E_n]\lt|\pr[c(M^{(2)}_{\rv m})=m]-\pr[c(M^{(2)}_{\rv m})=m\mid M^{(2)}_{\rv m}\in \mc S]\rt|=o(1).
\end{equation*}
Since $\eps>0$ was arbitrary, this shows that $\lt|\pr[G^{(1)}_{\rv m}\in \mc E_n]-\pr[G^{(2)}_{\rv m}\in \mc E_n]\rt|=o(1)$.
\end{proof}

\section{Proof of Lemmas~\ref{lem:BXYmcontigF(C'')} and~\ref{lem:G(1)mcontigF(C')}}\label{sec2:contiguityproof}

\subsection{Proof of Lemmas~\ref{lem:BXYmcontigF(C'')} and~\ref{lem:G(1)mcontigF(C')}}\label{sec:contiguity-couplinglemmas}

In this section, we use Lemma~\ref{thm:asympequivrandom} and adaptations of arguments from Section~\ref{sec:contiguityproof-lemma} to prove Lemmas~\ref{lem:BXYmcontigF(C'')} and~\ref{lem:G(1)mcontigF(C')}. In the following, a matching of $S$ with $T$ is a set of ordered pairs  $\Gamma\subset S\times T$ such that $|\{(s,t)\in \Gamma,t\in T\}|\leq 1$ for all $s\in S$ and $|\{(s,t)\in \Gamma,s\in S\}|\leq 1$ for all $t\in T$. A matching $\Gamma\subset S\times T$ is perfect if $|\Gamma|=|S|=|T|$. 
    Recall that $e_G(a,v)$ denotes the number of edges between vertices $a$ and $v$ in graph $G$. For any two sets of vertices $A,B$ in $G$, let $e_G(A,B)$ denote the number of edges in $G$ with at least one endpoint in $A$ and at least one endpoint in $B$. Recall $A,A',B,B'$ and 
    \begin{eqnarray*}
     M &=& M_{n,m,k,\rho,d^*,\eps};\\
     M' &=& M_{n,k,\rho,d^*,\eps}';\\
     M'' &=& M_{n,m,k,\rho,\eps}''.
    \end{eqnarray*}
    defined in Section~\ref{sec:linearnonprincipal-SAT}. Recall also that random variables $\rv m\sim \Po(d^*n/k)$ and $\rv{\hat m}=\max\{\rv m, m\} $ were involved in defining $M,M',M''$. 
We will also use the models 
\begin{align*}
    M^{(2)}&=M^{(2)}_{(2-2\eps)n,\rv m,\rv k,\rv d}\\
    G^{(1)}&=G^{(1)}_{(2-2\eps)n,\rv m,\rv k, \rv d}\\
    G^{(2)}&=G^{(2)}_{(2-2\eps)n,\rv m,\rv k, \rv d}
\end{align*}
that were defined in Section~\ref{sec:contiguity}, in the proof.

\begin{lemma}\label{lem:BXY-C''-contig} Let $\Gamma\subset (A\cup A')\times (B\cup B')$ be a maximum size matching of $A\cup A'$ with $B\cup B'$ chosen uniformly at random. Let $\mc S'$ be the event that $e_{M''}(a, \{v,v'\})\leq 1$ for all $a\in C$ and $(v,v')\in \Gamma$, and let $\mc C$ be the event that $|A\cup A'|=|B\cup B'|=n$. Let $\pi$ be a uniform random permutation of $V_{2n}$ chosen independently of $\B_{XY}$, and let $\tilde \B_{XY}$ be the matrix obtained from $\B_{XY}$ by permuting columns by $\pi$. Then 
$$F(\tilde \B_{XY})\sim M''\mid \{\mc S'\cap \mc C\},\ \quad \pr[\mc S'\cap \mc C]=\Omega(1).$$ 
\end{lemma}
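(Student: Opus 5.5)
The plan is to identify the two distributions row by row, after fixing the auxiliary randomness, and then to bound the probability of the conditioning event separately.

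\emph{Structure of $\B_{XY}$.} Row $a$ of $\B$ picks $k$ distinct indices $j_1<\dots<j_k$ in $[n]$. Each entry is independently $\bar X$ with probability $1-\rho$ and $\bar Y$ with probability $\rho$. In $\B_{XY}=[\B_X\mid \B_Y]$, an $\bar X$-entry at $j$ becomes a $1$ in column $j$, and a $\bar Y$-entry becomes a $1$ in column $n+j$. So $F(\B_{XY})$ has $2n$ variable vertices arranged in $n$ pairs $\{j,n+j\}$. Each constraint is joined to one vertex in each of $k$ distinct pairs, which form a uniform $k$-set of pairs, and the side is chosen independently per edge with the appropriate probabilities. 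Rows are independent. After applying $\pi$, the pairs and the two sides become a uniformly random labelling of $V_{2n}$ that respects the structure ``perfect matching between two halves of size $n$''.

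\emph{Structure of $M''\mid \mc S'\cap\mc C$.} On $\mc C$ we have $|A\cup A'|=|B\cup B'|=n$, so $\Gamma$ is a perfect matching. The data $((A\cup A',B\cup B'),\Gamma)$ is then a uniformly random ``two halves plus perfect matching'' structure on $V_{2n}$. This is because $S$, $A$ and $A'$ are chosen exchangeably, and $\Gamma$ is uniform given the halves. Hence it has the same law as the image of $\{\{j,n+j\}\}$ under $\pi$, with the halves labelled as the $\bar Y$ (weight $\rho$) side and the $\bar X$ side. Fix this structure. For each $a\in C_m$, the endpoints $\bb_{a,1},\dots,\bb_{a,k}$ are i.i.d.: a side is chosen with probabilities $\rho$ and $1-\rho$, and then a uniform vertex on that side, which is equivalent to a uniform pair of $\Gamma$ together with an independent side. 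Conditioning on $\mc S'$ means that for every $a$ the $k$ pairs are distinct. Since the events for different constraints are independent, this conditioning is a product of row-wise conditionings. Given distinctness, the ordered tuple of pairs is uniform over distinct tuples, and the sides stay independent with the same biases. So the unordered neighbourhood is a uniform $k$-set of pairs with i.i.d.\ sides, which is exactly the row law above. Note that $\mc S'$ already forces $M''$ to be simple. Also, $\C''$ was obtained by conditioning all of $M$ on $\mc S$, but rows are independent, so the conditioning on rows outside $C_m$ does not affect $M''$; on the rows of $C_m$ it is implied by $\mc S'$. Integrating over the structure gives $F(\tilde\B_{XY})\sim M''\mid\mc S'\cap\mc C$.

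\emph{The probability bound.} For $\mc C$: $|A|\sim\Bin((2-2\eps)n,1/2)$, so a.a.s.\ $n-2\eps n\le |A|\le n$, and $A'\subset V_{2n}\setminus S$ with $|V_{2n}\setminus S|=2\eps n$ can be chosen to make $|A\cup A'|=n$ exactly. Hence $\pr[\mc C]=1-o(1)$. Given $\mc C$, each row violates the distinctness condition with probability at most $\binom k2\cdot\frac{2}{n}\max(\rho,1-\rho)\le k^2/n$. Rows are independent and $m=O(n)$, so $\pr[\mc S'\mid\mc C]\ge(1-k^2/n)^m=\Omega(1)$.

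The main obstacle I expect is the first identification step: checking carefully that the unlabelled pairing structure coming from $(A,A',B,B',\Gamma)$ is exactly uniform, including the correct orientation of the $\rho$ side. Once that is in place, the row-wise computation is routine.
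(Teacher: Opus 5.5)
Your proof is correct and is essentially the paper's argument. Both fix the perfect matching $\Gamma$ on $\mc C$, whose law agrees with that of the $\pi$-image of the column pairs $\{v_j,v_{j+n}\}$, and check row by row that conditioning on $\mc S'$ gives a uniform $k$-set of pairs with independent $\rho$-biased sides. Both then bound $\pr[\mc C]=1-o(1)$ and $\pr[\mc S'\mid \mc C]\ge (1-O(1/n))^m=\Omega(1)$.

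One point in your favour: matching the weight-$\rho$ side $A\cup A'$ with the $\bar Y$-columns $\pi(V_{2n}\setminus V_n)$ is the correct orientation. The paper pairs $A\cup A'$ with $\pi(V_n)$ and implicitly swaps $\rho$ and $1-\rho$ in its computation for $\tilde\B_{XY}$.
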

\begin{proof}
    Let $\Gamma_\pi\subset \pi(V_n)\times \pi(V_{2n}\setminus V_n)$ be the matching given by $\Gamma_\pi=\{(\pi(v_i),\pi(v_{i+n}):i\in[n]\}$. Conditioned on $\mc C$, $A\cup A'$ is a uniformly chosen subset of $V_{2n}$ of size $n$. Hence, conditioned on $\mc C$, the distributions of $\Gamma$ and $\Gamma_\pi$ are identical. Therefore it suffices to show that
    \begin{equation}
        \pr[M''=G\mid \mc S',\mc C,\Gamma=\gamma]=\pr[ F(\tilde \B_{XY})=G\mid \Gamma_{\pi}=\gamma]\label{eq:BXY-WTS}
    \end{equation}
    for any perfect matching $\gamma\subset T\times(V_{2n}\setminus T)$ with $T\subset V_n$, and any factor graph $G$ with $e_G(a,\{v,v'\})\leq 1$ for all $(v,v')\in \gamma$, $a\in C$. For any such $\gamma$, $G$, we have
    \begin{align}
        \pr[M''=G\mid \Gamma=\gamma]&=\prod_{a\in C}\binom{k}{e_G(a,v),v\in V_{2n}}\rho^{e_G(a,T)}(1-\rho)^{e_G(a,V_{2n}\setminus T)}n^{-k}\nonumber\\
        &=\prod_{a\in C}k!\rho^{e_G(a,T)}(1-\rho)^{e_G(a,V_{2n}\setminus T)}n^{-k}\label{eq:prM''}
    \end{align}
    where $\binom{k}{e_G(a,v),v\in V_{2n}}$ is the number of ways to assign $b_{a,i}$, $i\in[k]$ so that $e_G(a,v)=|\{i\in [k]: b_{a,i}=v\}|$ for all $v\in V_{2n}$, and $\rho^{e_G(a,T)}(1-\rho)^{e_G(a,V_{2n}\setminus T)}n^{-k}$ is the probability of each assignment. The last line holds by the assumption that $e_G(a,\{v,v'\})\leq 1$ for all $(v,v')\in \gamma$, $a\in C_m$. Moreover, by considering each $a\in C_m$ independently, we have
    \begin{equation}
        \pr[\mc S'\mid \Gamma=\gamma] = \prod_{a\in C}\prod_{i=1}^k\lt(1-\frac{\rho(i-1)}{n}-\frac{(1-\rho)(i-1)}{n}\rt)=\lt(\frac{n!}{(n-k)!n^k}\rt)^m=\Omega(1).\label{eq:prS'}
    \end{equation}
    Since $\gamma$ is assumed to be a perfect matching, we have $\{\Gamma=\gamma\}\cap \mc C=\{\Gamma=\gamma\}$ as an equality of events. Thus, by combining~\ref{eq:prM''} and~\ref{eq:prS'},
    \begin{align}
        \pr[M''=G\mid \mc S',\mc C,\Gamma=\gamma]&=\pr[M''=G\mid \mc S',\Gamma=\gamma]\nonumber\\
        &= \lt(\frac{n!}{(n-k)!n^k}\rt)^{-m}\prod_{a\in C}k!\rho^{e_G(a,T)}(1-\rho)^{e_G(a,V_{2n}\setminus T)}n^{-k}\nonumber\\
        &=\prod_{a\in C}\binom{n}{k}^{-1}\rho^{e_G(a,T)}(1-\rho)^{e_G(a,V_{2n}\setminus T)}.\label{eq:BXY-M''conditional}
    \end{align}
    On the other hand, for any permutation $\sigma$ such that $\Gamma_\sigma=\gamma$,
    \begin{align}
        \pr[F(\tilde \B_{XY})=G\mid \pi=\sigma]=\prod_{a\in C}\binom{n}{k}^{-1}\rho^{e_G(a,T)}(1-\rho)^{e_G(a,V_{2n}\setminus T)}\label{eq:prTildeB}
    \end{align}
    where $\binom{n}{k}^{-1}$ is the probability of choosing the neighbourhood of $a$ in $F(\B)$ so that 
    \[
    \{i\in [n]: v_i\in N_{F(\B)}(a)\}=\{i\in[n]: \sigma(v_i)\in N_G(a)\text{ or }\sigma(v_{i+n})\in N_G(a)\},
    \] 
    and $\rho^{e_G(a,T)}(1-\rho)^{e_G(a,V_{2n}\setminus T)}$ is the probability of then assigning the coefficients $\B_{a,v}$ for $v\in N_{F(\B)}(a)$ so that $\sigma(\{v_i: \B_{a,v_i}=\bar X\})=N_G(a)\cap T$ and $\sigma(\{v_{i+n}: \B_{a,v_i}=\bar Y\})=N_G(a)\setminus T$. Summing~\eqref{eq:prTildeB} over all $\sigma$ with $\Gamma_\sigma=\gamma$ gives
    \begin{align*}
    \pr[F(\tilde \B_{XY})=G\mid \Gamma_{\pi}=\gamma]=\prod_{a\in C}\binom{n}{k}^{-1}\rho^{e_G(a,T)}(1-\rho)^{e_G(a,V_{2n}\setminus T)}.
    \end{align*}
    Combined with~\ref{eq:BXY-M''conditional}, this verifies~\ref{eq:BXY-WTS}. 
    
    To show that $\pr[\mc S']=\Omega(1)$, first note that summing over $\gamma$ in~\eqref{eq:prS'} gives $\pr[\mc S'\mid \mc C]=\Omega(1)$. Since $A'$ is chosen to minimize $||A\cup A'|-n|$, we have $|A\cup A'|=|B\cup B'|=n$ whenever $|A|,|B|\leq n$. Since $A$ is a uniformly chosen subset of $S$, where $|S|=(2-2\eps)n$, this occurs with probability $1-o(1)$. Therefore $\pr[\mc C]=1-o(1)$, and hence $\pr[\mc S'\cap \mc C]=\Omega(1)$.
\end{proof}

We now show that $M'$ is contiguous with the random graph $M^{(2)}=M^{(2)}_{(2-2\eps)n,\rv m,\rv k,\rv d}$, where $\rv k\sim \Bin(k,(1-\eps))$, $\rv d\sim \Po(\rho d^*)/2+\Po((1-\rho) d^*)/2$. Note that $\ex[\rv d](2-2\eps)n/\ex [\rv k]= d^*n/k$, so the distribution of $\rv m\sim \Po(d^*n/k)$ that appears in the definition for $M'$ coincides with the distribution of $\rv m\sim \Po(\ex[\rv d](2-2\eps)n/\ex [\rv k])$ that appears in the definition of $M^{(2)}$.

    Let $\rv \delta _v''\sim Po(d^*\rho)$ and $\rv \delta _v'''\sim Po(d^*(1-\rho))$ for $v\in V_{2n}$ be such that $\{\rv \delta _v''\}_{v\in V_{2n}}$, $\{\rv \delta _v'''\}_{v\in V_{2n}}$, and $M'$ are mutually independent. Let $\rv \delta_v'=\rv \delta_v''$ for $v\in A$ and $\rv \delta _v'=\rv \delta _v'''$ for $v\in B$. To make the distribution of the degree sequence of $M'$ match that of $M^{(2)}$, we condition $M'$ on the event
    \begin{equation*}
        \mc D_n'=\lt\{\sum_{a\in C} \rv \kappa_a=\sum_{v\in A}\rv \delta_v''+\sum_{v\in B} \rv \delta_v'''\rt\},
    \end{equation*}
where $\rv \kappa_a$ is the degree of $a$ in $M'$. The following Lemma, which is analogous to Lemma~\ref{lem:MmcontigMmidD}, shows that conditioning on $\mc D_n'$ does not significantly change the distribution of $M'$. The proof is similar to that of Lemma~\ref{lem:MmcontigMmidD}.
\begin{lemma}\label{lem:M'conditioning-contiguity}
    $M'\mcontig (M'\mid \mc D_n')$.
\end{lemma}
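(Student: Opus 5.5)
We follow the template of Lemma~\ref{lem:MmcontigMmidD} and reduce everything to a comparison of the total degree of $M'$ with an independent Poisson variable. Condition throughout on the partition $(A,A',B,B')$; the argument is uniform over typical partitions, i.e.\ those with $|A|,|B|=(1-\eps)n+O(\sqrt n)$, which occur with probability $1-o(1)$. Given the partition, the right-hand side $Z_n':=\sum_{v\in A}\rv\delta_v''+\sum_{v\in B}\rv\delta_v'''$ is independent of $M'$ and is distributed as $\Po(d^*(\rho|A|+(1-\rho)|B|))$. The left-hand side $\sum_{a}\rv\kappa_a=e(M')$ is a function of $M'$ alone. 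Hence for every factor graph $H$,
\[
\pr[M'=H\mid \mc D_n']=\frac{\pr[M'=H]\,\pr[Z_n'=e(H)]}{\pr[e(M')=Z_n']},
\]
exactly as in the proof of Claim~\ref{claim:Mclaim2}. So it suffices to establish analogues of Claims~\ref{claim:Mclaim1} and~\ref{claim:Mclaim2}.

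\textbf{Step 1: locating $e(M')$.} Let $\mu_n:=\ex[e(M')]$. By construction, $e(M')=\sum_{a\in C_{\rv m}}\rv\kappa_a$, where $\rv m\sim\Po(d^*n/k)$ and, given the partition, the $\rv\kappa_a$ are i.i.d.\ $\Bin(k,p_S)$. Here $p_S=\rho|A|/|A\cup A'|+(1-\rho)|B|/|B\cup B'|=1-\eps+O(n^{-1/2})$ on typical partitions. Thus $e(M')$ is a compound Poisson variable with
\[
\mu_n=d^*n(1-\eps)+O(\sqrt n),\qquad \Var(e(M'))=\Theta(n).
\]
Its mean matches $\ex[Z_n']=d^*(1-\eps)n+O(\sqrt n)$ up to $O(\sqrt n)$. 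We then apply the local limit theorem (Theorem~\ref{thm:LLT}) to $e(M')$. This is possible because $e(M')$ is a sum of $\Theta(n)$ i.i.d.\ pieces, obtained by splitting $\Po(d^*n/k)$ into $n$ independent $\Po(d^*/k)$ blocks, and the summands have gcd $1$. It gives $\pr[e(M')=j]=O(n^{-1/2})$ uniformly in $j$, and $\pr[e(M')=j]=\Theta_K(n^{-1/2})$ for $|j-d^*(1-\eps)n|\le K\sqrt n$. The Poisson pmf of $Z_n'$ satisfies the same estimates, so
\[
\pr[e(M')=Z_n']=\Theta(n^{-1/2}).
\]

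\textbf{Step 2: concentration and conclusion.} Let $\mc T_n(K)=\{|e(M')-d^*(1-\eps)n|\le K\sqrt n\}$. By the central limit theorem for $e(M')$, $\lim_{K\to\infty}\liminf_n\pr[\mc T_n(K)]=1$. Under $\mc D_n'$, the same holds by writing
\[
\pr[\mc T_n(K)^c\mid\mc D_n']=\Theta(\sqrt n)\sum_{|j-d^*(1-\eps)n|>K\sqrt n}\pr[e(M')=j]\,\pr[Z_n'=j]
\]
and bounding it by $O(1)\,\pr[|Z_n'-\ex Z_n'|>K\sqrt n-O(\sqrt n)]\to0$. On $\mc T_n(K)$, the displayed ratio is $\Theta_K(1)$. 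The $\eps$--$K$ argument following~\eqref{eq:equiv2} then gives contiguity in both directions. Finally, averaging over the partition, which is $1-o(1)$ typical, yields $M'\mcontig(M'\mid\mc D_n')$ unconditionally.

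\textbf{Main obstacle.} We expect the main difficulty to be Step~1, specifically applying the local limit theorem uniformly over the random partition and controlling the $O(\sqrt n)$ mean mismatches. This requires restricting to partitions with $|A|-(1-\eps)n=O(\sqrt n)$ and $|A\cup A'|=n$. These deviations then only shift the Gaussian centres by $O(\sqrt n)$, which the $\Theta_K$ constants absorb.
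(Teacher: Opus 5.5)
Your proposal is correct and takes essentially the paper's route. The paper likewise reduces the lemma, as in Lemma~\ref{lem:MmcontigMmidD}, to a concentration claim plus a $\Theta(1)$ Bayes-ratio claim against the independent Poisson total, tracking $|A|=a_n$ jointly where you instead condition on the partition. When you average back over partitions, make explicit that $\mc D_n'$ reweights the partition by $\pr[\mc D_n'\mid A,B]=\Theta(n^{-1/2})$, so typical partitions remain typical under the conditioning. Also, the uniform local limit theorem for $e(M')$ that you flag as the main obstacle is not needed: $\pr[e(M')=Z_n']\le\sup_j\pr[Z_n'=j]=O(n^{-1/2})$, and the matching lower bound and the concentration under $\mc D_n'$ follow from Chebyshev's inequality for $e(M')$ together with the explicit Poisson pmf of $Z_n'$.
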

\begin{proof}
    Let $\mc T_n(C)$ be the event
    \begin{equation*}
        \mc T_n(C)=\lt\{\lt||A|-(1-\eps)n\rt |\leq C\sqrt{n},|\rv m-d^*n/k|\leq C\sqrt{n},\lt|\sum_{a\in C}\rv \kappa_a-(1-\eps)d^*n\rt|\leq C\sqrt{n}\rt\}
    \end{equation*}
    Similar to Lemma~\ref{lem:MmcontigMmidD}, the lemma follows from the following two claims, which are analogous to Claims~\ref{claim:Mclaim1} and~\ref{claim:Mclaim2}.
    \begin{claim}
        \begin{equation*}
            \lim_{K\to\infty}\liminf_{n\to\infty}\pr[\mc T_n(K)]=\lim_{K\to\infty}\liminf_{n\to\infty}\pr[\mc T_n(K)\mid \mc D_n']=1.
        \end{equation*}
    \end{claim}
    \begin{claim}
        Suppose $G_n$ is a sequence of factor graphs such that $|e(G_n)-(1-\eps)d^*n|=O(\sqrt{n})$ and $|c(G_n)-d^*n/k|=O(\sqrt{n})$. Suppose $a_n$ is such that $|a_n-(1-\eps)n|=O(\sqrt{n})$. Then
        \begin{equation*}
            \pr[M'=G_n,|A|=a_n\mid \mc D_n']=\Theta(1)\pr[M', |A|=a_n].
        \end{equation*}
    \end{claim}
\end{proof}

 The following lemma can be viewed as an analog of Lemma~\ref{lem:MmidD=M'}, which expresses $M^{(2)}$ as a conditional probability space (recall that $M_m^{(2)}$ in Lemma~\ref{lem:MmidD=M'} is the same as $M^{(2)}$ in this section). Recall that $\mc C$ is the event that $|A\cup A'|=|B\cup B'|=n$.  With slight abuse of notation, we define $M^{(2)}$ with $A\cup B$ (instead of $V_{(2-2\eps)n}$) being the set of $(2-2\eps) n$ variable vertices. Now $M^{(2)}$ and $M'$ are both defined on the same set of variable vertices.

\begin{lemma}\label{lem:M'-M2-equiv}
$(M'\mid \mc D_n',\mc C)\sim M^{(2)}$.
\end{lemma}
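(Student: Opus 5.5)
We would prove the lemma the same way as Lemma~\ref{lem:MmidD=M'}. Fix a partition $(A,B)$ of the variable set $A\cup B$ with $|A|+|B|=(2-2\eps)n$, and a bipartite multigraph $G$ on $C\cup A\cup B$. We then factor both $\pr[M'=G,\ (A,B)\mid \mc D_n',\mc C]$ and $\pr[M^{(2)}=G,\ (A,B)]$ into three factors and match them one at a time:
\begin{itemize}
\item[(i)] the law of $G$ given its full degree sequence;
\item[(ii)] the law of the variable degrees given the constraint degrees and the labelling $(A,B)$;
\item[(iii)] the joint law of the labelling and the constraint degrees.
\end{itemize}
For $M^{(2)}$ we read the labelling from the mixture $\rv d\sim \Po(\rho d^*)/2+\Po((1-\rho)d^*)/2$. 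Each variable independently receives a fair label, ``$A$'' meaning a $\Po(\rho d^*)$ degree and ``$B$'' a $\Po((1-\rho)d^*)$ degree. This matches the construction of $M'$, where $A$ is a uniform subset of $S$.

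For factor (i) we reuse the computation in Claim~\ref{M-M2-conditional} verbatim. Given all degrees, the i.i.d.-endpoint model $M'$ assigns each multigraph $G$ probability proportional to $\prod_a\deg_G(a)!\prod_v\deg_G(v)!/\prod_{a,v}e(a,v)!$. The pairing model $M^{(2)}$ gives the same weight divided by $e(G)!$. Both are therefore the same distribution. For factor (ii) we need a two-class version of Claim~\ref{claim:multinomialdist}. Given the constraint degrees and the split of each constraint's endpoints between $A$ and $B$, the endpoints landing in $A$ are i.i.d.\ uniform on $A$, and similarly for $B$. The variable degrees are therefore $\Mult(A,e(C,A))\otimes \Mult(B,e(C,B))$. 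By Claim~\ref{claim:multinomialdist} this is exactly the law of independent $\Po(\rho d^*)$ variables on $A$ and $\Po((1-\rho)d^*)$ variables on $B$, conditioned on their class sums. This is what $M^{(2)}$ gives once we also condition on $\mc D_n'$, which involves exactly these $\rv\delta'_v$.

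For factor (iii) we use Poisson thinning. Since $\rv m\sim\Po(d^*n/k)$ and each of the $k$ endpoints of a constraint independently lands in $A$ with probability $\rho|A|/|A\cup A'|$ (resp.\ in $B$ with $(1-\rho)|B|/|B\cup B'|$), the edge counts into $A$ and $B$ are independent Poissons. On $\mc C$ we have $|A\cup A'|=|B\cup B'|=n$, so these edge counts have means $\rho d^*|A|$ and $(1-\rho)d^*|B|$. These are exactly the laws of $\sum_{v\in A}\rv\delta''_v$ and $\sum_{v\in B}\rv\delta'''_v$. The same independence lets us rewrite conditioning on $\mc D_n'$ as the event $\sum_a \rv\kappa_a=\sum_{v}\rv \delta'_v$. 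This is the event $\mc D_n$ of $M^{(2)}$, in the same way as in the third claim of Lemma~\ref{lem:MmidD=M'}.

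The main obstacle is matching the constraint-degree law: $\rv k\sim\Bin(k,1-\eps)$ versus the per-constraint success probability $(\rho|A|+(1-\rho)|B|)/n$ in $M'$. The latter equals $1-\eps$ only after averaging over the uniform random choice of $A\subset S$ and $S$. We would try first to treat $(\rv m,\ \text{labelling},\ \text{endpoint classes})$ jointly as one Poisson process. Under $\mc C$ and $\mc D_n'$, the posterior over the labelling and over the $\rv\kappa_a$ should then collapse to i.i.d.\ $\Bin(k,1-\eps)$ constraint degrees together with i.i.d.\ fair labels, both conditioned on $\mc D_n$. If this exact identity fails, we would fall back to proving only mutual contiguity, via the local limit estimates of Lemma~\ref{lem:M'conditioning-contiguity}, since contiguity is all that Lemma~\ref{lem:G(1)mcontigF(C')} needs.
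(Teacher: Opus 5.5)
Your factors (i) and (ii) are correct. They coincide with the last claim in the paper's proof and with Claim~\ref{claim:M'degreesequence}. The paper does (ii) in one step: given $\{\rv\kappa_a\}$ and $(A,B)=(\alpha,\beta)$, both the true variable degrees and the $\mc D_n'$-conditioned $\rv\delta'_v$ are multinomial, with total $M=\sum_a\rv\kappa_a$ and cell weights proportional to $\rho$ on $\alpha$ and $1-\rho$ on $\beta$.

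The gap is factor (iii). Your thinning step is false. There are $k\rv m$ endpoints, which is not a Poisson number, so $e(C,A)$ and $e(C,B)$ are compound Poisson rather than independent Poissons. Thinning is also not needed: given the constraint degrees and labels, the split $e(C,A)$ is $\Bin\big(M,\rho|\alpha|/(\rho|\alpha|+(1-\rho)|\beta|)\big)$ in both models.

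The real obstruction is the one you flag yourself, and it is fatal. On $\{A=\alpha\}\cap\mc C$ the $\rv\kappa_a$ are i.i.d.\ $\Bin(k,p_\alpha)$, where
$p_\alpha=(\rho|\alpha|+(1-\rho)|\beta|)/n=1-\eps+(2\rho-1)\big(|\alpha|-(1-\eps)n\big)/n$.
Combining this with (i) and (ii), the law of $(M',A)$ given $\mc D_n'\cap\mc C$ has density
\[
\frac{\ind{\mc C}}{Z}\Big(\frac{p_\alpha}{1-\eps}\Big)^{M}\Big(\frac{1-p_\alpha}{\eps}\Big)^{km-M}
\]
with respect to $M^{(2)}$ together with its mixture labels, where $Z$ is a normalising constant. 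Evaluate at the empty graph on $C_m\cup S$ and average over the label posterior. This gives $Z^{-1}\sum_j w_j\big((1-p_j)/\eps\big)^{km}$, with weights $w_j>0$ that do not depend on $m$. For $\rho\neq 1/2$ the bases are distinct, so this expression is strictly convex in $m$ and cannot be identically $1$.

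Hence the collapse to i.i.d.\ $\Bin(k,1-\eps)$ constraint degrees that you hope for does not occur, and the identity in law is false unless $\rho=1/2$. When $\rho=1/2$, $p_\alpha\equiv 1-\eps$ and the label posterior is uniform. Then $\ind{\mc C}$ only affects $Z$, and your argument goes through with no thinning at all.

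The paper's proof uses the same three-factor structure. Its Claim~\ref{claim:M2degreesequence} simply asserts the collapse you were hoping for: given $A\cup B=T$, the $\rv\kappa_a$ are i.i.d.\ $\Bin(k,1-\eps)$ and independent of $\{\rv\delta'_v\}$. That assertion fails for exactly the reason above, and the claim also never uses $\mc C$. So the step you could not close cannot be closed.

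What is true, and is all that Lemma~\ref{lem:G(1)mcontigF(C')} uses, is your fallback $(M'\mid\mc D_n',\mc C)\mcontig M^{(2)}$. Your proposal does not prove it. The missing argument is a likelihood-ratio bound with three parts:
\begin{itemize}
\item By concavity of $p\mapsto M\log p+(km-M)\log(1-p)$, the label factor is at most $\exp\big(km\,\mathrm{KL}(M/(km)\,\|\,1-\eps)\big)$, uniformly in $\alpha$. This is $O(1)$ whenever $|M-(1-\eps)km|=O(\sqrt n)$.
\item The label factor is $\Omega(1)$ for labels with $|\alpha|-(1-\eps)n=O(\sqrt n)$, which carry most of the posterior mass.
\item $Z=\Theta(1)$, by a local limit estimate.
\end{itemize}
Together these bound the Radon--Nikodym derivative above and below on events of probability close to $1$ under both laws, which gives mutual contiguity.
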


\begin{proof}
    Choose $(\hat A,\hat A',\hat B,\hat B',\{\rv {\hat\kappa} _a\}_{a\in C}, \{\rv {\hat \delta}_v\}_{v\in \hat A\cup \hat B}, \hat M'_{n,k,\rho,d^*,\eps})$ from the conditional distribution of $(A,A',B,B',\{\rv \kappa_a\}_{a\in C}, \{\rv \delta_v'\}_{v\in A\cup B}, M'_{n,k,\rho,d^*,\eps})$ given $\mc D_n'$. We first show that the distributions of the degree sequences of $\hat M'$ and $M^{(2)}$ agree.
    \begin{claim}\label{claim:M2degreesequence}
        \[(\{\rv {\hat \kappa_a}\}_{a\in C},\{\rv {{\hat\delta }}_{v}\}_{v\in \hat A\cup \hat B})\sim (\{\deg_{M^{(2)}}(a)\}_{a\in C},\{\deg_{M^{(2)}}(v)\}_{v\in A\cup B}).\]
    \end{claim}
    \begin{proof}
    First note that the union $A\cup B$ is independent of $\mc D_n'$, and thus $\hat A\cup \hat B\sim A\cup B$. 
    
    Conditioned on the event $\{A\cup B=T\}$ for any $T\subset V_{2n}$ of size $(2-2\eps)n$, the sequence $\{\rv \delta' _{v}\}_{v\in T}$ is a sequence of i.i.d. $\Po(\rho d^*)/2+\Po((1-\rho)d^*)/2$ random variables that is independent of $\{\rv \kappa_a\}_{a\in C}$. Moreover, $\{\rv \kappa_a\}_{a\in C}|\{A\cup B=T\}$ is a sequence of i.i.d. $\Bin(k,1-\eps)$ random variables. Therefore $(\{\rv \kappa_a\}_{a\in C},\{\rv \delta'_{\sigma_{A\cup B}(v)}\}_{v\in T})\mid \{A\cup B=T\}$ has the same joint distribution as $(\{\rv k_a\}_{a\in C},\{\rv d_v\}_{v\in T})$ in the notation of~\eqref{eq:D}. Therefore 
    \begin{align*}
        (\{\rv {\hat \kappa_a}\}_{a\in C},\{\rv {{\hat\delta }}_{v}\}_{v\in \hat A\cup \hat B})\mid \{\hat A\cup \hat B=T\}&\sim (\{\rv k_a\}_{a\in C},\{\rv d_v\}_{v\in T})\mid \Big\{\sum_{a\in C}\rv k_a=\sum_{v\in T}\rv d_v\Big\},\\
        &\sim (\{\deg_{M^{(2)}}(a)\}_{a\in C},\{\deg_{M^{(2)}}(v)\}_{v\in A\cup B})\mid \{A\cup B=T\}.
    \end{align*}
    Since $\hat A\cup \hat B\sim A\cup B$, this proves the claim.
    \end{proof}
    \begin{claim}\label{claim:M'degreesequence}
        \begin{equation*}
            (\{\rv {\hat\kappa_{a}}\}_{a\in C},\{\rv {{\hat\delta} }_{v}\}_{v\in \hat A\cup \hat B})\sim (\{\rv {\hat \kappa_{a}}\}_{a\in C},\{\deg_{\hat M'}(v)\}_{v\in \hat A\cup \hat B})\mid \mc C.
        \end{equation*}
    \end{claim}
    \begin{proof}
        Let $\alpha,\beta\subset V_{2n}$ be disjoint sets such that $|\alpha\cup \beta|=(2-2\eps)n$ and $|\alpha|,|\beta|\leq n$. Let $\{k_a\}_{a\in C}\in \NN^{C}$ and let $\{d_v\}_{v\in \alpha\cup \beta }\in\NN^{\alpha \cup \beta}$ such that $\sum_{v\in \alpha\cup \beta} d_v=\sum_{a\in C}k_a$. Let $M=\sum_{a\in C}k_a$. Then
        \begin{align*}
            &\pr[\{\rv {\hat \delta}_v\}_{v\in \hat A\cup \hat B}=\{d_v\}_{v\in \alpha\cup \beta}\mid \{\rv{\hat\kappa_a}\}_{a\in C}=\{k_a\}_{a\in C},\hat A=\alpha,\hat B=\beta]\\
            &=\frac{\pr[\{\rv \delta_v'\}_{v\in \hat A\cup \hat B}=\{d_v\}_{v\in \alpha\cup \beta},\{\rv\kappa_a\}_{a\in C}=\{k_a\}_{a\in C},A=\alpha,B=\beta,\mc D'_n]}{\pr[\{\rv\kappa_a\}_{a\in C}=\{k_a\}_{a\in C},A=\alpha,B=\beta,\mc D'_n]}\\
            &=\frac{\pr[\{\rv \delta_v''=d_v\}_{v\in \alpha},\{\rv \delta_v'''=d_v\}_{v\in \beta},\{\rv\kappa_a\}_{a\in C}=\{k_a\}_{a\in C},A=\alpha,B=\beta]}{\pr[\sum_{v\in \alpha}\rv \delta ''_v+\sum_{v\in\beta} \rv \delta'''_v=M,\{\rv\kappa_a\}_{a\in C}=\{k_a\}_{a\in C},A=\alpha,B=\beta]}\\
            &=\frac{\pr[\{\rv \delta_v''=d_v\}_{v\in \alpha},\{\rv \delta_v'''=d_v\}_{v\in \beta}]}{\pr[\sum_{v\in \alpha}\rv \delta ''_v+\sum_{v\in\beta} \rv \delta'''_v=M]}
        \end{align*}
        where the last equality follows from the independence of $\{\rv \delta ''_v\}_{v\in V_{2n}},\{\rv \delta'''_v\}_{v\in V_{2n}}$ and $\{\rv \kappa\}_{a\in C},A,B$. Since the $\{\rv \delta''_v\}_{v\in V_{2n}}$, $\{\rv \delta'''_v\}_{v\in V_{2n}}$ are i.i.d. Poisson, we have
        \begin{align*}
            \pr[\{\rv \delta_v''=d_v\}_{v\in \alpha},\{\rv \delta_v'''=d_v\}_{v\in \beta}]=\prod_{v\in \alpha}\frac{e^{-\rho d^*}(\rho d^*)^{d_v}}{d_v!}\prod_{v\in \beta}\frac{e^{-(1-\rho) d^*}((1-\rho) d^*)^{d_v}}{d_v!}.
        \end{align*}
        The sum $\sum_{v\in \alpha}\rv \delta ''_v+\sum_{v\in\beta} \rv \delta'''_v$ is Poisson with mean $\rho|\alpha| d^*+(1-\rho)|\beta|d^*$, so we get
        \begin{align}
            \frac{\pr[\{\rv \delta_v''=d_v\}_{v\in \alpha},\{\rv \delta_v'''=d_v\}_{v\in \beta}]}{\pr[\sum_{v\in \alpha}\rv \delta ''_v+\sum_{v\in\beta} \rv \delta'''_v=M]}&=\frac{e^{\rho|\alpha| d^*+(1-\rho)|\beta|d^*}M!}{(\rho|\alpha| d^*+(1-\rho)|\beta|d^*)^M}\\
            &\times\prod_{v\in \alpha}\frac{e^{-\rho d^*}(\rho d^*)^{d_v}}{d_v!}\prod_{v\in \beta}\frac{e^{-(1-\rho) d^*}((1-\rho) d^*)^{d_v}}{d_v!}\nonumber\\
            &=\frac{\rho ^{\sum_{v\in \alpha}d_v}(1-\rho)^{\sum_{v\in \beta}d_v}M!}{\prod_{v\in \alpha\cup \beta}d_v!(\rho|\alpha| +(1-\rho)|\beta|)^M}.\label{eq:priidPo}
        \end{align}
        Now consider the distribution of $\{\deg_{\hat M'}(v)\}_{v\in \hat A\cap \hat B}$. Let $\rv{\hat \gamma}_v=\deg_{\hat M'_{n,k,\rho,d^*,\eps}}(v)$ for $v\in \hat A\cup \hat B$, and let $\rv{\gamma}_v=\deg_{ M'_{n,k,\rho,d^*,\eps}}(v)$ for $v\in A\cup B$. Using the independence of $\{\rv \delta ''_v\}_{v\in V_{2n}},\{\rv \delta'''_v\}_{v\in V_{2n}}$ and $\{\rv \kappa\}_{a\in C},A,B$, we have
        \begin{align*}
            &\pr[\{\rv {\hat \gamma}_v\}_{v\in \hat A\cup \hat B}=\{d_v\}_{v\in \alpha\cup \beta}\mid \{\rv{\hat\kappa_a}\}_{a\in C}=\{k_a\}_{a\in C},\hat A=\alpha,\hat B=\beta]\\
            &=\frac{\pr[\{\rv {\gamma}_v\}_{v\in A\cup B}=\{d_v\}_{v\in \alpha\cup \beta},\{\rv\kappa_a\}_{a\in C}=\{k_a\}_{a\in C},A=\alpha,B=\beta,\mc D'_n]}{\pr[\{\rv\kappa_a\}_{a\in C}=\{k_a\}_{a\in C},A=\alpha,B=\beta,\mc D'_n]}\\
            &=\pr[\{\rv {\gamma}_v\}_{v\in A\cup B}=\{d_v\}_{v\in \alpha\cup \beta}\mid \{\rv\kappa_a\}_{a\in C}=\{k_a\}_{a\in C},A=\alpha,B=\beta].
        \end{align*}
        Let $\mc B=\{(a,i)\in C\times [k]:b_{a,i}\in A\cup B\}$, and let $\mathscr{B}$ be the set of subsets $D\subset C\times [k]$ such that $|\{i\in[k]:(a,i)\in D\}|=k_a$ for all $a\in C$. We can then write
        \begin{align}
            &\pr[\{\rv {\gamma}_v\}_{v\in A\cup B}=\{d_v\}_{v\in \alpha\cup \beta}\mid \{\rv\kappa_a\}_{a\in C}=\{k_a\}_{a\in C},A=\alpha,B=\beta]\nonumber\\
            &=\sum_{D\in \mathscr{B}}\big(\pr[\{\rv {\gamma}_v\}_{v\in A\cup B}=\{d_v\}_{v\in \alpha\cup \beta}\mid \mc B=D,A=\alpha,B=\beta]\nonumber\\
            &\hspace{2cm}\times \pr[\mc B=D\mid \{\rv\kappa_a\}_{a\in C}=\{k_a\}_{a\in C},A=\alpha,B=\beta]\big)\label{eq:prsum}
        \end{align}
        To calculate this probability, first note that the $\{b_{a,i}\}_{a\in C,i\in[k]}$ in the definition of $M'$ remain independent conditioned on $\mc B$, $A$, and $B$. Then, for any $(a,i)\in D$ and $v\in \alpha$, we have 
        \begin{align*}
            \pr[b_{a,i}=v\mid \mc B=D, A=\alpha, B=\beta]&=\pr[b_{a,i}=v\mid b_{a,i}\in \alpha\cup \beta,A=\alpha, B=\beta]\\
            &=\frac{\pr[b_{a,i}=v\mid A=\alpha,B=\beta]}{\pr[b_{a,i}\in \alpha\cup \beta\mid  A=\alpha,B=\beta]}
        \end{align*}
        Since $|\alpha|,|\beta|\leq n$, the event $A=\alpha,B=\beta$ implies $|A\cup A'|=|B\cup B'|=n$. Hence,
        \begin{equation*}
            \frac{\pr[b_{a,i}=v\mid A=\alpha,B=\beta]}{\pr[b_{a,i}\in \alpha\cup \beta\mid  A=\alpha,B=\beta]}=\frac{\rho/n}{\rho |\alpha|/n+(1-\rho)|\beta|/n}.
        \end{equation*}
        Similarly, we have $\pr[b_{a,i}=v\mid \mc B=D, A=\alpha, B=\beta]=(1-\rho)/(\rho|\alpha|+(1-\rho)|\beta|)$ for $v\in \beta$. Therefore, we get
        \begin{align*}
            \pr[\{\rv {\gamma}_v\}_{v\in A\cup B}=\{d_v\}_{v\in \alpha\cup \beta}\mid \mc B=D,A=\alpha,B=\beta]&=\binom{M}{d_v,v\in \alpha\cup \beta}\prod_{v\in \alpha}\lt(\frac{\rho}{\rho|A|+(1-\rho)|\beta|}\rt)^{d_v}\\&\times \prod_{v\in \beta}\lt(\frac{1-\rho}{\rho|A|+(1-\rho)|\beta|}\rt)^{d_v}\\
            &=\frac{\rho^{\sum_{v\in \alpha}d_v}(1-\rho)^{\sum_{v\in \beta}d_v}M!}{\prod_{v\in \alpha\cup\beta}d_v!(\rho|\alpha|+(1-\rho)|\beta|)^M}
        \end{align*}
        
        Substituting into~\eqref{eq:prsum} shows that
        \begin{align}
            &\pr[\{\rv {\gamma}_v\}_{v\in \hat A\cup \hat B}=\{d_v\}_{v\in \alpha\cup \beta}\mid \{\rv\kappa_a\}_{a\in C}=\{k_a\}_{a\in C},A=\alpha,B=\beta]\nonumber\\
            &\hspace{2cm}=\frac{\rho^{\sum_{v\in \alpha}d_v}(1-\rho)^{\sum_{v\in \beta}d_v}M!}{\prod_{v\in \alpha\cup\beta}d_v!(\rho|\alpha|+(1-\rho)|\beta|)^M}\label{eq:prdconditional}
        \end{align}
        which agrees with~\eqref{eq:priidPo}. This proves the claim.
    \end{proof}
    Claims~\ref{claim:M2degreesequence} and~\ref{claim:M'degreesequence} show that the degree sequences of $(M'\mid \mc D'_n,\mc C)$ and $M^{(2)}$ are identically distributed. Therefore the following claim completes the proof of the lemma.
    \begin{claim}
        Let $\{d_v\}_{v\in T}\in \NN^{V_{A\cup B}}$ for some $T\subset V_{2n}$ of size $(2-2\eps)n$, and let $\{k_a\}_{a\in C}\in \NN^{C}$. Then, conditioned on having degree sequences $\{d_v\}_{v\in T}$, $\{k_a\}_{a\in C}$, the distributions of $M'\mid \mc D_n',\mc C$ and $M^{(2)}$ are identical.
    \end{claim}
    \begin{proof}
    Let $\alpha,\beta$ be disjoint such that $\alpha\cup \beta =T$. We have
        \begin{align*}
            &\pr[M'=G\mid \{\rv \kappa_a\}_{a\in C}=\{k_a\}_{a\in C},A=\alpha,B=\beta]\\
            &=\prod_{a\in C}\binom{k_a}{e_G(a,v),v\in V_{T}}\prod_{v\in \alpha}\lt(\frac{\rho}{\rho|\alpha|+(1-\rho)|\beta|}\rt)^{e_G(a,v)}\prod_{v\in \beta}\lt(\frac{1-\rho}{\rho|\alpha|+(1-\rho)|\beta|}\rt)^{e_G(a,v)}\\
            &=\frac{\prod_{a\in C}k_a!\rho^{\sum_{v\in \alpha}\deg_G(v)}(1-\rho)^{\sum_{v\in \beta}\deg_G(v)}}{\prod_{a\in C,v\in T}e(a,v)!(\rho|\alpha|+(1-\rho)|\beta|)^M}.
        \end{align*}
        Let $\rv \gamma_v$ be the degree of $v\in V_{2n}$ in $M'$. Then using~\eqref{eq:prdconditional}, we get
        \begin{align*}
            &\pr[M'=G\mid\{\rv {\gamma}_v\}_{v\in A\cup B}=\{d_v\}_{v\in T} \{\rv \kappa_a\}_{a\in C}=\{k_a\}_{a\in C},A=\alpha,B=\beta]\\
            &=\frac{\pr[M'=G\mid \{\rv \kappa_a\}_{a\in C}=\{k_a\}_{a\in C},A=\alpha,B=\beta]}{\pr[\{\rv {\gamma}_v\}_{v\in  A\cup  B}=\{d_v\}_{v\in T}\mid \{\rv\kappa_a\}_{a\in C}=\{k_a\}_{a\in C},A=\alpha,B=\beta]}\\
            &=\frac{\prod_{v\in T}d_v!\prod_{a\in C}k_a!}{M!\prod_{a\in C,v\in T}e_G(a,v)!}
        \end{align*}
        Since $M^{(2)}$ is drawn from the pairing model, the proof of Claim~\ref{M-M2-conditional} shows that
        \begin{equation*}
            \pr[M^{(2)}=G\mid\{\rv {\delta}_v\}_{v\in A\cup B}=\{d_v\}_{v\in T} \{\rv \kappa_a\}_{a\in C}=\{k_a\}_{a\in C}]=\frac{\prod_{v\in T}d_v!\prod_{a\in C}k_a!}{M!\prod_{a\in C,v\in T}e_G(a,v)!}
        \end{equation*}
        which proves the claim.
    \end{proof}
\end{proof}
\begin{proof}[Proof of Lemma~\ref{lem:BXYmcontigF(C'')}]
Let $\{\mc E_n\}_{n\in \NN}$ be a sequence of sets of simple factor graphs. By Lemma~\ref{lem:BXY-C''-contig}, we have
\begin{align*}
    \pr[F(\tilde B_{XY})\in \mc E_n]=\pr[M''\in \mc E_n\mid \mc S'\cap \mc C]
\end{align*}
and
\begin{align*}
    \pr[\{M''\in \mc E_n\}\cap \mc C]\leq\pr[M''\in \mc E_n\mid \mc S'\cap \mc C]\leq \frac{\pr[M''\in \mc E_n]}{\pr[\mc S'\cap \mc C]}
\end{align*}
where $\pr[\mc S'\cap \mc C]=\Omega(1)$ and $\pr[\mc C]=1-o(1)$. Therefore $M''\in \mc E_n$ a.a.s. if and only if $F(\tilde B_{XY})\in \mc E_n$ a.a.s.

By the definition of $F(\C'')$, we have $\pr[F(\C'')\in \mc E_n]=\pr[M''\in \mc E_n\mid M\in \mc S]$. By~\eqref{eq:prS-M}, $\pr[M\in \mc S]=\Omega(1)$, so we get
\begin{equation*}
    \pr[M''\in \mc E_n]\leq \pr[F(\C'')\in \mc E_n]\leq O(1)\pr[M''\in \mc E_n].
\end{equation*}
Therefore $F(\C'')\in \mc E_n$ a.a.s. if and only if $M''\in \mc E_n$ a.a.s. This completes the proof.
\end{proof}

\begin{proof}[Proof of Lemma~\ref{lem:G(1)mcontigF(C')}]
Let $\{\mc E_n\}$ be a sequence of sets of simple factor graphs, and suppose $\pr[G^{(1)}\in \mc E_n]=o(1)$. By Theorem~\ref{thm:asympequiv},
\begin{equation*}
    \pr[M^{(2)}\in \mc E_n]\leq \pr[G^{(2)}\in \mc E_n]\leq \pr[G^{(1)}\in \mc E_n]+o(1)=o(1).
\end{equation*}
Then by Lemma~\ref{lem:M'-M2-equiv}, $\pr[M'\in \mc E_n\mid \mc D_n']=o(1)$, and by Lemma~\ref{lem:M'conditioning-contiguity}, $\pr[M'\in \mc E_n]=o(1)$. Similar to the proof of Lemma~\ref{lem:BXYmcontigF(C'')}, we have $\pr[F(\C')\in \mc E_n]=\pr[M'\in \mc E_n\mid M'\in \mc S]$ with $\pr[ M'\in \mc S]=\Omega(1)$, so we get $\pr[F(\C')\in \mc E_n]=o(1)$. This completes the proof.
\end{proof}

\bibliographystyle{abbrv}
 \bibliography{theo}

@article{connamacher2012satisfiability,
  title={The satisfiability threshold for a seemingly intractable random constraint satisfaction problem},
  author={Connamacher, Harold and Molloy, Michael},
  journal={SIAM Journal on Discrete Mathematics},
  volume={26},
  number={2},
  pages={768--800},
  year={2012},
  publisher={SIAM}
}

@article{ayre2020satisfiability,
  title={The satisfiability threshold for random linear equations},
  author={Ayre, Peter and Coja-Oghlan, Amin and Gao, Pu and M{\"u}ller, No{\"e}la},
  journal={Combinatorica},
  volume={40},
  number={2},
  pages={179--235},
  year={2020},
  publisher={Springer}
}

@article{duboismandler2002XORSAT,
author = {Dubois, Olivier and Mandler, Jacques},
title = {The 3-XORSAT threshold},
journal = {Comptes Rendus Mathematique},
volume = {335},
number = {11},
pages = {963-966},
year = {2002},
issn = {1631-073X},
doi = {https://doi.org/10.1016/S1631-073X(02)02563-3},
url = {https://www.sciencedirect.com/science/article/pii/S1631073X02025633},
}

@article{pittelsorkin2016XORSATthreshold,
title = {The Satisfiability Threshold for $k$-XORSAT},
journal = {Combinatorics, Probability and Computing},
volume = {25},
issue = {2},
pages = {236-268},
year = {2016},
author = {Pittel, Boris and Sorkin, Gregory}
}

@misc{dietzfelbinger2010XORSATthreshold,
      title={Tight Thresholds for Cuckoo Hashing via XORSAT}, 
      author={Martin Dietzfelbinger and Andreas Goerdt and Michael Mitzenmacher and Andrea Montanari and Rasmus Pagh and Michael Rink},
      year={2010},
      eprint={0912.0287},
      archivePrefix={arXiv},
      primaryClass={cs.DS},
      url={https://arxiv.org/abs/0912.0287}, 
}

@article{achlioptas2006twomomentsNAESAT,
author = { Achlioptas, Dimitris and  Moore, Cristopher},
title = {Random k‐SAT: Two Moments Suffice to Cross a Sharp Threshold},
journal = {SIAM Journal on Computing},
volume = {36},
number = {3},
pages = {740-762},
year = {2006},
doi = {10.1137/S0097539703434231},

URL = {https://doi.org/10.1137/S0097539703434231},
eprint = {https://doi.org/10.1137/S0097539703434231}
}

@article{dingslysun2022kSAT,
author = {Jian Ding and Allan Sly and Nike Sun},
title = {{Proof of the satisfiability conjecture for large $k$}},
volume = {196},
journal = {Annals of Mathematics},
number = {1},
publisher = {Department of Mathematics of Princeton University},
pages = {1 -- 388},
year = {2022},
doi = {10.4007/annals.2022.196.1.1},
URL = {https://doi.org/10.4007/annals.2022.196.1.1}
}

@inproceedings{goerdt2012beyondxorsat,
  title={Satisfiability thresholds beyond $k$-XORSAT},
  author={Goerdt, Andreas and Falke, Lutz},
  booktitle={International Computer Science Symposium in Russia},
  pages={148--159},
  year={2012},
  organization={Springer}
}

@inproceedings{coja2020rank,
  title={The rank of sparse random matrices},
  author={Coja-Oghlan, Amin and Erg{\"u}r, Alperen A and Gao, Pu and Hetterich, Samuel and Rolvien, Maurice},
  booktitle={Proceedings of the Fourteenth Annual ACM-SIAM Symposium on Discrete Algorithms},
  pages={579--591},
  year={2020},
  organization={SIAM}
}

@article{coja2024full,
  title={The full rank condition for sparse random matrices},
  author={Coja-Oghlan, Amin and Gao, Pu and Hahn-Klimroth, Max and Lee, Joon and M{\"u}ller, Noela and Rolvien, Maurice},
  journal={Combinatorics, probability and computing},
  volume={33},
  number={5},
  pages={643--707},
  year={2024},
  publisher={Cambridge University Press}
}

@book{Bini2002,
    author = {Gilberto Bini and Flaminio Flamini},
    title = {Finite commutative rings and their applications},
    publisher = {Springer},
    year = {2002}
}

@book{Bollobas2001,
place={Cambridge},
edition={2}, 
series={Cambridge Studies in Advanced Mathematics}, 
title={Random Graphs}, publisher={Cambridge University Press}, 
author={Bollobás, Béla}, 
year={2001}, 
collection={Cambridge Studies in Advanced Mathematics}

}

@Article{Ding2016,
author={Ding, Jian
and Sly, Allan
and Sun, Nike},
title={Satisfiability Threshold for Random Regular nae-sat},
journal={Communications in Mathematical Physics},
year={2016},
month={Jan},
day={01},
volume={341},
number={2},
pages={435-489},
issn={1432-0916},
doi={10.1007/s00220-015-2492-8},
url={https://doi.org/10.1007/s00220-015-2492-8}
}

@article{Panagiotou_Pasch_2025, title={Satisfiability thresholds for regular occupation problems}, volume={34}, DOI={10.1017/S0963548324000440}, number={4}, journal={Combinatorics, Probability and Computing}, author={Panagiotou, Konstantinos and Pasch, Matija}, year={2025}, pages={491–527}
}

@misc{gao2026satisfiabilitythresholdsolutionspace,
      title={The satisfiability threshold and solution space of random uniquely extendable constraint satisfaction problems}, 
      author={Pu Gao and Theodore Morrison},
      year={2026},
      eprint={2512.13819},
      archivePrefix={arXiv},
      primaryClass={math.CO},
      url={https://arxiv.org/abs/2512.13819}, 
}

@article{friedgut1999sharp,
  title={Sharp thresholds of graph properties, and the $k$-sat problem},
  author={Friedgut, Ehud and Bourgain, Jean},
  journal={Journal of the American mathematical Society},
  volume={12},
  number={4},
  pages={1017--1054},
  year={1999}
}

@article{Achlioptas2001,
author = {Achlioptas, Dimitris and Chtcherba, Arthur and Istrate, Gabriel and Moore, Cristopher},
year = {2001},
month = {05},
pages = {},
title = {The phase transition in 1-in-k SAT and NAE 3-SAT},
journal = {Proceedings of the Annual ACM-SIAM Symposium on Discrete Algorithms},
doi = {10.1145/365411.365760}
}

@article{GOERDT1996469,
title = {A Threshold for Unsatisfiability},
journal = {Journal of Computer and System Sciences},
volume = {53},
number = {3},
pages = {469-486},
year = {1996},
issn = {0022-0000},
doi = {https://doi.org/10.1006/jcss.1996.0081},
url = {https://www.sciencedirect.com/science/article/pii/S0022000096900811},
author = {Andreas Goerdt}
}

@article{Kirousis1998,
author = {Kirousis, Lefteris M. and Kranakis, Evangelos and Krizanc, Danny and Stamatiou, Yannis C.},
title = {Approximating the unsatisfiability threshold of random formulas},
journal = {Random Structures \& Algorithms},
volume = {12},
number = {3},
pages = {253-269},
doi = {https://doi.org/10.1002/(SICI)1098-2418(199805)12:3<253::AID-RSA3>3.0.CO;2-U},
url = {https://onlinelibrary.wiley.com/doi/abs/10.1002/%28SICI%291098-2418%28199805%2912%3A3%3C253%3A%3AAID-RSA3%3E3.0.CO%3B2-U},
eprint = {https://onlinelibrary.wiley.com/doi/pdf/10.1002/%28SICI%291098-2418%28199805%2912%3A3%3C253%3A%3AAID-RSA3%3E3.0.CO%3B2-U},
year={1998}
}

@article{Tikhomirov2020,
author = {Tikhomirov},
year = {2020},
month = {03},
pages = {593},
title = {Singularity of random Bernoulli matrices},
volume = {191},
journal = {Annals of Mathematics},
doi = {10.4007/annals.2020.191.2.6}
}

@inproceedings{Cheeseman1991,
author = {Cheeseman, Peter and Kanefsky, Bob and Taylor, William M.},
title = {Where the really hard problems are},
year = {1991},
isbn = {1558601600},
publisher = {Morgan Kaufmann Publishers Inc.},
address = {San Francisco, CA, USA},
booktitle = {Proceedings of the 12th International Joint Conference on Artificial Intelligence - Volume 1},
pages = {331–337},
numpages = {7},
location = {Sydney, New South Wales, Australia},
series = {IJCAI'91}
}

@inproceedings{Selman1992,
author = {Selman, Bart and Levesque, Hector and Mitchell, David},
title = {A new method for solving hard satisfiability problems},
year = {1992},
isbn = {0262510634},
publisher = {AAAI Press},
booktitle = {Proceedings of the Tenth National Conference on Artificial Intelligence},
pages = {440–446},
numpages = {7},
location = {San Jose, California},
series = {AAAI'92}
}

@article{CRAWFORD1996,
title = {Experimental results on the crossover point in random 3-SAT},
journal = {Artificial Intelligence},
volume = {81},
number = {1},
pages = {31-57},
year = {1996},
note = {Frontiers in Problem Solving: Phase Transitions and Complexity},
issn = {0004-3702},
doi = {https://doi.org/10.1016/0004-3702(95)00046-1},
url = {https://www.sciencedirect.com/science/article/pii/0004370295000461},
author = {James M. Crawford and Larry D. Auton}
}

@inproceedings{Achlioptas2002,
author = {Achlioptas, Dimitris and Moore, Cristopher},
title = {The Asymptotic Order of the Random k -SAT Threshold},
year = {2002},
isbn = {0769518222},
publisher = {IEEE Computer Society},
address = {USA},
booktitle = {Proceedings of the 43rd Symposium on Foundations of Computer Science},
pages = {779–788},
numpages = {10},
series = {FOCS '02}
}

@article{CojaOghlan2016,
author = {Coja-Oghlan, Amin and Panagiotou, Konstantinos},
year = {2016},
month = {01},
pages = {985-1068},
title = {The asymptotic $k$-SAT threshold},
volume = {288},
journal = {Advances in Mathematics},
doi = {10.1016/j.aim.2015.11.007}
}

@article{Bordenave2009,
  title={The rank of diluted random graphs.},
  author={Charles Bordenave and Marc Lelarge and Justin Salez},
  journal={Annals of Probability},
  year={2009},
  volume={39},
  pages={1097-1121},
  url={https://api.semanticscholar.org/CorpusID:115175031}
}

@article{Costello2010,
author = {Costello, Kevin p. and Vu, Van},
title = {On the rank of random sparse matrices},
year = {2010},
journal = {Duke Mathematical Journal},
issue_date = {May 2010},
publisher = {Cambridge University Press},
address = {USA},
volume = {19},
number = {3},
issn = {0963-5483},
url = {https://doi.org/10.1017/S0963548309990447},
doi = {10.1017/S0963548309990447},
month = may,
pages = {321–342},
numpages = {22}
}

@article{glasgow2023,
  title={The exact rank of sparse random graphs},
  author={Glasgow, Margalit and Kwan, Matthew and Sah, Ashwin and Sawhney, Mehtaab},
  journal={arXiv preprint arXiv:2303.05435},
  volume={1},
  number={3},
  pages={4},
  year={2023}
}

@article{Cheong2021,
author = {Gilyoung Cheong and Yifeng Huang},
title = {{Cohen–Lenstra distributions via random matrices over complete discrete valuation rings with finite residue fields}},
volume = {65},
journal = {Illinois Journal of Mathematics},
number = {2},
publisher = {Duke University Press},
pages = {385 -- 415},
year = {2021},
doi = {10.1215/00192082-8939615},
URL = {https://doi.org/10.1215/00192082-8939615}
}

@book{Durrett2019, place={Cambridge}, edition={5}, series={Cambridge Series in Statistical and Probabilistic Mathematics}, title={Probability: Theory and Examples}, publisher={Cambridge University Press}, author={Durrett, Rick}, year={2019}, collection={Cambridge Series in Statistical and Probabilistic Mathematics}}

\end{document}